\theoremstyle{plain}
\newtheorem{theorem}{Theorem}[section]
\newtheorem{proposition}[theorem]{Proposition}
\newtheorem{lemma}[theorem]{Lemma}
\theoremstyle{definition}
\newtheorem{definition}[theorem]{Definition}
\newtheorem{example}[theorem]{Example}
\newcommand\restr[2]{{
  \left.\kern-\nulldelimiterspace 
  #1 
  \right|_{#2} 
}}
\newcommand{\R}{\mathbb{R}}
\renewcommand{\d}{\mathrm{d}}
\newcommand{\Cinfty}{\mathscr{C}^\infty}
\newcommand{\T}{\mathrm{T}}
\newcommand{\cT}{\mathrm{T}^\ast}
\newcommand{\ad}{{\rm ad}}
\newcommand{\Ad}{{\rm Ad}}
\newcommand*{\inn}[1]{\iota_{#1}}
\newcommand{\Lie}{\mathscr{L}}
\newcommand{\X}{\mathfrak{X}}
\newcommand{\SL}{\mathrm{SL}}
\newcommand{\boplus}{{\textstyle\bigoplus}}
\newcommand{\V}{\mathcal{V}}
\newcommand{\bfX}{\mathbf{X}}
\newcommand{\parder}[2]{\frac{\partial #1}{\partial #2}}
\newcommand{\tparder}[2]{\partial #1/\partial #2}
\DeclareMathOperator{\rk}{rk}
\DeclareMathOperator{\cork}{cork}
\DeclareMathAlphabet{\mathpzc}{OT1}{pzc}{m}{it}
\def\d{\mathrm{d}}
\DeclareMathOperator{\pr}{pr}
\newcommand\xqed[1]{%
    \leavevmode\unskip\penalty9999 \hbox{}\nobreak\hfill
	\quad\hbox{#1}}
\newcommand\demo{\xqed{$\triangle$}}
\numberwithin{equation}{section}
\begin{document}


\vspace{5em}

{\huge\sffamily\raggedright
    Marsden--Meyer--Weinstein reduction for $k$-contact field theories
}
\vspace{2em}

{\large\raggedright
    
}

\vspace{3em}

{\Large\raggedright\sffamily
    Javier de Lucas
}
{\raggedright

    Department of Mathematical Methods in Physics, University of Warsaw, \\ ul. Pasteura 5, 02-093, Warszawa, Poland.\\
    e-mail: \href{mailto:javier.de.lucas@fuw.edu.pl}{javier.de.lucas@fuw.edu.pl} --- orcid: \href{https://orcid.org/0000-0001-8643-144X}{0000-0001-8643-144X}
}

\bigskip

{\Large\raggedright\sffamily
    Xavier Rivas
}\vspace{1mm}\newline
{\raggedright
    Department of Computer Engineering and Mathematics, Universitat Rovira i Virgili,\\
    Avinguda Països Catalans 26, 43007,  Tarragona, Spain.\\
    e-mail: \href{mailto:xavier.rivas@urv.cat}{xavier.rivas@urv.cat} --- orcid: \href{https://orcid.org/0000-0002-4175-5157}{0000-0002-4175-5157}
}

\bigskip

{\Large\raggedright\sffamily
    Silvia Vilariño
}\vspace{1mm}\newline
{\raggedright
    Department of Applied Mathematics and Institute of Mathematics and Applications (IUMA), University of Zaragoza.  Torres Quevedo Building, Campus Rio Ebro, 50018, Zaragoza, Spain.\\
    e-mail: \href{mailto:silviavf@unizar.es}{silviavf@unizar.es} --- orcid: \href{https://orcid.org/0000-0003-0404-1427}{0000-0003-0404-1427}
}

\bigskip

{\Large\raggedright\sffamily
    Bartosz M. Zawora
}\vspace{1mm}\newline
{\raggedright
    Department of Mathematical Methods in Physics, University of Warsaw, \\ ul. Pasteura 5, 02-093, Warszawa, Poland.\\
    e-mail: \href{mailto:b.zawora@uw.edu.pl}{b.zawora@uw.edu.pl} --- orcid: \href{https://orcid.org/0000-0003-4160-1411}{0000-0003-4160-1411}
}

\vspace{3em}

{\large\bf\raggedright
    Abstract
}\vspace{1mm}\newline
{\noindent This work devises a Marsden--Meyer--Weinstein $k$-contact reduction. Our techniques are illustrated with several examples of mathematical and physical relevance. As a byproduct, we review the previous contact reduction literature so as to clarify and to solve some inaccuracies. 
}
\bigskip

{\large\bf\raggedright
    Keywords:
}
Marsden--Meyer--Weinstein reduction, $k$-contact manifold, $k$-symplectic manifold, symplectic homogeneous manifold, contact manifold, Hamilton--De Donder--Weyl equations.
\medskip

{\large\bf\raggedright
    MSC2020 codes:
}
53C15, 
53C30 
(Primary), 
53D05, 
53D10, 
53C10 (Secondary). 

\bigskip


\newpage

{\setcounter{tocdepth}{2}
\def\baselinestretch{1}
\small
\def\addvspace#1{\vskip 1pt}
\parskip 0pt plus 0.1mm
\tableofcontents
}

\pagestyle{fancy}

\fancyhead[L]{Reduction of $k$-contact field theories}    
\fancyhead[C]{}                  
\fancyhead[R]{J. de Lucas, X. Rivas, S. Vilariño, and B.M. Zawora}       

\fancyfoot[L]{}     
\fancyfoot[C]{\thepage}                  
\fancyfoot[R]{}            

\renewcommand{\headrulewidth}{0.1pt}  
\renewcommand{\footrulewidth}{0pt}    

\renewcommand{\headrule}{%
    \vspace{3pt}                
    \hrule width\headwidth height 0.4pt 
    \vspace{0pt}                
}

\setlength{\headsep}{30pt}  

\section{Introduction}
The reduction of systems by their symmetries is a fundamental tool in mathematical physics, with a wide range of applications from classical mechanics and general relativity to quantum mechanics \cite{AM_78, AMR_88, CDD_78}. It allows for reducing the number of equations describing the behaviour of systems \cite{MW_01, OR_04}. E. Cartan started the study of the reduction by symmetries \cite{MW_01, OR_04}, which was afterwards developed over the years by many other researchers \cite{MW_01}. Meyer, Marsden, and Weinstein, a little bit laterally in \cite{MW_74, Mey_73}, summarised and developed previous ideas on the reduction of symplectic manifolds and their Hamiltonian systems under a Lie group action of symmetries. In the celebrated work \cite{MW_74}, Marsden and Weinstein present the reduction scheme by submanifolds defined via the level sets of an equivariant momentum map ${\bf J}^\Phi\colon P\rightarrow \mathfrak{g}^*$ for a Lie group action, $\Phi\colon G\times P\rightarrow P$, and another Lie group action of $G$ on the dual $\mathfrak{g}^*$ of a Lie algebra $\mathfrak{g}$. The momentum map ${\bf J}^\Phi$ captures some conserved quantities associated with the Lie symmetries of a certain $G$-invariant function on $P$: the Hamiltonian. The resulting reduced space is a symplectic manifold that inherits the Hamiltonian dynamics induced from the original Hamiltonian system \cite{OR_04}.

After Marsden and Weinstein's work \cite{MW_74}, the Marsden--Meyer--Weinstein (MMW) reduction technique was subsequently applied and extended beyond symplectic geometry to many other different geometries and settings. For instance, the reduction of Hamiltonian systems with singular values of the momentum map has frequently been studied \cite{BR_04, LMS_94, SL_91}. In these works, stratified reduced manifolds admitting certain types of symplectic forms are obtained. The so-called orbifolds can also appear as singular MMW reductions, which have motivated a separate research topic with physical and mathematical applications \cite{ALR_07, AAM_02, HSS_15, LMS_94, LW_01}.
The reduction of time-dependent regular Hamiltonian systems (with regular values) is developed in the framework of cosymplectic manifolds in \cite{Alb_89, LS_93}, obtaining a reduced phase space that is a cosymplectic manifold. In the setting of autonomous field theories and Hamilton--De Donder--Weyl equations, which can be described through $k$-symplectic geometry, MMW's reduction scheme was generalised to $k$-symplectic manifolds in \cite{Bla_19,LRVZ_23,MRSV_15}, retrieving the classical symplectic result when $k=1$. Similarly, the extension of the MMW reduction theorem from cosymplectic to $k$-polycosymplectic was achieved in \cite{LRVZ_23,GM_23}. Remarkably, a reduction from $k$-cosymplectic manifold to $\ell$-cosymplectic manifold for $\ell<k$, was devised \cite
{LRVZ_23}. There are also many other generalisations of this theorem to different geometries or using Lie groupoids and Lie algebroids \cite{LLSS_23, MPR_12} and other settings \cite{BaMa_23, BMR_24, CM_24,Igl_13}. Through these reductions, MMW reduction theory provides a powerful framework for simplifying and understanding complex dynamical systems across various geometrical settings, highlighting the profound interplay between symmetry and dynamics.

Among all MMW reductions, this work is interested in the so-called $k$-contact MMW reductions. $k$-Contact geometry is a natural extension of contact geometry to deal with non-autonomous Hamiltonian field theories \cite{Riv_22, Riv_23, RSS_24}. Contact geometry can be traced back to 1872 when Sophus Lie introduced contact transformations to study differential equations \cite{LS_72}. Over time, it has developed as a geometric concept with numerous applications, including Gibbs' thermodynamics \cite{Bra_17, SMLL_20}, Huygens' geometric optics, non-autonomous Hamiltonian dynamics \cite{LS_17}, control theory \cite{Sus_99}, Lie systems \cite{LR_23}, and many others \cite{BLMP_20,LJL_21,ELLM_21,GGMRR_20a}. Classical contact geometry is rooted in the contact distribution notion, namely a maximally non-integrable distribution $\mathcal{C}$ on a manifold $M$. Contact distributions are locally equivalent around a neighbourhood $U$ of each point to a differential one-form $\eta$, a {\it contact form} whose kernel is equal to the contact distribution, i.e. $\mathcal{C}|_U=\ker\eta$, and $\eta\wedge (\d\eta)^n$ does not vanish on $U$. If $\eta$ is defined on  $M$, then $(M,\eta)$ is called a {\it co-oriented contact manifold}.  Note that we hereafter assume $n$ to be a non-negative integer. In other words, we do not accept the existence of one-dimensional contact manifolds, as in other works
\cite{GG_23}.

It was the utmost mathematical and physical interest in contact geometry during the last decades that suggested its generalisation to deal with field theories \cite{LRS_24, GRR_22, Riv_23, RSS_24}. In $k$-contact geometry, a manifold $M$ is endowed with an $\R^k$-valued differential one-form $\bm\eta$ whose kernel is a non-zero regular distribution of corank $k$ satisfying that $\ker\bm\eta\oplus\ker\d\bm\eta=\T M$ \cite{Awa_92, GGMRR_21, LMS_94, Riv_22, Riv_23}. One sees that the initial idea was to extend to field theories of the properties of contact forms. More physically, this aimed at the study of Hamilton--De Donder--Weyl equations with dissipation via $k$-vector fields and $k$-contact forms \cite{Gun_87, RSV_07}. It is worth mentioning that most papers dealing with $k$-contact geometry use the notion of a co-orientable $k$-contact geometry \cite{Riv_22}. Recently, the more general approach was introduced in \cite{LRS_24}, based on a corank $k$ distribution that is maximally non-integrable and admits  $k$ commuting Lie symmetries.

This paper presents a $k$-contact MMW  reduction theorem in the setting of co-orientable $k$-contact manifolds. Analogously to the $k$-symplectic MMW reduction presented in \cite{MRSV_15}, we establish sufficient conditions to ensure that the reduction can be performed. Additionally, we study the Hamiltonian dynamics given by the so-called $k$-contact $k$-vector fields and demonstrate how their dynamics descends to the reduced manifold. Moreover, due to ambiguities in the literature related to contact MMW reduction, the one-contact MMW reduction is examined. All the methods developed within this work are applied to illustrative examples, providing a comprehensive understanding of the theory and its applications.

There have been many attempts to generalise the MMW reduction scheme for contact manifolds. The first one was C. Albert in \cite[p. 643]{Alb_89}, where he devised the reduction scheme for a co-orientable contact manifold $(M,\eta)$. However, this approach depends on the choice of a contact form within its conformal class and several attempts to solve this issue were accomplished \cite{LW_01}. Then, C. Willett generalised the solution in \cite{Wil_02}, where the reduced space was proven to be independent of the choice of a contact form $\eta$ within its conformal class, provided that it is globally defined. Moreover, C. Willett introduced a technical condition on a Lie group acting on a co-orientable contact manifold $(M,\eta)$, requiring that $\ker\mu+\mathfrak{g}_\mu=\mathfrak{g}$ must hold. Otherwise, the reduction fails in the sense that the reduced manifold is no longer a contact manifold. However, one can find examples where this condition is not satisfied; C. Willett provided such an example in \cite{Wil_02}, taking $\T^*\SL_2\times \R$ with its canonical contact form. In the meantime, specific cases of contact reductions were analysed, either by taking the zero level set \cite{LW_01, Lo_01} or by applying reduction to a particular co-orientable contact manifold, such as a spherical cotangent bundle \cite{DOR_03}. In the latter case, even the singular contact reduction was considered \cite{DRM_07}. Additionally, in \cite{ZMZC_06}, the authors addressed the contact reduction problem via contact groupoids in a more general setting. They also show that C. Willett’s reduced spaces are prequantizations of their reduced spaces.

Recently, a very general approach to the contact MMW reduction via one-homogeneous symplectic $\R^\times$-principal bundles appeared \cite{GG_23}. This method extends the reduction scheme to non-co-orientable contact manifolds, where $\eta$ is not globally defined. Additionally, the condition on Lie group actions in \cite{GG_23} is more relaxed, as it leaves invariant a contact distribution rather than the contact form itself. Taking this into account, \cite{GG_23} provides a framework for a MMW reduction scheme with $k=1$ that is more general than the one presented in this work.

However, while comparing our results with the ones in \cite{GG_23} and other previous ones, we noted a discordance between our contact reduction group, the results by C. Willett's \cite{Wil_02}, and theirs.  After discussing the issue with the authors of \cite{GG_23}, we jointly stated that their proof does not apply when the coadjoint orbit $\mathcal{O}_\mu$ is an $\mathbb{R}^\times$-principal bundle. After proving this case separately, results from \cite{GG_23} and ours remain valid when both applicable. It is worth noting that the technique given in \cite{GG_23} is not extendible to $k$-contact manifolds since $k$-contact manifolds cannot be extended to a homogeneous $k$-symplectic manifold as shown in \cite{LRS_24}. Instead, one can get a $k$-symplectic manifold without a principal bundle structure and the problem of making a good extension is still open.   

The structure of the paper goes as follows. Section \ref{Sec::kSymkCon}  briefly reviews  $k$-symplectic and $k$-contact manifolds, defining key concepts like $k$-vector fields, integral sections, and Hamiltonian systems within these frameworks. In particular, some notions and results from symplectic and contact geometry are in a concise manner discussed. Section \ref{Sec::RedHomSym}  introduces the reduction of exact $k$-symplectic manifolds, focusing on momentum maps, submanifold reduction, and the derivation of a $k$-symplectic MMW reduction theory. Previous results are extended to $k$-contact manifolds, developing the $k$-contact MWM reduction theory and exploring the role of momentum maps in these manifolds in Section \ref{Sec::MWMkCon}. Next, a critical comparison with previous approaches to contact reductions, highlighting corrections to inaccuracies in the literature and showcasing the advantages of this new framework, are given in Section \ref{Sec::CompPrevious}. Finally, Section \ref{Sec::ConclusionsOutlook} presents conclusions and potential directions for further research into the reduction of geometric structures and their applications to mathematical physics.

\section{Geometry of \texorpdfstring{$k$}{}-symplectic and \texorpdfstring{$k$}{}-contact manifolds}\label{Sec::kSymkCon}

Let us set some general assumptions and notation to be used throughout this work unless otherwise stated. Closed differential forms are assumed to have constant rank. Summation over crossed repeated indices is understood, although it can be explicitly detailed at times for clarity. All our considerations are local to stress our main ideas and to avoid technical problems concerning the global manifold structure of quotient spaces and similar issues. Natural numbers are assumed to be greater than zero.

This section reviews the fundamental notions and results on $k$-symplectic and $k$-contact geometry that will be used later (see \cite{LRS_24, GGMRR_20,GGMRR_21,Riv_22} for details). Hereafter, $\mathfrak{X}(P)$ and $\Omega^k(P)$ stand for the $\Cinfty(P)$-modules of vector fields and differential $k$-forms on a manifold $P$, while $\R^\times=\R\setminus{\{0\}}$. The space of $\R^k$-valued differential $\ell$-forms on a manifold $P$ is denoted by $\Omega^\ell(P,\R^k)$. These forms are denoted with bold Greek letters. The canonical basis of $\R^k$, denoted as $\{e_1,\ldots,e_k\}$, gives rise to a dual basis $\{e^1,\ldots, e^k\}$ in $\R^{k*}$. Moreover, $\{t^1,\ldots,t^k\}$ are the canonical coordinates on $\R^k$. 

Hereafter, $M$ stands for an $n$-dimensional manifold. Consider the Whitney sum over $M$, namely $\boplus^k\T M := \T M\oplus_M\dotsb\oplus_M \T M$ ($k$ times)
with the natural projections $\pr_M\colon\boplus^k\T M\to M$ and $\pr_M^\alpha\colon\boplus^k\T M\to\T M$ with $\alpha=1,\ldots, k$, where $\pr^\alpha_M$ denotes the projection from $\bigoplus^k\T M$ onto its $\alpha$-th component and $\pr_M$ is the projection from $\bigoplus^k\T M$ onto the base manifold $M$.

Each $\bm\theta\in \Omega^\ell(P,\R^k)$ can be represented as $\bm\theta=\theta^\alpha\otimes e_\alpha$ for some uniquely defined differential forms $\theta^1,\ldots,\theta^k \in \Omega^\ell(P)$. To simplify and clarify the presentation of our results, let us introduce the following notation. Given $\bm\theta = \theta^\alpha\otimes e_\alpha\in\Omega^\ell(P,\R^k)$ and $X\in\mathfrak{X}(P)$, the inner contraction of $\bm \theta$ with $X$ is
\[
    \iota_X \bm\theta := \langle \bm\theta,X\rangle :=  (\iota_X\theta^\alpha)\otimes e_\alpha\in\Omega^{\ell-1}(P,\R^k)\,.
\]
Then, $\bm\theta\in \Omega^\ell(P,\R^k)$ is {\it nondegenerate} if $\ker\bm\theta=\bigcap^k_{\alpha=1}\ker\theta^\alpha=0$.

Most operations on differential forms, including the exterior differential, the Lie derivative with respect to vector fields, and many other ones, may be extended to differential forms taking values in $\R^k$ in a natural manner. This extension involves considering the action of these operations in the components of $\R^k$-valued differential $\ell$-forms and linearly extending them to $\Omega^{\ell}(P,\R^k)$.

\subsection{\texorpdfstring{$k$}--vector fields and integral sections}

Let us review the theory of $k$-vector fields, which plays a crucial role in the geometric analysis of systems of partial differential equations \cite{LSV_15}. 

\begin{definition}
A {\it $k$-vector field} on $M$ is a section $\bfX\colon M\to\boplus^k\T M$ of the vector bundle $\pr_M\colon\bigoplus^k\T M\rightarrow M$. The space of $k$-vector fields on $M$ is denoted by $\X^k(M)$.
\end{definition}

Each $k$-vector field $\bfX\in\X^k(M)$ is equivalent to a family of vector fields $X_1,\dotsc,X_k\in\X(M)$, defined by $X_\alpha = \pr_M^\alpha\circ\bfX$ for $\alpha=1,\ldots,k$. This fact justifies denoting $\bfX:= (X_1,\dotsc, X_k)$. Moreover, a $k$-vector field $\bfX$ induces a decomposable contravariant skew-symmetric tensor field, $X_1\wedge\dotsb\wedge X_k$, which is a section of the bundle $\bigwedge^k\T M\to M$, and a generalised distribution $D^\bfX\subset TM$ spanned by $X_1, \ldots, X_k$.

\begin{definition}\label{dfn:first-prolongation-k-tangent-bundle}
   Given a map $\phi\colon U\subset\R^k\to M$, its {\it first prolongation} is the map $\phi'\colon U\subset\R^k\to\boplus^k\T M$ 
    defined as follows
    $$ \phi'(t) = \left( \phi(t); \T_t\phi\left( \parder{}{t^1}\bigg\vert_t \right),\dotsc,\T_t\phi\left( \parder{}{t^k}\bigg\vert_t \right) \right) := (\phi(t); \phi'_\alpha(t))\,, \qquad t=(t^1,\ldots,t^k)\in\R^k\,.
    $$
\end{definition}

The integral sections of a $k$-vector field are defined as follows.

\begin{definition}
    Let $\bfX = (X_1,\dotsc,X_k)\in\X^k(M)$ be a $k$-vector field. An {\it integral section} of $\bfX$ is a map $\phi\colon U\subset\R^k\to M$ such that $\phi' = \bfX\circ\phi\,$, namely $\T\phi \left(\parder{}{t^\alpha}\right) = X_\alpha\circ\phi$ for $\alpha=1,\ldots,k$. A $k$-vector field $\bfX\in\X^k(M)$ is {\it integrable} if $[X_\alpha,X_\beta]=0$ for $1\leq \alpha<\beta\leq k$.
\end{definition}

Let $\bfX = (X_1,\ldots, X_k)$ be a $k$-vector field on $M$ with local expression $ X_\alpha = X_\alpha^i\parder{}{x^i}\,$ for $\alpha=1,\ldots ,k$. Then, $\phi\colon U\subset\R^k\to M$ is an integral section of $\bfX$ if, and only if, its coordinates satisfy the following system of PDEs
\begin{equation}
\label{Eq::IntegralSections}
	\parder{\phi^i}{t^\alpha} = X_\alpha^i\circ\phi\,,\qquad i=1,\ldots,n,\qquad \alpha=1,\ldots,k\,.
\end{equation}
Then, \eqref{Eq::IntegralSections} is integrable if, and only if, $[X_\alpha,X_\beta] = 0$ for $1\leq \alpha<\beta\leq k$.

To simplify the notation, let us introduce the following. Given $\bm\theta = \theta^\alpha\otimes e_\alpha\in\Omega^\ell(P,\R^k)$ and $\bfX=(X_1,\ldots, X_k)\in\mathfrak{X}^k(M)$, the inner contraction of $\bm \theta$ with $\bfX$ is
\[
    \iota_{\bfX} \bm\theta := \langle \bm\theta,\bfX\rangle =\langle \theta^\alpha,X_\alpha\rangle:=\iota_{X_\alpha}\theta^\alpha\in\Omega^{\ell-1}(M)\,.
\]

\subsection{\texorpdfstring{$k$}{}-Symplectic geometry}

This section briefly surveys the theory of $k$-symplectic manifolds (for more details see \cite{LSV_15}). 

\begin{definition}
A {\it $k$-symplectic form} on $P$ is a closed nondegenerate $\bm\omega\in\Omega^2(P,\R^k)$. The pair $(P,\boldsymbol{\omega})$ is called a {\it $k$-symplectic manifold}. If the $\mathbb{R}^k$-valued two-form $\boldsymbol{\omega}$ is exact, namely $\boldsymbol{\omega} = \d\boldsymbol{\Theta}$ for some $\boldsymbol{\Theta} \in\Omega^1(P,\R^k)$, then $\bm\omega$ is said to be an {\it exact $k$-symplectic form} and the pair $(P,\bm\Theta)$ is an {\it exact $k$-symplectic manifold}.
\end{definition}
From now on, the pair $(P,\bm\omega)$ denotes a $k$-symplectic manifold.

\begin{definition}
 A \textit{polarised $k$-symplectic manifold} is a triple $(P,\bm\omega, \mathcal{V})$ such that $(P,\bm\omega)$ is a $k$-symplectic manifold of dimension $n(1+k)$ and there exists an integrable distribution $\mathcal{V}$ of rank $nk$ with ${\bm\omega}\vert_{\mathcal{V}\times \mathcal{V}} =0$.
\end{definition}

\begin{theorem}\label{Th:kSymTh}($k$-symplectic Darboux theorem)
Let $(P,\bm \omega, \cal V)$ be a polarised $k$-symplectic manifold. There exists a local coordinate system around each $x\in P$, given by $\{x^i,y^\alpha_i\}$ with $i=1,\ldots, n$ and $\alpha=1,\ldots,k$, such that 
\[
{\bm \omega}=  \d x^i\wedge \d y^\alpha_i\otimes e_\alpha\,,\qquad  \mathcal{V} =\left\langle \frac{\partial}{\partial y^\alpha_i}\right\rangle_{\substack{i=1,\ldots,n\\\alpha=1,\ldots,k}}\,.
\]
\end{theorem}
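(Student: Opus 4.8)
The plan is to straighten the polarisation with the Frobenius theorem, then to read off the tangent-space normal form from nondegeneracy, and finally to integrate the resulting frame into coordinates using the closedness of $\bm\omega$. Write $\bm\omega=\omega^\alpha\otimes e_\alpha$, so each $\omega^\alpha\in\Omega^2(P)$ is closed and $\bigcap_{\alpha}\ker\omega^\alpha=0$, while $\omega^\alpha\vert_{\mathcal{V}\times\mathcal{V}}=0$ for every $\alpha$ by the polarisation hypothesis. Since $\mathcal{V}$ is integrable of rank $nk$ and $\dim P=n(1+k)$, the Frobenius theorem provides local coordinates in which $\mathcal{V}=\langle\partial/\partial z^1,\ldots,\partial/\partial z^{nk}\rangle$ and the remaining $n$ coordinates, call them $x^1,\ldots,x^n$, are constant along the leaves; thus $\d x^1,\ldots,\d x^n$ form a local frame of the annihilator $\mathcal{V}^\circ$, which has rank $n$.

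The conceptual core is a pointwise computation. For $v\in\mathcal{V}$ and any $w\in\mathcal{V}$ one has $(\iota_v\omega^\alpha)(w)=\omega^\alpha(v,w)=0$, so the musical map $\flat\colon\T P\to\cT P\otimes\R^k$, $v\mapsto(\iota_v\omega^\alpha)\otimes e_\alpha$, restricts to a map $\flat\vert_{\mathcal{V}}\colon\mathcal{V}\to\mathcal{V}^\circ\otimes\R^k$. This restriction is injective, because $\flat(v)=0$ forces $v\in\bigcap_\alpha\ker\omega^\alpha=0$ by nondegeneracy; and since $\operatorname{rk}(\mathcal{V}^\circ\otimes\R^k)=n\cdot k=\operatorname{rk}\mathcal{V}$, it is an isomorphism. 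Transporting the basis $\{\d x^i\otimes e_\alpha\}$ through $(\flat\vert_{\mathcal{V}})^{-1}$ produces a local frame $\{Y^i_\alpha\}$ of $\mathcal{V}$ characterised by $\iota_{Y^i_\alpha}\omega^\beta=\delta^\beta_\alpha\,\d x^i$, which is exactly the tangent-space normal form predicted by the statement.

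It then remains to integrate this frame into functions $y^\alpha_i$ with $\{x^i,y^\alpha_i\}$ a coordinate system, $\mathcal{V}=\langle\partial/\partial y^\alpha_i\rangle$, and $\omega^\alpha=\d x^i\wedge\d y^\alpha_i$. In the straightened coordinates the absence of a $\d z\wedge\d z$ part (isotropy) lets me write $\omega^\alpha=\tfrac12 A^\alpha_{ij}\,\d x^i\wedge\d x^j+B^\alpha_{ia}\,\d x^i\wedge\d z^a$, where the $nk\times nk$ matrix $B$ (its rows indexed by $a$ and its columns by the pair $(\alpha,i)$) is invertible by the isomorphism above. I would then use $\d\omega^\alpha=0$ together with a Poincaré-lemma argument to choose primitives $y^\alpha_i$ absorbing the mixed term $B^\alpha_{ia}\,\d x^i\wedge\d z^a$, and finally shift each $y^\alpha_i$ by a function of the $x$'s alone to remove the purely horizontal term $\tfrac12 A^\alpha_{ij}\,\d x^i\wedge\d x^j$, which closedness forces to be exact in the $x$-variables.

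The hard part is precisely this last integration: closedness only yields primitives form by form, whereas the conclusion demands a single chart that simultaneously Darboux-normalises all $k$ forms while keeping $\mathcal{V}$ as the span of the $\partial/\partial y^\alpha_i$. The coupling arises from the shared horizontal coordinates $x^i$, so I expect to organise the argument as a recursive normalisation (or, alternatively, a Moser-type homotopy within the class of closed forms having $\mathcal{V}$ isotropic, where nondegeneracy of the interpolation survives near the base point by openness) in order to guarantee that the $y^\alpha_i$ are functionally independent of one another and of the $x^i$, so that $\{x^i,y^\alpha_i\}$ is a genuine coordinate system realising the claimed normal form.
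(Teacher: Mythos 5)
The paper does not prove this theorem at all: it is quoted as a known result of $k$-symplectic geometry (the Darboux theorem for polarised $k$-symplectic manifolds, surveyed from the cited literature), so there is no in-paper argument to compare against. Your proposal follows the standard route for this classical result and is essentially correct: Frobenius coordinates $(x^i,z^a)$ adapted to $\mathcal{V}$, the pointwise isomorphism $\flat\vert_{\mathcal{V}}\colon\mathcal{V}\to\mathcal{V}^\circ\otimes\R^k$ forced by isotropy plus nondegeneracy plus the rank count $nk=n\cdot k$, and then integration using $\d\omega^\alpha=0$. The one thing worth saying is that the difficulty you flag in the last paragraph is not actually there, so no Moser homotopy or recursive normalisation is needed. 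Concretely: the vanishing of the $\d x^i\wedge\d z^a\wedge\d z^b$ component of $\d\omega^\alpha$ says that, for each fixed $(\alpha,i)$, the leafwise one-form $B^\alpha_{ia}\,\d z^a$ is closed along the leaves, so the parametric Poincar\'e lemma gives $f^\alpha_i(x,z)$ with $\partial f^\alpha_i/\partial z^a=B^\alpha_{ia}$; then $\omega^\alpha-\d x^i\wedge\d f^\alpha_i$ is a closed purely horizontal two-form whose $z$-derivatives vanish by closedness, hence it equals $-\d x^i\wedge\d h^\alpha_i$ for functions $h^\alpha_i(x)$ of the base variables alone, and $y^\alpha_i:=f^\alpha_i-h^\alpha_i$ does the job. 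The functional independence you worry about is automatic: the Jacobian block $\partial y^\alpha_i/\partial z^a=B^\alpha_{ia}$ is exactly the matrix of $\flat\vert_{\mathcal{V}}$ in the frames $\{\partial/\partial z^a\}$ and $\{\d x^i\otimes e_\alpha\}$, which you have already shown to be invertible, and the shift by $h^\alpha_i(x)$ does not alter it; hence $(x^i,y^\alpha_i)$ is a chart and $\mathcal{V}=\ker\{\d x^1,\ldots,\d x^n\}=\langle\partial/\partial y^\alpha_i\rangle$ in it. With that observation your sketch closes into a complete proof.
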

The coordinates $\{x^i,y^\alpha_i\}$ in the $k$-symplectic Darboux theorem are called {\it $k$-symplectic Darboux coordinates} or an {\it adapted coordinate system} (see \cite{GLRR_24} and references therein). Nevertheless, they are simply called Darboux coordinates hereafter, as it does not lead to any misunderstanding.

\begin{example}[Canonical model for polarised $k$-symplectic manifolds]\label{ex:canonical-model-k-symplectic}
    Let $Q$ be a manifold of dimension $n$. A coordinate system $\{q^i\}$ in $Q$ induces a natural coordinate system $\{q^i,p_i^\alpha\}$ in $\T^*_kQ:=\boplus^k\cT Q$. Consider the canonical form $\theta\in\Omega^1(\cT Q)$ in the cotangent bundle $\T^*Q$ satisfying $\omega = \d\theta\in\Omega^2(\cT Q)$. Hence, the Whitney sum $\T^*_k Q$ has the canonical forms taking values in $\mathbb{R}^k$ given by
    \[ 
    \bm{\theta}_{\T^*_kQ} = (\pr^\alpha_Q)^\ast\theta\otimes e_\alpha\ ,\qquad \bm \omega_{\T^*_kQ} = \d\bm\theta_{\T^*_kQ}\,, 
    \]
which, in adapted coordinates $\{q^i,p^\alpha_i\}$ for $\T^*_kQ$, read
    \[
    \bm \theta_{\T^*_kQ}= p_i^\alpha\d q^i\otimes e_\alpha \ ,\qquad \bm\omega_{\T^*_kQ} =- \d q^i\wedge\d p_i^\alpha\otimes e_\alpha \,.
    \]
Taking all this into account, $\big(\T^*_kQ,\boldsymbol{\omega}_{\T^*_kQ},{\cal V}_{\T^*_kQ}\big)$, with ${\cal V}_{\T^*_kQ} = \ker\T\pr_Q$, is a polarised $k$-symplectic manifold.  The coordinates $\{q^i,p_i^\alpha\}$ in $\T^*_kQ$ are {\it $k$-symplectic Darboux coordinates}. 
\end{example}

\subsection{\texorpdfstring{$k$}--Symplectic Hamiltonian systems}

This subsection recalls the formalism for $k$-symplectic Hamiltonian systems and $k$-symplectic Hamilton--De Donder--Weyl geometric equations \cite{LSV_15}.

Given a $k$-symplectic Hamiltonian system $(P,\bm\omega,h)$, we define the surjective vector bundle morphism over $P$ given by
\[
\begin{array}{rccl}
\flat_{\bm\omega}\colon & \boplus^k \T P & \longrightarrow &\cT P\\\noalign{\medskip}
 & \bm v =(v_1,\ldots,v_k)& \longmapsto & \flat_{\bm \omega}(\bm v)= \sum^k_{\alpha=1}\inn{v_\alpha}\omega^\alpha\,.
\end{array}
\]
The above morphism induces a morphism of $\Cinfty(P)$-modules  $\flat_{\bm \omega}\colon \mathfrak{X}^k(P)\to \bigwedge^1P$ between the corresponding space of sections $\mathfrak{X}^k(P)$ and $\bigwedge^1P$.

\begin{definition}\label{dfn:k-sympl-hamiltonian-system}
    A {\it $k$-symplectic Hamiltonian system} is a family $(P,\bm\omega,h)$, where $(P,\bm\omega)$ is a $k$-symplectic manifold and $h\in\Cinfty(P)$ is called a {\it $k$-symplectic Hamiltonian function}. The \textit{$k$-symplectic Hamilton--De Donder--Weyl equation} for $(P,\bm\omega,h)$ is the equation 
    \begin{equation}\label{eq::k-sym-HDW-FE}
        \flat_{\bm \omega}(\bfX)=\d h\,.
    \end{equation}
for a $k$-vector field $\bfX=(X_1,\ldots, X_k)\in\mathfrak{X}^k(P)$.
    
    A {\it $k$-symplectic gauge $k$-vector field} of $(P,\bm\omega)$ is a $k$-vector field on $P$ taking values in $\ker\flat_{\bm\omega}$.

    A $k$-vector field $\bfX$ that satisfies equation \eqref{eq::k-sym-HDW-FE} for some $h\in\Cinfty(P)$ is called a \textit{$k$-symplectic Hamiltonian $k$-vector field} of the $k$-symplectic Hamiltonian system $(P,\bm \omega, h).$ We denote by $\mathfrak{X}^k_h(P)$ the space of $k$-symplectic Hamiltonian $k$-vector fields of $(P,\bm\omega,h)$. Since $\flat_{\bm \omega}$ is surjective, every $h\in \Cinfty(P)$ is related to a $k$-symplectic Hamiltonian $k$-vector field, which is unique up to a $k$-symplectic gauge $k$-vector field.
\end{definition}

In Darboux coordinates, equation \eqref{eq::k-sym-HDW-FE} implies that a
$k$-symplectic Hamiltonian $k$-vector field $\bfX = (X_1,\ldots, X_k)\in\mathfrak{X}^k_h(P)$ on a polarised $k$-symplectic manifold can be written as
\[
X_\alpha=\parder{h}{p_i^\alpha}\displaystyle\frac{\partial}{\partial q^i}+(X_\alpha)^\beta_i\displaystyle\frac{\partial}{\partial p^\beta_i}\,,\qquad   \alpha=1,\ldots,k\,.\]
 where the functions $(X_\alpha)^\beta_i$ satisfy  $(X_\alpha)^\alpha_i = - \displaystyle\parder{h}{q^i}$.
 
A $k$-symplectic Hamiltonian $k$-vector field is not necessarily integrable, but to obtain the solutions of the Hamilton--De Donder--Weyl equations, the existence of integral sections is relevant.

\begin{proposition}
    Let $(P,\bm \omega, h)$ be a $k$-symplectic Hamiltonian system and consider an integrable $k$-symplectic Hamiltonian $k$-vector field $\bfX=(X_1,\ldots, X_k)$ for $(P,\bm \omega, h)$. Assume also that $(P,\bm \omega,\mathcal{V})$ is polarised. If $\psi\colon U\subset \mathbb{R}^k\to M$ is an integral section of $\bfX$, then $\psi$ is a solution of \emph{the Hamilton--De Donder--Weyl field equations} given by the following system of partial differential equations \cite{LV_15}
    \begin{equation}\label{eq:local_HDW_field_eq}
         \parder{h}{p_i^\alpha}\bigg\vert_{\psi(t)}=\parder{\psi^i}{t^\alpha}\bigg\vert_{t}\,,\qquad
        \parder{h}{q^i}\bigg\vert_{\psi(t)}=-\parder{\psi^\alpha_i}{t^\alpha}\bigg\vert_{t} \,
 \end{equation}
 is some $k$-symplectic Darboux coordinates.

\end{proposition}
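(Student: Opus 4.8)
The plan is to reduce the whole statement to a coordinate computation in $k$-symplectic Darboux coordinates, so the first step is to invoke the polarised $k$-symplectic Darboux theorem (Theorem~\ref{Th:kSymTh}) to obtain, around the relevant point, coordinates $\{q^i,p_i^\alpha\}$ adapted to $(P,\bm\omega,\mathcal{V})$. In these coordinates the Hamiltonian $k$-vector field $\bfX=(X_1,\ldots,X_k)$ solving \eqref{eq::k-sym-HDW-FE} has exactly the local form displayed just above the proposition, namely $X_\alpha=\parder{h}{p_i^\alpha}\parder{}{q^i}+(X_\alpha)_i^\beta\parder{}{p_i^\beta}$ subject to the constraint $(X_\alpha)_i^\alpha=-\parder{h}{q^i}$ (summed over $\alpha$). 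I would recall, or briefly re-derive, this expression by contracting the Darboux form of $\bm\omega$ against each $X_\alpha$ via $\flat_{\bm\omega}$ and matching the resulting one-form coefficientwise with $\d h$.

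Next I would translate the integral-section hypothesis into equations on the components of $\psi$. Writing $\psi$ with components $(\psi^i,\psi_i^\alpha)$ in the chosen coordinates, the defining condition $\T\psi\big(\parder{}{t^\alpha}\big)=X_\alpha\circ\psi$, written out as in \eqref{Eq::IntegralSections}, becomes $\parder{\psi^i}{t^\alpha}=X_\alpha^i\circ\psi$ for the base components and $\parder{\psi_i^\beta}{t^\alpha}=(X_\alpha)_i^\beta\circ\psi$ for the fibre components. Substituting $X_\alpha^i=\parder{h}{p_i^\alpha}$ into the first batch yields at once the first family of Hamilton--De Donder--Weyl equations, $\parder{\psi^i}{t^\alpha}=\parder{h}{p_i^\alpha}\circ\psi$.

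The only point requiring care is the second family, and I expect this trace step to be the crux. Rather than reading off the fibre equations individually, I would set $\beta=\alpha$ in $\parder{\psi_i^\beta}{t^\alpha}=(X_\alpha)_i^\beta\circ\psi$ and sum over $\alpha$, obtaining $\parder{\psi_i^\alpha}{t^\alpha}=(X_\alpha)_i^\alpha\circ\psi$. Here the gauge-indeterminate off-diagonal coefficients $(X_\alpha)_i^\beta$ with $\beta\neq\alpha$ drop out, so the non-uniqueness of $\bfX$ (fixed only up to a $k$-symplectic gauge $k$-vector field in $\ker\flat_{\bm\omega}$) becomes harmless: only the gauge-invariant diagonal sum enters this divergence-type equation. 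Invoking the constraint $(X_\alpha)_i^\alpha=-\parder{h}{q^i}$ then gives precisely $\parder{\psi_i^\alpha}{t^\alpha}=-\parder{h}{q^i}\circ\psi$, the second family of equations. Finally I would remark that the integrability assumption $[X_\alpha,X_\beta]=0$ is not actually used in the implication itself, but serves to guarantee, via Frobenius, that integral sections $\psi$ of $\bfX$ exist in the first place.
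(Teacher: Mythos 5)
Your proof is correct and follows exactly the route the paper intends: the paper states this proposition without proof, but the local Darboux expression of a Hamiltonian $k$-vector field (with the trace constraint $(X_\alpha)^\alpha_i=-\tparder{h}{q^i}$) is displayed immediately beforehand, and the result is the straightforward combination of that expression with the integral-section equations \eqref{Eq::IntegralSections}, including the observation that only the $\alpha$-summed diagonal fibre components are constrained. Your remarks that the gauge ambiguity in the off-diagonal coefficients $(X_\alpha)^\beta_i$ is harmless and that integrability serves only to guarantee existence of integral sections are both accurate.
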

 
\subsection{\texorpdfstring{$k$}--Contact geometry}

This section briefly surveys the theory of $k$-contact manifolds \cite{GGMRR_20} and generalised subbundles and other related notions of interest \cite{Lew_23}.

\begin{definition}
    Let $E\rightarrow M$ be a vector bundle over $M$. Then, 
    \begin{itemize}
        \item A \textit{generalised subbundle} on $M$ is a subset $D\subset E$ such that $D_x = D\cap E_x$ is a vector subspace of the fibre $E_x$ of the bundle $E$ of every $x\in M$. We call $\dim E_x$ the {\it rank} of $D$ at $x\in M$.
     
        \item A generalised subbundle $D\in E$ is \textit{smooth} if it is locally spanned by a family of smooth sections of $E\rightarrow M$ taking values in $D$, i.e. if there exists, for every $x\in M$, a family of sections $e_1, \ldots, e_r\colon U\subset M\rightarrow E|_U$ defined in a neighbourhood $U$ of $x$ such that $D_{x'}=\langle e_1(x'), \ldots, e_r(x')\rangle$ for every $x'\in U$. 
        \item A generalised subbundle $D$ is \textit{regular} if it is smooth and has constant rank. 
    \end{itemize}
\end{definition}

If $D$ is a regular generalised subbundle of constant rank $\ell$, we write $\rk D = \ell$. If $D$ has constant corank $\widetilde{\ell}$, we will write $\cork D = \widetilde{\ell}$.  

The {\it annihilator} of a generalised subbundle $D$ is the generalised subbundle $D^\circ=\bigsqcup_{x\in M} D^\circ_x$, where $D^\circ_x$ is the annihilator of $D_x$ at $x\in M$. If $D$ is not regular, $D^\circ$ is not smooth. If not otherwise stated, all structures are hereafter considered to be smooth. Using the usual identification $E^{\ast\ast} = E$ for a finite-dimensional vector bundle $E$ over $M$,  it follows that $(D^\circ)^\circ = D$. 

A generalised subbundle of $\T M\to M$ is hereafter called a {\it distribution}, while a generalised subbundle of $\cT M\to M$ is called a {\it codistribution}. It is worth noting that our terminology may differ from that in the literature, where distributions are always assumed to be regular, whereas distributions that are not regular are called generalised distributions or Stefan-Sussmann distributions. Nevertheless, we have simplified our terminology and skipped the term 'generalised'  as this will not lead to any misunderstanding.

Consider a differential one-form $\eta\in\Omega^1(M)$. Then, $\eta$ spans a smooth co-distribution $\mathcal{C} = \langle\eta\rangle = \{\langle\eta_x\rangle\mid x\in M\}\subset\cT M$. Thus, $\mathcal{C}$ has rank one at every point where $\eta$ does not vanish. The annihilator of $\mathcal{C}$ is the distribution $\ker \eta\subset\T M$. The distribution $\mathcal{C}^\circ$ has corank one at every point where $\eta$ does not vanish and zero otherwise.

\begin{definition}\label{dfn:k-contact-manifold}
    A \textit{$k$-contact form} on an open $U\subset M$ is a differential one-form on $U$ taking values in $\mathbb{R}^k$, let us say $\bm\eta = \sum_\alpha\eta^\alpha\otimes e_\alpha \in\Omega^1(U,\mathbb{R}^k)$, such that
    \begin{enumerate}[{\rm(1)}]
        \item $0\neq\ker \bm\eta\subset\T U$ is a regular distribution of corank $k$,
        \item $\ker \d\bm\eta\subset\T U$ is a regular distribution of rank $k$,
        \item $\ker \bm\eta\cap\ker \d\bm\eta  = 0$.
    \end{enumerate}
 A pair $(M,\bm\eta)$ is called a \textit{co-oriented $k$-contact manifold}. If, in addition, $\dim M = n+nk+k$ for some $n,k\in\mathbb{N}$ and $M$ is endowed with an integrable distribution $\mathcal{V}\subset \ker \bm\eta$ with $\rk\mathcal{V} = nk$, we say that $(M,\bm\eta,\mathcal{V})$ is a \textit{polarised co-oriented $k$-contact manifold}. We call $\mathcal{V}$ a \textit{polarisation} of $(M,\bm\eta)$.
\end{definition}

It is worth noting that a one-contact manifold is a contact manifold \cite{LRS_24}. We assume that there are no one-dimensional contact manifolds as in other works \cite{GG_23}, as the contact distribution would be zero dimensional and it would be inappropriate to call it maximally non-integrable.   Every co-orientable $k$-contact manifold gives rise to the so-called {\it Reeb vector fields}, which are a key element in $k$-contact geometry \cite{LSV_15}.  

\begin{theorem}\label{thm:k-contact-Reeb}
    Let $(M,\bm\eta)$ be a co-orientable $k$-contact manifold. There exists a unique family of vector fields $R_1, \ldots, R_k\in\X(M)$, called the \textit{Reeb vector fields} of $(M, \bm\eta)$, such that
    \begin{equation}\label{eq:k-contact-Reeb}
            \inn{R_\alpha}\eta^\beta = \delta_\alpha^\beta\,,\qquad
            \inn{R_\alpha}\d\bm\eta = 0\,,
    \end{equation}
    for $\alpha,\beta = 1,\dotsc,k$. Moreover, $[R_\alpha,R_\beta] = 0$ with $\alpha,\beta = 1,\dotsc,k$, while $\ker\d\bm\eta=\langle R_1,\dotsc, R_k \rangle$, and $R_1\wedge \ldots \wedge R_k$ is non-vanishing.
\end{theorem}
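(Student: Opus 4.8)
The plan is to construct the Reeb vector fields by solving the defining linear system \eqref{eq:k-contact-Reeb} pointwise and then verifying that the pointwise solutions assemble into smooth vector fields with the asserted properties. First I would set up the algebraic framework at a fixed point $x\in M$. The conditions in Definition \ref{dfn:k-contact-manifold} give the splitting $\T_x M = \ker\bm\eta_x \oplus \ker\d\bm\eta_x$, where $\ker\bm\eta_x$ has corank $k$ (hence dimension $\dim M - k$) and $\ker\d\bm\eta_x$ has rank $k$. The key observation is that the $k$ covectors $\eta^1_x,\dots,\eta^k_x$ annihilate $\ker\bm\eta_x$, so they are linearly independent (their common kernel has the right corank), and I would show that their restrictions to the complementary space $\ker\d\bm\eta_x$ form a basis of $(\ker\d\bm\eta_x)^*$. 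Indeed, if some combination $c_\alpha\eta^\alpha_x$ vanished on $\ker\d\bm\eta_x$, it would vanish on all of $\T_x M$ (it already vanishes on $\ker\bm\eta_x$), forcing $c_\alpha=0$ by independence. This makes the map $\ker\d\bm\eta_x \to \R^k$, $v\mapsto(\eta^1_x(v),\dots,\eta^k_x(v))$, an isomorphism of $k$-dimensional spaces.

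Next I would define $R_1,\dots,R_k$ at $x$ by declaring $R_\alpha(x)$ to be the unique vector in $\ker\d\bm\eta_x$ satisfying $\eta^\beta_x(R_\alpha(x))=\delta^\beta_\alpha$; existence and uniqueness follow from the isomorphism just established. By construction the first equation in \eqref{eq:k-contact-Reeb} holds, and the second, $\inn{R_\alpha}\d\bm\eta = 0$, holds because $R_\alpha(x)\in\ker\d\bm\eta_x$. To see these are smooth vector fields, I would invoke the regularity hypotheses: $\ker\d\bm\eta$ is a regular distribution of rank $k$, so it admits a local smooth frame $Z_1,\dots,Z_k$; writing $R_\alpha=f_\alpha^\beta Z_\beta$, the conditions $\eta^\gamma(R_\alpha)=\delta^\gamma_\alpha$ become a linear system $f_\alpha^\beta\,\eta^\gamma(Z_\beta)=\delta^\gamma_\alpha$ whose coefficient matrix $(\eta^\gamma(Z_\beta))$ is smooth and, by the pointwise isomorphism above, invertible; hence the $f_\alpha^\beta$ are smooth, and so are the $R_\alpha$. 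Uniqueness of the whole family is immediate since any vector field satisfying \eqref{eq:k-contact-Reeb} must land in $\ker\d\bm\eta$ and match the pointwise prescription.

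It remains to verify the bracket relations, the identity $\ker\d\bm\eta=\langle R_1,\dots,R_k\rangle$, and that $R_1\wedge\dots\wedge R_k$ is non-vanishing. The last two are formal consequences of the construction: the $R_\alpha$ lie in $\ker\d\bm\eta$ and are pointwise linearly independent (their images under $v\mapsto(\eta^\beta(v))$ are the standard basis vectors $e_\alpha$), so they span the rank-$k$ distribution $\ker\d\bm\eta$ and their wedge is nowhere zero. For the commutativity $[R_\alpha,R_\beta]=0$ I would use the Cartan formula together with the defining relations. Contracting $[R_\alpha,R_\beta]$ against each $\eta^\gamma$ and against $\d\bm\eta$ and showing both contractions vanish will force $[R_\alpha,R_\beta]\in\ker\bm\eta\cap\ker\d\bm\eta=0$ by condition (3). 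Concretely, from $\Lie_{R_\alpha}=\inn{R_\alpha}\circ\d+\d\circ\inn{R_\alpha}$ one computes $\eta^\gamma([R_\alpha,R_\beta])=R_\alpha(\eta^\gamma(R_\beta))-R_\beta(\eta^\gamma(R_\alpha))-\d\eta^\gamma(R_\alpha,R_\beta)$; the first two terms vanish since $\eta^\gamma(R_\beta)=\delta^\gamma_\beta$ is constant, and the last vanishes because $\inn{R_\alpha}\d\eta^\gamma=0$.

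I expect the main obstacle to be the bracket identity $[R_\alpha,R_\beta]=0$, specifically establishing that the bracket lies in $\ker\d\bm\eta$ as well as in $\ker\bm\eta$. Showing $[R_\alpha,R_\beta]\in\ker\bm\eta$ is the straightforward Cartan-calculus computation above; the subtler point is $\inn{[R_\alpha,R_\beta]}\d\bm\eta=0$, which I would obtain from the general identity $\Lie_{R_\alpha}\inn{R_\beta}-\inn{R_\beta}\Lie_{R_\alpha}=\inn{[R_\alpha,R_\beta]}$ applied to $\d\bm\eta$, using that $\inn{R_\beta}\d\bm\eta=0$ and that $\Lie_{R_\alpha}\d\bm\eta=\d\inn{R_\alpha}\d\bm\eta=0$ (since $\d\bm\eta$ is closed and $\inn{R_\alpha}\d\bm\eta=0$). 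Once both contractions are shown to vanish, condition (3) of Definition \ref{dfn:k-contact-manifold} closes the argument, giving $[R_\alpha,R_\beta]=0$.
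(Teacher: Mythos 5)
Your proof is correct. Note that the paper does not actually prove Theorem \ref{thm:k-contact-Reeb}; it imports the result from the $k$-contact literature (\cite{GGMRR_20,LSV_15}), so there is no in-paper argument to compare against. Your construction — the pointwise splitting $\T_xM=\ker\bm\eta_x\oplus\ker\d\bm\eta_x$, the isomorphism $\ker\d\bm\eta_x\to\R^k$ given by $v\mapsto(\eta^\alpha_x(v))$, smoothness via an invertible matrix over a local frame of the regular distribution $\ker\d\bm\eta$, and commutativity by showing $[R_\alpha,R_\beta]$ lands in $\ker\bm\eta\cap\ker\d\bm\eta=0$ through the Cartan formula and $\iota_{[R_\alpha,R_\beta]}=\Lie_{R_\alpha}\iota_{R_\beta}-\iota_{R_\beta}\Lie_{R_\alpha}$ — is exactly the standard argument used in those references, with all the key steps (linear independence of the $\eta^\alpha_x$ from the corank condition, and $\Lie_{R_\alpha}\d\bm\eta=0$ from closedness of $\d\bm\eta$) correctly justified.
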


\begin{definition}
    Every $(M,\bm\eta)$ defines a vector bundle morphism over $M$
\[
\begin{array}{rccl}
\flat_{\bm\eta}\colon & \boplus^k \T M & \longrightarrow &\cT M\times\R \\\noalign{\medskip}
 & \bm v =(v_1,\ldots,v_k)& \longmapsto & \flat_{\bm \eta}(\bm v)=(\sum^k_{\alpha=1}\inn{v_\alpha}\d\eta^\alpha,\sum^k_{\alpha=1}\inn{v_\alpha}\eta^\alpha).
\end{array}
\]
A {\it $\bm\eta$-gauge $k$-vector field} of $(M,
    \bm\eta)$ is a $k$-vector field on $M$ taking values in $\ker\flat_{\bm\eta}$.
\end{definition}

Let us introduce a classical example of a co-oriented polarised $k$-contact manifold.

\begin{example}\label{ex:canonical-k-contact-structure}
    The manifold $M = (\boplus^k\cT Q)\times\R^k$ has a natural $k$-contact form
    $$ 
        \bm\eta_{Q}= (\d z^\alpha - \theta^\alpha)\otimes e_\alpha\,,
    $$
    where $\{z^1,\dotsc,z^k\}$ are the pull-back to $M$ of standard linear coordinates in $\R^k$ and each $\theta^\alpha$ is the pull-back of the Liouville one-form $\theta$ of the cotangent manifold $\cT Q$ with respect to the projection $M\to\cT Q$ onto the $\alpha$-th component of $\boplus^k\cT Q$. Note also that $M$ admits a natural projection onto $Q\times \mathbb{R}^k$ and a related vertical distribution $\mathcal{V}$ of rank $k\cdot\dim Q$ contained in $\ker \bm \eta_{Q}$. Hence, $\left((\boplus^k\cT Q)\times\R^k,\bm\eta_Q,\mathcal{V}\right)$ is a polarised co-oriented $k$-contact manifold.

    Local coordinates $\{q^1,\ldots,q^n\}$ on $Q$ induce natural coordinates $\{q^i,p_i^\alpha\}$, for a fixed value of $\alpha$, on the $\alpha$-th component of $\boplus^k\cT Q$ and $\{q^i,p_i^\alpha,z^\alpha\}$, with $\alpha=1,\ldots,k$, on $M$. Then,
    $$
        \bm\eta_{Q} = \left(\d z^\alpha - p_i^\alpha\d q^i\right)\otimes e_\alpha\,,\qquad
        \ker\bm\eta_{Q} = \left\langle \parder{}{p_i^\alpha}\,,\ \parder{}{q^i} + p_i^\alpha\parder{}{z^\alpha} \right\rangle\,.
    $$
    Hence, $\d\bm\eta_{Q} = (\d q^i\wedge\d p_i^\alpha)\otimes e_\alpha$, the Reeb vector fields are $ R_\alpha = \tparder{}{z^\alpha}$ for $\alpha=1,\ldots,k$, and
    $$
        \ker \d\bm\eta_{Q} = \left\langle\parder{}{z^1},\dotsc,\parder{}{z^k}\right\rangle\,.
    $$ 
    \demo
    
\end{example}

\begin{theorem}[$k$-contact Darboux Theorem \cite{GGMRR_20}]
\label{Th::PolkCon}
    Consider a polarised co-orientable $k$-contact manifold $(M, \bm\eta, \V)$. Then, around every point of $M$, there exist local coordinates $\{q^i,p_i^\alpha,z^\alpha\}$, with $1\leq\alpha\leq k$ and $1\leq i\leq n$, such that
    \[
    \bm\eta = \left(\d z^\alpha - p_i^\alpha\d q^i\right)\otimes e_\alpha\, , \quad \ker\bm\eta = \left\langle \parder{}{z^\alpha}\right\rangle\,,\quad \V = \left\langle\parder{}{p_i^\alpha}\right\rangle\,. 
    \]
    These coordinates are called \textit{Darboux coordinates} of the polarised co-orientable $k$-contact manifold $(M,\bm\eta,\mathcal{V})$.
\end{theorem}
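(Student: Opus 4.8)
The plan is to deduce the statement from the $k$-symplectic Darboux theorem (Theorem \ref{Th:kSymTh}) by quotienting out the Reeb directions. By Theorem \ref{thm:k-contact-Reeb} the Reeb vector fields $R_1,\dots,R_k$ pairwise commute, are pointwise linearly independent, and span $\ker\d\bm\eta$. The simultaneous flow-box theorem for commuting vector fields then provides, around any point, functions $z^1,\dots,z^k$ with $R_\alpha=\partial/\partial z^\alpha$, together with a local leaf space $\pi\colon M\to P$ of dimension $n(1+k)$ whose fibres are exactly the integral manifolds of $\ker\d\bm\eta=\langle R_1,\dots,R_k\rangle$. Since $\iota_{R_\alpha}\d\bm\eta=0$ and $\mathcal{L}_{R_\alpha}\d\bm\eta=\d\,\iota_{R_\alpha}\d\bm\eta=0$, the $\R^k$-valued two-form $\d\bm\eta$ is basic, so $\d\bm\eta=\pi^\ast\bm\omega$ for a unique closed $\bm\omega\in\Omega^2(P,\R^k)$.

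Next I would verify that $(P,\bm\omega)$ is $k$-symplectic, the only point being nondegeneracy. If $\bar v\in\T_pP$ satisfies $\iota_{\bar v}\bm\omega=0$, lift it to $\tilde v\in\T M$ with $\T\pi(\tilde v)=\bar v$; then $\iota_{\tilde v}\d\eta^\alpha=\pi^\ast(\iota_{\bar v}\omega^\alpha)=0$ for all $\alpha$, so $\tilde v\in\ker\d\bm\eta=\ker\T\pi$ and hence $\bar v=0$. With $(P,\bm\omega)$ a $k$-symplectic manifold, I would push the polarisation down to a rank-$nk$ integrable distribution $\bar{\V}$ on $P$ and apply Theorem \ref{Th:kSymTh}, producing coordinates $\{q^i,p_i^\alpha\}$ with $\bm\omega=\d q^i\wedge\d p_i^\alpha\otimes e_\alpha$ (up to the sign convention of the excerpt) and $\bar{\V}=\langle\partial/\partial p_i^\alpha\rangle$. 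That $\V$ is $\d\bm\eta$-isotropic, needed for it to descend to a polarisation, is immediate: for $v,w\in\Gamma(\V)$, integrability of $\V$ and $\V\subset\ker\bm\eta$ give $\d\eta^\alpha(v,w)=v(\eta^\alpha(w))-w(\eta^\alpha(v))-\eta^\alpha([v,w])=0$. Pulling $\{q^i,p_i^\alpha\}$ back to $M$ along $\pi$ (so they are $R_\alpha$-invariant) and setting $\zeta^\alpha:=\eta^\alpha+p_i^\alpha\,\d q^i$, one computes $\d\zeta^\alpha=\d\eta^\alpha-\pi^\ast\omega^\alpha=0$, whence $\zeta^\alpha=\d z^\alpha$ locally; the normalisation $\iota_{R_\beta}\eta^\alpha=\delta_\beta^\alpha$ and $R_\beta q^i=R_\beta p_i^\alpha=0$ then show $\{q^i,p_i^\alpha,z^\alpha\}$ is a coordinate system with $R_\alpha=\partial/\partial z^\alpha$ and $\bm\eta=(\d z^\alpha-p_i^\alpha\,\d q^i)\otimes e_\alpha$, giving $\ker\d\bm\eta=\langle\partial/\partial z^\alpha\rangle$.

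The main obstacle is precisely the descent of the polarisation $\V$ to $P$, i.e. its invariance under the Reeb flows. A Cartan-formula computation shows only that $[R_\alpha,\V]\subset\ker\bm\eta$: for $v\in\Gamma(\V)$ one has $\eta^\beta([R_\alpha,v])=R_\alpha(\eta^\beta(v))-v(\eta^\beta(R_\alpha))-\d\eta^\beta(R_\alpha,v)=0$ because $\eta^\beta(v)=0$, $\eta^\beta(R_\alpha)=\delta_\alpha^\beta$ is constant, and $\iota_{R_\alpha}\d\bm\eta=0$. However $\ker\bm\eta$ has rank $n+nk$ while $\V$ has rank $nk$, so this does \emph{not} force $[R_\alpha,\V]\subset\V$; differentiating $\d\eta^\beta(v,w)=0$ along $R_\alpha$ only yields the symmetry of the induced tensor $S_\alpha(v,w)=\d\eta^\beta([R_\alpha,v],w)$, which need not vanish. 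Thus $\V$ is an integrable Lagrangian-type distribution inside $\ker\bm\eta$ that may be genuinely $z$-dependent, and the reduction alone normalises $\bm\eta$ but not $\V$. To close the argument one must therefore either restrict to (or arrange) Reeb-invariant polarisations, so that $\bar\V$ is well defined and Theorem \ref{Th:kSymTh} applies verbatim, or, after fixing the canonical form of $\bm\eta$, perform a further $\bm\eta$-preserving change of coordinates that straightens the integrable distribution $\V$ onto $\langle\partial/\partial p_i^\alpha\rangle$. Producing a single chart that simultaneously normalises $\bm\eta$ and the a priori Reeb-dependent polarisation is the delicate step I expect to require the most care.
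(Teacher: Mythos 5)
The paper does not prove this theorem; it is quoted from \cite{GGMRR_20}, so there is no in-paper argument to compare against. Your reduction strategy (straighten the commuting Reeb fields, pass to the local leaf space of $\ker\d\bm\eta$, check that $\d\bm\eta$ descends to a nondegenerate closed form, apply Theorem \ref{Th:kSymTh}, and integrate $\eta^\alpha+p_i^\alpha\,\d q^i$ back up to get $z^\alpha$) is the standard route, and every step you actually carry out is correct --- including the useful observation that once $\V$ descends to $\bar\V=\langle\partial/\partial p_i^\alpha\rangle$, the constraint $\V\subset\ker\bm\eta$ forces $\V=\langle\partial/\partial p_i^\alpha\rangle$ upstairs with no correction terms along the $z^\alpha$ directions.

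The gap you flag at the end is, however, genuine, and the second escape route you propose (a further $\bm\eta$-preserving coordinate change that straightens $\V$ after normalising $\bm\eta$) cannot work in general: with the hypotheses of Definition \ref{dfn:k-contact-manifold} as literally stated the theorem is false. Take $k=n=1$, $M=\R^3$, $\eta=\d z-p\,\d q$, and $\V=\bigl\langle \partial_p+z\,(\partial_q+p\,\partial_z)\bigr\rangle$. This is a rank-one (hence integrable) subdistribution of $\ker\eta$ on a $3$-dimensional contact manifold, so $(M,\eta,\V)$ is polarised in the sense of Definition \ref{dfn:k-contact-manifold}, yet $[\partial_z,\V]\not\subset\V$. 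Since the Reeb field is determined by $\eta$ alone, any Darboux chart would give $R=\partial/\partial\tilde z$ and $\V=\langle\partial/\partial\tilde p\rangle$, hence $[R,\V]\subset\V$ --- a contradiction. So the missing ingredient is precisely the extra hypothesis that the polarisation is Reeb-invariant, $[R_\alpha,\V]\subset\V$ (which your Cartan-formula computation correctly shows does \emph{not} follow from $\V\subset\ker\bm\eta$ and integrability); with that hypothesis added, $\V$ descends and your argument closes. Incidentally, your conclusion $\ker\d\bm\eta=\langle\partial/\partial z^\alpha\rangle$ is the correct one; the statement's $\ker\bm\eta=\langle\partial/\partial z^\alpha\rangle$ is a typo, as comparison with Example \ref{ex:canonical-k-contact-structure} shows.
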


Theorem \ref{Th::PolkCon} allows us to consider the manifold introduced in Example \ref{ex:canonical-k-contact-structure} as the canonical model of polarised co-oriented $k$-contact manifolds.

\subsection{\texorpdfstring{$k$}--Contact Hamiltonian systems}
\label{sub:k-contact-Hamiltonian-systems}

Let us present the basics of the Hamiltonian $k$-contact formulation of field theories with dissipation (for more details see \cite{GGMRR_20}).

\begin{definition}\label{dfn:k-contact-Hamiltonian-system}
    A \textit{$k$-contact Hamiltonian system} is a triple $(M,\bm\eta,h)$, where $(M,\bm\eta)$ is a co-oriented $k$-contact manifold and $h\in\Cinfty(M)$ is called an $\bm\eta$-\textit{Hamiltonian function}. Consider a map $\psi\colon U\subset\R^k\to M$, where $U$ is an open subset of $\R^k$. The \textit{$k$-contact Hamilton--De Donder--Weyl equations} related to $(M,\bm\eta,h)$ have the following form
    \begin{equation}\label{eq:k-contact-HDW}
            \inn{\psi_\alpha'}\d\eta^\alpha = \left( \d h - (R_\alpha h)\eta^\alpha \right)\circ\psi\,,\qquad
            \inn{\psi_\alpha'}\eta^\alpha = -h\circ\psi\,,
    \end{equation}
    where $\psi'(t)=(\psi_1'(t),\ldots, \psi_k'(t))\in \boplus^k\T M$ denotes the first prolongation (see Definition \ref{dfn:first-prolongation-k-tangent-bundle}).
\end{definition}
In Darboux coordinates for a polarised co-oriented $k$-contact manifold, equations \eqref{eq:k-contact-HDW} become
\begin{equation}\label{eq:k-contact-HDW-Darboux-coordinates} 
        \parder{\psi^i}{t^\alpha} = \parder{h}{p_i^\alpha}\circ\psi\,,\qquad
        \parder{\psi^\alpha_i}{t^\alpha} = -\left( \parder{h}{q^i} + p_i^\alpha\parder{h}{z^\alpha} \right)\circ\psi\,,\qquad
        \parder{\psi^\alpha}{t^\alpha} = \left( p_i^\alpha\parder{h}{p_i^\alpha} - h \right)\circ\psi\,,
\end{equation}
where $\psi^i=q^i\circ \psi$, $\psi^\alpha_i=p^\alpha_i\circ \psi$ and $\psi^\alpha=z^\alpha\circ \psi$. 
\begin{definition}
    Consider a $k$-contact Hamiltonian system $(M,\bm\eta,h)$. The \textit{$k$-contact Hamilton--De Donder--Weyl equations} for a $k$-vector field $\bfX = (X_\alpha)\in\X^k(M)$ are
    \begin{equation}\label{eq:k-contact-HDW-fields}
            \inn{X_\alpha}\d\eta^\alpha = \d h - (R_\alpha h)\eta^\alpha\,,\qquad
            \inn{X_\alpha}\eta^\alpha = -h\,.
    \end{equation} 

    A $k$-vector field $\bfX$ that satisfies equations \eqref{eq:k-contact-HDW-fields} is called a \textit{$k$-contact Hamiltonian $k$-vector field}. A $k$-contact Hamiltonian system gives rise to a family of $k$-contact Hamiltonian vector fields. Indeed, $\bfX+\mathbf{Y}$ for a $\bm\eta$-gauge $k$-vector field $\mathbf{Y}$ of $(M,\bm\eta,h)$ is again a $k$-contact Hamiltonian $k$-vector field for $(M,\bm\eta,h)$.
\end{definition}

Consider a $k$-vector field $\bfX = (X_1,\dotsc,X_k)\in\X^k(M)$ in Darboux coordinates for $(M,\bm\eta,\mathcal{V})$,
\[
    X_\alpha = (X_\alpha)^i\parder{}{q^i} + (X_\alpha)^\beta_i\parder{}{p_i^\beta} + (X_\alpha)^\beta\parder{}{z^\beta}\,,\qquad\alpha=1,\ldots,k.
\]    
Then, equations \eqref{eq:k-contact-HDW-fields} are equivalent to the following relations
\begin{equation}\label{eq:k-contact-HDW-fields-Darboux-coordinates}
        (X_\alpha)^i = \parder{h}{p_i^\alpha}\,,\qquad
        (X_\alpha)^\alpha_i = -\left( \parder{h}{q^i} + p_i^\alpha\parder{h}{z^\alpha} \right)\,,\qquad
        (X_\alpha)^\alpha = p_i^\alpha\parder{h}{p_i^\alpha} - h\,.
\end{equation}

\begin{proposition}\label{prop:k-contact-equiv-fields-sections}
    Given an integrable $k$-vector field $\bfX\in\X^k(M)$, every integral section $\psi\colon U\subset\R^k\to M$ of $\bfX$ satisfies the $k$-contact Hamilton--De Donder--Weyl equation \eqref{eq:k-contact-HDW} if, and only if, $\bfX$ is a solution to \eqref{eq:k-contact-HDW-fields}.
\end{proposition}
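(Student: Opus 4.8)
The plan is to exploit the defining relation of an integral section, namely that $\psi' = \bfX\circ\psi$, which componentwise reads $\psi_\alpha'(t) = X_\alpha(\psi(t))$ for $\alpha = 1,\ldots,k$. Both systems \eqref{eq:k-contact-HDW} and \eqref{eq:k-contact-HDW-fields} are assembled from the very same contractions, the former evaluated along the first prolongation $\psi'$ and the latter on the $k$-vector field $\bfX$ itself; the whole argument consists in transferring the identities between these two objects through this relation. The key observation to record at the outset is that $\inn{\psi_\alpha'(t)}\d\eta^\alpha = \big(\inn{X_\alpha}\d\eta^\alpha\big)(\psi(t))$ and $\inn{\psi_\alpha'(t)}\eta^\alpha = \big(\inn{X_\alpha}\eta^\alpha\big)(\psi(t))$ whenever $\psi$ is an integral section of $\bfX$.

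For the implication ($\Leftarrow$), I would assume that $\bfX$ solves \eqref{eq:k-contact-HDW-fields} and let $\psi$ be an \emph{arbitrary} integral section. Substituting $\psi_\alpha' = X_\alpha\circ\psi$ into the left-hand sides of \eqref{eq:k-contact-HDW}, and composing the right-hand sides of \eqref{eq:k-contact-HDW-fields} with $\psi$, yields \eqref{eq:k-contact-HDW} immediately via the two identities just recorded. This direction is a pointwise substitution valid at every $t\in U$ and requires no integrability hypothesis whatsoever.

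For the implication ($\Rightarrow$), I would use the integrability assumption $[X_\alpha, X_\beta] = 0$ to guarantee that through every point $x\in M$ there passes at least one integral section of $\bfX$: commuting vector fields have commuting flows, so composing the local flows of $X_1,\ldots,X_k$ produces a map $\psi$ defined on a neighbourhood of $0\in\R^k$ with $\psi(0) = x$ and $\psi_\alpha' = X_\alpha\circ\psi$. By hypothesis every such $\psi$ satisfies \eqref{eq:k-contact-HDW}; evaluating those equations at $t = 0$ and using $\psi_\alpha'(0) = X_\alpha(x)$ recovers exactly \eqref{eq:k-contact-HDW-fields} at the point $x$. Since $x$ was arbitrary, the field equations \eqref{eq:k-contact-HDW-fields} hold on all of $M$.

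The main obstacle is the surjectivity of the point-to-section correspondence underlying ($\Rightarrow$): one must know that the integral sections collectively reach every point of $M$, for otherwise the pointwise equalities in \eqref{eq:k-contact-HDW-fields} could fail off the union of their images. This is precisely what the integrability assumption secures through the commutativity of flows, and it is the only step where integrability is genuinely used. A minor technical point to verify is that the ordering of the composed flows does not affect the value of $\psi_\alpha'$ at the base point, which again follows from the commutativity $[X_\alpha,X_\beta]=0$.
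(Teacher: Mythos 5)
Your proof is correct: the backward implication is the pointwise substitution $\psi_\alpha'=X_\alpha\circ\psi$, and the forward implication uses integrability precisely to guarantee an integral section through every point, which is the standard (and essentially only) argument for this statement. The paper states this proposition without proof, deferring to the $k$-contact literature, and your argument is exactly the one expected there, including the correct identification of where the hypothesis $[X_\alpha,X_\beta]=0$ is genuinely needed.
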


It is worth noting that the existence of a $k$-vector field satisfying equations \eqref{eq:k-contact-HDW-fields} does not imply the existence of its integral sections in general.

\begin{proposition}
    The $k$-contact Hamilton--De Donder--Weyl equations \eqref{eq:k-contact-HDW-fields} are equivalent to
    \begin{equation}\label{eq:k-contact-HDW-alternative}
        \begin{dcases}
            \Lie_{\bfX}\bm\eta:=\Lie_{X_\alpha}\eta^\alpha=\inn{\bfX}\d\bm\eta+\d\inn{\bfX}\bm\eta = -(R_\alpha h)\eta^\alpha\,,\\
            \inn{\bfX}\bm\eta=\inn{X_\alpha}\eta^\alpha = -h\,.
        \end{dcases}
    \end{equation}
\end{proposition}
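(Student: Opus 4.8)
The plan is to derive the equivalence directly from Cartan's magic formula, applied to each component $\eta^\alpha$ and then summed over $\alpha$. Observe first that the second equation is literally the same in both \eqref{eq:k-contact-HDW-fields} and \eqref{eq:k-contact-HDW-alternative}, namely $\inn{X_\alpha}\eta^\alpha = -h$, so it suffices to show that, under this common constraint, the first equation of \eqref{eq:k-contact-HDW-fields} is equivalent to the first equation of \eqref{eq:k-contact-HDW-alternative}.

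First I would recall Cartan's formula $\Lie_{X_\alpha}\eta^\alpha = \inn{X_\alpha}\d\eta^\alpha + \d\inn{X_\alpha}\eta^\alpha$ for each fixed $\alpha$ and sum over $\alpha$, using the contraction conventions $\inn{\bfX}\d\bm\eta = \inn{X_\alpha}\d\eta^\alpha$ and $\inn{\bfX}\bm\eta = \inn{X_\alpha}\eta^\alpha$ introduced earlier. This reproduces, with no extra hypotheses on $\bfX$, precisely the middle identity already displayed in \eqref{eq:k-contact-HDW-alternative}, namely $\Lie_{\bfX}\bm\eta = \inn{\bfX}\d\bm\eta + \d\inn{\bfX}\bm\eta$.

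Next I would substitute the common second equation. Since $\inn{\bfX}\bm\eta = -h$, we get $\d\inn{\bfX}\bm\eta = -\d h$, so the Cartan identity becomes $\Lie_{\bfX}\bm\eta = \inn{\bfX}\d\bm\eta - \d h$. This single relation bridges the two first equations: inserting the value $\Lie_{\bfX}\bm\eta = -(R_\alpha h)\eta^\alpha$ from \eqref{eq:k-contact-HDW-alternative} yields $\inn{\bfX}\d\bm\eta = \d h - (R_\alpha h)\eta^\alpha$, which is the first equation of \eqref{eq:k-contact-HDW-fields}; conversely, inserting $\inn{\bfX}\d\bm\eta = \d h - (R_\alpha h)\eta^\alpha$ cancels the $\d h$ terms and returns $\Lie_{\bfX}\bm\eta = -(R_\alpha h)\eta^\alpha$. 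Each implication is manifestly reversible, so the two systems are equivalent.

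There is no serious obstacle in this argument: the entire content is the termwise Cartan formula together with the cancellation of $\d h$ enabled by the second equation. The only point demanding care is bookkeeping of the summation convention, i.e. remembering that $\inn{\bfX}\d\bm\eta$ and $\Lie_{\bfX}\bm\eta$ stand for the sums $\sum_\alpha\inn{X_\alpha}\d\eta^\alpha$ and $\sum_\alpha\Lie_{X_\alpha}\eta^\alpha$, so that Cartan's identity—which holds componentwise—assembles correctly after summation and the lone $\d h$ on the right-hand side is not inadvertently multiplied by $k$.
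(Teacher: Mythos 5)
Your argument is correct: the paper states this proposition without proof, and the componentwise Cartan identity $\Lie_{X_\alpha}\eta^\alpha=\inn{X_\alpha}\d\eta^\alpha+\d\inn{X_\alpha}\eta^\alpha$, summed over $\alpha$ and combined with $\d\inn{\bfX}\bm\eta=-\d h$ from the shared second equation, is exactly the intended (and standard) route. Both directions of the equivalence follow by the cancellation of $\d h$ as you describe, and your handling of the summation convention is consistent with the paper's definition of $\inn{\bfX}\d\bm\eta$ and $\inn{\bfX}\bm\eta$.
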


\begin{example}[The damped wave equation]

    A vibrating string can be described using a $k$-contact Hamiltonian formalism \cite{GGMRR_20}. Consider the coordinates $\{t,x\}$ for $\R^2$ and set $Q=\R$. The phase space becomes $\boplus^2\T^\ast \R\times\R^2$ and admits coordinates $(u,p^t,p^x,s^t,s^x)$. Denote by $u$ the separation of a point in the string from its equilibrium point, while $p^t$ and $p^x$ will denote the momenta of $u$ with respect to the two independent variables. We endow $\bigoplus^2\cT\R\times\R^2$ with its natural two-contact form $\bm\eta$, see Example \ref{ex:canonical-k-contact-structure}. A Hamiltonian function for the vibrating string can be chosen to be $h\in\Cinfty ( \boplus^2 \T^*\R\times \R^2)$ of the form
\begin{equation}\label{eq:Lagrangian-vibrating-string}
    h(u,p^t,p^x,s^t,s^x) = \frac{1}{2\rho}(p^t)^2 - \frac{1}{2\tau}(p^x)^2+ ks^t,
\end{equation}
where $\rho$ is the linear mass density of the string, $\tau$ is the tension of the string,  and $k>0$. We assume that $\rho, \tau, k$ are constant. Then, the two-contact Hamiltonian two-vector field $\bfX=(X^1,X^2)$ reads
\begin{align}
X^1&=\frac{p^t}{\rho}\parder{}{u}+A^1_t\parder{}{p^t}+A^1_x\parder{}{p^x}+B^1_t\parder{}{s^t}+B^1_x\parder{}{s^x},\\
X^2&=-\frac{p^x}{\tau}\parder{}{u}+A^2_t\parder{}{p^t}-(kp^t+A^1_t)\parder{}{p^x}+B^2_t\parder{}{s^t}+\left(\frac{(p^t)^2}{2\rho}-\frac{(p^x)^2}{2\tau}-ks^t-B^1_t\right)\parder{}{s^x},
\end{align}
where $A^1_t,A^1_x,B^1_t,B^1_x,A^2_t,B^2_t$ are arbitrary functions on $M$. For instance, by choosing $B^1_x$ and $B^2_t$ to be independent of $s^t$ and $s^x$, and setting $A^2_t,B^1_t,A^1_x=0$ and $A^1_t=-kp^t$, it follows that $\bfX$ becomes an integrable $k$-vector field. Thus, the Hamilton--De Donder--Weyl field equations are
\begin{equation}
 	\begin{dcases}
 		\parder{u}{t} = \frac{1}{\rho}p^t\,,\\
 		\parder{u}{x} = -\frac{1}{\tau}p^x\,,\\
 		\parder{p^t}{t} + \parder{p^x}{x} = -k p^t\,,\\
   \parder{s^t}{t}+\parder{s^x}{x}=\frac{1}{2\rho}(p^t)^2- \frac{1}{2\tau}(p^x)^2 -k s^t \,.
 	\end{dcases}
 \end{equation}
Hence, by substituting the first and second equations in the third, one obtains
$$
    \frac{\partial^2 u}{\partial t^2}-c^2 \frac{\partial^2 u}{\partial x^2} + k \frac{\partial u}{\partial t} = 0 \,,
$$
where $c^2 = \dfrac{\tau}{\rho}$, that is, the damped wave equation.
\demo
\end{example}

\section{Reduction of exact \texorpdfstring{$k$}--symplectic manifolds}\label{Sec::RedHomSym}

This section presents the modified $k$-symplectic MMW reduction for exact $k$-symplectic manifolds. As demonstrated in subsequent sections, the reduction is adjusted to retrieve the reduced $k$-contact geometric structure from the reduced $k$-symplectic one. 

\subsection{\texorpdfstring{$k$}{}-Symplectic momentum maps}
\label{Sec::SymMomentuMap}

Let us survey the theory of $k$-symplectic momentum maps, focusing on essential definitions related to the $k$-symplectic and $k$-contact MMW  reduction. In general, the presented results do not restrict to $\Ad^{*k}$-equivariant momentum maps (see \cite{LRVZ_23} for details). However, this paper focuses on $\Ad^{*k}$-equivariant momentum maps, which is justified by Proposition \ref{Prop::AdEqkCon}. Set $\mathfrak{g}^k=\mathfrak{g}\times\stackrel{k}{\dotsb}\times\mathfrak{g}$, and analogously, $(\mathfrak{g}^*)^k=\mathfrak{g}^*\times\stackrel{k}{\dotsb}\times\mathfrak{g}^*$, where $\mathfrak{g}^*$ is the dual space to the Lie algebra $\mathfrak{g}$.

Let us recall some technical notions. A {\it weak regular value} \cite{AM_78} of a map $F:M\rightarrow N$ is a point $y\in N$ such that $F^{-1}(y)$ is a submanifold of $M$ and $\T_xF^{-1}(y)=\ker\T_xF$ for every $x\in F^{-1}(y)$. Additionally, a Lie group action $\Phi: G\times M\rightarrow M$ is {\it quotientable} when the space of orbits of the Lie group action $G$ on $M$, denoted by $M/G$, is a manifold and the projection $\pi: M\rightarrow M/G$ is a submersion. If $\Phi$ is free and proper, it is quotientable \cite{Lee_12}.

This work focuses primarily on exact $k$-symplectic manifolds, which are a specific case of $k$-symplectic manifolds. Therefore, we introduce the well-known definitions adjusted to this context. Nevertheless, some of the results presented hold in a more general framework, so we also provide the definitions for $k$-symplectic manifolds in the general setting.

\begin{definition}
\label{Def::Onohomoaction}
    A Lie group action $\Phi\colon G\times P\to P$ is an \textit{exact $k$-symplectic Lie group action} relative to $(P, \bm\Theta)$ if $\Phi_g^*\boldsymbol\Theta=\boldsymbol\Theta$ for each $g\in G$, where $\Phi_g:P\ni p \mapsto \Phi_g(p):=\Phi(g,p)\in P$. In other words,
    \begin{equation}
        \Lie_{\xi_P}\bm\Theta = 0\,,\qquad \forall \xi\in\mathfrak{g}\,,
    \end{equation}
    where $\xi_P$ is the fundamental vector field of $\Phi$ related to $\xi\in\mathfrak{g}$, namely $\xi_P(p) = \restr{\frac{\d}{\d t}}{t=0}\Phi(\exp(t\xi),p)$ for any $p\in P$.
\end{definition} 

A $k$-symplectic momentum map for an exact $k$-symplectic manifold $(P,\bm\Theta)$ is defined using the exactness.

\begin{definition}
\label{Def::MomentumMap1homo}
     An {\it exact $k$-symplectic momentum map} associated with $(P,\bm\Theta)$ and an exact $k$-symplectic Lie group action $\Phi:G\times P\rightarrow P$ is a map $\mathbf{J}^\Phi_{\bm\Theta}:P\rightarrow (\mathfrak{g}^*)^k$ such that
    \begin{equation}
    \label{Eq::coMomentumMaphomo}
    \iota_{\xi_P}\bm\Theta=\langle \mathbf{J}^\Phi_{\bm\Theta},\xi\rangle,\qquad \forall \xi\in\mathfrak{g}\,.
    \end{equation}
\end{definition}
Equation \eqref{Eq::coMomentumMaphomo} implies that $\mathbf{J}^\Phi_{\bm\Theta}$ satisfies \(\inn{\boldsymbol{\xi}_P}\boldsymbol{\Theta}=\left\langle \mathbf{J}^\Phi_{\bm\Theta},\boldsymbol{\xi}\right\rangle\), where $\bm \xi=(\xi^1,\ldots,\xi^k)\in \mathfrak{g}^k$. Conversely, if $\bm \xi_P$ is the $k$-vector field on $P$ whose $k$ components are the fundamental vector fields of $\Phi:G\times P\rightarrow P$ related to the $k$ components of $\bm\xi\in\mathfrak{g}^k$, the relation above implies \eqref{Eq::coMomentumMaphomo}.

If $\Phi:G\times P\rightarrow P$ is an exact $k$-symplectic Lie group action, then 
\[
\d\iota_{\xi_P}\bm\Theta=-\iota_{\xi_P}\bm\omega,\qquad \forall \xi\in\mathfrak{g}.
\]
Thus, one obtains a $k$-symplectic momentum map defined in the standard way, as in the following definitions.

\begin{definition}
    A Lie group action $\Phi\colon G\times P\to P$ is a \textit{$k$-symplectic Lie group action} relative to $(P,\bm\omega)$ if $\Phi_g^*\boldsymbol\omega=\boldsymbol\omega$ for each $g\in G$, where $\Phi_g:P\ni p \mapsto \Phi_g(p):=\Phi(g,p)\in P$. In other words,
    \begin{equation}
        \Lie_{\xi_P}\bm\omega = 0\,,\qquad \forall \xi\in\mathfrak{g}\,,
    \end{equation}
    where $\xi_P$ is the fundamental vector field of $\Phi$ related to $\xi\in\mathfrak{g}$, namely $\xi_P(p) = \restr{\frac{\d}{\d t}}{t=0}\Phi(\exp(t\xi),p)$ for any $p\in P$.
\end{definition} 

\begin{definition}\label{Def::symMomentumMap}
    A {\it $k$-symplectic momentum map} associated with a Lie group action $\Phi: G\times P\rightarrow P$ with respect to $(P,\boldsymbol\omega)$ is a map $\mathbf{J}^\Phi:=\left(\mathbf{J}^\Phi_1,\ldots,\mathbf{J}^\Phi_k\right):P\rightarrow (\mathfrak{g}^*)^k$ such that
    \begin{equation}\label{Eq::coMomentumMap}
        \inn{\xi_P}\boldsymbol\omega=(\inn{\xi_P}\omega^\alpha)\otimes e_\alpha=-\d\left\langle \mathbf{J}^\Phi,\xi\right\rangle \,,\qquad \forall \xi\in \mathfrak{g}\,.
    \end{equation}
\end{definition}

The following definition is well-known in the literature, but $\Ad^{*k}_{g^{-1}}$ is sometimes denoted as ${\rm Coad}^k_g$. However, it has been changed to $\Ad^{*k}$ to shorten and clarify the notation.

\begin{definition}
    A $k$-symplectic momentum map $\mathbf{J}^\Phi\colon P\rightarrow (\mathfrak{g}^*)^k$ is {\it $\Ad^{*k}$\!-equivariant} if
    \[
    \mathbf{J}^\Phi\circ \Phi_g =\Ad^{*k}_{g^{-1}}\circ\,\, \mathbf{J}^\Phi\,,\qquad \forall g\in G\,,
    \]
    \begin{minipage}{11cm}
    where $\Ad^{*k}_{g^{-1}}=\Ad^*_{g^{-1}}\stackrel{}\otimes \overset{(k)}{\dotsb}\otimes \Ad^*_{g^{-1}}$ and
    \[
    \begin{array}{rccc}
    \Ad^{*k}&:G\times(\mathfrak{g}^*)^k & \longrightarrow & (\mathfrak{g}^*)^k\\
    & (g,\boldsymbol \mu) &\longmapsto & \Ad^{*k}_{g^{-1}}\boldsymbol\mu\,
    \end{array}
    \]
In other words, the diagram aside is commutative for every $g\in G$.
\end{minipage}
\begin{minipage}{4cm}
    \begin{tikzcd}
    P
    \arrow[r,"\mathbf{J}^\Phi"]
    \arrow[d,"\Phi_g"]& (\mathfrak{g}^*)^k
    \arrow[d,"\Ad^{*k}_{g^{-1}}"]\\
    P
    \arrow[r,"\mathbf{J}^\Phi"]&
    (\mathfrak{g}^*)^k.
    \end{tikzcd}
\end{minipage}
\end{definition}

\begin{proposition}
\label{Prop::AdEqkSym}
    An exact $k$-symplectic momentum map $\mathbf{J}^\Phi_{\bm\Theta}\colon P\rightarrow (\mathfrak{g}^*)^k$ associated with a Lie group action $\Phi:G\times P\rightarrow P$ related to an exact $k$-symplectic manifold $(P,\bm\Theta)$ is $\Ad^{*k}$-equivariant.
\end{proposition}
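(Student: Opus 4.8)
The plan is to reduce the claim to its scalar components and then replay the classical exact-symplectic argument on each of them. Writing $\bm\Theta = \Theta^\alpha\otimes e_\alpha$, the invariance hypothesis $\Phi_g^*\bm\Theta = \bm\Theta$ is equivalent to $\Phi_g^*\Theta^\alpha = \Theta^\alpha$ for every $\alpha=1,\ldots,k$ and every $g\in G$, since the $e_\alpha$ are constant. Likewise, decomposing $\mathbf{J}^\Phi_{\bm\Theta} = (\mathbf{J}^\Phi_1,\ldots,\mathbf{J}^\Phi_k)$ with $\mathbf{J}^\Phi_\alpha\colon P\to\mathfrak{g}^*$, the defining relation \eqref{Eq::coMomentumMaphomo} splits as $\iota_{\xi_P}\Theta^\alpha = \langle\mathbf{J}^\Phi_\alpha,\xi\rangle$ for each $\alpha$, and the desired identity $\mathbf{J}^\Phi_{\bm\Theta}\circ\Phi_g = \Ad^{*k}_{g^{-1}}\circ\mathbf{J}^\Phi_{\bm\Theta}$ is equivalent to $\mathbf{J}^\Phi_\alpha\circ\Phi_g = \Ad^*_{g^{-1}}\circ\mathbf{J}^\Phi_\alpha$ holding separately for each $\alpha$. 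Thus it suffices to prove the one-form version and reassemble.

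The computational heart is the transformation law for the fundamental vector fields under the action. First I would establish that $(\Phi_g)_*\xi_P = (\Ad_g\xi)_P$ for every $g\in G$ and $\xi\in\mathfrak{g}$; equivalently, $\Phi_g^*\xi_P = (\Ad_{g^{-1}}\xi)_P$. This follows by differentiating the curve $t\mapsto\Phi(g\exp(t\xi),p)$ at $t=0$ in two ways, using the identity $g\exp(t\xi) = \exp(t\,\Ad_g\xi)\,g$ coming from $g\exp(t\xi)g^{-1}=\exp(t\,\Ad_g\xi)$, together with $\Phi_g^{-1}=\Phi_{g^{-1}}$ for the left action $\Phi$. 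This is the place where the conventions (left versus right action, and whether $\Ad_g$ or $\Ad_{g^{-1}}$ appears) must be tracked with care; this bookkeeping is the only genuine subtlety, as the $\R^k$-valued structure contributes nothing essential beyond the component splitting already carried out.

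Finally I would assemble the pieces. Fix $\alpha$ and $\xi\in\mathfrak{g}$, and use the naturality of the interior contraction under the diffeomorphism $\Phi_g$, namely $\Phi_g^*(\iota_X\beta) = \iota_{\Phi_g^*X}(\Phi_g^*\beta)$, to write
\[
\langle\mathbf{J}^\Phi_\alpha\circ\Phi_g,\xi\rangle = \Phi_g^*\!\left(\iota_{\xi_P}\Theta^\alpha\right) = \iota_{\Phi_g^*\xi_P}\!\left(\Phi_g^*\Theta^\alpha\right) = \iota_{(\Ad_{g^{-1}}\xi)_P}\Theta^\alpha = \langle\mathbf{J}^\Phi_\alpha,\Ad_{g^{-1}}\xi\rangle = \langle\Ad^*_{g^{-1}}\mathbf{J}^\Phi_\alpha,\xi\rangle,
\]
where the middle equality uses both the invariance $\Phi_g^*\Theta^\alpha = \Theta^\alpha$ and the vector-field transformation law from the previous step. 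Since $\xi\in\mathfrak{g}$ is arbitrary, this gives $\mathbf{J}^\Phi_\alpha\circ\Phi_g = \Ad^*_{g^{-1}}\circ\mathbf{J}^\Phi_\alpha$ for each $\alpha$, and recombining over $\alpha=1,\ldots,k$ yields the $\Ad^{*k}$-equivariance of $\mathbf{J}^\Phi_{\bm\Theta}$, as claimed.
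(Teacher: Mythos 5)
Your proposal is correct and follows essentially the same route as the paper, which reduces the claim to the identity $\Phi_{g*}\xi_P=(\Ad_{g}\xi)_P$ for fundamental vector fields (cited there from Abraham--Marsden) combined with the invariance of $\bm\Theta$ and the defining relation of the momentum map. You merely spell out the componentwise splitting and the pullback computation that the paper leaves implicit, so there is nothing to add.
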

The proof amounts to showing that $\mathbf{J}^\Phi_{\bm\Theta}(\Phi_g(p))=\Ad^{*k}_{g^{-1}}\mathbf{J}^\Phi_{\bm\Theta}(p)$ for each $p\in P$ and every $g\in G$, and it follows from $\Phi_{g*}\xi_P=(\Ad^*_{g^{-1}}\xi)_P$, see \cite{AM_78}. A $k$-symplectic momentum map associated with a $k$-symplectic Lie group action is not necessarily $\Ad^{*k}$-equivariant \cite{LRVZ_23}. However, only $\Ad^{*k}$-equivariant momentum maps are relevant in this work. Therefore, it is hereafter assumed that $\mathbf{J}^\Phi: P\rightarrow (\mathfrak{g}^*)^k$ is $\Ad^{*k}$-equivariant.

\subsection{\texorpdfstring{$k$}{}-Symplectic reduction by a submanifold}

This section surveys the well-known  $k$-symplectic reduction by a submanifold \cite{MRSV_15}. To study $k$-symplectic reduction by a submanifold, it is important to consider the $k$-symplectic orthogonal notion (see \cite{LV_13} for details).

\begin{definition}
The {\it $k$-symplectic orthogonal} of $W_p\subset \T_pP$ at some point $p\in P$ with respect to $(P,\bm\omega)$ is
\[
W_p^{\perp_k} := \{v_p\in \T_p P\mid \bm\omega(w_p,v_p)=0\,,\,\, \forall w_p\in W_p\}.
\]
\end{definition}

In the $k$-symplectic setting, we will deal with a weak regular value $\bm\mu \in (\mathfrak{g}^*)^k$ of a $k$-symplectic momentum map $\mathbf{J}^\Phi: P \rightarrow (\mathfrak{g}^*)^k$. However, it is not enough to assume that $\bm\mu\in(\mathfrak{g}^*)^k$ is a weak regular value since it is crucial that for each of the components of the momentum map, $\mathbf{J}^{\Phi-1}_\alpha(\mu^\alpha)$ is a submanifold of $P$. Therefore, we say that $\bm\mu \in (\mathfrak{g}^*)^k$ is a {\it weak regular $k$-value} of $\mathbf{J}^\Phi$, if $\mu^\alpha \in \mathfrak{g}^*$ is a weak regular value of $\mathbf{J}^\Phi_\alpha: P \rightarrow \mathfrak{g}^*$ for $\alpha=1,\ldots,k$ and $\bm\mu$ is a weak regular value of $\mathbf{J}^{\Phi-1}(\bm\mu)$. This implies that $\mathbf{J}^{\Phi-1}(\bm\mu)$ is a submanifold of $P$, however, the converse does not necessarily hold.

The following theorem plays an important role in our $k$-contact reduction (see \cite{LV_13, MRSV_15} for a proof). Recall that $S/\mathcal{F}$, for a submanifold $S$ and a foliation  $\mathcal{F}$ of $S$, stands for the space of leaves of the foliation in $S$.

\begin{theorem}
    \label{Th::RedGen}($k$-symplectic reduction by a submanifold.)
   Let $S$ be a submanifold of $P$ with an injective immersion $\jmath:S\hookrightarrow P$. Assume that $\ker \jmath^*\bm\omega$ has a constant rank for $(P,\bm\omega)$, the quotient space $S/\mathcal{F}_S$ is a manifold, where $\mathcal{F}_S$ is a foliation on $S$ given by $\ker\jmath^*\bm\omega$, and assume that the canonical projection $\pi: S\rightarrow S/\mathcal{F}_S$ is a submersion. Then, $(S/\mathcal{F}_S,\bm\omega_{S})$ is a $k$-symplectic manifold defined univocally by
   \[
   \jmath^*\bm\omega=\pi^*\bm\omega_S,
   \]
   and $\ker \jmath^*\bm\omega_p=\T_pS\cap\left( \T_p S\right)^{\perp_k}$ for any $p\in S$.
\end{theorem}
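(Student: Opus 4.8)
The plan is to establish, in turn, that $\ker\jmath^*\bm\omega$ is integrable, that $\jmath^*\bm\omega$ descends to a well-defined $\R^k$-valued two-form $\bm\omega_S$ on $S/\mathcal{F}_S$, and that this $\bm\omega_S$ is closed and nondegenerate; the orthogonal characterisation of the kernel is then a direct unwinding of definitions.

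First I would record that $\jmath^*\bm\omega = (\jmath^*\omega^\alpha)\otimes e_\alpha$ is closed, since the exterior differential commutes with pullbacks componentwise, so each $\jmath^*\omega^\alpha$ is closed. I would then show that the distribution $\ker\jmath^*\bm\omega=\bigcap_{\alpha=1}^k\ker\jmath^*\omega^\alpha$ is involutive: for $V,W$ taking values in this kernel and any $Z\in\X(S)$, the invariant formula for $\d\jmath^*\omega^\alpha$ reduces, using $\inn{V}\jmath^*\omega^\alpha=\inn{W}\jmath^*\omega^\alpha=0$ and $\d\jmath^*\omega^\alpha=0$, to $\jmath^*\omega^\alpha([V,W],Z)=0$ for all $Z$; hence $[V,W]$ again takes values in the kernel. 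Since by hypothesis $\ker\jmath^*\bm\omega$ is regular of constant rank, Frobenius' theorem produces the foliation $\mathcal{F}_S$ with $\T\mathcal{F}_S=\ker\jmath^*\bm\omega=\ker\T\pi$.

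Next I would verify that $\jmath^*\bm\omega$ is $\pi$-basic. Horizontality is immediate: if $V$ takes values in $\ker\T\pi=\ker\jmath^*\bm\omega$, then $\inn{V}\jmath^*\omega^\alpha=0$ for each $\alpha$. Invariance follows from Cartan's formula, $\Lie_V\jmath^*\omega^\alpha=\inn{V}\d\jmath^*\omega^\alpha+\d\inn{V}\jmath^*\omega^\alpha=0$, using closedness and horizontality. By the standard theory of basic forms there is therefore a unique $\bm\omega_S\in\Omega^2(S/\mathcal{F}_S,\R^k)$ with $\pi^*\bm\omega_S=\jmath^*\bm\omega$; uniqueness is guaranteed because $\pi$ is a surjective submersion, so $\pi^*$ is injective on forms. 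Closedness of $\bm\omega_S$ then follows from $\pi^*\d\bm\omega_S=\d\pi^*\bm\omega_S=\d\jmath^*\bm\omega=0$ together with the injectivity of $\pi^*$.

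For nondegeneracy, suppose $\bar v\in\ker\bm\omega_S$ at a point $\pi(p)$ and lift it to $v\in\T_pS$ with $\T_p\pi(v)=\bar v$. For every $w\in\T_pS$ one has $\jmath^*\bm\omega(v,w)=(\pi^*\bm\omega_S)(v,w)=\bm\omega_S(\bar v,\T_p\pi\, w)=0$, so $v\in\ker\jmath^*\bm\omega_p=\ker\T_p\pi$ and hence $\bar v=0$. This also settles the final identity: $v\in\ker\jmath^*\bm\omega_p$ means precisely that $v\in\T_pS$ and $\bm\omega(\T\jmath\,v,\T\jmath\,w)=0$ for all $w\in\T_pS$, which by skew-symmetry of $\bm\omega$ is exactly the condition $v\in\T_pS\cap(\T_pS)^{\perp_k}$. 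The main obstacle is the passage from $S$ to the quotient: everything hinges on identifying $\ker\T\pi$ with $\ker\jmath^*\bm\omega$ exactly, which is what simultaneously makes $\jmath^*\bm\omega$ basic and forces the descended form to be nondegenerate; the integrability step, resting on closedness of $\bm\omega$, is the conceptual crux that makes the foliation—and hence the whole construction—available in the first place.
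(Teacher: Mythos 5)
Your proof is correct and follows the standard route: involutivity of $\ker\jmath^*\bm\omega$ via closedness, descent of the basic form through the submersion, nondegeneracy by lifting a kernel vector, and the orthogonal characterisation by unwinding definitions. The paper itself does not prove this theorem but defers to \cite{LV_13, MRSV_15}, and your argument is essentially the canonical one found there, so there is nothing to flag.
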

Let us recall the following lemma (see \cite{MRSV_15} for the proof).
\begin{lemma}
    Let $\bm\mu=(\mu^1,\ldots,\mu^k)\in(\mathfrak{g}^*)^k$ be a weak regular $k$-value of a $k$-symplectic momentum map $\mathbf{J}^\Phi:P\rightarrow (\mathfrak{g}^*)^k$ associated with a Lie group action $\Phi:G\times P\rightarrow P$ and $(P,\bm\omega)$. Then,
    \[
    G_{\bm\mu}=\bigcap^k_{\alpha=1}G_{\mu^\alpha},\qquad \mathfrak{g}_{\bm\mu}=\bigcap^k_{\alpha=1}\mathfrak{g}_{\mu^\alpha},
    \]
    where $G_{\bm\mu}$ is the isotropy subgroup of $\bm\mu\in(\mathfrak{g}^*)^k$ under the Lie  group action $\Ad^{*k}$ and $\mathfrak{g}_{\bm\mu}$ is its Lie algebra.
\end{lemma}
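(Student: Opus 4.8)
The plan is to reduce both identities to the elementary observation that $\Ad^{*k}$ acts on $(\mathfrak{g}^*)^k$ componentwise. First I would unfold the definition of the isotropy subgroup: by definition $g\in G_{\bm\mu}$ precisely when $\Ad^{*k}_{g^{-1}}\bm\mu = \bm\mu$. Since $\Ad^{*k}_{g^{-1}} = \Ad^*_{g^{-1}}\otimes\cdots\otimes\Ad^*_{g^{-1}}$ acts diagonally, so that $\Ad^{*k}_{g^{-1}}\bm\mu = (\Ad^*_{g^{-1}}\mu^1,\ldots,\Ad^*_{g^{-1}}\mu^k)$, this single equation in $(\mathfrak{g}^*)^k$ is equivalent to the $k$ equations $\Ad^*_{g^{-1}}\mu^\alpha = \mu^\alpha$ for $\alpha = 1,\ldots, k$; that is, to $g\in G_{\mu^\alpha}$ for every $\alpha$. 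This yields $G_{\bm\mu} = \bigcap_{\alpha=1}^k G_{\mu^\alpha}$ directly, with no appeal to the momentum-map or regularity hypotheses.

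For the Lie-algebra identity I would pass from the group level to the infinitesimal level. The quickest route is to differentiate: $\xi\in\mathfrak{g}_{\bm\mu}$ iff the one-parameter subgroup $\exp(t\xi)$ fixes $\bm\mu$ for all $t$, and differentiating at $t=0$ gives the infinitesimal stabiliser condition $\ad^*_\xi\mu^\alpha = 0$ for every $\alpha$, i.e. $\xi\in\mathfrak{g}_{\mu^\alpha}$ for all $\alpha$. Alternatively, and more structurally, each $G_{\mu^\alpha}$ and $G_{\bm\mu}$ is the stabiliser of a point under a smooth action, hence a closed---and therefore embedded Lie---subgroup of $G$ by Cartan's closed-subgroup theorem, so I may invoke the standard fact that the Lie algebra of a finite intersection of closed subgroups is the intersection of their Lie algebras. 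Combined with the group identity already established, either argument gives $\mathfrak{g}_{\bm\mu} = \bigcap_{\alpha=1}^k \mathfrak{g}_{\mu^\alpha}$.

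There is no genuine obstacle here: the statement is essentially the assertion that the stabiliser of a tuple under a componentwise action is the intersection of the stabilisers of its entries, both at the group and at the algebra level. The only point requiring a word of care is the transition to Lie algebras, where one must note that the subgroups involved are closed, so that Cartan's theorem (equivalently, the interchange of $\mathrm{Lie}(-)$ with finite intersections) applies; the differentiation argument sidesteps even this by working directly with the defining equations.
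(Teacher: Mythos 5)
Your proof is correct. The paper does not actually prove this lemma---it only cites \cite{MRSV_15} for the proof---so there is nothing to compare against line by line, but your argument is the standard one: the identity is a purely algebraic consequence of $\Ad^{*k}$ acting diagonally, and you rightly observe that neither the momentum map nor the weak regularity of $\bm\mu$ plays any role. The passage to Lie algebras is also handled correctly, either by the closed-subgroup characterisation $\mathfrak{g}_{\bm\mu}=\{\xi\in\mathfrak{g}\mid \exp(t\xi)\in G_{\bm\mu}\ \forall t\}$ or by the infinitesimal stabiliser condition $\ad^*_\xi\mu^\alpha=0$ for all $\alpha$.
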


Let us recall the following well-known result from $k$-symplectic geometry, which is not exactly analogous to the symplectic setting. It is important that, in general, $((\T_xN)^{\perp_k})^{\perp_k}\neq \T_xN$, where $N$ is any submanifold of $P$ and $x\in N$. Indeed, this led to several errors in some attempts to devise a $k$-symplectic reduction \cite{Gun_87, MRSV_15}.

\begin{lemma}
\label{Lemm::NonAdPerpPS}
Let $\bm\mu\in(\mathfrak{g}^*)^k$ be a weak regular $k$-value of a $k$-symplectic momentum map ${\bf J}^\Phi:P\rightarrow (\mathfrak{g}^*)^k$. Then, for every $p\in {\bf J}^{\Phi-1}(\bm\mu)$, one has 
\begin{enumerate}[{\rm(1)}]
\item $\T_{p}(G_{\bm\mu} p) =\T_{p}(G p)\cap \T_{p}\left({\bf J}^{\Phi-1}({\bm\mu})\right)$, 
\item $\T_{p}({\bf J}^{\Phi-1}(\bm\mu)) = \T_{p}(Gp)^{\perp_k}$.
\end{enumerate}
\end{lemma}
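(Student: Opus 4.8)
The plan is to reduce both identities to pointwise statements on $\T_p P$ and to combine two ingredients: the defining relation of the $k$-symplectic momentum map and the infinitesimal form of the $\Ad^{*k}$-equivariance guaranteed by Proposition \ref{Prop::AdEqkSym}. Two preliminary identifications are used throughout. First, since $\bm\mu$ is a weak regular $k$-value, $\mathbf{J}^{\Phi-1}(\bm\mu)$ is a submanifold and, by the very definition of weak regular value, $\T_p(\mathbf{J}^{\Phi-1}(\bm\mu)) = \ker\T_p\mathbf{J}^\Phi$. Second, the fundamental vector fields span the orbit tangent spaces, so $\T_p(Gp) = \{\xi_P(p)\mid \xi\in\mathfrak{g}\}$ and $\T_p(G_{\bm\mu}p) = \{\xi_P(p)\mid \xi\in\mathfrak{g}_{\bm\mu}\}$, where $\mathfrak{g}_{\bm\mu}$ is the Lie algebra of $G_{\bm\mu}$.

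For part (2), I would establish the equality as a direct biconditional, which deliberately sidesteps the failure of $((\cdot)^{\perp_k})^{\perp_k}=\mathrm{id}$ noted before the statement. Writing $\mathbf{J}^\Phi=(\mathbf{J}^\Phi_1,\ldots,\mathbf{J}^\Phi_k)$ and contracting the componentwise defining relation $\inn{\xi_P}\omega^\alpha=-\d\langle\mathbf{J}^\Phi_\alpha,\xi\rangle$ against an arbitrary $v_p\in\T_pP$ yields
\[
\omega^\alpha(\xi_P(p),v_p) = -\langle \T_p\mathbf{J}^\Phi_\alpha(v_p),\xi\rangle\,,\qquad \forall\,\xi\in\mathfrak{g}\,,\quad \alpha=1,\ldots,k\,.
\]
Then $v_p\in\ker\T_p\mathbf{J}^\Phi$ holds if, and only if, the right-hand side vanishes for all $\xi$ and all $\alpha$, if, and only if, $\omega^\alpha(\xi_P(p),v_p)=0$ for all $\xi,\alpha$, if, and only if, $\bm\omega(w_p,v_p)=0$ for every $w_p\in\T_p(Gp)$, which is exactly $v_p\in\T_p(Gp)^{\perp_k}$. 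Combined with the first preliminary identification, this gives part (2).

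For part (1), I would differentiate the equivariance relation $\mathbf{J}^\Phi\circ\Phi_g=\Ad^{*k}_{g^{-1}}\circ\mathbf{J}^\Phi$ at $g=e$ in the direction $\xi$, obtaining
\[
\T_p\mathbf{J}^\Phi(\xi_P(p)) = \xi_{(\mathfrak{g}^*)^k}(\mathbf{J}^\Phi(p)) = \xi_{(\mathfrak{g}^*)^k}(\bm\mu)\,,
\]
where $\xi_{(\mathfrak{g}^*)^k}$ is the fundamental vector field of the $\Ad^{*k}$-action and the last equality uses $p\in\mathbf{J}^{\Phi-1}(\bm\mu)$. For the inclusion $\subseteq$, any $\xi\in\mathfrak{g}_{\bm\mu}$ satisfies $\xi_{(\mathfrak{g}^*)^k}(\bm\mu)=0$, so $\xi_P(p)\in\ker\T_p\mathbf{J}^\Phi=\T_p(\mathbf{J}^{\Phi-1}(\bm\mu))$, while $\xi_P(p)\in\T_p(Gp)$ trivially. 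For $\supseteq$, I would write any $v_p\in\T_p(Gp)\cap\T_p(\mathbf{J}^{\Phi-1}(\bm\mu))$ as $v_p=\xi_P(p)$; then $\xi_{(\mathfrak{g}^*)^k}(\bm\mu)=\T_p\mathbf{J}^\Phi(v_p)=0$, hence $\xi\in\mathfrak{g}_{\bm\mu}$ and $v_p\in\T_p(G_{\bm\mu}p)$.

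The computations are formally the componentwise transcription of the classical symplectic argument, so the real work is bookkeeping: handling the $\R^k$-valued contractions component by component and pinning down the fundamental vector field of the $\Ad^{*k}$-action with the correct sign. I expect the most delicate point to be the identification, used in the nontrivial inclusion of part (1), of the isotropy algebra $\mathfrak{g}_{\bm\mu}$ with the kernel of $\xi\mapsto\xi_{(\mathfrak{g}^*)^k}(\bm\mu)$; the remainder is a direct unwinding of definitions, and part (2) is arranged so as to avoid any appeal to a double $k$-orthogonal.
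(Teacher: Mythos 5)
Your proof is correct: part (2) follows from contracting the momentum-map relation $\inn{\xi_P}\omega^\alpha=-\d\langle\mathbf{J}^\Phi_\alpha,\xi\rangle$ with an arbitrary tangent vector and using $\T_p(\mathbf{J}^{\Phi-1}(\bm\mu))=\ker\T_p\mathbf{J}^\Phi$, and part (1) follows from differentiating the $\Ad^{*k}$-equivariance (which is a standing assumption at this point in the paper). The paper itself recalls this lemma from \cite{MRSV_15} without proof, but its own proofs of the direct analogues (Lemma \ref{Lemm::SymplecticLemma} and Lemma \ref{Lemm::contactLemma}) use exactly the same two ingredients, so your argument matches the intended route.
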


\subsection{Marsden--Meyer--Weinstein reduction theorem for exact \texorpdfstring{$k$}{}-symplectic manifolds}

In this subsection, we present the proof of the modified $k$-symplectic MMW  reduction (Theorem \ref{Th::PolisymplecticReductionJ}). First, we recall a classical $k$-symplectic reduction theorem \cite{MRSV_15}. Then, we introduce a certain Lie subgroup of $G$ that allows for the reduction of $k$-contact manifolds and present its basic properties. Moreover, we examine an extension of Lemma \ref{Lemm::NonAdPerpPS} in the symplectic setting \cite[Lemma 4.3.2]{AM_78}. 

The following theorem is a standard $k$-symplectic MMW reduction theorem. Its proof is based on the restriction of Lemma \ref{Lemm::NonAdPerpPS} to the symplectic setting, i.e. when $k=1$, $(P,\omega)$ becomes symplectic, and then $((\T N)^{\perp_1})^{\perp_1}=\T N$ for any submanifold $N$ of $P$ (see \cite{CLRZ_23,MRSV_15} for details).

\begin{theorem}
\label{Th::PolisymplecticReductionJ}
    Let $(P,\bm\omega)$ be a $k$-symplectic manifold and let $\mathbf{J}^\Phi:P\rightarrow (\mathfrak{g}^*)^k$ be a $k$-symplectic momentum map associated with a $k$-symplectic Lie group action $\Phi\colon G\times P\rightarrow P$. Assume that $\boldsymbol\mu=(
    \mu^1,\ldots,
    \mu^k)\in (\mathfrak{g}^*)^k$ is a weak regular $k$-value of $\mathbf{J}^\Phi$ and $G_{\boldsymbol{\mu}}$ acts in a quotientable manner on $\mathbf{J}^{\Phi-1}(\boldsymbol\mu)$. Let $G_{\mu^\alpha}$ denote the isotropy subgroup at $\mu^\alpha$ of the Lie group action $\Ad^*\colon(g,\vartheta)\in G\times\mathfrak{g}^*\mapsto  \Ad^*(g,\vartheta)\in \mathfrak{g}^*$ for $\alpha=1,\ldots,k$. Moreover, let the following (sufficient) conditions be assumed
    \begin{equation}\label{Eq::symplecticReduction1eq}
        \ker (\T_p\mathbf{J}_\alpha^\Phi) = \T_p(\mathbf{J}^{\Phi-1}(\boldsymbol\mu))+\ker\omega^\alpha(p) + \T_p(G_{\mu^\alpha} p)\,,\qquad \alpha=1,\ldots,k\,,
    \end{equation}
    \begin{equation}\label{Eq::symplecticReduction2eq}
        \T_p(G_{\boldsymbol\mu} p) = \bigcap^k_{\alpha=1}\left(\ker\omega^\alpha(p)+\T_p(G_{\mu^\alpha}p)\right)\cap \T_p(\mathbf{J}^{\Phi-1}(\boldsymbol\mu))\,,
    \end{equation}
    for every $p\in {\bf J}^{\Phi-1}({\boldsymbol \mu})$. Then,  $(P_{\bm\mu}:=\mathbf{J}^{\Phi-1}(\boldsymbol\mu)/G_{\boldsymbol\mu},\bm\omega_{\boldsymbol\mu})$ is a $k$-symplectic manifold with ${\bm\omega}_{\boldsymbol \mu}\in\Omega^2(P_{\bm\mu},\R^k)$ being uniquely determined by
    \[ \pi_{\boldsymbol\mu}^*\bm\omega_{\boldsymbol\mu}=\jmath_{\boldsymbol\mu}^*\bm\omega\,,
    \]
    where $\jmath_{\boldsymbol\mu}\colon\mathbf{J}^{\Phi-1}(\boldsymbol\mu)\hookrightarrow P$ is the canonical immersion and $\pi_{\boldsymbol\mu}\colon \mathbf{J}^{\Phi-1}(\boldsymbol\mu)\rightarrow \mathbf{J}^{\Phi-1}(\boldsymbol\mu)/G_{\boldsymbol\mu}$ is the canonical projection.
\end{theorem}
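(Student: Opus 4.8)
The plan is to reduce the statement to the $k$-symplectic reduction by a submanifold, Theorem~\ref{Th::RedGen}, applied to $S := \mathbf{J}^{\Phi-1}(\bm\mu)$ with its canonical immersion $\jmath_{\bm\mu}\colon S\hookrightarrow P$. Everything hinges on identifying the characteristic distribution $\ker\jmath_{\bm\mu}^*\bm\omega$ of $\jmath_{\bm\mu}^*\bm\omega$ with the tangent distribution to the $G_{\bm\mu}$-orbits on $S$. Once this is done, the foliation $\mathcal{F}_S$ of Theorem~\ref{Th::RedGen} is the orbit foliation, $S/\mathcal{F}_S = \mathbf{J}^{\Phi-1}(\bm\mu)/G_{\bm\mu} = P_{\bm\mu}$, and the theorem produces the reduced $k$-symplectic form $\bm\omega_{\bm\mu}$ with $\pi_{\bm\mu}^*\bm\omega_{\bm\mu} = \jmath_{\bm\mu}^*\bm\omega$.

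I would begin with the elementary observation that, since $(\jmath_{\bm\mu}^*\bm\omega)_p(u,v) = \bm\omega_p(u,v)$ for $u,v\in\T_pS$, one has $\ker(\jmath_{\bm\mu}^*\bm\omega)_p = \T_pS\cap(\T_pS)^{\perp_k}$ at every $p\in S$. The crux is then the fibrewise identity
\begin{equation*}
\T_pS\cap(\T_pS)^{\perp_k} = \T_p(G_{\bm\mu}p),\qquad p\in\mathbf{J}^{\Phi-1}(\bm\mu).
\end{equation*}
The inclusion $\supseteq$ is immediate from Lemma~\ref{Lemm::NonAdPerpPS}: part~(2) gives $\T_pS = \T_p(Gp)^{\perp_k}$, hence $(\T_pS)^{\perp_k} = (\T_p(Gp)^{\perp_k})^{\perp_k}\supseteq\T_p(Gp)\supseteq\T_p(G_{\bm\mu}p)$, while part~(1) gives $\T_p(G_{\bm\mu}p)\subseteq\T_pS$.

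The reverse inclusion is the genuine difficulty, and it is here that the hypotheses \eqref{Eq::symplecticReduction1eq}--\eqref{Eq::symplecticReduction2eq} are used; they are needed precisely because, unlike the symplectic case $k=1$, the double $k$-symplectic orthogonal is not the identity on subspaces. Denote by $W^{\perp_\alpha}$ the orthogonal complement of $W\subset\T_pP$ with respect to the single closed two-form $\omega^\alpha$, and recall the two standard facts $\ker\T_p\mathbf{J}^\Phi_\alpha = \T_p(Gp)^{\perp_\alpha}$ (from \eqref{Eq::coMomentumMap}) and $(W^{\perp_\alpha})^{\perp_\alpha} = W + \ker\omega^\alpha(p)$ (the presymplectic double-orthogonal formula). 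Fix $v\in\T_pS\cap(\T_pS)^{\perp_k}$ and $\alpha\in\{1,\ldots,k\}$. Decomposing an arbitrary $x\in\ker\T_p\mathbf{J}^\Phi_\alpha$ according to \eqref{Eq::symplecticReduction1eq} as $x = x_1 + x_2 + x_3$ with $x_1\in\T_pS$, $x_2\in\ker\omega^\alpha(p)$, $x_3\in\T_p(G_{\mu^\alpha}p)$, the three pairings $\omega^\alpha(x_i,v)$ vanish --- using $v\in(\T_pS)^{\perp_k}\subseteq(\T_pS)^{\perp_\alpha}$, then $x_2\in\ker\omega^\alpha(p)$, and finally $v\in\T_pS\subseteq\T_p(G_{\mu^\alpha}p)^{\perp_\alpha}$ (which follows from the momentum-map relation). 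Hence $v\in(\ker\T_p\mathbf{J}^\Phi_\alpha)^{\perp_\alpha} = \T_p(Gp)+\ker\omega^\alpha(p)$. Writing $v = w + u$ with $w\in\T_p(Gp)$ and $u\in\ker\omega^\alpha(p)$ and pairing with $\T_p(Gp)$ via $\omega^\alpha$ forces $w\in\T_p(Gp)\cap\T_p(Gp)^{\perp_\alpha} = \T_p(G_{\mu^\alpha}p)$, the last equality being the one-component consequence of $\Ad^*$-equivariance (namely $\T_p\mathbf{J}^\Phi_\alpha(\xi_P(p)) = -\ad^*_\xi\mu^\alpha$, which vanishes exactly for $\xi\in\mathfrak{g}_{\mu^\alpha}$). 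Thus $v\in\ker\omega^\alpha(p)+\T_p(G_{\mu^\alpha}p)$ for every $\alpha$; since also $v\in\T_pS$, hypothesis \eqref{Eq::symplecticReduction2eq} yields $v\in\T_p(G_{\bm\mu}p)$, as required.

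With the fibrewise identity established, $\ker\jmath_{\bm\mu}^*\bm\omega$ equals the tangent distribution to the $G_{\bm\mu}$-orbits. Because $G_{\bm\mu}$ acts quotientably on $\mathbf{J}^{\Phi-1}(\bm\mu)$, this distribution has constant rank (it is the vertical bundle of the submersion $\pi_{\bm\mu}$), its leaves are the orbits, and $S/\mathcal{F}_S = \mathbf{J}^{\Phi-1}(\bm\mu)/G_{\bm\mu} = P_{\bm\mu}$ is a manifold with $\pi_{\bm\mu}$ a submersion. All the hypotheses of Theorem~\ref{Th::RedGen} are therefore satisfied, and it delivers the $k$-symplectic manifold $(P_{\bm\mu},\bm\omega_{\bm\mu})$ with $\bm\omega_{\bm\mu}$ uniquely determined by $\pi_{\bm\mu}^*\bm\omega_{\bm\mu} = \jmath_{\bm\mu}^*\bm\omega$. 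The main obstacle is entirely contained in the reverse inclusion above: conditions \eqref{Eq::symplecticReduction1eq} and \eqref{Eq::symplecticReduction2eq} are engineered to offset the failure of $((\,\cdot\,)^{\perp_k})^{\perp_k}$ to recover a subspace, which is exactly the phenomenon that has no analogue when $k=1$.
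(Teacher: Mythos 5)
Your proposal is correct and follows the same route the paper indicates for this theorem: reduce to Theorem \ref{Th::RedGen} applied to $S=\mathbf{J}^{\Phi-1}(\bm\mu)$ and show that conditions \eqref{Eq::symplecticReduction1eq}--\eqref{Eq::symplecticReduction2eq} force $\ker\jmath_{\bm\mu}^*\bm\omega_p=\T_pS\cap(\T_pS)^{\perp_k}=\T_p(G_{\bm\mu}p)$, which is exactly the identity the paper attributes to \cite[Theorem 3.17]{MRSV_15}. Your componentwise argument via $\perp_\alpha$, Lemma \ref{Lemm::NonAdPerpPS}, and the presymplectic double-orthogonal formula correctly supplies the details that the paper defers to the references.
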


Conditions \eqref{Eq::symplecticReduction1eq} and \eqref{Eq::symplecticReduction2eq} ensure that $\ker\jmath_{\bm\mu}^*\bm\omega_p=\T_p\mathbf{J}^{\Phi-1}(\bm\mu)\cap\left(\T_p\mathbf{J}^{\Phi-1}(\bm\mu)\right)^{\perp_k}=\T_p(G_{\bm\mu}p)$ (see \cite[Theorem 3.17]{MRSV_15} for a proof). Moreover, \cite{CLRZ_23} demonstrates that these conditions are mutually independent, and sufficient, but not necessary. Necessary and sufficient conditions were given by C.\, Blacker in \cite{Bla_19}, and due to some almost irrelevant issues, restated and analysed in \cite{CLRZ_23, GM_23}.

Our approach repeats the procedure devised in \cite{MRSV_15}, which leads to Theorem \ref{Th::PolisymplecticReductionJ} but in a slightly different setting, namely we consider $\widetilde{N}=\mathbf{J}^{\Phi-1}(\R^{\times k}\bm\mu)$ instead of taking $N=\mathbf{J}^{\Phi-1}(\bm\mu)$, where $\R^\times=\R\setminus{\{0\}}$ and $\R^{\times k}\bm\mu$ is an invariant set with respect to dilations in each component of $\mu^\alpha\in \mathfrak{g}^*$ for $\alpha=1,\ldots,k$, namely
\[
\mathbf{J}^{\Phi-1}(\R^{\times k}\bm\mu)=\{ p\in P\,\mid\, \exists \lambda_1,\ldots,\lambda_k\in\R^\times,\,\, \mathbf{J}^\Phi(p)=(\lambda_1\mu^1,\ldots,\lambda_k \mu^k)\}.
\]
In other words, $\mathbf{J}^{\Phi-1}(\R^{\times k}\bm\mu)$ is the pre-image of the orbit of $\bm\mu\in(\mathfrak{g}^*)^k$ relative to the action
\[
(\R^\times)^k\times(\mathfrak{g}^*)^k\ni(\lambda_1,\ldots,\lambda_k;\mu^1,\ldots,\mu^k)\longmapsto (\lambda_1\mu^1,\ldots,\lambda_k\mu^k)\in (\mathfrak{g}^*)^k.
\]
It is worth stressing that, according to our assumptions, if $\bm\mu$ is a weak regular $k$-value, the spaces $\mathbf{J}^{\Phi-1}(\R^{\times k}\bm\mu)$ and $\mathbf{J}^{\Phi-1}_\alpha(\R^\times\mu^\alpha)$ are submanifolds of $P$.

One can expect that the Lie subgroup of $G$ that quotients $\mathbf{J}^{\Phi-1}(\R^{\times k}\bm\mu)$ may satisfy some new condition compared to the $k$-symplectic case in Theorem \ref{Th::PolisymplecticReductionJ} due to dimensional considerations (see Section \ref{Sec:Correction}). In this regard, let us present the following proposition that is a starting point for the modified $k$-symplectic MMW reduction.
\begin{proposition}
\label{Prop::Kgroup}
    Let $\mathfrak{k}_{[\mu]}:=\ker\mu\cap\mathfrak{g}_{[\mu]}$, where $\ker\mu=\left\langle\mu\right\rangle^\circ$ and $\mathfrak{g}_{[\mu]}=\{\xi\in \mathfrak{g}\,\mid\,\ad^{*}_{\xi}\mu\wedge \mu =0\}$ for some $\mu\in\mathfrak{g}^*$. Then, $\mathfrak{k}_{[\mu]}$ is a Lie subalgebra of $\mathfrak{g}$.
\end{proposition}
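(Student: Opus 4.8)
The plan is to first dispose of the trivial case $\mu=0$, where $\ker\mu=\mathfrak{g}_{[\mu]}=\mathfrak{g}$ and the statement is immediate, and then work with $\mu\neq 0$. For $\mu\neq0$ the condition $\ad^*_\xi\mu\wedge\mu=0$ is equivalent to $\ad^*_\xi\mu\in\langle\mu\rangle$, so I would begin by recording the two routine facts that (i) $\mathfrak{g}_{[\mu]}$ is a vector subspace of $\mathfrak{g}$, and (ii) the assignment determined by $\ad^*_\xi\mu=c(\xi)\,\mu$ is a well-defined \emph{linear} functional $c\colon\mathfrak{g}_{[\mu]}\to\R$ (well-definedness uses $\mu\neq0$, linearity uses linearity of $\xi\mapsto\ad^*_\xi\mu$). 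Since $\ker\mu=\langle\mu\rangle^\circ$ is a hyperplane, $\mathfrak{k}_{[\mu]}=\ker\mu\cap\mathfrak{g}_{[\mu]}$ is automatically a vector subspace; the only nontrivial point is closure under the Lie bracket.

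The key ingredient is the fact that the coadjoint representation is a Lie algebra homomorphism, i.e.
\[
\ad^*_{[\xi,\eta]}=[\ad^*_\xi,\ad^*_\eta]=\ad^*_\xi\,\ad^*_\eta-\ad^*_\eta\,\ad^*_\xi\,.
\]
Applying this to $\mu$ for $\xi,\eta\in\mathfrak{g}_{[\mu]}$ and using $\ad^*_\xi\mu=c(\xi)\mu$, $\ad^*_\eta\mu=c(\eta)\mu$, I would compute
\[
\ad^*_{[\xi,\eta]}\mu=\ad^*_\xi\big(c(\eta)\mu\big)-\ad^*_\eta\big(c(\xi)\mu\big)=c(\eta)c(\xi)\mu-c(\xi)c(\eta)\mu=0\,.
\]
This shows that for \emph{any} $\xi,\eta\in\mathfrak{g}_{[\mu]}$ one has $[\xi,\eta]\in\mathfrak{g}_\mu:=\{\zeta\in\mathfrak{g}\mid\ad^*_\zeta\mu=0\}\subset\mathfrak{g}_{[\mu]}$; in particular $\mathfrak{g}_{[\mu]}$ is itself a subalgebra and the first defining condition of $\mathfrak{k}_{[\mu]}$ holds for $[\xi,\eta]$ in the strengthened form $\ad^*_{[\xi,\eta]}\mu=0$. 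Note that this step uses only membership in $\mathfrak{g}_{[\mu]}$.

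It remains to verify $[\xi,\eta]\in\ker\mu$, and this is precisely where the hypothesis $\xi,\eta\in\ker\mu$ is used. From the defining relation of the coadjoint action, $\langle\ad^*_\xi\mu,\eta\rangle=-\langle\mu,[\xi,\eta]\rangle$ (the sign convention is immaterial for what follows), together with $\ad^*_\xi\mu=c(\xi)\mu$ valid for $\xi\in\mathfrak{g}_{[\mu]}$, I would obtain
\[
\langle\mu,[\xi,\eta]\rangle=-\langle\ad^*_\xi\mu,\eta\rangle=-c(\xi)\langle\mu,\eta\rangle=0\,,
\]
the last equality holding because $\eta\in\ker\mu$. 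Hence $[\xi,\eta]\in\ker\mu$, and combined with the previous paragraph $[\xi,\eta]\in\ker\mu\cap\mathfrak{g}_{[\mu]}=\mathfrak{k}_{[\mu]}$, which proves the claim.

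There is no genuine obstacle here: the argument is a two-line computation resting on the homomorphism property of $\ad^*$. The only points needing care are the well-definedness of $c$ (which forces the restriction $\mu\neq0$ and the correct reading of $\ad^*_\xi\mu\wedge\mu=0$ as $\ad^*_\xi\mu\in\langle\mu\rangle$) and bookkeeping of which hypothesis produces which closure condition — membership in $\mathfrak{g}_{[\mu]}$ yields $\ad^*_{[\xi,\eta]}\mu\in\langle\mu\rangle$, while membership in $\ker\mu$ yields $\langle\mu,[\xi,\eta]\rangle=0$. I would emphasise that neither condition alone suffices, since $\ker\mu$ is in general \emph{not} a subalgebra; it is the interaction between the two, mediated by the functional $c$, that makes $\mathfrak{k}_{[\mu]}$ closed under the bracket.
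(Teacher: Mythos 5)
Your proof is correct and follows essentially the same route as the paper's: both use the representation property of $\ad^*$ to get $\ad^*_{[\xi,\nu]}\mu=(\lambda\kappa-\kappa\lambda)\mu=0$ and the pairing identity $\langle\mu,[\xi,\nu]\rangle=\langle\ad^*_\xi\mu,\nu\rangle=\lambda\langle\mu,\nu\rangle=0$. Your additional care with the $\mu=0$ case and the well-definedness of the functional $c$ is a minor tidying of details the paper leaves implicit, not a different argument.
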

\begin{proof}
    Note that $\mathfrak{k}_{[\mu]}$ is a linear space. If $\xi,\nu\in\mathfrak{k}_{[\mu]}$, then $\ad_\xi^*\mu=\lambda\mu$ and $\ad^*_\nu\mu=\kappa\mu$ for some $\lambda,\kappa\in\R$. Thus,
    \[
    \ad^*_{[\xi,\nu]}\mu=\ad^*_\nu\ad^*_\xi\mu-\ad^*_\nu\ad^*_\xi\mu=(\lambda\kappa-\kappa\lambda)\mu=0,\qquad \langle \ad^*_\xi\mu,\nu\rangle=\langle \mu,[\xi,\nu]\rangle=\langle \lambda\mu,\nu\rangle=0.
    \]
    The first equality shows that $[\xi,\nu]\in\mathfrak{g}_{[\mu]}$ and the second that $[\xi,\nu]$ belongs to $\ker\mu$.
    Therefore, $[\xi,\nu]\in \mathfrak{k}_{[\mu]}$ and $\mathfrak{k}_{[\mu]}$ is a Lie subalgebra of $\mathfrak{g}$.
\end{proof}

Proposition \ref{Prop::Kgroup} implies that there exists a unique and simply connected Lie subgroup of $G$, denoted as $K_{[\bf \mu]}$, whose Lie algebra is $\mathfrak{k}_{[\mu]}$ for $\mu\in\mathfrak{g}^*$. The following lemma presents the properties of $\mathfrak{k}_{[\bm\mu]}$ and $K_{[\bm\mu]}$ in the $k$-symplectic setting, where $\ker\bm\mu=\cap^k_{\alpha=1}\ker\mu^\alpha$ and $\mathfrak{k}_{[\bm\mu]}=\ker\bm\mu\cap\mathfrak{g}_{[\bm\mu]}$.

\begin{lemma}
\label{Lemm::SubgroupinJ}
    Let $(P,\bm\Theta)$ be a $k$-symplectic manifold and let $\bm\mu\in(\mathfrak{g}^*)^k$ be a weak regular $k$-value of an exact $k$-symplectic momentum map $\mathbf{J}^\Phi_{\bm\Theta}\colon P\rightarrow (\mathfrak{g}^*)^k$ associated with a Lie group action $\Phi\colon G\times P\rightarrow P$. Then,
    \begin{equation}
    \label{Eq::kKproperties}
    K_{[\bm\mu]}=\bigcap^k_{\alpha=1}K_{[\mu^\alpha]},
 \qquad \mathfrak{k}_{[\bm\mu]}=\bigcap^k_{\alpha=1}\mathfrak{k}_{[\mu^\alpha]}.
    \end{equation}
    Moreover, $\T_p(K_{[\bm\mu]}p)\subseteq \ker \jmath_{[\bm\mu]}^*\bm\omega(p)$ for the natural embedding $\jmath_{[\bm\mu]}\colon \mathbf{J}^{\Phi-1}_{\bm\Theta}(\R^{\times k}\bm\mu)\hookrightarrow P$ and every $p\in \mathbf{J}^{\Phi-1}_{\bm\Theta}(\R^{\times k}\bm\mu)$.
\end{lemma}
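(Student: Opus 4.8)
The plan is to handle the two identities in \eqref{Eq::kKproperties} by elementary manipulation of intersections, and then to establish the inclusion $\T_p(K_{[\bm\mu]}p)\subseteq\ker\jmath_{[\bm\mu]}^*\bm\omega(p)$ by verifying, for each generator $\xi\in\mathfrak{k}_{[\bm\mu]}$, that the value $\xi_P(p)$ of the associated fundamental vector field lies in that kernel. Throughout I write $\widetilde{N}:=\mathbf{J}^{\Phi-1}_{\bm\Theta}(\R^{\times k}\bm\mu)$.

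For \eqref{Eq::kKproperties} I start from the definitions $\ker\bm\mu=\bigcap_{\alpha}\ker\mu^\alpha$ and $\mathfrak{g}_{[\bm\mu]}=\bigcap_{\alpha}\mathfrak{g}_{[\mu^\alpha]}$ (the $\R^k$-valued analogue of $\mathfrak{g}_{[\mu]}$ from Proposition \ref{Prop::Kgroup}). Commutativity of intersections then yields
\[
\mathfrak{k}_{[\bm\mu]}=\ker\bm\mu\cap\mathfrak{g}_{[\bm\mu]}=\Big(\bigcap_{\alpha}\ker\mu^\alpha\Big)\cap\Big(\bigcap_{\alpha}\mathfrak{g}_{[\mu^\alpha]}\Big)=\bigcap_{\alpha}\big(\ker\mu^\alpha\cap\mathfrak{g}_{[\mu^\alpha]}\big)=\bigcap_{\alpha}\mathfrak{k}_{[\mu^\alpha]}.
\]
The group identity follows from the subalgebra--subgroup correspondence: the connected integral subgroup $\bigcap_\alpha K_{[\mu^\alpha]}$ has Lie algebra $\bigcap_\alpha\mathfrak{k}_{[\mu^\alpha]}=\mathfrak{k}_{[\bm\mu]}$, and by uniqueness this subgroup is $K_{[\bm\mu]}$ (in the local setting it suffices to identify the identity components).

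For the inclusion I invoke Theorem \ref{Th::RedGen}, which gives $\ker\jmath_{[\bm\mu]}^*\bm\omega(p)=\T_p\widetilde{N}\cap(\T_p\widetilde{N})^{\perp_k}$, together with $\T_p(K_{[\bm\mu]}p)=\{\xi_P(p):\xi\in\mathfrak{k}_{[\bm\mu]}\}$. Thus it is enough to show that each such $\xi_P(p)$ is (a) tangent to $\widetilde{N}$ and (b) $k$-symplectically orthogonal to $\T_p\widetilde{N}$. For (a), since $\xi\in\mathfrak{g}_{[\mu^\alpha]}$ and $\mu^\alpha\neq 0$, the condition $\ad^*_\xi\mu^\alpha\wedge\mu^\alpha=0$ forces $\ad^*_\xi\mu^\alpha\in\R\mu^\alpha$; hence each line $\R\mu^\alpha$ is invariant under $\ad^*_\xi$, and therefore under the flow $t\mapsto\Ad^*_{\exp(-t\xi)}$, which carries $\R^\times\mu^\alpha$ into itself. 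By the $\Ad^{*k}$-equivariance of $\mathbf{J}^\Phi_{\bm\Theta}$ (Proposition \ref{Prop::AdEqkSym}), $K_{[\bm\mu]}$ preserves $\R^{\times k}\bm\mu$ and so maps $\widetilde{N}$ into itself; hence the orbit $K_{[\bm\mu]}p$ lies in $\widetilde{N}$ and $\xi_P(p)\in\T_p\widetilde{N}$.

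For (b) I use exactness. From $\Lie_{\xi_P}\bm\Theta=0$ and $\iota_{\xi_P}\bm\Theta=\langle\mathbf{J}^\Phi_{\bm\Theta},\xi\rangle$ one obtains, componentwise, $\iota_{\xi_P}\omega^\alpha=-\d\langle\mathbf{J}^\Phi_{\bm\Theta,\alpha},\xi\rangle$. Given $v\in\T_p\widetilde{N}$, applying the tangent map to the restriction $\mathbf{J}^\Phi_{\bm\Theta}\colon\widetilde{N}\to\R^{\times k}\bm\mu$ shows $\T_p\mathbf{J}^\Phi_{\bm\Theta,\alpha}(v)\in\R\mu^\alpha$, say $\T_p\mathbf{J}^\Phi_{\bm\Theta,\alpha}(v)=s_\alpha\mu^\alpha$, whence
\[
\omega^\alpha(\xi_P(p),v)=-\langle\T_p\mathbf{J}^\Phi_{\bm\Theta,\alpha}(v),\xi\rangle=-s_\alpha\langle\mu^\alpha,\xi\rangle=0,
\]
since $\xi\in\ker\bm\mu\subseteq\ker\mu^\alpha$. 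As this holds for every $\alpha$, we get $\xi_P(p)\in(\T_p\widetilde{N})^{\perp_k}$, and together with (a) this gives $\xi_P(p)\in\ker\jmath_{[\bm\mu]}^*\bm\omega(p)$. The hard part will be step (a): the line-invariance extracted from the wedge condition is exactly what lets $K_{[\bm\mu]}$ preserve the \emph{enlarged} set $\widetilde{N}$ rather than only the level set $\mathbf{J}^{\Phi-1}(\bm\mu)$ preserved by $G_{\bm\mu}$, and this is the feature that distinguishes the present reduction from the standard one.
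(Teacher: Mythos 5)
Your proof is correct and follows essentially the same route as the paper's: the identities in \eqref{Eq::kKproperties} are obtained by intersecting componentwise, tangency of $\xi_P(p)$ to $\mathbf{J}^{\Phi-1}_{\bm\Theta}(\R^{\times k}\bm\mu)$ comes from $\ad^{*k}_\xi$ preserving each ray $\R\mu^\alpha$ (the paper differentiates the equivariance at $t=0$, you integrate it to flow-invariance of the preimage — an equivalent, if slightly more explicit, argument), and the orthogonality is the same computation $\omega^\alpha(\xi_P(p),v)=-\langle\T_p\mathbf{J}^\Phi_{\bm\Theta\,\alpha}(v),\xi\rangle=0$ using $\xi\in\ker\bm\mu$. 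No gaps.
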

\begin{proof}
    The first statement in \eqref{Eq::kKproperties} follows from Definition \ref{Def::symMomentumMap} and the fact that if $g\in K_{[\bm\mu]}$, then $g\in K_{[\mu^\alpha]}$ for $\alpha=1,\ldots,k$ and vice versa. The identity for the Lie algebra then follows. Second, for any $\xi\in\mathfrak{k}_{[\bm\mu]}$, one has
    \[
    \T\mathbf{J}^\Phi_{\bm\Theta}(\xi_P)=\frac{\d}{\d t}\bigg|_{t=0}\mathbf{J}^\Phi_{\bm\Theta}\circ \Phi_{\exp(t\xi)}=\frac{\d}{\d t}\bigg|_{t=0}\Ad^{*k}_{\exp(-t\xi)}\mathbf{J}^\Phi_{\bm\Theta}=-\ad^{*k}_\xi\mathbf{J}^\Phi=-(\lambda_1\mathbf{J}_{\bm\Theta 1}^\Phi,\ldots,\lambda_k\mathbf{J}_{\bm\Theta k}^\Phi)
    \]
for some $\lambda_1,\ldots,\lambda_k\in\R$. This yields that $\T_p(K_{[\bm\mu]}p)\subseteq \T_p(\mathbf{J}^{\Phi-1}_{\bm\Theta}(\mathbb{R}^{\times k}\bm\mu))$ for any $p\in \mathbf{J}^{\Phi-1}_{\bm\Theta}(\R^{\times k}\bm\mu)$. Next, for any $v_p\in\T_p(\mathbf{J}^{\Phi-1}_{\bm\Theta}(\mathbb{R}^{\times k}\bm\mu))$, one has,  for some $\tilde{\lambda}_1,\ldots,\tilde{\lambda}_n\in\R$, that
\[
(\jmath^*_{[\bm\mu]}\bm\omega)(p)(v_p,\xi_{\mathbf{J}^{\Phi-1}_{\bm\Theta}(\R^{\times k}\bm\mu)}(p))=\bm\omega(p)(\T_p\jmath_{[\bm\mu]}v_p,\xi_P(p))=\langle \T_p\mathbf{J}_{\bm\Theta}^\Phi(v_p),\xi\rangle=\langle \tilde{\lambda}_\alpha\mu^\alpha\oplus e_\alpha,\xi\rangle=0,
\]
for any $\xi\in\mathfrak{k}_{[\bm\mu]}$ and $p\in\mathbf{J}^{\Phi-1}_{\bm\Theta}(\R^{\times k}\bm\mu)$, where we denoted by $v_p$ both a vector $v_p\in\T_p\mathbf{J}_{\bm\Theta}^{\Phi-1}(\R^\times\bm\mu)$ and its induced $v_p\in \T_pP$. Therefore, $\T_p(K_{[\bm\mu]}p)\subseteq \ker(\jmath^*_{[\bm\mu]}\bm\omega)(p)$ for every $p\in\mathbf{J}_{\bm\Theta}^{\Phi-1}(\R^\times\bm\mu)$.

\end{proof}
Therefore, Lemma \ref{Lemm::SubgroupinJ} and  Theorem \ref{Th::RedGen}, taking $S=\mathbf{J}^{\Phi-1}_{\bm\Theta}(\R^{\times k}\bm\mu)$, show that
\[
\T_p(K_{[\bm\mu]}p)\subseteq \ker(\jmath^*_{[\bm\mu]}\bm\omega)(p)=\T_p(\mathbf{J}_{\bm\Theta}^{\Phi-1}(\R^{\times k}\bm\mu))\cap\left(\T_p(\mathbf{J}_{\bm\Theta}^{\Phi-1}(\R^{\times k}\bm\mu))\right)^{\perp_k},\quad \forall p\in\mathbf{J}_{\bm\Theta}^{\Phi-1}(\R^{\times k}\bm\mu).
\]
In general, the converse does not hold, as detailed in \cite{MRSV_15}. Consequently, analogously to \cite{MRSV_15}, one may ask under which conditions $\T_p(K_{[\bm\mu]}p)= \ker(\jmath^*_{[\bm\mu]}\bm\omega)(p)$ for any $p\in\mathbf{J}_{\bm\Theta}^{\Phi-1}(\R^\times\bm\mu)$. The answer follows essentially as in \cite{MRSV_15}, but with a significant difference given in Lemma \ref{Lemm::SymplecticLemma}. 

Recall that, if $k=1$, then $(P,\omega)$ is a symplectic manifold. Now, let us assume that $(P,\omega=\d\Theta)$ is an exact symplectic manifold.
\begin{lemma}
\label{Lemm::SymplecticLemma}
    Let $(P,\Theta)$ be an exact symplectic manifold and let $G_{[\mu]}=\{g\!\in\! G\!\mid\!\Ad^*_{g^{-1}}\mu\wedge\mu=0\}$. Assume that $J^\Phi_{\Theta}(p)=\mu\in\mathfrak{g}^*$ is a weak regular value of an exact symplectic momentum map $J_{\Theta}^\Phi\colon  P\rightarrow \mathfrak{g}^*$ associated with a Lie group action $\Phi\colon  G\times P\rightarrow P$. Then, for any $p\in {J}_{\Theta}^{\Phi-1}(\R^\times\mu)$, one has
    \begin{enumerate}[{\rm(1)}]
            \item $\T_p(G_{[\mu]}p)=\T_p(Gp)\cap\T_p({J}_{\Theta}^{\Phi-1}(\R^\times\mu))$,
            \item $\T_p({J}_{\Theta}^{\Phi-1}(\R^\times\mu))=\left(\T_p(Gp)\cap\ker\Theta_p\right)^{\perp_\omega}$,
    \end{enumerate}
    where $^{\perp_\omega}$ denotes the symplectic orthogonal.
\end{lemma}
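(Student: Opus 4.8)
The plan is to reduce everything to the two standard infinitesimal identities for a momentum map and to locate the single place where exactness is indispensable. Fix $p\in J^{\Phi-1}_\Theta(\R^\times\mu)$ and write $J^\Phi_\Theta(p)=\lambda_0\mu$ with $\lambda_0\in\R^\times$; we may assume $\mu\neq 0$, as otherwise $\R^\times\mu=\emptyset$. First I would record the basic relations. Since $\Lie_{\xi_P}\Theta=0$ and $\omega=\d\Theta$, Cartan's formula gives $\iota_{\xi_P}\omega=-\d\langle J^\Phi_\Theta,\xi\rangle$, whence $\langle\T_pJ^\Phi_\Theta(v),\xi\rangle=\omega_p(v,\xi_P(p))$ for all $v\in\T_pP$ and $\xi\in\mathfrak g$; this yields the classical equality $\ker\T_pJ^\Phi_\Theta=\T_p(Gp)^{\perp_\omega}$. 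Differentiating the $\Ad^*$-equivariance $J^\Phi_\Theta\circ\Phi_g=\Ad^*_{g^{-1}}\circ J^\Phi_\Theta$ at $g=e$ gives $\T_pJ^\Phi_\Theta(\xi_P(p))=-\ad^*_\xi J^\Phi_\Theta(p)=-\lambda_0\ad^*_\xi\mu$. Finally I would introduce the unique $Z_p\in\T_pP$ with $\iota_{Z_p}\omega_p=\Theta_p$. Because $\Theta_p(\xi_P(p))=\langle J^\Phi_\Theta(p),\xi\rangle=\lambda_0\langle\mu,\xi\rangle$ is nonzero for suitable $\xi$, one has $\Theta_p\neq0$ and hence $(\ker\Theta_p)^{\perp_\omega}=\R Z_p$; moreover, combining the two identities and the defining property $\iota_{\xi_P}\Theta=\langle J^\Phi_\Theta,\xi\rangle$ of the exact momentum map,
\[
\langle\T_pJ^\Phi_\Theta(Z_p),\xi\rangle=\omega_p(Z_p,\xi_P(p))=\Theta_p(\xi_P(p))=\langle\lambda_0\mu,\xi\rangle\,,
\]
so that $\T_pJ^\Phi_\Theta(Z_p)=\lambda_0\mu\neq0$. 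In particular $Z_p\notin\ker\T_pJ^\Phi_\Theta$ and $\mu\in\Ima\T_pJ^\Phi_\Theta$.

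Next I would pin down the tangent space of the ray-preimage. Differentiating a curve in $J^{\Phi-1}_\Theta(\R^\times\mu)$ shows $\T_pJ^{\Phi-1}_\Theta(\R^\times\mu)\subseteq(\T_pJ^\Phi_\Theta)^{-1}(\R\mu)$; conversely, given $v$ on the left with $\T_pJ^\Phi_\Theta(v)=a\mu$, the vector $v-(a/\lambda_0)Z_p$ lies in $\ker\T_pJ^\Phi_\Theta$, so $\T_pJ^{\Phi-1}_\Theta(\R^\times\mu)\subseteq\ker\T_pJ^\Phi_\Theta+\R Z_p$. Since $\ker\T_pJ^\Phi_\Theta=\T_pJ^{\Phi-1}_\Theta(\lambda_0\mu)$ is contained in $\T_pJ^{\Phi-1}_\Theta(\R^\times\mu)$ and the latter fibres over $\R^\times\mu$ with level sets as fibres, it has dimension $\dim\ker\T_pJ^\Phi_\Theta+1$, forcing the equality $\T_pJ^{\Phi-1}_\Theta(\R^\times\mu)=\ker\T_pJ^\Phi_\Theta+\R Z_p=(\T_pJ^\Phi_\Theta)^{-1}(\R\mu)$, where the weak-regularity/submanifold hypotheses are used for the dimension count. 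Part (1) is then immediate: a vector $\xi_P(p)\in\T_p(Gp)$ lies in $\T_pJ^{\Phi-1}_\Theta(\R^\times\mu)$ iff $\T_pJ^\Phi_\Theta(\xi_P(p))=-\lambda_0\ad^*_\xi\mu\in\R\mu$, i.e. iff $\ad^*_\xi\mu\wedge\mu=0$, i.e. iff $\xi\in\mathfrak g_{[\mu]}$. As $G_{[\mu]}$ is the closed subgroup stabilising the line $\R\mu$ under the coadjoint action, its Lie algebra is exactly $\mathfrak g_{[\mu]}$ and $\T_p(G_{[\mu]}p)=\{\xi_P(p)\mid\xi\in\mathfrak g_{[\mu]}\}$, giving $\T_p(G_{[\mu]}p)=\T_p(Gp)\cap\T_pJ^{\Phi-1}_\Theta(\R^\times\mu)$.

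It then remains to translate the tangent-space formula into the orthogonality statement of part (2):
\[
\T_pJ^{\Phi-1}_\Theta(\R^\times\mu)=\T_p(Gp)^{\perp_\omega}+(\ker\Theta_p)^{\perp_\omega}=\bigl(\T_p(Gp)\cap\ker\Theta_p\bigr)^{\perp_\omega}\,,
\]
where the first equality is the previous paragraph together with $\ker\T_pJ^\Phi_\Theta=\T_p(Gp)^{\perp_\omega}$ and $\R Z_p=(\ker\Theta_p)^{\perp_\omega}$, and the last is the identity $A^{\perp_\omega}+B^{\perp_\omega}=(A\cap B)^{\perp_\omega}$, valid because $\omega_p$ is nondegenerate. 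The main obstacle is the middle step of identifying the single extra dimension of $\T_pJ^{\Phi-1}_\Theta(\R^\times\mu)$ over the level-set tangent space $\ker\T_pJ^\Phi_\Theta$: one must produce this direction explicitly as $Z_p$ and recognise it as $(\ker\Theta_p)^{\perp_\omega}$. This is precisely where exactness is indispensable, since without the relation $\iota_{\xi_P}\Theta=\langle J^\Phi_\Theta,\xi\rangle$ one cannot identify $\T_pJ^\Phi_\Theta(Z_p)$ with a multiple of $\mu$; this is the \emph{significant difference} from the classical Abraham--Marsden lemma, where the replacement of $G_\mu$ by $G_{[\mu]}$ and of $J^{-1}(\mu)$ by $J^{-1}(\R^\times\mu)$ is compensated exactly by the appearance of the factor $\ker\Theta_p$.
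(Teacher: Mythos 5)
Your proof is correct, and for part (2) it takes a genuinely different route from the paper's. Part (1) is essentially the paper's argument: both reduce membership of $\xi_P(p)$ in $\T_p(J^{\Phi-1}_\Theta(\R^\times\mu))$ to the condition $\ad^*_\xi\mu\wedge\mu=0$ via $\T_pJ^\Phi_\Theta(\xi_P(p))=-\ad^*_\xi J^\Phi_\Theta(p)$. For part (2) the paper argues directly: it first notes that $\T_p(Gp)\cap\ker\Theta_p=\{\xi_P(p)\mid\xi\in\ker\mu\}$ (this is where exactness enters, through $\iota_{\xi_P}\Theta=\langle J^\Phi_\Theta,\xi\rangle$), and then verifies both inclusions by unwinding $\omega_p(v,\xi_P(p))=\langle\T_pJ^\Phi_\Theta(v),\xi\rangle$ for $\xi\in\ker\mu$, so that $v$ is $\omega$-orthogonal to that subspace iff $\T_pJ^\Phi_\Theta(v)\in(\ker\mu)^\circ=\R\mu$. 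You instead decompose $\T_p(J^{\Phi-1}_\Theta(\R^\times\mu))=\ker\T_pJ^\Phi_\Theta+\R Z_p$ with $Z_p$ the $\omega$-dual of $\Theta_p$, identify the two summands as $\T_p(Gp)^{\perp_\omega}$ and $(\ker\Theta_p)^{\perp_\omega}$, and conclude from $A^{\perp_\omega}+B^{\perp_\omega}=(A\cap B)^{\perp_\omega}$. Your version is longer but makes explicit the single extra dimension over the level set and its geometric meaning (the Euler/Liouville direction), which the paper leaves implicit here and only surfaces later, in Section \ref{Sec:Correction}, when it writes $\T_pJ^{\widetilde\Phi-1}_\Theta(\R^\times\mu)=\T_pJ^{\widetilde\Phi-1}_\Theta(\mu')\oplus\langle\nabla_p\rangle$. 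Both proofs rely on the identification $\T_p(J^{\Phi-1}_\Theta(\R^\times\mu))=(\T_pJ^\Phi_\Theta)^{-1}(\R\mu)$ coming from the weak-regularity hypotheses; the paper uses it tacitly, while you at least justify it by a dimension count. One trivial slip: $\R^\times\cdot 0=\{0\}$, not $\emptyset$, so the case $\mu=0$ is not vacuous; it is, however, degenerate ($G_{[0]}=G$, $\ker 0=\mathfrak g$), both identities there reduce to weak regularity, and your main argument genuinely needs $\mu\neq 0$ only to have $Z_p\notin\ker\T_pJ^\Phi_\Theta$, so nothing essential is lost.
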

\begin{proof}\phantom{.}
\begin{enumerate}[{\rm(1)}]
        \item First, for any $\xi\in \mathfrak{g}$, one has that 
        \begin{equation}
        \label{Eq::LemmaProof}
        \T_pJ_{\Theta}^{\Phi}(\xi_P(p))=\frac{\d}{\d t}\bigg|_{t=0}\!\!{J}_{\Theta}^\Phi\!\circ\! \Phi_{\exp(t\xi)}(p)=\frac{\d}{\d t}\bigg|_{t=0}\left(\Ad^*_{\exp(-t\xi) } {J}_{\Theta}^\Phi\right)(p)=-\ad^*_{\xi}\mu.
        \end{equation}
        Let us prove that $\T_p(G_{[\mu]}p)\subset \T_p(J_{\Theta}^{\Phi-1}(\mathbb{R}^\times \mu))$. Using the above equality, for $\xi_P(p)\in \T_p\left(G_{[\mu]}p\right)$, i.e. $\xi\in\mathfrak{g}_{[\mu]}$,  it follows from \eqref{Eq::LemmaProof} that
        \[
        0=\mu\wedge\ad^*_\xi\mu=-\mu\wedge \T_pJ_\Theta^\Phi(\xi_P(p)),
        \]
        and hence $\xi_P(p)\in \T_p{J}_{\Theta}^{\Phi-1}(\R^\times\mu)$.

        Let us now show that  $\T_p(G_{[\mu]}p)\supset \T_pJ_{\Theta}^{\Phi-1}(\mathbb{R}^\times \mu)\cap \T_p(Gp)$. If $v\in \T_p(Gp)\cap\T_p{J}_{\Theta}^{\Phi-1}(\R^\times\mu)$, then $v=\xi_P(p)\in\T_p{J}_{\Theta}^{\Phi-1}(\R^\times\mu)$ for some $\xi\in\mathfrak{g}$ and, by \eqref{Eq::LemmaProof}, one gets
        \[
        0=\mu\wedge\T_p{J}_\Theta^\Phi(\xi_P(p))=-\mu\wedge\ad^*_\xi\mu.
        \]
        Therefore, $v=\xi_P(p)\in\T_p(G_{[\mu]}p)$. This proves (1).
        
        \item  Recall that, by Definition \ref{Def::MomentumMap1homo}, if $\xi_P(p)\in \ker\Theta_p$ for some $p\in J_{\Theta}^{\Phi-1}(\R^\times\mu)$, then  $\xi\in \ker\mu$. Let $v\in \left(\T_p(Gp)\cap\ker\Theta_p\right)^{\perp_\omega}$. Then,
        \[
        0=\iota_v(\iota_{\xi_P}\omega)_p=-\iota_v\d\langle J^\Phi_\Theta,\xi\rangle(p)=\langle \T_p{J}_\Theta^\Phi(v),\xi\rangle,\qquad \forall\xi\in \ker \mu\,. 
        \]
        Hence, $v\in\T_p{J}_\Theta^{\Phi-1}(\R^\times\mu)$.

        Conversely, let $v\in \T_p{J}_\Theta^{\Phi-1}(\R^\times\mu)$. Then, for every $\xi\in\ker\mu$, one has
        \[
        0=\langle\T_p{J}_\Theta^\Phi(v),\xi\rangle=-\iota_v(\iota_{\xi_P}\omega)_p
        \]
        and $v\in \left(\T_p(Gp)\cap\ker\Theta_p\right)^{\perp_\omega}$.
    \end{enumerate}
\end{proof}

Let $(P,\bm\Theta)$ be an exact
$k$-symplectic manifold and let $\bm\mu\in (\mathfrak{g}^*)^k$ be a weak regular $k$-value of an exact $k$-symplectic momentum $\mathbf{J}^\Phi_{\bm\Theta}\colon P\rightarrow (\mathfrak{g}^*)^k$ associated with an exact $k$-symplectic Lie group action $\Phi\colon G\times P\rightarrow P$ that acts in a quotientable manner on $\mathbf{J}_{\bm\Theta}^{\Phi-1}(\R^{\times k}\bm\mu)$. Using Lemma \ref{Lemm::SymplecticLemma}, let us provide conditions under which equality $\T_p(K_{[\bm\mu]}p)= \ker(\jmath^*_{[\bm\mu]}\bm\omega)(p)$ holds for every $p\in \mathbf{J}^{\Phi-1}_{\bm\Theta}(\R^{\times k}\bm\mu)$. The proof consists of two steps. 
\begin{enumerate}[{\rm(1)}]
    \item The vector space
    \[
    V^p_\alpha:=\frac{\left(\frac{\T_p(\mathbf{J}^{\Phi-1}_{\bm\Theta\alpha}(\R^\times\mu^\alpha))}{\ker \omega^\alpha_p}\right)}{\left\{[\xi_P(p)]\,\mid\,\xi\in\mathfrak{k}_{[\mu^\alpha]}\right\}},
    \]
    is a symplectic vector space, where $\pr^P_\alpha\colon \T P\rightarrow \frac{\T P}{\ker\omega^\alpha}$ is the canonical vector bundle projection (over the base $P$) and $[\xi_P(p)]:=\pr_\alpha^P(\xi_P(p))$.
    \item The linear surjective morphisms
    \[
    \Pi^\alpha_p\colon \T_{\pi_{[\bm\mu]}(p)}\left(\mathbf{J}_{\bm\Theta}^{\Phi-1}(\mathbb{R}^{\times k}\bm\mu)/K_{[\bm\mu]}\right)\longrightarrow \frac{\left(\frac{\T_p(\mathbf{J}^{\Phi-1}_{\bm\Theta\alpha}(\R^\times\mu^\alpha))}{\ker \omega^\alpha_p}\right)}{\left\{[\xi_P(p)]\,\mid\,\xi\in\mathfrak{k}_{[\mu^\alpha]}\right\}}\,,\qquad\alpha=1,\ldots,k\,,
    \]
    satisfy $\bigcap^k_{\alpha=1}\ker\Pi^\alpha_p=0$, where $\pi_{[\bm\mu]}\colon \mathbf{J}_{\bm\Theta}^{\Phi-1}(\mathbb{R}^{\times k}\bm\mu)\rightarrow \mathbf{J}_{\bm\Theta}^{\Phi-1}(\mathbb{R}^{\times k}\bm\mu)/K_{[\bm\mu]}$ is the canonical projection.
\end{enumerate}

Assuming that the above steps are fulfilled, the following immediate lemma implies that $\ker(\jmath^*_{[\bm\mu]}\bm\omega)(p)=\T_p(K_{[\bm\mu]}p)$ and $\mathbf{J}_{\bm\Theta}^{\Phi-1}(\mathbb{R}^{\times k}\bm\mu)/K_{[\bm\mu]}$ is an exact $k$-symplectic manifold.

\begin{lemma}
\label{Lemm::PolysymVA}
    Let $\pi_\alpha\colon V_p\rightarrow V^p_\alpha$ be surjective linear morphisms and let $(V^p_\alpha,\d\Theta^\alpha(p))$ be symplectic vector spaces for $\alpha=1,\ldots,k$. If $\cap^k_{\alpha=1}\ker \pi_\alpha=0$, then $(V_p,\d\bm\Theta(p)=\sum^k_{\alpha=1}(\d\pi_\alpha^*\Theta^\alpha)(p)\otimes e_\alpha)$ is a $k$-symplectic vector space.
\end{lemma}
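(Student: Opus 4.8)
The plan is to reduce the claim to a single linear-algebraic statement about the kernels of the pulled-back forms. By the definition of a $k$-symplectic form, the pair $(V_p,\d\bm\Theta(p))$ is $k$-symplectic precisely when the $\R^k$-valued two-form $\d\bm\Theta(p)=\sum_{\alpha=1}^k(\pi_\alpha^*\d\Theta^\alpha(p))\otimes e_\alpha$ is closed and nondegenerate. Since $V_p$ is a vector space and $\d\bm\Theta(p)$ has constant coefficients, closedness is automatic; so the entire content lies in verifying nondegeneracy, namely that $\bigcap_{\alpha=1}^k\ker\bigl(\pi_\alpha^*\d\Theta^\alpha(p)\bigr)=0$.

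The key step I would carry out is a componentwise computation of these kernels. Writing $\omega^\alpha:=\d\Theta^\alpha(p)$ for the nondegenerate skew form on $V^p_\alpha$, I would prove that $\ker(\pi_\alpha^*\omega^\alpha)=\ker\pi_\alpha$ for each $\alpha$. The inclusion $\ker\pi_\alpha\subseteq\ker(\pi_\alpha^*\omega^\alpha)$ is immediate, since $(\pi_\alpha^*\omega^\alpha)(v,w)=\omega^\alpha(\pi_\alpha(v),\pi_\alpha(w))$ vanishes whenever $\pi_\alpha(v)=0$. For the converse, if $v\in\ker(\pi_\alpha^*\omega^\alpha)$, then $\omega^\alpha(\pi_\alpha(v),\pi_\alpha(w))=0$ for all $w\in V_p$; here the surjectivity of $\pi_\alpha$ lets $\pi_\alpha(w)$ range over all of $V^p_\alpha$, and the nondegeneracy of $\omega^\alpha$ then forces $\pi_\alpha(v)=0$, that is, $v\in\ker\pi_\alpha$.

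With this identity in hand the conclusion follows at once: $\bigcap_{\alpha=1}^k\ker(\pi_\alpha^*\omega^\alpha)=\bigcap_{\alpha=1}^k\ker\pi_\alpha=0$ by hypothesis, so $\d\bm\Theta(p)$ is nondegenerate and $(V_p,\d\bm\Theta(p))$ is a $k$-symplectic vector space.

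The only delicate point — and the place where I expect the two hypotheses to be used essentially — is the equality $\ker(\pi_\alpha^*\omega^\alpha)=\ker\pi_\alpha$: surjectivity of $\pi_\alpha$ is exactly what lets one pass from vanishing against vectors of the form $\pi_\alpha(w)$ to vanishing against all of $V^p_\alpha$, while nondegeneracy of $\omega^\alpha$ is what converts the latter into $\pi_\alpha(v)=0$. Neither assumption can be dropped. Beyond taking care that every manipulation is purely pointwise and linear, so that no genuine closedness condition intervenes, I anticipate no real obstacle.
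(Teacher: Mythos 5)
Your proof is correct, and since the paper declares this lemma ``immediate'' and supplies no argument of its own, your componentwise identification $\ker(\pi_\alpha^*\omega^\alpha)=\ker\pi_\alpha$ (using surjectivity of $\pi_\alpha$ and nondegeneracy of $\omega^\alpha$) is exactly the intended reasoning. Closedness is indeed automatic for a constant-coefficient form on a vector space, so reducing everything to the nondegeneracy condition $\bigcap_{\alpha=1}^k\ker(\pi_\alpha^*\omega^\alpha)=0$ is the right move.
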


Let us move on to the proofs of the aforementioned steps. They essentially follow the same ideas as in \cite{MRSV_15}. The diagram in Figure \ref{fig:diagramRed} helps us illustrate the process.
\begin{figure}
    \centering
    \begin{center}
\begin{tikzcd}
    \left(\T_p(\mathbf{J}^{\Phi-1}_{\bm\Theta\alpha}(\R^\times\mu^\alpha)),\omega_{\mathbf{J}^\Phi_{\bm\Theta\alpha}}(p)\right) \arrow[dd,"\pr^{\mathbf{J}_{\bm\Theta\alpha}^\Phi}"]\arrow[rr,hook,"\jmath^\alpha_p"]& & \left(\T_pP,\omega^\alpha(p)\right)\arrow[dd,"\pr^P_\alpha"]\\
    & & \\
    \left(\frac{\T_p(\mathbf{J}^{\Phi-1}_{\bm\Theta\alpha}(\mathbb{R}^\times \mu^\alpha))}{\ker\omega^\alpha_p},\widetilde{\omega_{\mathbf{J}^\Phi_{\bm\Theta\alpha}}(p)}\right)\arrow[dd,"\widetilde{\pr^\alpha}"]\arrow[rr,hook,"\widetilde{\jmath^\alpha_p}"]& & \left(\frac{\T_pP}{\ker\omega^\alpha_p},\widetilde{\omega^\alpha(p)}\right)\\
    & & \\
    \left(\frac{\left(\frac{\T_p(\mathbf{J}^{\Phi-1}_{\bm\Theta\alpha}(\R^\times\mu^\alpha))}{\ker \omega^\alpha_p}\right)}{\left\{[\xi_P(p)]\,\mid\,\xi\in\mathfrak{k}_{[\mu^\alpha]}\right\}},\omega_{[\mu^\alpha]}(p)\right).
\end{tikzcd}
\end{center}
    \caption{Diagram illustrating part of the processes to accomplish a $k$-symplectic MMW reduction. Note that $\omega_{{\bf J}^{\Phi}_{\bm\theta\alpha}}(p):=(j^\alpha_p)^*\omega^\alpha(p)$ and $\widetilde{\omega_{{\bf J}^{\Phi}_{\bm\Theta\alpha}}(p)}=\widetilde{(j^\alpha_p)^*}\widetilde{\omega^\alpha(p)}$, while $\widetilde{\omega_{{\bf J}^{\Phi}_{\bm\theta\alpha}}(p)}=\widetilde{{\rm pr}^\alpha}^*\omega_{[\mu^\alpha]}(p)$ and $\omega_{{\bf J}^{\Phi}_\alpha}(p)=({\rm pr}^{{\bf J}^\Phi_{\bm\Theta\alpha}})^*\widetilde{\omega_{{\bf J}^{\Phi}_{\bm\Theta\alpha}}(p)}$ }
    \label{fig:diagramRed}
\end{figure}

The following lemma is immediate.
\begin{lemma}
\label{Lemm::QSym}
    Let $(P,\bm\Theta)$ be an exact $k$-symplectic manifold, then there exists a unique symplectic linear form $\widetilde{\omega^\alpha(p)}=\d\widetilde{\Theta^\alpha(p)}$ on $\frac{\T_pP}{\ker\omega^\alpha(p)}$ satisfying
    \[
    \left(\pr^P_\alpha\right)^*\widetilde{\omega^\alpha(p)}=\left(\pr^P_\alpha\right)^*\widetilde{\d\Theta^\alpha(p)}=\d\Theta^\alpha(p)=\omega^\alpha(p),\qquad \forall p\in P\,.
    \]
    Moreover, there exists $\widetilde{\omega_{\mathbf{J}^\Phi_{\bm\Theta\alpha}}(p)}\in\Omega^2\left(\frac{\T_p(\mathbf{J}^{\Phi-1}_{\bm\Theta\alpha}(\R^\times\mu^\alpha))}{\ker\omega^\alpha_p}\right)$ such that
    \[
    \left(\pr^{\mathbf{J}^\Phi_{\bm\Theta\alpha}}\right)^*\widetilde{\omega_{\mathbf{J}^\Phi_{\bm\Theta\alpha}}(p)}=\omega_{\mathbf{J}^\Phi_{\bm\Theta\alpha}}(p)\,,\qquad \widetilde{\omega_{\mathbf{J}^\Phi_{\bm\Theta\alpha}}(p)}=\left(\widetilde{\jmath^\alpha_p}\right)^*\widetilde{\omega^\alpha(p)}\,,\qquad \forall p\in \mathbf{J}^{\Phi-1}_{\bm\Theta}(\R^{\times k}{\bm\mu})\,.
    \]
\end{lemma}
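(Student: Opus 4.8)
The plan is to read the statement as a fibrewise exercise in linear presymplectic reduction, performed at each $p$ and for each fixed $\alpha\in\{1,\ldots,k\}$, the only structural input being that $\omega^\alpha(p)=\d\Theta^\alpha(p)$ is a skew-symmetric bilinear form on $\T_pP$ whose radical is exactly $\ker\omega^\alpha(p)$, of constant rank by our standing convention on closed forms.

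First I would produce $\widetilde{\omega^\alpha(p)}$ on $\T_pP/\ker\omega^\alpha(p)$ by the standard prescription $\widetilde{\omega^\alpha(p)}\big(\pr^P_\alpha(u),\pr^P_\alpha(v)\big):=\omega^\alpha(p)(u,v)$. This is well defined because altering $u$ or $v$ by an element of $\ker\omega^\alpha(p)$ does not change the right-hand side, and it is nondegenerate since $\widetilde{\omega^\alpha(p)}\big(\pr^P_\alpha(u),\cdot\big)=0$ forces $u\in\ker\omega^\alpha(p)$, i.e. $\pr^P_\alpha(u)=0$; thus $\widetilde{\omega^\alpha(p)}$ is a linear symplectic form. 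The identity $(\pr^P_\alpha)^*\widetilde{\omega^\alpha(p)}=\omega^\alpha(p)$ is built into the definition, and uniqueness is forced by surjectivity of $\pr^P_\alpha$, which makes $(\pr^P_\alpha)^*$ injective on bilinear forms. The clause $\widetilde{\omega^\alpha(p)}=\d\widetilde{\Theta^\alpha(p)}$ only records that the reduced form carries a potential; on the vector space $\T_pP/\ker\omega^\alpha(p)$ such a potential always exists, while the genuine exactness of the reduced structure is the one established later at the bundle level through the reduction by $K_{[\bm\mu]}$, so no separate pointwise argument is needed here.

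For the second assertion the decisive preliminary point is that the lower quotient in Figure \ref{fig:diagramRed} is legitimate, i.e. that $\ker\omega^\alpha(p)\subseteq\T_p\big(\mathbf{J}^{\Phi-1}_{\bm\Theta\alpha}(\R^\times\mu^\alpha)\big)$. I would obtain this from the exact momentum-map relation of Definition \ref{Def::MomentumMap1homo}, namely $\iota_{\xi_P}\omega^\alpha=-\d\langle\mathbf{J}^\Phi_{\bm\Theta\alpha},\xi\rangle$: for $v\in\ker\omega^\alpha(p)$ and every $\xi\in\mathfrak{g}$ one has $\langle\T_p\mathbf{J}^\Phi_{\bm\Theta\alpha}(v),\xi\rangle=\big(\d\langle\mathbf{J}^\Phi_{\bm\Theta\alpha},\xi\rangle\big)(v)=-(\iota_{\xi_P}\omega^\alpha)(v)=\omega^\alpha(p)(v,\xi_P(p))=0$, whence $\T_p\mathbf{J}^\Phi_{\bm\Theta\alpha}(v)=0$ and $v$ is tangent to the level set. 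With this in hand, repeating the descent construction for $\omega_{\mathbf{J}^\Phi_{\bm\Theta\alpha}}(p)=(\jmath^\alpha_p)^*\omega^\alpha(p)$ defines $\widetilde{\omega_{\mathbf{J}^\Phi_{\bm\Theta\alpha}}(p)}$ through $(\pr^{\mathbf{J}^\Phi_{\bm\Theta\alpha}})^*\widetilde{\omega_{\mathbf{J}^\Phi_{\bm\Theta\alpha}}(p)}=\omega_{\mathbf{J}^\Phi_{\bm\Theta\alpha}}(p)$, exactly as before. The remaining identity $\widetilde{\omega_{\mathbf{J}^\Phi_{\bm\Theta\alpha}}(p)}=(\widetilde{\jmath^\alpha_p})^*\widetilde{\omega^\alpha(p)}$ reduces to the commutativity $\widetilde{\jmath^\alpha_p}\circ\pr^{\mathbf{J}^\Phi_{\bm\Theta\alpha}}=\pr^P_\alpha\circ\jmath^\alpha_p$ of the outer square, which holds by definition of the induced maps; pulling $(\pr^P_\alpha)^*\widetilde{\omega^\alpha(p)}=\omega^\alpha(p)$ back along $\jmath^\alpha_p$ and using surjectivity of $\pr^{\mathbf{J}^\Phi_{\bm\Theta\alpha}}$ then gives the claim. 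The only genuinely non-formal step, and hence the main obstacle, is precisely the containment $\ker\omega^\alpha(p)\subseteq\T_p\big(\mathbf{J}^{\Phi-1}_{\bm\Theta\alpha}(\R^\times\mu^\alpha)\big)$ that makes the diagram well-posed; everything else is bookkeeping of the universal property of the quotient by the radical.
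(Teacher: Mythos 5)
Your proof is correct and follows essentially the same route as the paper: fibrewise linear reduction by the radical for the first claim, then descent of $(\jmath^\alpha_p)^*\omega^\alpha(p)$ and the commutativity $\widetilde{\jmath^\alpha_p}\circ\pr^{\mathbf{J}^\Phi_{\bm\Theta\alpha}}=\pr^P_\alpha\circ\jmath^\alpha_p$ for the second. You additionally supply, via the momentum-map identity $\iota_{\xi_P}\omega^\alpha=-\d\langle\mathbf{J}^\Phi_{\bm\Theta\alpha},\xi\rangle$, a proof of the containment $\ker\omega^\alpha(p)\subseteq\T_p\bigl(\mathbf{J}^{\Phi-1}_{\bm\Theta\alpha}(\R^\times\mu^\alpha)\bigr)$ that the paper merely asserts, which is a worthwhile addition.
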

\begin{proof}
    The first part of the Lemma is immediate. Now, by definition $\omega_{\mathbf{J}^\Phi_{\bm\Theta\alpha}}(p):=(\jmath^{\alpha*}_p\omega^\alpha)(p)\in \Omega^2(\T_p\mathbf{J}^{\Phi-1}_{\bm\Theta\alpha}(\R^\times\mu^\alpha))$ for $p\in\T_p\mathbf{J}^{\Phi-1}_{\bm\Theta\alpha}(\R^\times\mu^\alpha)$. Note that $\omega_{\mathbf{J}^\Phi_{\bm\Theta\alpha}}(p)$ is exact because $\widetilde{\omega^\alpha(p)}$ is exact. Since $\ker\omega^\alpha(p)\subseteq \T_p\mathbf{J}^{\Phi-1}_{\bm\Theta\alpha}(\R^\times\mu^\alpha)$, it follows that $\ker \omega^\alpha(p)\subseteq \ker\omega_{\mathbf{J}^\Phi_{\bm\Theta\alpha}}(p)$ and there exists, a therefore unique, $\widetilde{\omega_{\mathbf{J}^\Phi_{\bm\Theta\alpha}}(p)}\in\Omega^2\left(\frac{\T_p\mathbf{J}^{\Phi-1}_{\bm\Theta\alpha}(\R^\times\mu^\alpha)}{\ker\omega^\alpha_p}\right)$ satisfying that $(\pr^{\mathbf{J}^\Phi_{\bm\Theta\alpha}})^*\widetilde{\omega_{\mathbf{J}^\Phi_{\bm\Theta\alpha}}(p)}=\omega_{\mathbf{J}^\Phi_{\bm\Theta\alpha}}(p)$. Additionally, 
    \begin{align*}
        \omega_{\mathbf{J}^\Phi_{\bm\Theta\alpha}}(p) &=(\jmath^{\alpha*}_p\omega^\alpha)(p)=\jmath^{\alpha*}_p\pr^{P*}_\alpha\widetilde{\omega^\alpha(p)}=\left( \pr^P_\alpha\circ \,\jmath^\alpha_p\right)^*\widetilde{\omega^\alpha(p)}\\
        &=\left(\widetilde{\jmath^\alpha_p}\circ \pr^{\mathbf{J}^\Phi_{\bm\Theta\alpha}}\right)^*\widetilde{\omega^\alpha(p)}= \pr^{\mathbf{J}^\Phi_{\bm\Theta\alpha}*}\widetilde{\jmath^{\alpha}_p}^*\,\,\widetilde{\omega^\alpha(p)}\,.
    \end{align*}
    Therefore, $\widetilde{\omega_{\mathbf{J}^\Phi_{\bm\Theta\alpha}}(p)}=(\widetilde{\jmath^\alpha_p})^*\widetilde{\omega^\alpha(p)}$. 
\end{proof}
The next lemma is a direct consequence of Lemma \ref{Lemm::SymplecticLemma}, Lemma \ref{Lemm::QSym}, and, in the case of $\rm(2),\rm(3),\rm(4)$ the fact that $\omega^\alpha$ and $\mathbf{J}^\Phi_{\bm\Theta}$ are exact.
\begin{lemma}
\label{Lemm::SymplecticLemma2}
    For $\alpha=1,\ldots, k$ and $p\in\mathbf{J}^{\Phi-1}_{\bm\Theta\alpha}(\R^\times\mu^\alpha)$, one has
    \begin{enumerate}
        \item[{\rm(1)}] $\left\{ [\xi_P(p)]\,\mid\,\xi\in\mathfrak{g}_{[\mu^\alpha]}\right\}=\left\{ [\xi_P(p)]\,\mid\,\xi\in\mathfrak{g}\right\}\cap \frac{\T_p(\mathbf{J}^{\Phi-1}_{\bm\Theta\alpha}(\R^\times\mu^\alpha))}{\ker\omega^\alpha(p)}$,
    \end{enumerate}
    and if $\mathbf{J}^\Phi_{\bm\Theta}\colon P\rightarrow \mathfrak{g}^{*k}$ is an exact $k$-symplectic momentum map relative to $(P,\bm\Theta)$, the following conditions hold
    \begin{enumerate}
        \item[{\rm(2)}]
        $\frac{\T_p(\mathbf{J}^{\Phi-1}_{\bm\Theta\alpha}(\R^\times\mu^\alpha))}{\ker\omega^\alpha(p)}=\left(\left\{ [\xi_P(p)]\,\mid\,\xi\in\mathfrak{g}\right\}\cap\ker \widetilde{\Theta^\alpha(p)}\right)^{\perp_\alpha}$,
        \item[{\rm(3)}] $\left(\frac{\T_p(\mathbf{J}^{\Phi-1}_{\bm\Theta\alpha}(\R^\times\mu^\alpha))}{\ker\omega^\alpha(p)}\right)^{\perp_\alpha}=\left\{ [\xi_P(p)]\,\mid\,\xi\in\mathfrak{g}\right\}\cap\ker \widetilde{\Theta^\alpha(p)}$,
        \item[{\rm(4)}] $\left\{ [\xi_P(p)]\,\mid\,\xi\in\mathfrak{g}_{[\mu^\alpha]}\right\}=\frac{\T_p(\mathbf{J}^{\Phi-1}_{\bm\Theta\alpha}(\R^\times\mu^\alpha))}{\ker\omega^\alpha(p)}\cap\left(\frac{\T_p(\mathbf{J}^{\Phi-1}_{\bm\Theta\alpha}(\R^\times\mu^\alpha))}{\ker\omega^\alpha(p)}\right)^{\perp_\alpha}$,
    \end{enumerate}
    where $\perp_\alpha$ denotes the symplectic orthogonal in $\frac{\T_pP}{\ker\omega^\alpha_p}$ with respect to $\widetilde{\omega^\alpha(p)}$.
\end{lemma}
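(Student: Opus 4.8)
The plan is to transfer every assertion to the genuinely symplectic vector space $V_p^\alpha:=\T_pP/\ker\omega^\alpha(p)$ equipped with the form $\widetilde{\omega^\alpha(p)}$ produced by Lemma \ref{Lemm::QSym}, and then to run, fibre by fibre, the infinitesimal argument behind Lemma \ref{Lemm::SymplecticLemma}. I would first record two purely local facts. From the momentum relation $\inn{\xi_P}\omega^\alpha=-\d\langle\mathbf{J}^\Phi_{\bm\Theta\alpha},\xi\rangle$ one gets $\omega^\alpha(p)(v,\xi_P(p))=\langle\T_p\mathbf{J}^\Phi_{\bm\Theta\alpha}(v),\xi\rangle$, whence $\ker\omega^\alpha(p)\subseteq\ker\T_p\mathbf{J}^\Phi_{\bm\Theta\alpha}\subseteq\T_p(\mathbf{J}^{\Phi-1}_{\bm\Theta\alpha}(\R^\times\mu^\alpha))$; this makes the quotient subspaces in the statement well defined and lets $\T_p\mathbf{J}^\Phi_{\bm\Theta\alpha}$ factor through $\pr^P_\alpha$. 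Second, at $p$ with $\mathbf{J}^\Phi_{\bm\Theta\alpha}(p)=\lambda\mu^\alpha$, $\lambda\in\R^\times$, $\Ad^{*k}$-equivariance gives $\T_p\mathbf{J}^\Phi_{\bm\Theta\alpha}(\xi_P(p))=-\lambda\,\ad^*_\xi\mu^\alpha$, which is the only infinitesimal input the proof of Lemma \ref{Lemm::SymplecticLemma} actually uses.

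For (1) I would simply project the first item of Lemma \ref{Lemm::SymplecticLemma} through $\pr^P_\alpha$. Its proof never invokes nondegeneracy: $\xi\in\mathfrak{g}_{[\mu^\alpha]}$ means $\mu^\alpha\wedge\ad^*_\xi\mu^\alpha=0$, which by the equivariance identity is equivalent to $\xi_P(p)\in\T_p(\mathbf{J}^{\Phi-1}_{\bm\Theta\alpha}(\R^\times\mu^\alpha))$. Because $\ker\omega^\alpha(p)$ lies inside this level set, applying $\pr^P_\alpha$ converts the manifold identity into the claimed equality of subspaces of $V_p^\alpha$, so (1) holds verbatim in the degenerate ($k$-symplectic) setting.

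For (2) and (3) I would carry out the quotient analogue of the second item of Lemma \ref{Lemm::SymplecticLemma}, now using exactness. From $\inn{\xi_P}\Theta^\alpha=\langle\mathbf{J}^\Phi_{\bm\Theta\alpha},\xi\rangle$ and $\mathbf{J}^\Phi_{\bm\Theta\alpha}(p)\in\R^\times\mu^\alpha$ one obtains $[\xi_P(p)]\in\ker\widetilde{\Theta^\alpha(p)}\iff\xi\in\ker\mu^\alpha$, hence $\{[\xi_P(p)]\mid\xi\in\mathfrak{g}\}\cap\ker\widetilde{\Theta^\alpha(p)}=\{[\xi_P(p)]\mid\xi\in\ker\mu^\alpha\}$. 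Computing the $\perp_\alpha$ of this set inside $(V_p^\alpha,\widetilde{\omega^\alpha(p)})$ via the descended momentum relation gives $[v]\in(\cdots)^{\perp_\alpha}\iff\langle\T_p\mathbf{J}^\Phi_{\bm\Theta\alpha}(v),\xi\rangle=0$ for all $\xi\in\ker\mu^\alpha\iff\T_p\mathbf{J}^\Phi_{\bm\Theta\alpha}(v)\in(\ker\mu^\alpha)^\circ=\langle\mu^\alpha\rangle$, i.e. exactly $\T_p(\mathbf{J}^{\Phi-1}_{\bm\Theta\alpha}(\R^\times\mu^\alpha))/\ker\omega^\alpha(p)$; this is (2). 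Statement (3) then follows by applying $\perp_\alpha$ once more and using reflexivity $(W^{\perp_\alpha})^{\perp_\alpha}=W$, valid because $\widetilde{\omega^\alpha(p)}$ is nondegenerate.

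Finally, for (4) I would intersect (1) and (3). With $N:=\T_p(\mathbf{J}^{\Phi-1}_{\bm\Theta\alpha}(\R^\times\mu^\alpha))/\ker\omega^\alpha(p)$, item (3) reads $N^{\perp_\alpha}=\{[\xi_P(p)]\mid\xi\in\mathfrak{g}\}\cap\ker\widetilde{\Theta^\alpha(p)}$, so item (1) gives $N\cap N^{\perp_\alpha}=\{[\xi_P(p)]\mid\xi\in\mathfrak{g}_{[\mu^\alpha]}\}\cap\ker\widetilde{\Theta^\alpha(p)}$. Matching this with the statement reduces to the inclusion $\{[\xi_P(p)]\mid\xi\in\mathfrak{g}_{[\mu^\alpha]}\}\subseteq\ker\widetilde{\Theta^\alpha(p)}$, i.e. to $\langle\mu^\alpha,\xi\rangle=0$ for every $\xi\in\mathfrak{g}_{[\mu^\alpha]}$. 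The Lie-theoretic lever is that, writing $\ad^*_\xi\mu^\alpha=c\,\mu^\alpha$, skew-symmetry forces $\langle\ad^*_\xi\mu^\alpha,\xi\rangle=\langle\mu^\alpha,[\xi,\xi]\rangle=0$, whence $c\,\langle\mu^\alpha,\xi\rangle=0$; thus the inclusion is automatic on the conformal part where $c\neq0$. I expect the main obstacle to be precisely the complementary isotropy locus $c=0$, namely $\xi\in\mathfrak{g}_{\mu^\alpha}$: there $\T_p\mathbf{J}^\Phi_{\bm\Theta\alpha}(\xi_P(p))=0$ places $[\xi_P(p)]$ in $N$, while the descended momentum relation exerts no control over $\langle\mu^\alpha,\xi\rangle$. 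Closing (4) therefore hinges on handling these isotropy directions, and it is exactly here that the exactness of $\bm\Theta$ and the fine position of $\mathfrak{g}_{\mu^\alpha}$ relative to $\langle\mu^\alpha\rangle$ must be exploited.
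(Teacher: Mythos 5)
Your treatment of items (1)--(3) is correct and is exactly what the paper's one-line proof compresses: the paper disposes of the lemma as ``a direct consequence of Lemma \ref{Lemm::SymplecticLemma}, Lemma \ref{Lemm::QSym}, and exactness'', and your fibrewise transfer through $\pr^P_\alpha$ --- the descended momentum relation on $\left(\T_pP/\ker\omega^\alpha(p),\widetilde{\omega^\alpha(p)}\right)$, the identification $\left\{[\xi_P(p)]\mid\xi\in\mathfrak{g}\right\}\cap\ker\widetilde{\Theta^\alpha(p)}=\left\{[\xi_P(p)]\mid\xi\in\ker\mu^\alpha\right\}$, and reflexivity of $\perp_\alpha$ in the nondegenerate quotient --- is precisely that derivation, with the same preliminary inputs ($\ker\omega^\alpha(p)\subseteq\ker\T_p\mathbf{J}^\Phi_{\bm\Theta\alpha}$ and $\T_p\mathbf{J}^\Phi_{\bm\Theta\alpha}(\xi_P(p))=-\lambda\,\ad^*_\xi\mu^\alpha$) that Lemmas \ref{Lemm::QSym} and \ref{Lemm::SymplecticLemma} rely on.

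On (4), the obstacle you isolate is genuine, but it sits in the statement, not in your argument. As printed, with $\mathfrak{g}_{[\mu^\alpha]}$ on the left-hand side, (4) is false: take $G=\R$ acting by translations in $q$ on $P=\cT\R$ with $\Theta=p\,\d q$, so $J(q,p)=p$ and $\mu=1$. Then $\mathfrak{g}_{[\mu]}=\mathfrak{g}$, the level set $J^{-1}(\R^\times\mu)=\{p\neq 0\}$ is open, so $N=\T_pP$ and $N^{\perp_\alpha}=0$, yet $\left\{[\xi_P(p)]\mid\xi\in\mathfrak{g}_{[\mu]}\right\}=\langle\partial_q\rangle\neq 0$. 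The intended left-hand side is $\mathfrak{k}_{[\mu^\alpha]}=\ker\mu^\alpha\cap\mathfrak{g}_{[\mu^\alpha]}$; this is also what the lemma must deliver where it is used, since Proposition \ref{Prop::3.11} quotients by $\left\{[\xi_P(p)]\mid\xi\in\mathfrak{k}_{[\mu^\alpha]}\right\}$ and needs that set to equal the kernel $N\cap N^{\perp_\alpha}$ of the restricted form, consistently with the manifold-level identity $\ker\jmath^*_{[\bm\mu]}\bm\omega(p)=\T_p(K_{[\bm\mu]}p)$ the paper derives later. With this correction, your own computation already closes (4) with no further input: intersecting (1) and (3) gives $N\cap N^{\perp_\alpha}=\left\{[\xi_P(p)]\mid\xi\in\mathfrak{g}_{[\mu^\alpha]}\right\}\cap\ker\widetilde{\Theta^\alpha(p)}$, and since $\widetilde{\Theta^\alpha(p)}\bigl([\xi_P(p)]\bigr)=\lambda\langle\mu^\alpha,\xi\rangle$ with $\lambda\neq 0$ depends only on the class (so $\langle\mu^\alpha,\xi\rangle$ is unambiguous on classes of fundamental vectors), the intersection with $\ker\widetilde{\Theta^\alpha(p)}$ selects exactly the classes with $\xi\in\ker\mu^\alpha$, that is, $\left\{[\xi_P(p)]\mid\xi\in\mathfrak{k}_{[\mu^\alpha]}\right\}$. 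There is nothing to exploit on the isotropy locus $\mathfrak{g}_{\mu^\alpha}$: directions $\xi\in\mathfrak{g}_{\mu^\alpha}$ with $\langle\mu^\alpha,\xi\rangle\neq 0$ are correctly \emph{excluded} by the $\ker\widetilde{\Theta^\alpha(p)}$ factor --- the abelian example shows they must be --- so the inclusion you were trying to force cannot hold and should not be sought.
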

The proof of point $(1)$ in Lemma \ref{Lemm::SymplecticLemma2} does not require $(P,\bm\Theta)$ to be an exact $k$-symplectic manifold. The following proposition establishes the first step of the MMW reduction theorem for exact $k$-symplectic manifolds.

\begin{proposition}
\label{Prop::3.11}
The vector space
\[
V^p_\alpha:=\frac{\left(\frac{\T_p(\mathbf{J}^{\Phi-1}_{\bm\Theta\alpha}(\R^\times\mu^\alpha))}{\ker \omega^\alpha_p}\right)}{\left\{[\xi_P(p)]\,\mid\,\xi\in\mathfrak{k}_{[\mu^\alpha]}\right\}}
\]
is a symplectic vector space for $\alpha=1,\ldots,k$.
\end{proposition}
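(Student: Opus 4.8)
The plan is to exhibit $V^p_\alpha$ as the quotient of a subspace of a genuine symplectic vector space by its radical, a construction that is automatically symplectic. By Lemma~\ref{Lemm::QSym}, $\big(\T_pP/\ker\omega^\alpha(p),\,\widetilde{\omega^\alpha(p)}\big)$ is a symplectic vector space and
\[
W_\alpha:=\frac{\T_p\big(\mathbf{J}^{\Phi-1}_{\bm\Theta\alpha}(\R^\times\mu^\alpha)\big)}{\ker\omega^\alpha(p)}
\]
is a linear subspace of it, with $V^p_\alpha=W_\alpha\big/\{[\xi_P(p)]\mid\xi\in\mathfrak{k}_{[\mu^\alpha]}\}$. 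Since for any subspace $W$ of a symplectic vector space the quotient $W/(W\cap W^{\perp})$ inherits a canonical symplectic form, the whole statement collapses to the single identity
\[
\{[\xi_P(p)]\mid\xi\in\mathfrak{k}_{[\mu^\alpha]}\}=W_\alpha\cap W_\alpha^{\perp_\alpha},
\]
i.e. to proving that the subspace we quotient out is exactly the radical of $W_\alpha$.

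First I would determine $W_\alpha^{\perp_\alpha}$. By Lemma~\ref{Lemm::SymplecticLemma2}(3) it equals $\{[\xi_P(p)]\mid\xi\in\mathfrak{g}\}\cap\ker\widetilde{\Theta^\alpha(p)}$; and because the momentum map is exact, the relation $\iota_{\xi_P}\Theta^\alpha=\langle\mathbf{J}^\Phi_{\bm\Theta\alpha},\xi\rangle$ together with $\mathbf{J}^\Phi_{\bm\Theta\alpha}(p)\in\R^\times\mu^\alpha$ shows that a fundamental-field class $[\xi_P(p)]$ lies in $\ker\widetilde{\Theta^\alpha(p)}$ iff $\langle\mu^\alpha,\xi\rangle=0$. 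Hence $W_\alpha^{\perp_\alpha}=\{[\xi_P(p)]\mid\xi\in\ker\mu^\alpha\}$. The inclusion ``$\subseteq$'' of the target identity is then immediate: for $\xi\in\mathfrak{k}_{[\mu^\alpha]}=\ker\mu^\alpha\cap\mathfrak{g}_{[\mu^\alpha]}$ the condition $\xi\in\ker\mu^\alpha$ puts $[\xi_P(p)]$ in $W_\alpha^{\perp_\alpha}$, while $\xi\in\mathfrak{g}_{[\mu^\alpha]}$ puts it in $W_\alpha$ by Lemma~\ref{Lemm::SymplecticLemma2}(1).

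For the reverse inclusion, take a class $c\in W_\alpha\cap W_\alpha^{\perp_\alpha}$. Since $c\in W_\alpha^{\perp_\alpha}$, the previous step lets me write $c=[\eta_P(p)]$ with $\eta\in\ker\mu^\alpha$. Because, by Lemma~\ref{Lemm::SymplecticLemma2}(2), $W_\alpha=S^{\perp_\alpha}$ with $S=\{[\zeta_P(p)]\mid\zeta\in\ker\mu^\alpha\}$, membership $c\in W_\alpha$ means $\widetilde{\omega^\alpha(p)}\big([\eta_P(p)],[\zeta_P(p)]\big)=0$ for every $\zeta\in\ker\mu^\alpha$. Expanding this pairing through the momentum map — $\iota_{\eta_P}\omega^\alpha=-\d\langle\mathbf{J}^\Phi_{\bm\Theta\alpha},\eta\rangle$ and $\Ad^{*k}$-equivariance give $\omega^\alpha(p)(\eta_P(p),\zeta_P(p))$ proportional to $\langle\mu^\alpha,[\zeta,\eta]\rangle$ — the vanishing for all $\zeta\in\ker\mu^\alpha$ forces $\ad^*_\eta\mu^\alpha$ to annihilate $\ker\mu^\alpha=\langle\mu^\alpha\rangle^\circ$, that is $\ad^*_\eta\mu^\alpha\in\langle\mu^\alpha\rangle$. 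Thus $\eta\in\mathfrak{g}_{[\mu^\alpha]}$ and, combined with $\eta\in\ker\mu^\alpha$, we get $\eta\in\mathfrak{k}_{[\mu^\alpha]}$, so $c$ lies in the $\mathfrak{k}_{[\mu^\alpha]}$-image.

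The step I expect to be the crux — and the one responsible for the ``new condition'' flagged before the statement — is this reconciliation of $\mathfrak{g}_{[\mu^\alpha]}$ with $\mathfrak{k}_{[\mu^\alpha]}$ inside the radical. The decisive algebraic observation, which I would isolate as a short remark, is that any $\xi\in\mathfrak{g}_{[\mu^\alpha]}$ with $\ad^*_\xi\mu^\alpha=\lambda\mu^\alpha$ and $\lambda\neq0$ already lies in $\ker\mu^\alpha$: pairing with $\xi$ gives $\lambda\langle\mu^\alpha,\xi\rangle=\langle\ad^*_\xi\mu^\alpha,\xi\rangle=\langle\mu^\alpha,[\xi,\xi]\rangle=0$. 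This is exactly what guarantees that passing to the ray $\R^\times\mu^\alpha$ trims the radical down to the $\mathfrak{k}_{[\mu^\alpha]}$-image and no further, so that the form induced on $V^p_\alpha$ is nondegenerate and $V^p_\alpha$ is symplectic for every $\alpha=1,\dots,k$.
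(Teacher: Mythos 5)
Your argument is correct and follows the same route as the paper's: both realise $V^p_\alpha$ as the quotient of the subspace $W_\alpha\subset\big(\T_pP/\ker\omega^\alpha_p,\widetilde{\omega^\alpha(p)}\big)$ by its radical, resting on Lemma \ref{Lemm::QSym} and Lemma \ref{Lemm::SymplecticLemma2}. The difference is one of granularity: the paper's proof simply cites parts (2)--(4) of Lemma \ref{Lemm::SymplecticLemma2} and stops, whereas you rederive the radical from parts (1)--(3) together with the momentum-map identities. This extra step is worth keeping, because it pins the radical down as $\{[\xi_P(p)]\mid\xi\in\mathfrak{g}_{[\mu^\alpha]}\}\cap\ker\widetilde{\Theta^\alpha(p)}=\{[\xi_P(p)]\mid\xi\in\mathfrak{k}_{[\mu^\alpha]}\}$, which is exactly the form needed for nondegeneracy of $\omega_{[\mu^\alpha]}(p)$; part (4) of Lemma \ref{Lemm::SymplecticLemma2} as printed identifies the radical with the $\mathfrak{g}_{[\mu^\alpha]}$-classes and only yields the $\mathfrak{k}_{[\mu^\alpha]}$-classes after the intersection with $\ker\widetilde{\Theta^\alpha(p)}$ that your derivation makes explicit. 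Your closing observation, that any $\xi\in\mathfrak{g}_{[\mu^\alpha]}$ with $\ad^*_\xi\mu^\alpha=\lambda\mu^\alpha$ and $\lambda\neq 0$ automatically lies in $\ker\mu^\alpha$, is not strictly needed once both inclusions are proved, but it does explain cleanly why passing to the ray $\R^\times\mu^\alpha$ trims the radical to exactly the $\mathfrak{k}_{[\mu^\alpha]}$-image.
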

\begin{proof}
    Since $\left\{[\xi_P(p)]\,\mid\,\xi\in\mathfrak{k}_{[\mu^\alpha]}\right\}\subseteq\frac{\T_p\mathbf{J}^{\Phi-1}_{\bm\Theta\alpha}(\R^\times\mu^\alpha)}{\ker\omega^\alpha_p}$, the quotient space $V_\alpha$ is well-defined and there is the canonical projection
    \[
    \widetilde{\pr_\alpha}\colon \frac{\T_p\mathbf{J}^{\Phi-1}_{\bm\Theta\alpha}(\R^\times\mu^\alpha)}{\ker\omega^\alpha_p}\longrightarrow V_\alpha.
    \]
    By Lemma \ref{Lemm::SymplecticLemma2} points $\rm(2),\rm(3),\rm(4)$ and Lemma \ref{Lemm::QSym}, $\{[\xi_P(p)]\,\mid\,\xi\in\mathfrak{k}_{[\mu^\alpha]}\}$ belongs to $\ker\widetilde{\omega_{\mathbf{J}^\Phi_{\bm\Theta\alpha}}(p)}$ and there exists a symplectic form $\omega_{[\mu^\alpha]}(p)\in\Omega^2\left(V^p_\alpha\right)$ satisfying $\widetilde{\omega_{\mathbf{J}^\Phi_{\bm\Theta\alpha}}(p)}=\widetilde{\pr_\alpha}^*\omega_{[\mu^\alpha]}(p)$ for $\alpha=1,\ldots,k$.
\end{proof}
This finishes the first part of the proof. In the second part, we prove that $\mathbf{J}^{\Phi-1}_{\bm\Theta}(\R^{\times k}\bm\mu)/K_{[\bm\mu]}$ is an exact $k$-symplectic manifold under certain assumptions. Additionally, we provide the technical conditions under which these assumptions are satisfied.

\begin{proposition}
         The map  
         \[
         \Pi^\alpha_p:=\pr^{\mathbf{J}^\Phi_{\bm\Theta\alpha}}\circ \,\jmath ^\alpha\colon \T_{p}\mathbf{J}_{\bm\Theta}^{\Phi-1}(\R^{\times k}\bm\mu)\longrightarrow \frac{\T_p(\mathbf{J}^{\Phi-1}_{\bm\Theta\alpha}(\R^\times\mu^\alpha))}{\ker \omega^\alpha_p}\,,
         \]
         where $\jmath^\alpha\colon \T_p\mathbf{J}^{\Phi-1}_{\bm\Theta}(\R^{\times k}\bm\mu)\hookrightarrow \T_p\mathbf{J}^{\Phi-1}_{\bm\Theta\alpha}(\R^\times\mu^\alpha)$ is the natural embedding, induces the map
         \[
         \widetilde{\Pi}^\alpha_p\colon \T_{\pi_{[\bm\mu]}(p)}\left(\mathbf{J}_{\bm\Theta}^{\Phi-1}(\R^{\times k}\bm\mu)/K_{[\bm\mu]}\right)\longrightarrow V_\alpha\,,\qquad \alpha=1,\ldots,k.
         \]
\end{proposition}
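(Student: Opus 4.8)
The plan is to show that the composition $\widetilde{\pr_\alpha}\circ\Pi^\alpha_p$ annihilates the vertical directions of the projection $\pi_{[\bm\mu]}$, so that by the universal property of the quotient of vector spaces it descends to the claimed map $\widetilde{\Pi}^\alpha_p$. The whole argument is essentially bookkeeping of nested quotients; the genuine content has already been secured in Lemma \ref{Lemm::SubgroupinJ}.

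First I would identify the source of $\widetilde{\Pi}^\alpha_p$. Since $\Phi$ acts in a quotientable manner on $\mathbf{J}^{\Phi-1}_{\bm\Theta}(\R^{\times k}\bm\mu)$, the projection $\pi_{[\bm\mu]}$ is a submersion whose fibres are the $K_{[\bm\mu]}$-orbits, whence
\[
\T_{\pi_{[\bm\mu]}(p)}\left(\mathbf{J}_{\bm\Theta}^{\Phi-1}(\R^{\times k}\bm\mu)/K_{[\bm\mu]}\right)=\frac{\T_p\mathbf{J}_{\bm\Theta}^{\Phi-1}(\R^{\times k}\bm\mu)}{\T_p(K_{[\bm\mu]}p)}\,,\qquad \ker\T_p\pi_{[\bm\mu]}=\T_p(K_{[\bm\mu]}p)=\left\{\xi_P(p)\mid\xi\in\mathfrak{k}_{[\bm\mu]}\right\}\,.
\]
Note here that $\jmath^\alpha$ is well-defined because $\mathbf{J}^{\Phi-1}_{\bm\Theta}(\R^{\times k}\bm\mu)\subseteq\mathbf{J}^{\Phi-1}_{\bm\Theta\alpha}(\R^\times\mu^\alpha)$, so the embedding of tangent spaces restricts correctly, and $\xi_P(p)\in\T_p\mathbf{J}^{\Phi-1}_{\bm\Theta}(\R^{\times k}\bm\mu)$ for every $\xi\in\mathfrak{k}_{[\bm\mu]}$ by Lemma \ref{Lemm::SubgroupinJ}.

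Next I would compose $\Pi^\alpha_p$ with the canonical projection $\widetilde{\pr_\alpha}$ onto $V_\alpha$ and verify that $\widetilde{\pr_\alpha}\circ\Pi^\alpha_p$ kills $\T_p(K_{[\bm\mu]}p)$. The key point is the identity $\mathfrak{k}_{[\bm\mu]}=\bigcap_{\beta=1}^k\mathfrak{k}_{[\mu^\beta]}$ from Lemma \ref{Lemm::SubgroupinJ}, which gives in particular $\mathfrak{k}_{[\bm\mu]}\subseteq\mathfrak{k}_{[\mu^\alpha]}$ for each $\alpha$. Hence, for $\xi\in\mathfrak{k}_{[\bm\mu]}$,
\[
\Pi^\alpha_p(\xi_P(p))=\pr^{\mathbf{J}^\Phi_{\bm\Theta\alpha}}\!\left(\jmath^\alpha(\xi_P(p))\right)=[\xi_P(p)]\in\left\{[\xi_P(p)]\mid\xi\in\mathfrak{k}_{[\mu^\alpha]}\right\}\,,
\]
and since $\left\{[\xi_P(p)]\mid\xi\in\mathfrak{k}_{[\mu^\alpha]}\right\}$ is precisely the subspace by which we quotient to form $V_\alpha$, it follows that $\widetilde{\pr_\alpha}\left(\Pi^\alpha_p(\xi_P(p))\right)=0$.

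Finally, because $\widetilde{\pr_\alpha}\circ\Pi^\alpha_p$ vanishes on $\ker\T_p\pi_{[\bm\mu]}=\T_p(K_{[\bm\mu]}p)$, the universal property of the quotient yields a unique linear map $\widetilde{\Pi}^\alpha_p\colon\T_{\pi_{[\bm\mu]}(p)}(\mathbf{J}_{\bm\Theta}^{\Phi-1}(\R^{\times k}\bm\mu)/K_{[\bm\mu]})\to V_\alpha$ characterised by $\widetilde{\Pi}^\alpha_p\circ\T_p\pi_{[\bm\mu]}=\widetilde{\pr_\alpha}\circ\Pi^\alpha_p$, and its surjectivity is inherited from that of $\Pi^\alpha_p$ and $\widetilde{\pr_\alpha}$. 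I do not expect any serious obstacle: the only care needed is to keep track of the two layers of quotient (dividing by $\ker\omega^\alpha_p$ and then by the $\mathfrak{k}_{[\mu^\alpha]}$-orbit directions) on the target and the single $K_{[\bm\mu]}$-quotient on the source, so that the inclusion $\mathfrak{k}_{[\bm\mu]}\subseteq\mathfrak{k}_{[\mu^\alpha]}$ lines up the two vertical subspaces.
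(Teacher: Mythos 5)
Your argument is correct and is essentially the proof the paper intends: the paper disposes of this proposition in one line by appeal to Lemma \ref{Lemm::SymplecticLemma2}(4) and the analogy with the $k$-symplectic case, and the content you supply --- that $\mathfrak{k}_{[\bm\mu]}=\bigcap_{\beta}\mathfrak{k}_{[\mu^\beta]}\subseteq\mathfrak{k}_{[\mu^\alpha]}$ (Lemma \ref{Lemm::SubgroupinJ}) forces $\widetilde{\pr_\alpha}\circ\Pi^\alpha_p$ to annihilate $\T_p(K_{[\bm\mu]}p)=\ker\T_p\pi_{[\bm\mu]}$, so the map descends by the universal property of the quotient --- is exactly the missing bookkeeping. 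One small caveat: your closing remark that surjectivity of $\widetilde{\Pi}^\alpha_p$ is ``inherited'' is unwarranted, since $\Pi^\alpha_p$ need not be surjective; the proposition does not claim surjectivity, and the paper treats it as a separate hypothesis characterised later by the condition $\T_p(\mathbf{J}^{\Phi-1}_{\bm\Theta\alpha}(\R^\times\mu^\alpha))=\T_p(\mathbf{J}_{\bm\Theta}^{\Phi-1}(\R^{\times k}\bm\mu))+\ker\omega^\alpha(p)+\T_p(K_{[\mu^\alpha]}p)$, so that sentence should simply be dropped.
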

The proof follows from part (4) in Lemma \ref{Lemm::SymplecticLemma2} and is analogous to the one in \cite{MRSV_15}.

\begin{lemma}
\label{Lemm::RedSpacedLemma}
    Let $(P,\bm\Theta)$ be an exact symplectic manifold and let $\bm\mu\in (\mathfrak{g}^*)^k$ be a weak regular $k$-value of an exact $k$-symplectic momentum $\mathbf{J}^\Phi_{\bm\Theta}\colon P\rightarrow (\mathfrak{g}^*)^k$ associated with an exact $k$-symplectic Lie group action $\Phi\colon G\times P\rightarrow P$ that acts in a quotientable manner on $\mathbf{J}^{\Phi-1}_{\bm\Theta}(\R^{\times k}\bm\mu)$. Then, there exists an exact $\R^k$-valued differential two-form $\bm\omega_{[\bm\mu]}\in \Omega^2(\mathbf{J}^{\Phi-1}_{\bm\Theta}(\R^{\times k}\bm\mu)/K_{[\bm\mu]},\R^k)$ satisfying
    \[
    \pi^*_{[\bm\mu]}\bm\omega_{[\bm\mu]}=\jmath_{[\bm\mu]}^*\bm\omega,
    \]
    and
    \[
    \widetilde{\Pi}^{\alpha*}_p\omega_{[\mu^\alpha]}=\omega_{[\bm\mu]}^\alpha,\qquad \alpha=1,\ldots,k,
    \]
    where $\jmath_{[\bm\mu]}\colon \mathbf{J}^{\Phi-1}_{\bm\Theta}(\R^{\times k}\bm\mu)\hookrightarrow P$ and $\pi_{[\bm\mu]}\colon \mathbf{J}_{\bm\Theta}^{\Phi-1}(\R^{\times k}\bm\mu)\rightarrow \mathbf{J}^{\Phi-1}_{\bm\Theta}(\R^{\times k}\bm\mu)/K_{[\bm\mu]}$ are the canonical embedding and the canonical projection, respectively.
\end{lemma}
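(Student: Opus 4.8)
The plan is to obtain $\bm\omega_{[\bm\mu]}$ not by descending $\bm\omega$ directly, but by first descending the potential $\bm\Theta$; this makes exactness automatic, and the required pullback relation follows by differentiation. Write $S:=\mathbf{J}^{\Phi-1}_{\bm\Theta}(\R^{\times k}\bm\mu)$. The first task is to show that $\jmath^*_{[\bm\mu]}\bm\Theta$ is basic for the $K_{[\bm\mu]}$-action on $S$, i.e.\ horizontal and invariant. Horizontality follows from the momentum-map identity $\iota_{\xi_P}\bm\Theta=\langle\mathbf{J}^\Phi_{\bm\Theta},\xi\rangle$: for $\xi\in\mathfrak{k}_{[\bm\mu]}\subseteq\ker\bm\mu$ and $p\in S$ one has $\mathbf{J}^\Phi_{\bm\Theta}(p)=(\lambda_1\mu^1,\ldots,\lambda_k\mu^k)$ with $\lambda_\alpha\in\R^\times$, whence $\langle\mathbf{J}^\Phi_{\bm\Theta}(p),\xi\rangle=(\lambda_1\langle\mu^1,\xi\rangle,\ldots,\lambda_k\langle\mu^k,\xi\rangle)=0$; since $\xi_S$ is $\jmath_{[\bm\mu]}$-related to $\xi_P$, this gives $\iota_{\xi_S}\jmath^*_{[\bm\mu]}\bm\Theta=0$. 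Invariance is immediate from $\Phi$ being an exact $k$-symplectic action ($\Lie_{\xi_P}\bm\Theta=0$), together with the fact, already exploited in Lemma~\ref{Lemm::SubgroupinJ}, that $K_{[\bm\mu]}$ stabilises $S$ by $\Ad^{*k}$-equivariance.

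Given the quotientable hypothesis, $\pi_{[\bm\mu]}\colon S\to S/K_{[\bm\mu]}$ is a surjective submersion and a basic form descends uniquely, so there is a unique $\bm\Theta_{[\bm\mu]}\in\Omega^1(S/K_{[\bm\mu]},\R^k)$ with $\pi^*_{[\bm\mu]}\bm\Theta_{[\bm\mu]}=\jmath^*_{[\bm\mu]}\bm\Theta$. I then set $\bm\omega_{[\bm\mu]}:=\d\bm\Theta_{[\bm\mu]}$, which is exact by construction and satisfies $\pi^*_{[\bm\mu]}\bm\omega_{[\bm\mu]}=\d\pi^*_{[\bm\mu]}\bm\Theta_{[\bm\mu]}=\d\jmath^*_{[\bm\mu]}\bm\Theta=\jmath^*_{[\bm\mu]}\bm\omega$, establishing the first displayed relation; injectivity of $\pi^*_{[\bm\mu]}$ (as $\T_p\pi_{[\bm\mu]}$ is surjective) shows the relation characterises $\bm\omega_{[\bm\mu]}$. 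As a consistency check, horizontality of $\bm\omega$ itself, implicit here, is precisely the content of Lemma~\ref{Lemm::SubgroupinJ}.

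For the component identity $\widetilde\Pi^{\alpha*}_p\omega_{[\mu^\alpha]}=\omega^\alpha_{[\bm\mu]}$, I would argue pointwise by chasing Figure~\ref{fig:diagramRed}. By the preceding proposition, $\widetilde{\pr_\alpha}\circ\Pi^\alpha_p$ kills $\T_p(K_{[\bm\mu]}p)=\ker\T_p\pi_{[\bm\mu]}$ and hence factors as $\widetilde\Pi^\alpha_p\circ\T_p\pi_{[\bm\mu]}=\widetilde{\pr_\alpha}\circ\Pi^\alpha_p$, with $\Pi^\alpha_p=\pr^{\mathbf{J}^\Phi_{\bm\Theta\alpha}}\circ\jmath^\alpha$. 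Pulling $\omega_{[\mu^\alpha]}$ back along $\T_p\pi_{[\bm\mu]}$ and applying in turn Proposition~\ref{Prop::3.11} ($\widetilde{\pr_\alpha}^*\omega_{[\mu^\alpha]}=\widetilde{\omega_{\mathbf{J}^\Phi_{\bm\Theta\alpha}}(p)}$), Lemma~\ref{Lemm::QSym} ($(\pr^{\mathbf{J}^\Phi_{\bm\Theta\alpha}})^*\widetilde{\omega_{\mathbf{J}^\Phi_{\bm\Theta\alpha}}(p)}=\omega_{\mathbf{J}^\Phi_{\bm\Theta\alpha}}(p)=\jmath^{\alpha*}_p\omega^\alpha(p)$), and the factorisation $\jmath^\alpha_p\circ\jmath^\alpha=\T_p\jmath_{[\bm\mu]}$ collapses the chain to $(\T_p\pi_{[\bm\mu]})^*\widetilde\Pi^{\alpha*}_p\omega_{[\mu^\alpha]}=\jmath^*_{[\bm\mu]}\omega^\alpha(p)=(\T_p\pi_{[\bm\mu]})^*\omega^\alpha_{[\bm\mu]}$. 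Since $\T_p\pi_{[\bm\mu]}$ is surjective, its transpose is injective and the desired equality drops out.

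The main obstacle I expect is not a single computation but securing that the descent is legitimate: verifying horizontality of $\jmath^*_{[\bm\mu]}\bm\Theta$ over all of $S$, where the scalars $\lambda_\alpha$ vary from point to point, so one must genuinely use $\langle\mu^\alpha,\xi\rangle=0$ rather than cancel the $\lambda_\alpha$; and confirming that $K_{[\bm\mu]}$ stabilises $S$ so that both invariance and the $\jmath_{[\bm\mu]}$-relatedness of fundamental vector fields are available. Once basicity is in hand, the quotientable assumption performs the global passage to $S/K_{[\bm\mu]}$, and the component identity is a bookkeeping exercise with the several maps of Figure~\ref{fig:diagramRed}. Note that this lemma yields only an exact $\R^k$-valued two-form with the two stated properties; its nondegeneracy, hence the $k$-symplectic structure, is obtained separately through the condition $\bigcap_\alpha\ker\Pi^\alpha_p=0$ and Lemma~\ref{Lemm::PolysymVA}.
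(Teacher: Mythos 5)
Your proof is correct, and for the second displayed identity it follows essentially the same diagram chase as the paper (factorisation $\widetilde{\Pi}^\alpha_p\circ\T_p\pi_{[\bm\mu]}=\widetilde{\pr_\alpha}\circ\Pi^\alpha_p$, then collapsing the pullbacks via Proposition~\ref{Prop::3.11} and Lemma~\ref{Lemm::QSym}). Where you genuinely diverge is in how the reduced form is produced. The paper descends the two-form directly: by Lemma~\ref{Lemm::SubgroupinJ} one has $\iota_{\xi_P}\jmath^*_{[\bm\mu]}\bm\omega=0$ for $\xi\in\mathfrak{k}_{[\bm\mu]}$, and since $\jmath^*_{[\bm\mu]}\bm\omega$ is closed, Cartan's formula gives invariance, so the form is basic and descends to a \emph{closed} $\bm\omega_{[\bm\mu]}$; exactness is only exhibited in the subsequent proposition, where $\bm\Theta$ is descended. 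You instead descend the potential $\bm\Theta$ first, checking horizontality from the momentum-map identity $\iota_{\xi_P}\bm\Theta=\langle\mathbf{J}^\Phi_{\bm\Theta},\xi\rangle$ together with $\xi\in\ker\bm\mu$ (correctly using $\langle\mu^\alpha,\xi\rangle=0$ rather than any cancellation of the pointwise scalars $\lambda_\alpha$), and invariance from $\Lie_{\xi_P}\bm\Theta=0$. Your route requires slightly more input (the one-form identity on all of $\mathbf{J}^{\Phi-1}_{\bm\Theta}(\R^{\times k}\bm\mu)$, plus the stabilisation of that submanifold by the connected group $K_{[\bm\mu]}$), but it buys the exactness asserted in the statement immediately, which the paper's own proof of this lemma leaves implicit and only settles one proposition later. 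Since $\pi^*_{[\bm\mu]}$ is injective on forms, the two constructions yield the same $\bm\omega_{[\bm\mu]}$, so the approaches are fully compatible.
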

\begin{proof}
    First, recall that $\T_p(K_{[\bm\mu]}p)\subseteq \ker \jmath_{[\bm\mu]}^*\bm\omega(p)$ for every $p\in\mathbf{J}^{\Phi-1}_{\bm\Theta}(\R^{\times k}\bm\mu)$ by Lemma \ref{Lemm::SubgroupinJ}. Then, $\iota_{\xi_P}\jmath^*_{[\bm\mu]}\bm\omega=0$ for any $\xi\in\mathfrak{k}_{[\bm\mu]}$. Since $\jmath^*_{[\bm\mu]}\omega^\alpha$ is also closed, it gives rise to a unique closed two-form $\bm\omega_{[\bm\mu]}\in\Omega^2\left(\mathbf{J}^{\Phi-1}_{\bm\Theta}(\R^{\times k}\bm\mu)/K_{[\bm\mu]},\R^k\right)$ satisfying $\pi^*_{[\bm\mu]}\bm\omega_{[\bm\mu]}=\jmath_{[\bm\mu]}^*\bm\omega$.
    According to the following commutative diagram
    \begin{center}
        \begin{tikzcd}[column sep=.25in]
            \left(\T_p\mathbf{J}_{\bm\Theta}^{\Phi-1}(\R^{\times k}\bm\mu), \,\jmath^*_{[\bm\mu]}\omega^\alpha_p\right)\arrow[r,hook,"\jmath^\alpha_{\mathbf{J}_{\bm\Theta}^\Phi}"]\arrow[d,"\T_p\pi_{[\bm\mu]}"]\arrow[rr,hook,bend left, "\jmath_{[\bm\mu]}"]\arrow[rd, "\Pi_p^\alpha"]&  \left(\T_p\mathbf{J}^{\Phi-1}_{\bm\Theta\alpha}(\R^\times\bm\mu),\,\omega_{\mathbf{J}^\Phi_{\bm\Theta\alpha}}(p)\right)\arrow[r,hook,"\jmath^\alpha_p"]\arrow[d,"\pr^{\mathbf{J}^\Phi_{\bm\Theta\alpha}}"]& \left(\T_pP,\,\omega^\alpha(p)\right)\arrow[d,"\pr^P_\alpha"]\\
             \left(\T_p\mathbf{J}^{\Phi-1}_{\bm\Theta}(\R^{\times k}\bm\mu)/\T_p\left(K_{[\bm\mu]}p\right),\,\omega_{[\bm\mu]}^\alpha(p)\right) \arrow[r,hook]\arrow[rd,"\widetilde{\Pi}^\alpha_p"] &  \left( \frac{\T_p\mathbf{J}^{\Phi-1}_{\bm\Theta\alpha}(\R^\times\mu^\alpha)}{\ker\omega^\alpha_p},\,\widetilde{\omega_{\mathbf{J}^\Phi_{\bm\Theta\alpha}}(p)}\right)\arrow[r,hook,"\widetilde{\jmath^\alpha_p}"] \arrow[d,"\widetilde{\pr_\alpha}"]&  \left(\frac{\T_pP}{\ker\omega^\alpha_p},\,\widetilde{\omega^\alpha(p)}\right)\\
             &  \left(\frac{\left(\frac{\T_p\mathbf{J}^{\Phi-1}_{\bm\Theta\alpha}(\R^\times\mu^\alpha)}{\ker\omega^\alpha_p}\right)}{\left\{[\xi_P(p)]\,\mid\,\xi\in\mathfrak{k}_{[\mu^\alpha]}\right\}},\, \omega_{[\mu^\alpha]}(p) \right)&  
        \end{tikzcd}
    \end{center}
    and the fact that $\T_p\mathbf{J}^{\Phi-1}_{\bm\Theta}(\R^{\times k}\bm\mu)\subseteq \T_p\mathbf{J}_{\bm\Theta\alpha}^{\Phi-1}(\R^\times\mu^\alpha)$ for $\alpha=1,\ldots,k$, it follows $\Pi^{\alpha*}_p\widetilde{\omega_{\mathbf{J}^\Phi_{\bm\Theta\alpha}}(p)}=\jmath_{[\bm\mu]}^*\omega^\alpha_p$ and $\widetilde{\Pi}^\alpha_p\circ\T_p\pi_{[\bm\mu]}=\widetilde{\pr_\alpha}\circ \Pi^\alpha_p$. Then, using Proposition \ref{Prop::3.11} for any $v_p,w_p\in\T_p\mathbf{J}^{\Phi-1}_{\bm\Theta}(\R^{\times k}\bm\mu)$, one gets
    \begin{multline*}
        \pi^*_{[\bm\mu]}\widetilde{\Pi}^{\alpha*}_p\omega_{[\mu^\alpha]}(p)(v_p,w_p)=\omega_{[\mu^\alpha]}(p)\left(\widetilde{\Pi}^{\alpha}_p\circ \T_p\pi_{[\bm\mu]}(v_p),\widetilde{\Pi}^{\alpha}_p\circ \T_p\pi_{[\bm\mu]}(w_p)\right)\\=\omega_{[\mu^\alpha]}(p)\left(\widetilde{\pr_\alpha}\circ \Pi^\alpha_p(v_p),\widetilde{\pr_\alpha}\circ \Pi^\alpha_p(w_p)\right)=\widetilde{\pr_\alpha}^*\omega_{[\mu^\alpha]}(p)\left(\Pi^\alpha_p(v_p),\Pi^\alpha_p(w_p)\right)\\=\widetilde{\omega_{\mathbf{J}^\Phi_{\bm\Theta\alpha}}(p)}\left(\Pi^\alpha_p(v_p),\Pi^\alpha_p(w_p)\right)=\jmath_{[\bm\mu]}^*\omega^\alpha(p)\left(v_p,w_p\right)=\pi^*_{[\bm\mu]}\omega^\alpha_{[\bm\mu]}(p).
    \end{multline*}
    Thus, $\widetilde{\Pi}^{\alpha*}_p\omega_{[\mu^\alpha]}(p)=\omega^\alpha_{[\bm\mu]}(p)$.
\end{proof}

The immediate consequence of Lemma \ref{Lemm::PolysymVA} and Lemma \ref{Lemm::RedSpacedLemma} is the following proposition.
\begin{proposition}
    Assume that $\bigcap^k_{\alpha=1}\ker\Pi^\alpha_p=0$ and $\Pi^\alpha_p$ is a surjective morphism for every $p\in\mathbf{J}^{\Phi-1}_{\bm\Theta}(\R^{\times k}\bm\mu)$ and $\alpha=1,\ldots,k$. Then, $(\mathbf{J}^{\Phi-1}_{\bm\Theta}(\R^{\times k}\bm\mu)/K_{[\bm\mu]},\bm\Theta_{[\bm\mu]})$ is an exact $k$-symplectic manifold, where
    \[
    \d \pi^*_{[\bm\mu]}{\bm\Theta}_{[\bm\mu]}=\pi^*_{[\bm\mu]}\bm\omega_{[\bm\mu]}=\jmath^*_{[\bm\mu]}\bm\omega=\d \jmath^*_{[\bm\mu]}\bm\Theta\,.
    \]
\end{proposition}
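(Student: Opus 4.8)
The plan is to verify the two defining features of an exact $k$-symplectic manifold separately: that $\bm\omega_{[\bm\mu]}$ is a $k$-symplectic form (closed and nondegenerate) on the quotient $\mathbf{J}^{\Phi-1}_{\bm\Theta}(\R^{\times k}\bm\mu)/K_{[\bm\mu]}$, and that it admits a globally defined potential $\bm\Theta_{[\bm\mu]}$. Closedness of $\bm\omega_{[\bm\mu]}$ and the identity $\pi^*_{[\bm\mu]}\bm\omega_{[\bm\mu]}=\jmath^*_{[\bm\mu]}\bm\omega$ are already supplied by Lemma \ref{Lemm::RedSpacedLemma}, so the two points that actually remain are the pointwise nondegeneracy of $\bm\omega_{[\bm\mu]}$ and the construction of $\bm\Theta_{[\bm\mu]}$.

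For nondegeneracy I would argue fibrewise. Fix $p\in\mathbf{J}^{\Phi-1}_{\bm\Theta}(\R^{\times k}\bm\mu)$ and set $V_p:=\T_{\pi_{[\bm\mu]}(p)}\big(\mathbf{J}^{\Phi-1}_{\bm\Theta}(\R^{\times k}\bm\mu)/K_{[\bm\mu]}\big)$. Proposition \ref{Prop::3.11} gives that each $(V^p_\alpha,\omega_{[\mu^\alpha]}(p))$ is a symplectic vector space, while Lemma \ref{Lemm::RedSpacedLemma} gives $\omega^\alpha_{[\bm\mu]}(p)=\widetilde{\Pi}^{\alpha*}_p\omega_{[\mu^\alpha]}(p)$, so that $\bm\omega_{[\bm\mu]}(\pi_{[\bm\mu]}(p))=\sum_{\alpha}\widetilde{\Pi}^{\alpha*}_p\omega_{[\mu^\alpha]}(p)\otimes e_\alpha$. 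This is precisely the form appearing in Lemma \ref{Lemm::PolysymVA}, applied to the maps $\widetilde{\Pi}^\alpha_p\colon V_p\to V^p_\alpha$: their surjectivity and the condition $\bigcap^k_{\alpha=1}\ker\widetilde{\Pi}^\alpha_p=0$ are the two standing hypotheses for the induced maps on the reduced tangent space, related to $\Pi^\alpha_p$ through the intertwining relation $\widetilde{\Pi}^\alpha_p\circ\T_p\pi_{[\bm\mu]}=\widetilde{\pr_\alpha}\circ\Pi^\alpha_p$ of Lemma \ref{Lemm::RedSpacedLemma}. Lemma \ref{Lemm::PolysymVA} then yields that $(V_p,\bm\omega_{[\bm\mu]}(\pi_{[\bm\mu]}(p)))$ is a $k$-symplectic vector space, i.e. $\bm\omega_{[\bm\mu]}$ is nondegenerate at $\pi_{[\bm\mu]}(p)$. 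Since $\pi_{[\bm\mu]}$ is a surjective submersion, every point of the quotient arises this way, so $\bm\omega_{[\bm\mu]}$ is nondegenerate everywhere and hence, being closed, a $k$-symplectic form.

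For exactness I would build $\bm\Theta_{[\bm\mu]}$ by descending $\jmath^*_{[\bm\mu]}\bm\Theta$ to the quotient, for which it suffices to check that this one-form is $K_{[\bm\mu]}$-basic. Invariance, $\Lie_{\xi_P}\bm\Theta=0$ for $\xi\in\mathfrak{k}_{[\bm\mu]}$, holds because $\Phi$ is an exact $k$-symplectic Lie group action (Definition \ref{Def::Onohomoaction}). Horizontality follows from the defining property $\inn{\xi_P}\Theta^\alpha=\langle\mathbf{J}^\Phi_{\bm\Theta\alpha},\xi\rangle$ of the exact momentum map (Definition \ref{Def::MomentumMap1homo}): for $p\in\mathbf{J}^{\Phi-1}_{\bm\Theta}(\R^{\times k}\bm\mu)$ one has $\mathbf{J}^\Phi_{\bm\Theta\alpha}(p)=\lambda_\alpha\mu^\alpha$ for some $\lambda_\alpha\in\R^\times$, and since $\xi\in\mathfrak{k}_{[\bm\mu]}\subset\ker\bm\mu$ forces $\langle\mu^\alpha,\xi\rangle=0$, the contraction $\inn{\xi_P}\jmath^*_{[\bm\mu]}\Theta^\alpha$ vanishes. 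Thus $\jmath^*_{[\bm\mu]}\bm\Theta$ descends to a unique $\bm\Theta_{[\bm\mu]}$ with $\pi^*_{[\bm\mu]}\bm\Theta_{[\bm\mu]}=\jmath^*_{[\bm\mu]}\bm\Theta$; differentiating and using that $\pi^*_{[\bm\mu]}$ is injective on forms (as $\pi_{[\bm\mu]}$ is a submersion) gives $\d\bm\Theta_{[\bm\mu]}=\bm\omega_{[\bm\mu]}$ together with the stated chain of equalities.

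The form bookkeeping is routine and the nondegeneracy is essentially delegated to Lemma \ref{Lemm::PolysymVA} and Proposition \ref{Prop::3.11}, so the main obstacle I anticipate is purely one of correctly matching the hypotheses: one must confirm that the trivial-intersection and surjectivity conditions genuinely apply to the induced maps $\widetilde{\Pi}^\alpha_p$ on the reduced tangent space $V_p$, not merely to the $\Pi^\alpha_p$ defined on $\T_p\mathbf{J}^{\Phi-1}_{\bm\Theta}(\R^{\times k}\bm\mu)$. Tracking this through the intertwining relation is what ultimately encodes the crucial identity $\T_p(K_{[\bm\mu]}p)=\ker(\jmath^*_{[\bm\mu]}\bm\omega)(p)$ that the whole reduction rests upon.
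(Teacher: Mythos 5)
Your proposal is correct and follows the same route the paper takes: the paper simply declares the proposition an immediate consequence of Lemma \ref{Lemm::PolysymVA} and Lemma \ref{Lemm::RedSpacedLemma}, and you assemble exactly those ingredients (together with Proposition \ref{Prop::3.11} for the symplecticity of each $V^p_\alpha$) to obtain nondegeneracy fibrewise. The only content you add beyond what the paper leaves implicit is the explicit verification that $\jmath^*_{[\bm\mu]}\bm\Theta$ is $K_{[\bm\mu]}$-basic (invariance from the exact action, horizontality from $\mathfrak{k}_{[\bm\mu]}\subset\ker\bm\mu$), which correctly produces the potential $\bm\Theta_{[\bm\mu]}$ and in fact yields the stronger identity $\pi^*_{[\bm\mu]}\bm\Theta_{[\bm\mu]}=\jmath^*_{[\bm\mu]}\bm\Theta$ used later in Theorem \ref{Th::OnehomoksymRed}.
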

The following lemmas provide necessary, but not sufficient, conditions to ensure that $\Pi^\alpha_p$ is a surjective morphism and $\bigcap^k_{\alpha=1}\ker\Pi^\alpha_p=0$  for every $p\in\mathbf{J}_{\bm\Theta}^{\Phi-1}(\R^{\times k}\bm\mu)$ and $\alpha=1,\ldots,k$.
\begin{lemma}
    The map 
    \[
    \Pi_p^\alpha\colon\T_{\pi_{[\bm\mu]}(p)}\left(\mathbf{J}_{\bm\Theta}^{\Phi-1}(\R^{\times k}\bm\mu)/K_{[\bm\mu]}\right)\longrightarrow \frac{\left(\frac{\T_p(\mathbf{J}^{\Phi-1}_{\bm\Theta\alpha}(\R^\times\mu^\alpha))}{\ker \omega^\alpha_p}\right)}{\left\{[\xi_P(p)]\,\mid\,\xi\in\mathfrak{k}_{[\mu^\alpha]}\right\}}
    \]
    is a surjection if and only if
    \[
    \T_p(\mathbf{J}^{\Phi-1}_{\bm\Theta\alpha}(\mathbb{R}^\times\mu^\alpha))=\T_p(\mathbf{J}_{\bm\Theta}^{\Phi-1}(\R^{\times k}\bm\mu))+\ker\omega^\alpha(p)+\T_p\left(K_{[\mu^\alpha]}p\right)\,.
    \]
    Additionally, the condition $\bigcap^k_{\alpha=1}\ker\Pi^\alpha_p=0$ is satisfied if and only if
    \[
    \T_p\left(K_{[\bm\mu]}p\right)=\bigcap^k_{\alpha}\left(\ker\omega^\alpha(p)+\T_p\left(K_{[\mu^\alpha]}p\right)\right)\cap \T_p(\mathbf{J}_{\bm\Theta}^{\Phi-1}(\R^{\times k}\bm\mu))\,.
    \]
\end{lemma}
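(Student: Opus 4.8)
The plan is to reduce both statements to a fiberwise linear-algebra computation at each point $p$, using the commutative diagram from Lemma \ref{Lemm::RedSpacedLemma} together with the surjectivity and kernel descriptions already established. The map $\Pi_p^\alpha$ factors as $\widetilde{\pr_\alpha}\circ\pr^{\mathbf{J}^\Phi_{\bm\Theta\alpha}}\circ\jmath^\alpha$ composed down to the quotient by $\T_p(K_{[\bm\mu]}p)$; since each individual arrow in the diagram is surjective, the only obstruction to surjectivity of $\Pi_p^\alpha$ lies in whether the image of $\T_p\mathbf{J}_{\bm\Theta}^{\Phi-1}(\R^{\times k}\bm\mu)$ fills up the full target space $V_\alpha^p$.

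\medskip

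\emph{Surjectivity.} First I would note that $\Pi_p^\alpha$ is the composition of $\T_p\mathbf{J}^{\Phi-1}_{\bm\Theta}(\R^{\times k}\bm\mu)\hookrightarrow\T_p\mathbf{J}^{\Phi-1}_{\bm\Theta\alpha}(\R^\times\mu^\alpha)$ with the two canonical projections onto $V_\alpha^p$. The final projection $\widetilde{\pr_\alpha}$ and the quotient $\pr^{\mathbf{J}^\Phi_{\bm\Theta\alpha}}$ are already surjective by construction, so $\Pi_p^\alpha$ is surjective if and only if the image of $\T_p\mathbf{J}^{\Phi-1}_{\bm\Theta}(\R^{\times k}\bm\mu)$ in $\frac{\T_p\mathbf{J}^{\Phi-1}_{\bm\Theta\alpha}(\R^\times\mu^\alpha)}{\ker\omega^\alpha_p}$ together with $\{[\xi_P(p)]\mid\xi\in\mathfrak{k}_{[\mu^\alpha]}\}$ spans the whole quotient $\frac{\T_p\mathbf{J}^{\Phi-1}_{\bm\Theta\alpha}(\R^\times\mu^\alpha)}{\ker\omega^\alpha_p}$. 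Pulling this back via $\pr^{\mathbf{J}^\Phi_{\bm\Theta\alpha}}$ and using that $\ker\omega^\alpha(p)\subseteq\T_p\mathbf{J}^{\Phi-1}_{\bm\Theta\alpha}(\R^\times\mu^\alpha)$, this is exactly the condition
\[
\T_p(\mathbf{J}^{\Phi-1}_{\bm\Theta\alpha}(\R^\times\mu^\alpha))=\T_p(\mathbf{J}_{\bm\Theta}^{\Phi-1}(\R^{\times k}\bm\mu))+\ker\omega^\alpha(p)+\T_p\left(K_{[\mu^\alpha]}p\right)\,,
\]
where I use that $\T_p(K_{[\mu^\alpha]}p)=\{\xi_P(p)\mid\xi\in\mathfrak{k}_{[\mu^\alpha]}\}$ lifts the second term in the denominator of $V_\alpha^p$.

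\medskip

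\emph{Intersection of kernels.} For the second equivalence I would compute $\ker\Pi_p^\alpha$ explicitly: a tangent vector $v_p\in\T_p\mathbf{J}_{\bm\Theta}^{\Phi-1}(\R^{\times k}\bm\mu)$ lies in $\ker\Pi_p^\alpha$ precisely when its class in $V_\alpha^p$ vanishes, i.e. when $[v_p]\in\{[\xi_P(p)]\mid\xi\in\mathfrak{k}_{[\mu^\alpha]}\}$ modulo $\ker\omega^\alpha_p$, which unwinds to $v_p\in\ker\omega^\alpha(p)+\T_p(K_{[\mu^\alpha]}p)$. Intersecting over $\alpha$ and keeping the ambient constraint $v_p\in\T_p\mathbf{J}_{\bm\Theta}^{\Phi-1}(\R^{\times k}\bm\mu)$ gives $\bigcap_\alpha\ker\Pi_p^\alpha=0$ if and only if
\[
\bigcap^k_{\alpha=1}\left(\ker\omega^\alpha(p)+\T_p\left(K_{[\mu^\alpha]}p\right)\right)\cap \T_p(\mathbf{J}_{\bm\Theta}^{\Phi-1}(\R^{\times k}\bm\mu))\subseteq\T_p(K_{[\bm\mu]}p)\,.
\]
Since the reverse inclusion $\T_p(K_{[\bm\mu]}p)\subseteq\bigcap_\alpha\ker\Pi_p^\alpha$ is automatic from $K_{[\bm\mu]}=\bigcap_\alpha K_{[\mu^\alpha]}$ (Lemma \ref{Lemm::SubgroupinJ}) together with $\T_p(K_{[\bm\mu]}p)\subseteq\ker\jmath_{[\bm\mu]}^*\bm\omega(p)$, the stated equality follows.

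\medskip

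\textbf{The main obstacle} will be the careful bookkeeping in identifying $\ker\Pi_p^\alpha$ inside the double quotient $V_\alpha^p$: one must verify that passing to the quotient by $\T_p(K_{[\bm\mu]}p)$ (in the domain) and the quotient by $\{[\xi_P(p)]\mid\xi\in\mathfrak{k}_{[\mu^\alpha]}\}$ (in the codomain) are compatible, and that the preimage of zero genuinely equals $\ker\omega^\alpha(p)+\T_p(K_{[\mu^\alpha]}p)$ intersected with the ambient submanifold tangent space rather than some larger or smaller space. This requires using $K_{[\mu^\alpha]}$-equivariance and the exactness hypotheses through Lemma \ref{Lemm::SymplecticLemma2}; otherwise the argument is a routine diagram chase.
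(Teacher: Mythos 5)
Your proposal is correct and follows essentially the same route the paper intends: the paper itself omits the proof, deferring to the analogous argument in the standard $k$-symplectic case of \cite{MRSV_15}, which is exactly the fiberwise diagram chase you describe — surjectivity of $\Pi^\alpha_p$ unwinds to the sum condition by pulling the span requirement back through $\pr^{\mathbf{J}^\Phi_{\bm\Theta\alpha}}$, and the kernel computation identifies the preimage of $\bigcap_\alpha\ker\Pi^\alpha_p$ with $\bigcap_\alpha\bigl(\ker\omega^\alpha(p)+\T_p(K_{[\mu^\alpha]}p)\bigr)\cap\T_p(\mathbf{J}^{\Phi-1}_{\bm\Theta}(\R^{\times k}\bm\mu))$, with the automatic inclusion of $\T_p(K_{[\bm\mu]}p)$ supplied by Lemma \ref{Lemm::SubgroupinJ}. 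The compatibility concerns you flag are precisely what the preceding proposition (via part (4) of Lemma \ref{Lemm::SymplecticLemma2}) settles, so no gap remains.
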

The proofs of the previous lemmas follow as in the standard $k$-symplectic case presented in \cite{MRSV_15}. The following theorem summarises the previous results.

\begin{theorem}
\label{Th::OnehomoksymRed}
    Let $(P,\bm\Theta)$ be an exact $k$-symplectic manifold, let $\bm\mu\in(\mathfrak{g}^{*})^k$ be a regular $k$-value of an exact $k$-symplectic momentum map $\mathbf{J}^\Phi_{\bm\Theta}\colon P\rightarrow (\mathfrak{g}^{*})^k$ associated with an exact $k$-symplectic Lie group action $\Phi\colon G\times P\rightarrow P$ that acts in a quotientable manner on $\mathbf{J}_{\bm\Theta}^{\Phi-1}(\mathbb{R}^{\times k}\bm\mu)$. Assume that for every $p\in\mathbf{J}_{\bm\Theta}^{\Phi-1}(\R^{\times k}\bm\mu)$ the following conditions hold
    \begin{equation}
        \label{Eq::1homoEQ1}
        \T_p(\mathbf{J}^{\Phi-1}_{\bm\Theta\alpha}(\mathbb{R}^\times\mu^\alpha))=\T_p(\mathbf{J}_{\bm\Theta}^{\Phi-1}(\R^{\times k}\bm\mu))+\ker\omega^\alpha(p)+\T_p\left(K_{[\mu^\alpha]}p\right)\,,\qquad \forall\alpha=1,\ldots,k\,,
    \end{equation}
    and
    \begin{equation}
        \label{Eq::1homoEQ2}
        \T_p\left(K_{[\bm\mu]}p\right)=\bigcap^k_{\alpha}\left(\ker\omega^\alpha(p)+\T_p\left(K_{[\mu^\alpha]}p\right)\right)\cap \T_p(\mathbf{J}^{\Phi-1}_{\bm\Theta}(\R^{\times k}\bm\mu))\,.
    \end{equation}
    Then, $(P_{[\bm\mu]}:=\mathbf{J}_{\bm\Theta}^{\Phi-1}(\R^{\times k}\bm\mu)/K_{[\bm\mu]},\bm\Theta_{[\bm\mu]})$ is an exact $k$-symplectic manifold, such that
    \[
    \pi^*_{[\bm\mu]}\bm\Theta_{[\bm\mu]}=\jmath_{[\bm\mu]}^*\bm\Theta\,,
    \]
    where $\pi_{[\bm\mu]}\colon\mathbf{J}_{\bm\Theta}^{\Phi-1}(\R^{\times k}\bm\mu)\rightarrow P_{[\bm\mu]}$ is the canonical projection and $\jmath_{[\bm\mu]}\colon\mathbf{J}_{\bm\Theta}^{\Phi-1}(\R^{\times k}\bm\mu)\hookrightarrow P$ is the canonical inclusion.
\end{theorem}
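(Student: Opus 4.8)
The plan is to assemble the theorem from the chain of lemmas and propositions already established, the point being that the two hypotheses \eqref{Eq::1homoEQ1} and \eqref{Eq::1homoEQ2} are precisely the conditions that make the reduction machinery of this section go through. The overall strategy is to realise $\bm\Theta_{[\bm\mu]}$ as the descent of $\jmath^*_{[\bm\mu]}\bm\Theta$ to the quotient $P_{[\bm\mu]} = \mathbf{J}^{\Phi-1}_{\bm\Theta}(\R^{\times k}\bm\mu)/K_{[\bm\mu]}$ and to verify that the associated $\R^k$-valued two-form $\bm\omega_{[\bm\mu]}=\d\bm\Theta_{[\bm\mu]}$ is nondegenerate by checking nondegeneracy fibrewise through the symplectic quotient spaces $V^p_\alpha$.

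First I would record the automatic inclusion. By Lemma \ref{Lemm::SubgroupinJ} one has $\T_p(K_{[\bm\mu]}p)\subseteq\ker(\jmath^*_{[\bm\mu]}\bm\omega)(p)$ for every $p\in\mathbf{J}^{\Phi-1}_{\bm\Theta}(\R^{\times k}\bm\mu)$, so the $K_{[\bm\mu]}$-orbit directions lie in the kernel of the restricted form and the reduction by a submanifold (Theorem \ref{Th::RedGen}, with $S=\mathbf{J}^{\Phi-1}_{\bm\Theta}(\R^{\times k}\bm\mu)$) can be attempted. Since each $\jmath^*_{[\bm\mu]}\omega^\alpha$ is closed and annihilates every $\xi_P$ with $\xi\in\mathfrak{k}_{[\bm\mu]}$, Lemma \ref{Lemm::RedSpacedLemma} already produces a unique closed $\R^k$-valued two-form $\bm\omega_{[\bm\mu]}$ on $P_{[\bm\mu]}$ with $\pi^*_{[\bm\mu]}\bm\omega_{[\bm\mu]}=\jmath^*_{[\bm\mu]}\bm\omega$ and $\widetilde{\Pi}^{\alpha*}_p\omega_{[\mu^\alpha]}=\omega^\alpha_{[\bm\mu]}$, where each $\big(V^p_\alpha,\omega_{[\mu^\alpha]}(p)\big)$ is the symplectic vector space supplied by Proposition \ref{Prop::3.11}.

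The crux is to upgrade the inclusion above to the equality $\ker(\jmath^*_{[\bm\mu]}\bm\omega)(p)=\T_p(K_{[\bm\mu]}p)$, equivalently, that $\bm\omega_{[\bm\mu]}$ is nondegenerate. Here I would invoke the lemma immediately preceding this theorem: hypothesis \eqref{Eq::1homoEQ1} is exactly the statement that each induced map $\Pi^\alpha_p\colon\T_{\pi_{[\bm\mu]}(p)}P_{[\bm\mu]}\to V^p_\alpha$ is surjective, while \eqref{Eq::1homoEQ2} is exactly $\bigcap^k_{\alpha=1}\ker\Pi^\alpha_p=0$. With these two facts in hand, Lemma \ref{Lemm::PolysymVA} applies verbatim to the family $\Pi^\alpha_p$ and the symplectic forms $\omega_{[\mu^\alpha]}(p)$, so that $\T_{\pi_{[\bm\mu]}(p)}P_{[\bm\mu]}$ carries the $k$-symplectic vector-space structure $\bm\omega_{[\bm\mu]}(p)=\sum_\alpha\Pi^{\alpha*}_p\omega_{[\mu^\alpha]}(p)\otimes e_\alpha$; nondegeneracy at every point then yields that $(P_{[\bm\mu]},\bm\omega_{[\bm\mu]})$ is a $k$-symplectic manifold, which is precisely the content of the proposition preceding that lemma and which I would simply cite. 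The main obstacle throughout is the failure of $((\T N)^{\perp_k})^{\perp_k}=\T N$ in the $k$-symplectic setting: this forbids a direct double-orthogonal argument and is exactly why the whole detour through the quotient symplectic spaces $V^p_\alpha$ and the two conditions \eqref{Eq::1homoEQ1}--\eqref{Eq::1homoEQ2} is unavoidable.

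Finally, for exactness I would show that $\jmath^*_{[\bm\mu]}\bm\Theta$ itself descends. Since $\Phi$ is an exact $k$-symplectic Lie group action, $\Lie_{\xi_P}\bm\Theta=0$, and because $\T_p(K_{[\bm\mu]}p)\subseteq\T_p\mathbf{J}^{\Phi-1}_{\bm\Theta}(\R^{\times k}\bm\mu)$ for $\xi\in\mathfrak{k}_{[\bm\mu]}$, this Lie derivative survives the pullback. Moreover $\inn{\xi_P}\jmath^*_{[\bm\mu]}\bm\Theta=\langle\mathbf{J}^\Phi_{\bm\Theta},\xi\rangle=0$ along $\mathbf{J}^{\Phi-1}_{\bm\Theta}(\R^{\times k}\bm\mu)$, because there $\mathbf{J}^\Phi_{\bm\Theta}$ takes values in $\R^{\times k}\bm\mu$ while $\xi\in\mathfrak{k}_{[\bm\mu]}\subseteq\ker\bm\mu$. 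Hence $\jmath^*_{[\bm\mu]}\bm\Theta$ is $K_{[\bm\mu]}$-basic and equals $\pi^*_{[\bm\mu]}\bm\Theta_{[\bm\mu]}$ for a unique $\bm\Theta_{[\bm\mu]}\in\Omega^1(P_{[\bm\mu]},\R^k)$. Taking exterior derivatives gives $\pi^*_{[\bm\mu]}\d\bm\Theta_{[\bm\mu]}=\jmath^*_{[\bm\mu]}\d\bm\Theta=\pi^*_{[\bm\mu]}\bm\omega_{[\bm\mu]}$, and since $\pi_{[\bm\mu]}$ is a surjective submersion this forces $\d\bm\Theta_{[\bm\mu]}=\bm\omega_{[\bm\mu]}$, so that $(P_{[\bm\mu]},\bm\Theta_{[\bm\mu]})$ is the desired exact $k$-symplectic manifold with $\pi^*_{[\bm\mu]}\bm\Theta_{[\bm\mu]}=\jmath^*_{[\bm\mu]}\bm\Theta$.
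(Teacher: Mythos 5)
Your proposal is correct and follows essentially the same route as the paper, which obtains this theorem by assembling Lemma \ref{Lemm::SubgroupinJ}, Lemma \ref{Lemm::RedSpacedLemma}, Proposition \ref{Prop::3.11}, Lemma \ref{Lemm::PolysymVA}, and the final lemma identifying conditions \eqref{Eq::1homoEQ1}--\eqref{Eq::1homoEQ2} with the surjectivity of $\Pi^\alpha_p$ and $\bigcap_\alpha\ker\Pi^\alpha_p=0$. Your closing paragraph, checking explicitly that $\jmath^*_{[\bm\mu]}\bm\Theta$ is $K_{[\bm\mu]}$-basic (via $\Lie_{\xi_P}\bm\Theta=0$ and $\iota_{\xi_P}\bm\Theta=\langle\mathbf{J}^\Phi_{\bm\Theta},\xi\rangle=0$ for $\xi\in\mathfrak{k}_{[\bm\mu]}\subseteq\ker\bm\mu$), is a correct elaboration of a step the paper leaves implicit.
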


\section{Marsden--Meyer--Weinstein reduction for \texorpdfstring{$k$}{}-contact manifolds}\label{Sec::MWMkCon}

In this section, we devise the theory of $k$-contact momentum maps as well as the MMW reduction for $k$-contact manifolds. This theorem is established by extending $k$-contact manifolds to $k$-symplectic manifolds and using the modified $k$-symplectic MMW reduction theorem originally devised in \cite{GLMV_19}. However, it is important to note that the preimage of the momentum map is not taken with respect to $\bm\mu\in \mathfrak{g}^{*k}$, but rather with  $\R^\times\bm\mu$, where $\R^\times=\mathbb{R}\setminus{\{0\}}$. This is significantly different from the approach used for the MMW reduction for $k$-symplectic or $k$-cosymplectic manifolds \cite{LRVZ_23, MRSV_15}.

\subsection{\texorpdfstring{$k$}--Contact momentum maps}

This subsection defines $k$-contact momentum maps and explains its properties. Additionally, it establishes the notation used hereafter.
\begin{definition}
\label{Def::kcontactMomentumMap}
        Let $(M,\bm\eta)$ be a $k$-contact manifold. A Lie group action $\Phi\colon G\times M\to M$ is a \textit{$k$-contact Lie group action} if $\Phi_g^*\boldsymbol\eta=\bm\eta$ for each $g\in G$. A {\it $k$-contact momentum map} associated with $\Phi\colon G\times M\rightarrow M$ is a map $\mathbf{J}^\Phi_{\bm\eta}=(\mathbf{J}^\Phi_1,\ldots, \mathbf{J}^\Phi_k)\colon M\rightarrow (\mathfrak{g}^*)^k$ such that
\begin{equation}
\label{Eq::contMomentumMap}
\left\langle \mathbf{J}^\Phi_{\bm\eta},\xi\right\rangle:=\inn{\xi_M}\boldsymbol\eta=(\inn{\xi_M}\eta^\alpha)\otimes e_\alpha \,,\qquad \forall \xi\in \mathfrak{g}\,.
\end{equation}
\end{definition}

Note that if $\Phi:G\times M\rightarrow M$ is a $k$-contact Lie group action, then
equation \eqref{Eq::contMomentumMap} implies that $\iota_{\xi_M}\d\bm\eta=-\d\iota_{\xi_M}\bm\eta=-\d\langle \mathbf{J}^\Phi_{\bm\eta},\xi\rangle$ and one has
\[
\d\left\langle \mathbf{J}^\Phi_{\bm\eta},\xi \right\rangle=-\iota_{\xi_M}\d\bm\eta\,,\qquad \forall\xi\in\mathfrak{g}.
\]
Then,
$$
   \iota_{R_\beta}\iota_{\xi_M}\d\bm\eta=-R_\beta \langle {\bf J}^\Phi_{\bm\eta},\xi\rangle=0,\qquad \forall \beta=1,\ldots,k,\qquad \forall \xi\in \mathfrak{g}.
$$

\begin{definition}
    A $k$-contact momentum map $\mathbf{J}^\Phi_{\bm\eta}:M\rightarrow (\mathfrak{g}^*)^k$ is {\it $\Ad^{*k}$-equivariant} if
    \[
    \mathbf{J}_{\bm\eta}^\Phi\circ \Phi_g =\Ad^{*k}_{g^{-1}}\circ\,\, \mathbf{J}^\Phi_{\bm\eta}\,,\qquad \forall g\in G\,,
    \]
    \begin{minipage}{11cm}
    where 
    \[
    \begin{array}{rccc}
    \Ad^{*k}&:G\times(\mathfrak{g}^*)^k & \longrightarrow & (\mathfrak{g}^*)^k\\
    & (g,\boldsymbol \mu) &\longmapsto & \Ad^{*k}_{g^{-1}}\boldsymbol\mu\,
    \end{array}.
    \]
In other words, the diagram aside commutes for every $g\in G$.
\end{minipage}
\begin{minipage}{4cm}
    \begin{tikzcd}
    M
    \arrow[r,"\mathbf{J}^\Phi_{\bm\eta}"]
    \arrow[d,"\Phi_g"]& (\mathfrak{g}^*)^k
    \arrow[d,"\Ad^{*k}_{g^{-1}}"]\\
    M
    \arrow[r,"\mathbf{J}^\Phi_{\bm\eta}"]&
    (\mathfrak{g}^*)^k.
    \end{tikzcd}
\end{minipage}
\end{definition}

\begin{proposition}
\label{Prop::AdEqkCon}
    A $k$-contact momentum map $\mathbf{J}^\Phi_{\bm\eta}:M\rightarrow (\mathfrak{g}^*)^k$ associated with a Lie group action $\Phi:G\times M\rightarrow M$ related to a $k$-contact manifold $(M,\bm\eta)$ is $\Ad^{*k}$-equivariant.
\end{proposition}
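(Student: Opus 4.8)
The plan is to establish the pointwise identity $\mathbf{J}^\Phi_{\bm\eta}(\Phi_g(x))=\Ad^{*k}_{g^{-1}}\mathbf{J}^\Phi_{\bm\eta}(x)$ for every $x\in M$ and $g\in G$, mimicking the argument sketched for the exact $k$-symplectic case in Proposition \ref{Prop::AdEqkSym}. Since two elements of $(\mathfrak{g}^*)^k$ coincide if and only if they agree after pairing with every $\xi\in\mathfrak{g}$, I would fix an arbitrary $\xi\in\mathfrak{g}$ and compare $\langle\mathbf{J}^\Phi_{\bm\eta}(\Phi_g(x)),\xi\rangle$ with $\langle\Ad^{*k}_{g^{-1}}\mathbf{J}^\Phi_{\bm\eta}(x),\xi\rangle$. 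Using the defining relation \eqref{Eq::contMomentumMap}, the left-hand side equals $(\iota_{\xi_M}\bm\eta)(\Phi_g(x))$, whereas, by definition of the diagonal coadjoint action $\Ad^{*k}$ together with $\langle\Ad^*_{g^{-1}}\mu,\xi\rangle=\langle\mu,\Ad_{g^{-1}}\xi\rangle$, the right-hand side equals $(\iota_{(\Ad_{g^{-1}}\xi)_M}\bm\eta)(x)$, read componentwise in $\R^k$.

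The key ingredient I would isolate is the transformation rule for fundamental vector fields under the action, namely $\xi_M\circ\Phi_g=\T\Phi_g\circ(\Ad_{g^{-1}}\xi)_M$ (equivalently $(\Phi_g)_*\xi_M=(\Ad_g\xi)_M$). This follows from the one-parameter group computation $\exp(t\xi)g=g\exp(t\,\Ad_{g^{-1}}\xi)$: writing $\xi_M(\Phi_g(x))=\restr{\frac{\d}{\d t}}{t=0}\Phi(\exp(t\xi)g,x)$ and substituting this relation shows that $\Phi(\exp(t\xi)g,x)=\Phi_g\big(\Phi(\exp(t\,\Ad_{g^{-1}}\xi),x)\big)$, and differentiating at $t=0$ yields the claim. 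This is the $k$-contact analogue of the fundamental-vector-field identity invoked in Proposition \ref{Prop::AdEqkSym} and recorded in \cite{AM_78}.

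Finally I would combine this with the invariance hypothesis $\Phi_g^*\bm\eta=\bm\eta$, applied component by component to $\eta^1,\dots,\eta^k$. Evaluating,
\[
(\iota_{\xi_M}\bm\eta)(\Phi_g(x))=\bm\eta_{\Phi_g(x)}\big(\xi_M(\Phi_g(x))\big)=\bm\eta_{\Phi_g(x)}\big(\T_x\Phi_g\,(\Ad_{g^{-1}}\xi)_M(x)\big)=(\Phi_g^*\bm\eta)_x\big((\Ad_{g^{-1}}\xi)_M(x)\big),
\]
and $\Phi_g^*\bm\eta=\bm\eta$ collapses the last expression to $(\iota_{(\Ad_{g^{-1}}\xi)_M}\bm\eta)(x)$, which is exactly the right-hand side computed above. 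Since $\xi$ was arbitrary, the pointwise identity follows, giving $\Ad^{*k}$-equivariance. I expect no serious obstacle here: the argument is essentially formal, and the only points requiring care are the bookkeeping with the $\R^k$-valued form $\bm\eta=\eta^\alpha\otimes e_\alpha$ (every step must be read componentwise, and $\Ad^{*k}$ must be the diagonal coadjoint action so that a single $\Ad_{g^{-1}}$ acts simultaneously on all $k$ slots) and the sign/convention check in the fundamental-vector-field identity, where a left versus right action would interchange the roles of $g$ and $g^{-1}$.
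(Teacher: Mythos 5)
Your proposal is correct and takes essentially the same route as the paper, which only sketches the argument by reducing it to the pointwise identity $\mathbf{J}^\Phi_{\bm\eta}(\Phi_g(x))=\Ad^{*k}_{g^{-1}}\mathbf{J}^\Phi_{\bm\eta}(x)$ and citing the fundamental-vector-field transformation rule together with the invariance $\Phi_g^*\bm\eta=\bm\eta$. Your version simply fills in the details of that sketch (pairing with an arbitrary $\xi\in\mathfrak{g}$, the computation $\exp(t\xi)g=g\exp(t\,\Ad_{g^{-1}}\xi)$, and the componentwise reading of the $\R^k$-valued form), all of which are accurate.
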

Similarly as in Proposition \ref{Prop::AdEqkSym}, it is sufficient to show that $\mathbf{J}^\Phi_{\bm\eta}(\Phi_g(x))=\Ad^{*k}_{g^{-1}}\mathbf{J}^\Phi_{\bm\eta}(x)$ for each $x\in M$ and every $g\in G$, and it follows from the following identity $\Phi_{g*}\xi_M=(\Ad^*_{g^{-1}}\xi)_M$, see \cite{AM_78}.

Analogously, to simplify the notation, let us introduce the following definition. 

\begin{definition}
A {\it $k$-contact Hamiltonian system} is a triple $(M,\bm\eta,\mathbf{J}^{\Phi})$, where $(M,\boldsymbol{\eta})$ is a $k$-contact manifold and $\mathbf{J}^{\Phi}_{\bm\eta}:M\rightarrow (\mathfrak{g}^*)^k$ is a $k$-contact momentum map associated with a $k$-contact Lie group action $\Phi:G\times M\rightarrow M$. A {\it $k$-contact $G$-invariant Hamiltonian system} is a tuple $(M,\boldsymbol\eta, {\bf J}^\Phi_{\bm\eta}, h)$, where $(M,\boldsymbol{\eta}, {\bf J}^\Phi_{\bm\eta})$ is a $k$-contact Hamiltonian system, $h\in\Cinfty(M)$ is a Hamiltonian function associated with a $k$-contact Hamiltonian $k$-vector field $\bfX^{h}$, and the map $\Phi:G\times M\rightarrow M$ is a $k$-contact Lie group action satisfying $\Phi_g^* h= h$ for every $g\in G$.
\end{definition}

\subsection{\texorpdfstring{$k$}--Contact reduction by a submanifold}

This subsection presents a general $k$-contact reduction theorem by submanifold and gives necessary and sufficient conditions to perform the reduction. 

First, let us introduce the following definition.

\begin{definition}
    The {\it $k$-contact orthogonal} of $W_x\subset \T_xM$ at some $x\in M$ with respect to $(M,\bm\eta)$ is
    \[
    W_x^{\perp_{\d\bm\eta}}:=\{v_x\in\T_xM\,\mid\,\d\bm\eta(v_x,w_x)=0\,,\,\,\forall w_x\in W_x\}.
    \]
\end{definition}

\begin{theorem}
\label{Th::kcontredN}($k$-contact reduction by a submanifold.) Let $N$ be a submanifold of $M$ with an injective immersion $\jmath\colon N\hookrightarrow M$. Suppose that $\ker \jmath^*\bm\eta$ and $\ker\jmath^*\d\bm\eta$ have constant ranks for $(M,\bm\eta)$. Let $N/\mathcal{F}_N$ be a manifold, where $\mathcal{F}_N$ is a foliation on $N$ given by $\mathcal{D}:=\ker\jmath^*\bm\eta\cap\ker\jmath^*\d\bm\eta$ and let the canonical projection $\pi\colon N\rightarrow N/\mathcal{F}_N$ be a submersion. Moreover, assume that Reeb vector fields associated with $(M,\bm\eta)$ are tangent to $N$. Then, $(N/\mathcal{F}_N,\bm\eta_N)$ is a $k$-contact manifold defined uniquely by
\[
\jmath^*\bm\eta=\pi^*\bm\eta_N,
\]
and $\ker\jmath^*\bm\eta_x\cap\ker\jmath^*\d\bm\eta_x=\T_xN\cap(\T_xN)^{\perp_{\d\bm\eta}}\cap\ker\bm\eta_x$ for any $x\in N$.
\end{theorem}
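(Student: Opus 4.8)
The plan is to follow the blueprint of the $k$-symplectic reduction by a submanifold (Theorem~\ref{Th::RedGen}), upgrading it to handle the one-form $\bm\eta$ itself together with the Reeb fields. The first step is to descend $\jmath^*\bm\eta$ to the quotient. Writing $\mathcal{D}=\ker\jmath^*\bm\eta\cap\ker\jmath^*\d\bm\eta$ and using $\jmath^*\d\bm\eta=\d\jmath^*\bm\eta$, every $X\in\mathcal{D}$ satisfies $\iota_X\jmath^*\bm\eta=0$ and $\iota_X\d\jmath^*\bm\eta=0$, so $\Lie_X\jmath^*\bm\eta=0$ by Cartan's formula. Thus each component $\jmath^*\eta^\alpha$ is basic for the foliation $\mathcal{F}_N$, whose tangent distribution is $\mathcal{D}=\ker\T\pi$, and since $\pi$ is a surjective submersion there is a unique $\bm\eta_N\in\Omega^1(N/\mathcal{F}_N,\R^k)$ with $\jmath^*\bm\eta=\pi^*\bm\eta_N$; uniqueness is immediate from the injectivity of $\pi^*$. (Integrability of $\mathcal{D}$, implicit in the hypothesis, can also be checked directly from the constant-rank assumptions by a bracket computation, noting that $\ker\jmath^*\d\bm\eta$ is the characteristic distribution of the closed form $\d\jmath^*\bm\eta$.)

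Next I would verify that $\bm\eta_N$ satisfies the three conditions of Definition~\ref{dfn:k-contact-manifold} by relating its kernels to those of $\jmath^*\bm\eta$ and $\jmath^*\d\bm\eta$. If $v\in\T_xN$ is a lift of $\bar v\in\T_{\pi(x)}(N/\mathcal{F}_N)$, then, because $\mathcal{D}$ is contained in both $\ker\jmath^*\bm\eta$ and $\ker\jmath^*\d\bm\eta$, one has $\iota_{\bar v}\bm\eta_N=0$ iff $v\in\ker\jmath^*\bm\eta$ and $\iota_{\bar v}\d\bm\eta_N=0$ iff $v\in\ker\jmath^*\d\bm\eta$. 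Hence $\ker\bm\eta_N=\T_x\pi(\ker\jmath^*\bm\eta)$ and $\ker\d\bm\eta_N=\T_x\pi(\ker\jmath^*\d\bm\eta)$, and consequently $\ker\bm\eta_N\cap\ker\d\bm\eta_N=\T_x\pi(\mathcal{D}_x)=0$, which is condition~(3).

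The crux is pinning the two remaining ranks to $k$, and this is exactly where the tangency of the Reeb fields is used. Restricting $R_1,\dots,R_k$ to $N$ yields $\iota_{R_\alpha}\jmath^*\eta^\beta=\delta_\alpha^\beta$ and $\iota_{R_\alpha}\jmath^*\d\bm\eta=0$. The first identity keeps $\jmath^*\eta^1,\dots,\jmath^*\eta^k$ pointwise independent, so $\ker\jmath^*\bm\eta$ has corank exactly $k$ in $\T N$ and, descending, $\ker\bm\eta_N$ has corank $k$ (condition~(1)). For condition~(2) I would establish the splitting $\ker\jmath^*\d\bm\eta=\langle R_1,\dots,R_k\rangle\oplus\mathcal{D}$: the sum is direct since any $\sum_\alpha c^\alpha R_\alpha\in\mathcal{D}\subset\ker\jmath^*\bm\eta$ forces $c^\beta=\iota_{\sum_\alpha c^\alpha R_\alpha}\jmath^*\eta^\beta=0$, and it is exhaustive because for $v\in\ker\jmath^*\d\bm\eta$ the corrected vector $v-\sum_\alpha(\iota_v\jmath^*\eta^\alpha)R_\alpha$ lies in $\ker\jmath^*\bm\eta\cap\ker\jmath^*\d\bm\eta=\mathcal{D}$. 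Therefore $\ker\d\bm\eta_N=\T_x\pi(\ker\jmath^*\d\bm\eta)$ has rank $\rk\ker\jmath^*\d\bm\eta-\rk\mathcal{D}=k$, the reduced Reeb fields being $\T\pi(R_\alpha)$.

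Finally, the identity $\ker\jmath^*\bm\eta_x\cap\ker\jmath^*\d\bm\eta_x=\T_xN\cap(\T_xN)^{\perp_{\d\bm\eta}}\cap\ker\bm\eta_x$ follows by unwinding the pullbacks under the identification $\T_xN\cong\T_x\jmath(\T_xN)\subset\T_xM$: the condition $\iota_v\jmath^*\bm\eta=0$ reads $\T_x\jmath(v)\in\ker\bm\eta_x$, whereas $\iota_v\jmath^*\d\bm\eta=0$ reads $\d\bm\eta(\T_x\jmath(v),\T_x\jmath(w))=0$ for all $w\in\T_xN$, i.e.\ $\T_x\jmath(v)\in(\T_xN)^{\perp_{\d\bm\eta}}$; intersecting gives the right-hand side. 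I expect the rank bookkeeping of conditions~(1) and~(2) to be the main obstacle, since it is only the tangency of the Reeb fields that pins both the corank of $\ker\bm\eta_N$ and the rank of $\ker\d\bm\eta_N$ to $k$.
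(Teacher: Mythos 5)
Your proposal is correct and follows essentially the same route as the paper: descend $\jmath^*\bm\eta$ along the foliation by $\mathcal{D}$, use the tangency of the Reeb vector fields to pin the corank of $\ker\bm\eta_N$ and the rank of $\ker\d\bm\eta_N$ to $k$, and unwind the pullbacks for the final identity. The only cosmetic differences are that you compute $\rk\ker\d\bm\eta_N$ via the explicit splitting $\ker\jmath^*\d\bm\eta=\langle R_1,\dots,R_k\rangle\oplus\mathcal{D}$ (a clean pointwise argument that, unlike the paper's, does not even need the projectability check $[R_\alpha,X]\in\mathcal{D}$), whereas the paper builds an adapted basis of $\T_xN$ and verifies that the projected $R^N_\alpha$ are the Reeb fields of the quotient.
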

\begin{proof}
    For any $X,Y\in \mathfrak{X}(N)$ taking values in $\mathcal{D}$, one has
    \[
    \jmath^*(\iota_{[X,Y]}\bm\eta)=0\,,\qquad \jmath^*(\iota_{[X,Y]}\d\bm\eta)=0\,. 
    \]
    Hence, $[X,Y]$ takes values in $\mathcal{D}$. By the Fr\"obenius theorem and the fact that $\ker \jmath^*\bm\eta\cap\ker\jmath^*\d\bm\eta$ has constant rank on $N$, the distribution $\mathcal{D}$ defines a foliation $\mathcal{F}_N$ on $N$. 

    Then, by definition of $\mathcal{D}$, it follows that $\jmath^*\bm\eta$ is basic with respect to $\mathcal{F}_N$. Therefore, there exists a unique $\bm\eta_N\in\Omega^1(N/F_N,\R^k)$, such that 
    \[
    \jmath^*\bm\eta=\pi^*\bm\eta_N\,.
    \]
    
    Now, it must be verified that $\ker\bm\eta_N\cap\bm\d\eta_N=0$, $\cork \ker\bm\eta_N=k$, and $\rk\ker\d\bm\eta=k$. Let $X_N=\T\pi(X)$ takes values in $\ker\bm\eta_N\cap\ker\d\bm\eta_N$. Then,
    \[
    \inn{X}\jmath^*\bm\eta=\inn{X}\pi^*
    \bm\eta_N=\pi^*(\iota_{X_N}\bm\eta_N)=0
    \]
    and
    \[
    \inn{X}\inn{Y}\jmath^*\d\bm\eta=\inn{X}\inn{Y}\pi^*\d\bm\eta_N=\pi^*(\inn{X_N}\inn{Y_N}\d\bm\eta_N)=0,
    \]
    for any $Y_N:=\T\pi(Y)$. Thus, $X$ is tangent to $\mathcal{F}_N$ and $\T\pi(X)=0$. Then, $\ker\bm\eta_N\cap\ker\d\bm\eta_N=0$.

    Note that $R_1,\ldots,R_k$ are tangent to $N$ and $[R_\alpha,X]$ takes values in $\mathcal{D}$ for every $X$ tangent to $\mathcal{F}_N$ and $\alpha=1,\ldots,k$. Thus, the Reeb vector fields project via $\pi$ onto $\langle R^N_1,\ldots,R^N_k\rangle\in \mathfrak{X}(N/\mathcal{F}_N)$. Additionally,
    \[
    \pi^*(\inn{R^N_\alpha}\eta^\beta_N)=\jmath^*(\inn{R_\alpha}\eta^\beta)=\delta^\beta_\alpha,\qquad \pi^*(\inn{R^N_\alpha}\d \bm\eta_N)=\jmath^*(\inn{R_\alpha}\d\bm\eta)=0,\qquad \alpha,\beta=1,\ldots,k.
    \]
    Therefore, $R^N_1,\ldots, R^N_k$ are the Reeb vector fields associated with $(N/\mathcal{F}_N,\bm\eta_N)$ and $\rk \ker\bm\eta_N=k$.

    Let $n:=\dim N$, and let $\langle X_1,\ldots,X_n\rangle=\T_xN$ for any $x\in N$. One can choose a family of vectors $\langle Y_1,\ldots,Y_{n-k}\rangle\subset \ker\bm\eta_x$ such that $\langle Y_1,\ldots, Y_{n-k}\rangle\oplus\langle R_1,\ldots,R_k\rangle=\T_xN$. Moreover, among $\langle Y_1,\ldots, Y_{n-k}\rangle$ there are vectors $\langle Z_1,\ldots,Z_\ell\rangle= \ker\jmath^*\bm\eta_x\cap\ker\d\jmath^*\bm\eta_x$. Thus,
    \[
    \T_x N=\langle Y_1,\ldots,Y_{n-k-\ell}\rangle\oplus \langle Z_1,\ldots,Z_\ell\rangle\oplus \langle R_1,\ldots,R_k\rangle,
    \]
    for any $x\in N$. Since, $Z_i$ projects to zero for $\alpha=1,\ldots,\ell$ it follows that $\ker\d\bm\eta_N$ is a corank $k$ distribution on $N/\mathcal{F}_N$ and a pair $(N/\mathcal{F}_N,\bm\eta_N)$ is a $k$-contact manifold.

    Additionally, $\ker\jmath^*\bm\eta_x=\T_x N\cap \ker\bm\eta_x$ and $\ker\jmath^*\d\bm\eta_x=\T_xN\cap(\T_xN)^{\perp_{\d\bm\eta}}$ yields that
    \[
    \ker\jmath^*\bm\eta_x\cap\ker\jmath^*\d\bm\eta_x= \T_xN\cap(\T_xN)^{\perp_{\d\bm\eta}}\cap\ker\bm\eta_x,
    \]
    for any $x\in N$.
\end{proof}

Now, let us prove the analogue of Lemma \ref{Lemm::NonAdPerpPS}.

\begin{lemma}
\label{Lemm::contactLemma}
    Let $\bm\mu\in (\mathfrak{g}^*)^k$  be a weak regular $k$-value of a $k$-contact momentum map $\mathbf{J}^\Phi_{\bm\eta}\colon M\rightarrow (\mathfrak{g}^*)^k$ associated with a Lie group action $\Phi\colon G\times M\rightarrow M$ and $(M,\bm\eta)$. Then, for any $x\in \mathbf{J}^{\Phi-1}_{\bm\eta}(\R^{\times k}\bm\mu)$, one has
    \begin{enumerate}[{\rm(1)}]
        \item $\T_x(G_{[\bm\mu]}x)=\T_x(Gx)\cap\T_x\mathbf{J}^{\Phi-1}_{\bm\eta}(\R^{\times k}\bm\mu)$,
        \item $\T_x\mathbf{J}^{\Phi-1}_{\bm\eta}(\R^{\times k}\bm\mu)=\left( \T_x(Gx)\cap\ker\bm\eta_x\right)^{\perp_{\d\bm\eta}}$.
    \end{enumerate}
\end{lemma}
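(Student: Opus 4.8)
The plan is to mirror the exact symplectic Lemma~\ref{Lemm::SymplecticLemma} (the $k=1$ case), replacing the potential $\Theta$ by the $k$-contact form $\bm\eta$ and the symplectic orthogonal by $\perp_{\d\bm\eta}$. Two identities drive the argument. First, the $\Ad^{*k}$-equivariance of $\mathbf{J}^\Phi_{\bm\eta}$ (Proposition~\ref{Prop::AdEqkCon}) together with $\Phi_{g*}\xi_M=(\Ad^*_{g^{-1}}\xi)_M$ gives, for $x\in\mathbf{J}^{\Phi-1}_{\bm\eta}(\R^{\times k}\bm\mu)$ with $\mathbf{J}^\Phi_\alpha(x)=\lambda_\alpha\mu^\alpha$ and $\lambda_\alpha\in\R^\times$, the component-wise formula $\T_x\mathbf{J}^\Phi_\alpha(\xi_M(x))=-\lambda_\alpha\,\ad^*_\xi\mu^\alpha$ for each $\alpha$ and $\xi\in\mathfrak{g}$. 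Second, since $\Phi$ is a $k$-contact action, $\mathcal{L}_{\xi_M}\eta^\alpha=0$, so Cartan's formula yields $\iota_{\xi_M}\d\eta^\alpha=-\d\langle\mathbf{J}^\Phi_\alpha,\xi\rangle$, whence for $v\in\T_xM$
\[
\d\eta^\alpha(v,\xi_M(x))=\langle\T_x\mathbf{J}^\Phi_\alpha(v),\xi\rangle .
\]
I would also record two tangent-space descriptions: by weak $k$-regularity $\mathbf{J}^{\Phi-1}_{\bm\eta}(\R^{\times k}\bm\mu)=\bigcap_\alpha\mathbf{J}^{\Phi-1}_\alpha(\R^\times\mu^\alpha)$, so $v\in\T_x\mathbf{J}^{\Phi-1}_{\bm\eta}(\R^{\times k}\bm\mu)$ iff $\T_x\mathbf{J}^\Phi_\alpha(v)\in\langle\mu^\alpha\rangle$ for every $\alpha$; and $\xi_M(x)\in\ker\bm\eta_x$ iff $\langle\mu^\alpha,\xi\rangle=0$ for all $\alpha$, i.e. $\xi\in\ker\bm\mu$.

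For part~(1), using $\mathfrak{g}_{[\bm\mu]}=\bigcap_\alpha\mathfrak{g}_{[\mu^\alpha]}=\{\xi:\ad^*_\xi\mu^\alpha\wedge\mu^\alpha=0\ \forall\alpha\}$, the inclusion $\subseteq$ is immediate: if $\xi\in\mathfrak{g}_{[\bm\mu]}$ then $\ad^*_\xi\mu^\alpha\in\langle\mu^\alpha\rangle$, so the equivariance formula gives $\T_x\mathbf{J}^\Phi_\alpha(\xi_M(x))\in\langle\mu^\alpha\rangle$ for all $\alpha$, placing $\xi_M(x)$ in $\T_x(Gx)\cap\T_x\mathbf{J}^{\Phi-1}_{\bm\eta}(\R^{\times k}\bm\mu)$. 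Conversely, every element of that intersection is some $\xi_M(x)$ with $\T_x\mathbf{J}^\Phi_\alpha(\xi_M(x))\in\langle\mu^\alpha\rangle$; since $\lambda_\alpha\neq0$ the equivariance formula forces $\ad^*_\xi\mu^\alpha\wedge\mu^\alpha=0$ for each $\alpha$, i.e. $\xi\in\mathfrak{g}_{[\bm\mu]}$. This closes part~(1) exactly as in Lemma~\ref{Lemm::SymplecticLemma}(1).

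For part~(2), the inclusion $\subseteq$ is clean: if $\T_x\mathbf{J}^\Phi_\alpha(v)=c_\alpha\mu^\alpha$ and $w=\xi_M(x)\in\T_x(Gx)\cap\ker\bm\eta_x$ (so $\xi\in\ker\bm\mu$), then the second identity gives $\d\eta^\alpha(v,w)=c_\alpha\langle\mu^\alpha,\xi\rangle=0$ for every $\alpha$, so $v\in(\T_x(Gx)\cap\ker\bm\eta_x)^{\perp_{\d\bm\eta}}$. The reverse inclusion is the crux and the main obstacle. For $v$ in the orthogonal, the second identity only yields $\langle\T_x\mathbf{J}^\Phi_\alpha(v),\xi\rangle=0$ for all $\alpha$ and all $\xi\in\ker\bm\mu$, that is
\[
\T_x\mathbf{J}^\Phi_\alpha(v)\in(\ker\bm\mu)^\circ=\langle\mu^1,\ldots,\mu^k\rangle\qquad\text{for each }\alpha,
\]
whereas tangency to $\mathbf{J}^{\Phi-1}_{\bm\eta}(\R^{\times k}\bm\mu)$ demands the strictly stronger component-wise membership $\T_x\mathbf{J}^\Phi_\alpha(v)\in\langle\mu^\alpha\rangle$. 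The annihilator of the single subspace $\ker\bm\mu$ controls only the joint span $\langle\mu^1,\ldots,\mu^k\rangle$, and cannot by itself separate the lines $\langle\mu^\alpha\rangle$.

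This gap is the genuinely new feature of $k$-contact reduction, absent when $k=1$ (there $(\ker\mu)^\circ=\langle\mu\rangle$, so $\subseteq$ and $\supseteq$ coincide) and absent in the $k$-symplectic Lemma~\ref{Lemm::NonAdPerpPS} (there $\xi$ ranges over all of $\mathfrak{g}$ and the level set is a single point, forcing $\T_x\mathbf{J}^\Phi_\alpha(v)=0$). To close the reverse inclusion I would exploit the remaining $k$-contact data: the splitting $\T_xM=\ker\bm\eta_x\oplus\langle R_1,\ldots,R_k\rangle$ with $\T_x\mathbf{J}^\Phi_\alpha(R_\beta)=0$ (so the differentials $\T_x\mathbf{J}^\Phi_\alpha$ factor through $\ker\bm\eta_x$), together with the transversality built into weak $k$-regularity of $\bm\mu$, to rule out the off-diagonal directions $\mu^\beta$ ($\beta\neq\alpha$) in $\T_x\mathbf{J}^\Phi_\alpha(v)$. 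I expect this to be the hardest and most delicate step, and I anticipate that the cleanest route is the one announced in the introduction --- to realize $(M,\bm\eta)$ inside an exact $k$-symplectic manifold and deduce the claim factor-by-factor from Lemma~\ref{Lemm::SymplecticLemma}, where each scaling line $\langle\mu^\alpha\rangle$ is kept separate by the very construction of the extension; any hidden assumption needed to force the separation (for instance, a coupling among the $\mu^\alpha$, or the single-scaling interpretation $\R^\times\bm\mu$) would have to be made explicit precisely here.
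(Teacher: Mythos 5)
Your part (1) and the inclusion $\T_x\mathbf{J}^{\Phi-1}_{\bm\eta}(\R^{\times k}\bm\mu)\subseteq\left(\T_x(Gx)\cap\ker\bm\eta_x\right)^{\perp_{\d\bm\eta}}$ coincide with the paper's argument, down to the two driving identities (the $\Ad^{*k}$-equivariance formula for $\T_x\mathbf{J}^\Phi_{\bm\eta}(\xi_M(x))$ and $\iota_{\xi_M}\d\eta^\alpha=-\d\langle\mathbf{J}^\Phi_\alpha,\xi\rangle$) and the identification $\T_x(Gx)\cap\ker\bm\eta_x=\{\xi_M(x)\mid\xi\in\ker\bm\mu\}$. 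The genuine gap is that you never prove the reverse inclusion of (2): you stop at the observation that, for $v$ in the orthogonal, the pairing with $\ker\bm\mu$ only yields $\T_x\mathbf{J}^\Phi_\alpha(v)\in(\ker\bm\mu)^\circ=\langle\mu^1,\ldots,\mu^k\rangle$ for each $\alpha$, whereas tangency to $\mathbf{J}^{\Phi-1}_{\bm\eta}(\R^{\times k}\bm\mu)$ requires the component-wise condition $\T_x\mathbf{J}^\Phi_\alpha(v)\in\langle\mu^\alpha\rangle$, and you then list candidate strategies (the Reeb splitting, the lift to the exact $k$-symplectic cover) without executing any of them. A step announced as "anticipated to be the hardest" but not carried out leaves part (2) unproved, so the proposal is incomplete as a proof of the statement.

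You should know, however, that the paper's own proof of that inclusion consists of exactly the computation you already have, namely
\[
0=(\iota_v\iota_{\xi_M}\d\bm\eta)_x=\langle\T_x\mathbf{J}^\Phi_{\bm\eta}(v),\xi\rangle\,,\qquad\forall\xi\in\ker\bm\mu\,,
\]
followed immediately by the assertion $v\in\T_x\mathbf{J}^{\Phi-1}_{\bm\eta}(\R^{\times k}\bm\mu)$; no argument separating the individual lines $\langle\mu^\alpha\rangle$ inside $\langle\mu^1,\ldots,\mu^k\rangle$ is given. For $k=1$ the two containments coincide (this is the paper's Lemma on the exact symplectic case, which is fine), and in the paper's worked examples the images of the maps $\T_x\mathbf{J}^\Phi_\alpha$ sit in complementary subspaces of $\mathfrak{g}^*$, so the conditions again agree; but for general $k\geq 2$ the one-line conclusion needs precisely the extra justification you were looking for. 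So your reconstruction is faithful to the paper wherever it is complete, and the point at which you stall is the point at which the paper's written proof supplies no more than what you already derived; you have correctly located a step that deserves either an additional argument or an explicit hypothesis, rather than missed an idea the paper contains.
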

\begin{proof}
\begin{enumerate}[(1)]

    \item Note that for any $\xi\in\mathfrak{g}$, one has
\[
\T_x\mathbf{J}^\Phi_{\bm\eta\,\alpha}(\xi_M(x))=\frac{\d}{\d t}\bigg|_{t=0}\left(\mathbf{J}^{\Phi}_{\bm\eta}\circ\Phi_{\exp(t\xi)}\right)(x)=\frac{\d}{\d t}\bigg|_{t=0}\left(\Ad^{*k}_{\exp(-t\xi)}\mathbf{J}^{\Phi}_{\bm\eta}\right)(x)=-\ad^{*k}_\xi\bm\mu.
\]
First, let $\xi\in\mathfrak{g}_{[\bm\mu]}$, then
\[
0=\bm\mu\wedge\ad^{*k}_\xi\bm\mu=-\bm\mu\wedge\T_x\mathbf{J}^{\Phi}_{\bm\eta}(\xi_M(x)),
\]
for any $x\in\mathbf{J}^{\Phi-1}_{\bm\eta}(\R^{\times k}\bm\mu)$. Thus, $\xi_M(x)\in\T_x\mathbf{J}^{\Phi-1}_{\bm\eta}(\R^{\times k}\bm\mu)$.

Conversely,  $v\in\T_x(Gx)\cap\T_x\mathbf{J}^{\Phi-1}_{\bm\eta}(\R^{\times k}\bm\mu)$ yields that $v=\xi_M(x)\in \T_x\mathbf{J}^{\Phi-1}_{\bm\eta}(\R^{\times k}\bm\mu)$ and
\[
0=\bm\mu\wedge\T_x\mathbf{J}^{\Phi}_{\bm\eta}(\xi_M(x))=-\bm\mu\wedge\ad^{*k}_\xi\bm\mu,
\]
for any $x\in\mathbf{J}^{\Phi-1}_{\bm\eta}(\R^{\times k}\bm\mu)$. Hence, $\xi_M(x)\in \T_x(G_{[\bm\mu]}x)$. This proves (1).

\item Definition \ref{Def::kcontactMomentumMap} yields that if $\xi_M(x)\in\ker\bm\eta_x$ for some $x\in\mathbf{J}^{\Phi-1}_{\bm\eta}(\R^{\times k}\bm\mu)$, then $\xi\in\ker\bm\mu$. Let $v\in \left(\T_x(Gx)\cap\ker\bm\eta_x\right)^{\perp_{\d\bm\eta}}$. Thus,
\[
0=(\inn{v}\inn{\xi_M}\d\bm\eta)_x=-\inn{v}\d \left\langle\mathbf{J}^\Phi_{\bm\eta} ,\xi\right\rangle=\left\langle \T_x\mathbf{J}^\Phi_{\bm\eta}(v),\xi\right\rangle,\qquad \forall\xi\in\ker\bm\mu.
\]
Therefore, $v\in\T_x\mathbf{J}^{\Phi-1}_{\bm\eta}(\R^{\times k}\bm\mu)$.

Conversely, let $v\in \T_x\mathbf{J}^{\Phi-1}_{\bm\eta}(\R^{\times k}\bm\mu)$. Then,
\[
0=\left\langle \T_x\mathbf{J}^\Phi_{\bm\eta}(v),\xi\right\rangle=(\inn{v}\inn{\xi_M}\d\bm\eta)_x,\qquad \forall\xi\in\ker\bm\mu.
\]
Hence, $v\in \left(\T_x(Gx)\cap\ker\bm\eta_x\right)^{\perp_{\d\bm\eta}}$. This proves (2).
\end{enumerate}
\end{proof}

Blacker, in \cite{Bla_19}, provides sufficient and necessary conditions for performing the MMW reduction on $k$-symplectic manifolds. Theorem \ref{Th::kcontredN} yields the analogue theorem to one in \cite[Theorem 2.14]{Bla_19} but in $k$-contact setting. Taking $\T_xN:=(\T_xW)^{\perp_{\d\bm\eta}}$ for some $W\subseteq M$ leads to the following theorem.

\begin{theorem}
\label{Th::BlackerReduction}
    Let $W\subset M$ be a submanifold of $(M,\bm\eta)$. Then,
    \[
    \frac{(\T_xW)^{\perp_{\d\bm\eta}}}{\left((\T_xW)^{\perp_{\d\bm\eta}}\right)^{\perp_{\d\bm\eta}}\cap (\T_xW)^{\perp_{\d\bm\eta}}\cap\ker\bm\eta_x}
    \]
    is a $k$-contact vector space.
\end{theorem}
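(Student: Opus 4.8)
The plan is to read this statement as the pointwise, purely linear-algebraic shadow of the $k$-contact reduction by a submanifold (Theorem \ref{Th::kcontredN}) and to transcribe that proof to the single tangent space $V:=\T_xM$. Fix $x\in M$ and write $U:=\T_xW$, $N:=U^{\perp_{\d\bm\eta}}$, and $\mathcal{D}:=N\cap N^{\perp_{\d\bm\eta}}\cap\ker\bm\eta_x$, so that the space in the statement is exactly $N/\mathcal{D}$, with quotient projection $\pi\colon N\to N/\mathcal{D}$. On it I would put the induced $\R^k$-valued forms $\bar{\bm\eta}$ and $\overline{\d\bm\eta}$ and verify the three defining conditions of Definition \ref{dfn:k-contact-manifold} in their linear form: $\ker\bar{\bm\eta}$ has corank $k$, $\ker\overline{\d\bm\eta}$ has rank $k$, and the two kernels meet only in $0$.

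The engine of the argument is the behaviour of the Reeb values $R_1(x),\dots,R_k(x)$. Since $R_\alpha(x)\in\ker\d\bm\eta_x$ by Theorem \ref{thm:k-contact-Reeb}, one has $\d\eta^\beta(R_\alpha(x),v)=0$ for every $v\in V$; in particular $R_\alpha(x)$ annihilates all of $U$ and all of $N$, so that $R_\alpha(x)\in N\cap N^{\perp_{\d\bm\eta}}$. Combined with $\eta^\beta(R_\alpha(x))=\delta^\beta_\alpha$, this shows $R_\alpha(x)\notin\ker\bm\eta_x$, hence $R_\alpha(x)\notin\mathcal{D}$, and these vectors descend to $k$ independent classes in $N/\mathcal{D}$. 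This is the linear substitute for the tangency hypothesis of Theorem \ref{Th::kcontredN} that the Reeb vector fields be tangent to $N$, and here it holds automatically because $N=U^{\perp_{\d\bm\eta}}$.

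With this in hand the verification is short. The inclusions $\mathcal{D}\subseteq\ker\bm\eta_x$ and $\mathcal{D}\subseteq N^{\perp_{\d\bm\eta}}$ ensure that $\bm\eta_x|_N$ and $\d\bm\eta_x|_{N\times N}$ descend to well-defined forms $\bar{\bm\eta}$ and $\overline{\d\bm\eta}$ on $N/\mathcal{D}$. For transversality, a class $\pi(v)\in\ker\bar{\bm\eta}\cap\ker\overline{\d\bm\eta}$ forces $v\in\ker\bm\eta_x$ and $v\in N^{\perp_{\d\bm\eta}}$, i.e. $v\in\mathcal{D}$, so $\pi(v)=0$. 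Since $\bar{\bm\eta}(\pi(R_\alpha(x)))=e_\alpha$, the map $\bar{\bm\eta}$ is onto $\R^k$ and $\ker\bar{\bm\eta}$ has corank $k$. Finally $\ker\overline{\d\bm\eta}=\pi(N\cap N^{\perp_{\d\bm\eta}})\cong(N\cap N^{\perp_{\d\bm\eta}})/\mathcal{D}$, and because $\mathcal{D}=(N\cap N^{\perp_{\d\bm\eta}})\cap\ker\bm\eta_x$ the first isomorphism theorem identifies this quotient with $\bm\eta_x(N\cap N^{\perp_{\d\bm\eta}})$; as the $R_\alpha(x)$ already map onto a basis of $\R^k$, this image is all of $\R^k$ and $\ker\overline{\d\bm\eta}$ has rank $k$.

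I expect the main obstacle to be condition (2): correctly identifying the radical of the descended two-form and pinning down its dimension. Everything turns on the Reeb values sitting in $N\cap N^{\perp_{\d\bm\eta}}$ while evaluating to a basis under $\bm\eta_x$, which is exactly what makes the isomorphism $(N\cap N^{\perp_{\d\bm\eta}})/\mathcal{D}\cong\bm\eta_x(N\cap N^{\perp_{\d\bm\eta}})$ produce dimension $k$ rather than something smaller. I would also flag that no exterior-derivative relation between $\bar{\bm\eta}$ and $\overline{\d\bm\eta}$ is required at the linear level: a $k$-contact vector space here is simply the triple of the two descended forms subject to the above corank, rank, and transversality conditions, which is precisely the pointwise reading of Definition \ref{dfn:k-contact-manifold}.
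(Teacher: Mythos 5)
Your proposal is correct and follows exactly the route the paper intends: the paper gives no standalone proof of this theorem but derives it by substituting $\T_xN:=(\T_xW)^{\perp_{\d\bm\eta}}$ into the $k$-contact reduction by a submanifold (Theorem \ref{Th::kcontredN}), which is precisely the pointwise transcription you carry out, including the key observation that the Reeb tangency hypothesis becomes automatic because $\ker\d\bm\eta_x$ lies in every $k$-contact orthogonal. Your explicit identification of $\ker\overline{\d\bm\eta}$ with $\bm\eta_x(N\cap N^{\perp_{\d\bm\eta}})=\R^k$ via the first isomorphism theorem supplies a detail the paper leaves implicit, but the argument is the same.
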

By combining Theorem \ref{Th::kcontredN} and Lemma \ref{Lemm::contactLemma}, the following theorem is obtained by setting $\T_xW:=\T_x(Gx)\cap\ker\bm\eta_x$, which is the $k$-contact analogue of the $k$-symplectic result in \cite[Theorem 3.22]{Bla_19}.

\begin{theorem}
    Let $(M,\bm\eta,\mathbf{J}^\Phi_{\bm\eta})$ be a $k$-contact Hamiltonian system and let $\Phi$ be a $k$-contact Lie group action. Assume that $\bm\mu\in (\mathfrak{g}^*)^k$ is a weak regular $k$-value of $\mathbf{J}^\Phi_{\bm\eta}\colon M\rightarrow (\mathfrak{g}^*)^k$ and $\mathbf{J}^{\Phi-1}_{\bm\eta}(\R^{\times k}\bm\mu)$ is a quotientable by $K_{[\bm\mu]}$. Moreover, assume that
    \begin{equation}
    \label{Eq::BlackerCondition}
    \T_x(K_{[\bm\mu]}x)=\left((\T_x(Gx)\cap\ker\bm\eta_x)^{\perp_{\d\bm\eta}}\right)^{\perp_{\d\bm\eta}}\cap (\T_x(Gx)\cap\ker\bm\eta_x)^{\perp_{\d\bm\eta}}\cap\ker\bm\eta_x,
    \end{equation}
    for any $x\in \mathbf{J}^{\Phi-1}_{\bm\eta}(\R^{\times k}\bm\mu)$. Then, $(M_{[\bm\mu]}=\mathbf{J}^{\Phi-1}_{\bm\eta}(\R^{\times k} \bm\mu)/K_{[\bm\mu]},\bm\eta_{[\bm\mu]})$ is a $k$-contact manifold, where $\bm\eta_{[\bm\mu]}\in\Omega^1(M_{[\bm\mu]},\R^k)$ is uniquely defined by
\[
\pi_{[\bm\mu]}^*\bm\eta_{[\bm\mu]}=i_{[\bm\mu]}^*\bm\eta,
\]
where $i_{[\bm\mu]}\colon\mathbf{J}^{\Phi-1}_{\bm\eta}(\R^{\times k} \bm\mu)\hookrightarrow M$ denotes the natural immersion while the canonical projection is denoted by $\pi_{
[\bm\mu]}\!\colon\!\mathbf{J}^{\Phi-1}_{\bm\eta}(\R^{\times k}\bm\mu)\!\rightarrow\! \mathbf{J}^{\Phi-1}_{\bm\eta}(\R^{\times k} \bm\mu)/K_{[\bm\mu]}$.
\end{theorem}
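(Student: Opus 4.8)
The plan is to deduce the statement from the $k$-contact reduction by a submanifold, Theorem~\ref{Th::kcontredN}, applied to the submanifold $N := \mathbf{J}^{\Phi-1}_{\bm\eta}(\R^{\times k}\bm\mu)$ with the injective immersion $\jmath := i_{[\bm\mu]}\colon N\hookrightarrow M$. Since $\bm\mu$ is a weak regular $k$-value, $N$ is a submanifold of $M$. The whole argument then reduces to checking the hypotheses of Theorem~\ref{Th::kcontredN} and to identifying the foliation $\mathcal{F}_N$ it produces with the orbit foliation of $K_{[\bm\mu]}$, so that $N/\mathcal{F}_N$ coincides with $M_{[\bm\mu]} = \mathbf{J}^{\Phi-1}_{\bm\eta}(\R^{\times k}\bm\mu)/K_{[\bm\mu]}$, and the resulting $\bm\eta_N$ is the sought $\bm\eta_{[\bm\mu]}$.

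First I would verify that the Reeb vector fields $R_1,\ldots,R_k$ of $(M,\bm\eta)$ are tangent to $N$. This follows from the identity $\inn{R_\beta}\inn{\xi_M}\d\bm\eta = -R_\beta\langle\mathbf{J}^\Phi_{\bm\eta},\xi\rangle = 0$ recorded after Definition~\ref{Def::kcontactMomentumMap}: every component of $\mathbf{J}^\Phi_{\bm\eta}$ is constant along the flow of each $R_\beta$, so this flow preserves each level set $\mathbf{J}^{\Phi-1}_{\bm\eta}(\bm\nu)$ and hence their union $N$. Next, I would record the constant rank of $\ker\jmath^*\bm\eta$ and $\ker\jmath^*\d\bm\eta$ along $N$; the standing convention that closed forms have constant rank together with the weak regular $k$-value hypothesis and the $\Ad^{*k}$-equivariance of $\mathbf{J}^\Phi_{\bm\eta}$ (Proposition~\ref{Prop::AdEqkCon}) handles this, exactly as in the $k$-symplectic situation of \cite{MRSV_15}.

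The core step is the pointwise identification $\mathcal{D}_x := \ker\jmath^*\bm\eta_x\cap\ker\jmath^*\d\bm\eta_x = \T_x(K_{[\bm\mu]}x)$ for every $x\in N$. By the final assertion of Theorem~\ref{Th::kcontredN},
\[
\mathcal{D}_x = \T_xN\cap(\T_xN)^{\perp_{\d\bm\eta}}\cap\ker\bm\eta_x.
\]
Setting $\T_xW := \T_x(Gx)\cap\ker\bm\eta_x$, Lemma~\ref{Lemm::contactLemma}(2) gives $\T_xN = (\T_xW)^{\perp_{\d\bm\eta}}$, whence
\[
\mathcal{D}_x = (\T_xW)^{\perp_{\d\bm\eta}}\cap\big((\T_xW)^{\perp_{\d\bm\eta}}\big)^{\perp_{\d\bm\eta}}\cap\ker\bm\eta_x,
\]
which is precisely the right-hand side of the hypothesis \eqref{Eq::BlackerCondition}. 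Therefore $\mathcal{D}_x = \T_x(K_{[\bm\mu]}x)$, so the leaves of $\mathcal{F}_N$ are exactly the $K_{[\bm\mu]}$-orbits in $N$. Since $\mathbf{J}^{\Phi-1}_{\bm\eta}(\R^{\times k}\bm\mu)$ is assumed quotientable by $K_{[\bm\mu]}$, the quotient $N/\mathcal{F}_N = M_{[\bm\mu]}$ is a manifold and $\pi_{[\bm\mu]}$ is a submersion. With all hypotheses of Theorem~\ref{Th::kcontredN} in force, its conclusion produces a unique $\bm\eta_{[\bm\mu]}\in\Omega^1(M_{[\bm\mu]},\R^k)$ with $\pi_{[\bm\mu]}^*\bm\eta_{[\bm\mu]} = i_{[\bm\mu]}^*\bm\eta$ making $(M_{[\bm\mu]},\bm\eta_{[\bm\mu]})$ a $k$-contact manifold, as desired.

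The main obstacle is the foliation identification: one must be certain that the purely algebraic double-orthogonal expression appearing in \eqref{Eq::BlackerCondition} genuinely equals $\mathcal{D}_x$. The subtlety — absent in the symplectic case — is that $\big((\T_xW)^{\perp_{\d\bm\eta}}\big)^{\perp_{\d\bm\eta}}\neq \T_xW$ in general, so the double orthogonal cannot be simplified and the equality with $\T_x(K_{[\bm\mu]}x)$ must be imposed as the hypothesis \eqref{Eq::BlackerCondition} rather than derived. A secondary technical point is justifying the constant rank of $\ker\jmath^*\bm\eta$ and of $\ker\jmath^*\d\bm\eta$ along $N$, which is needed for $\mathcal{F}_N$ to be a genuine regular foliation via the Fr\"obenius argument used inside Theorem~\ref{Th::kcontredN}.
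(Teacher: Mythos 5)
Your proposal is correct and follows essentially the same route as the paper, which obtains this theorem in one line by combining Theorem \ref{Th::kcontredN} with Lemma \ref{Lemm::contactLemma} after setting $\T_xW:=\T_x(Gx)\cap\ker\bm\eta_x$, so that condition \eqref{Eq::BlackerCondition} identifies the characteristic distribution $\ker\jmath^*\bm\eta\cap\ker\jmath^*\d\bm\eta$ with the tangent spaces to the $K_{[\bm\mu]}$-orbits. You additionally spell out the hypotheses the paper leaves implicit here (tangency of the Reeb vector fields to $\mathbf{J}^{\Phi-1}_{\bm\eta}(\R^{\times k}\bm\mu)$ via $R_\beta\langle\mathbf{J}^\Phi_{\bm\eta},\xi\rangle=0$, and the constant-rank requirements), which is a faithful filling-in rather than a different argument.
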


The subsequent sections provide conditions that guarantee Equation \eqref{Eq::BlackerCondition} is satisfied.

\subsection{\texorpdfstring{$k$}--Contact Marsden--Meyer--Weinstein reduction theorem}

This subsection presents a $k$-contact MMW reduction theorem and the relation between $k$-symplectic and $k$-contact manifolds. Moreover, a simple example as a product of $k$ different contact manifolds is provided to illustrate the applicability of our results.

The following theorem shows how a $k$-contact manifold can be extended to a $k$-symplectic manifold, and vice versa. 

\begin{theorem}
\label{Th::symplectisation}
Let $\bm\eta\in \Omega^1(M,\R^k)$, let $\pr_M\colon \R^\times \times M\rightarrow M$ be the canonical projection onto $M$, and let $s\in\R^\times$ be a natural coordinate on $\R^\times$. Then, $(M,\bm\eta)$ is a $k$-contact manifold if and only if  $(\R^\times\times M,\d(s\cdot\pr^*_M\bm\eta)=:\bm\omega)$ is an exact $k$-symplectic manifold with some vector fields $\widetilde{R}_1,\ldots,\widetilde{R}_k$ on $\R^\times \times M$ such that $\iota_{\widetilde{R}_\alpha}\omega^\beta=-\delta^\beta_\alpha \d s$ and $\widetilde{R}^\alpha s=0$ for $\alpha,\beta=1,\ldots,k$. 
\end{theorem}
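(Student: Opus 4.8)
The plan is to prove both implications using the splitting $\T_{(s,x)}(\R^\times\times M)=\langle\partial_s\rangle\oplus\T_xM$ together with the dual splitting $\cT_{(s,x)}(\R^\times\times M)=\langle\d s\rangle\oplus\pr_M^*\cT_xM$. Writing $\bm\eta=\eta^\alpha\otimes e_\alpha$, the components of $\bm\omega=\d(s\cdot\pr_M^*\bm\eta)$ are
\[
\omega^\alpha=\d s\wedge\pr_M^*\eta^\alpha+s\,\pr_M^*\d\eta^\alpha\,,\qquad\alpha=1,\ldots,k\,,
\]
so $\bm\omega$ is exact by construction and the only substantive $k$-symplectic requirement is nondegeneracy, i.e. $\ker\bm\omega=\bigcap_\alpha\ker\omega^\alpha=0$. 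For a tangent vector $v=a\,\partial_s+w$ with $w\in\T_xM$, a direct contraction gives $\iota_v\omega^\alpha=a\,\pr_M^*\eta^\alpha-\eta^\alpha(w)\,\d s+s\,\pr_M^*(\iota_w\d\eta^\alpha)$, and the dual splitting lets me read off the $\d s$-coefficient and the $\pr_M^*$-part independently. This contraction is the common engine of both directions.

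For the forward implication I would take $\widetilde R_\alpha$ to be the horizontal lifts of the Reeb vector fields $R_\alpha$ of $(M,\bm\eta)$, whose existence is guaranteed by Theorem \ref{thm:k-contact-Reeb}. Since these lifts annihilate $\d s$ one immediately gets $\widetilde R_\alpha s=0$, and the identities $\iota_{R_\alpha}\eta^\beta=\delta_\alpha^\beta$, $\iota_{R_\alpha}\d\bm\eta=0$ feed into the contraction formula to yield $\iota_{\widetilde R_\alpha}\omega^\beta=-\delta_\alpha^\beta\,\d s$. Nondegeneracy then follows by setting $\iota_v\omega^\alpha=0$: the $\d s$-coefficient forces $\eta^\alpha(w)=0$, i.e. $w\in\ker\bm\eta$; contracting the surviving $\pr_M^*$-relation $a\,\eta^\alpha+s\,\iota_w\d\eta^\alpha=0$ with $R_\beta$ and using the Reeb identities kills the second term and gives $a=0$; since $s\neq0$ the relation reduces to $\iota_w\d\eta^\alpha=0$, so $w\in\ker\bm\eta\cap\ker\d\bm\eta=0$ by condition (3) of Definition \ref{dfn:k-contact-manifold}. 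Hence $v=0$.

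For the converse I would start from the hypothesised fields $\widetilde R_\alpha$. The condition $\widetilde R_\alpha s=0$ says each $\widetilde R_\alpha$ is horizontal, with fibre component $W_\alpha\in\T_xM$, and substituting $\iota_{\widetilde R_\alpha}\omega^\beta=-\delta_\alpha^\beta\,\d s$ into the contraction formula and separating the two parts produces exactly the Reeb equations $\eta^\beta(W_\alpha)=\delta_\alpha^\beta$ and $\iota_{W_\alpha}\d\eta^\beta=0$ (here $s\neq0$ is used to discard the factor $s$). From $\eta^\beta(W_\alpha)=\delta_\alpha^\beta$ the covectors $\eta^1_x,\ldots,\eta^k_x$ are linearly independent, so $\ker\bm\eta$ is regular of corank $k$, giving condition (1). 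Condition (3) follows by lifting any $w\in\ker\bm\eta_x\cap\ker\d\bm\eta_x$ horizontally and observing that its contraction with every $\omega^\alpha$ vanishes, whence $w=0$ by nondegeneracy of $\bm\omega$. Finally, for condition (2) I would prove $\ker\d\bm\eta_x=\langle W_1,\ldots,W_k\rangle$: the independent vectors $W_\alpha$ lie in $\ker\d\bm\eta$, while for any $w\in\ker\d\bm\eta_x$ the vector $w-\sum_\alpha\eta^\alpha(w)W_\alpha$ lies in $\ker\bm\eta\cap\ker\d\bm\eta=0$, forcing $w\in\langle W_1,\ldots,W_k\rangle$; evaluating the $W_\alpha$ at a fixed value of $s$ provides smooth spanning fields, so $\ker\d\bm\eta$ is regular of rank $k$.

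The main obstacle I anticipate is purely bookkeeping in the converse: the $\widetilde R_\alpha$ are only assumed horizontal, so their fibre parts $W_\alpha$ could a priori depend on $s$, and the argument must therefore establish the three structural conditions directly from the $W_\alpha$ rather than prematurely naming them Reeb vector fields; the uniqueness of solutions to the Reeb equations, itself a consequence of condition (3), then retroactively shows $W_\alpha$ is $s$-independent and coincides with $R_\alpha$. The two facts that make every step close are the dual splitting $\cT(\R^\times\times M)=\langle\d s\rangle\oplus\pr_M^*\cT M$, which converts each vanishing contraction into separate scalar and covector equations, and the nonvanishing of $s\in\R^\times$, which is precisely what permits cancelling the factor $s$ multiplying $\pr_M^*\d\eta^\alpha$.
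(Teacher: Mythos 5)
Your proof is correct and follows essentially the same route as the paper's: the same decomposition $\bm\omega=\d s\wedge\pr_M^*\bm\eta+s\,\pr_M^*\d\bm\eta$, the same use of (lifted) Reeb vector fields to kill the $\partial_s$-component, and the same appeal to $s\neq 0$ together with $\ker\bm\eta\cap\ker\d\bm\eta=0$ for nondegeneracy. The only cosmetic difference is in the converse, where the paper establishes $s$-independence of the $\widetilde{R}_\alpha$ by showing $\iota_{[\widetilde{R}_\alpha,\partial_s]}\bm\omega=0$ and invoking nondegeneracy, whereas you work pointwise with the fibre components $W_\alpha$ and fix a value of $s$ to obtain smooth spanning fields for $\ker\d\bm\eta$ --- both devices close the argument equally well.
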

\begin{proof}
Assume that $(M,\bm\eta)$ is a $k$-contact manifold and let $X\in\X(\R^\times\times M)$. Note that $\d\bm\omega = \d^2(s\cdot \pr^*_M\bm\eta) = 0$ and 
\[
\bm\omega = \d(s\cdot\pr^*_M\bm\eta) = \d s\wedge\pr^*_M\bm\eta + s\,\d\pr^*_M\bm\eta\,.
\]
By Theorem \ref{thm:k-contact-Reeb}, there exists a family of Reeb vector fields $R_1,\ldots,R_k$ on $M$ that can be uniquely lifted to $\widetilde{R}_1,\ldots, \widetilde{R}_k$ on $\R^\times\times M$ so that $\widetilde{R}_\alpha s=0$ and $\pr_{M*}\widetilde{R}_\alpha=R_\alpha$ for $\alpha=1,\ldots,k$. Moreover, $\widetilde{R}_\alpha$ satisfies that $\inn{\widetilde{R}_\alpha}\omega^\beta=-\delta^\beta_\alpha \d s$. Thus, $\widetilde{R}_1,\ldots,\widetilde{R}_k$ span a rank $k$ distribution on $\mathbb{R}^\times\times M$ given by  $\ker\pr^*_M\d\bm\eta\cap \ker \d s$.

Let us prove that $\bm\omega$ is nondegenerate. Suppose that $X$ takes values in $\ker\bm\omega=\ker(s\pr^*_M\d\bm\eta+\d s\wedge\pr^*_M\bm\eta)$ at least at one point $x
\in M$. Then, 
\[
0=(\inn{\parder{}{s}}\inn{X}\bm\omega)_x \quad \Longrightarrow \quad(\inn{X}\pr^*_M\bm\eta)_x=0,
\]
and $X_x$ takes values in $\ker(\pr^*_M\bm\eta)_x$. Moreover,
\[
0=(\inn{\widetilde{R}_\alpha}\inn{X}\bm\omega)_x\quad \Longrightarrow \quad (Xs)(x)=0.
\]
Therefore, $(\inn{X}\d\pr^*_M\bm\eta)_x=0$ and $X_x=0$ since $\ker\d s\cap\ker\pr^*_M\bm\eta\cap\ker\d\pr^*_M\bm\eta=0$. Thus, $(\R^\times\times M,\bm\omega)$ is a $k$-symplectic manifold. 

Conversely, let $(\R^\times\times M,\bm\omega=\d(s\pr^*_M\bm\eta))$ be a $k$-symplectic manifold with vector fields $\widetilde{R}_\alpha$ spanning a rank $k$ distribution given by $\ker\d\pr^*_M\bm\eta\cap \ker \d s$.

Suppose that $X\in\mathfrak{X}(M)$ takes values in $\ker\bm\eta\cap\ker\d\bm\eta$ at least at one point $x\in M$. Then, $X$ can be lifted in a unique manner to a vector field $\widetilde{X}$ such that $\pr_{M*}\widetilde{X}=X$ and $\widetilde{X}s=0$. Then, $\inn{\widetilde{X}}\bm\omega=0$ at any point $(s,x)\in \mathbb{R}^\times\times\{x\}$ yields that $\widetilde{X}=0$ on such points since $\bm\omega$ is nondegenerate. Therefore, $\ker\bm\eta_x\cap\ker\d\bm\eta_x=0$ and, in general $\ker \bm\eta\cap\ker\d\bm\eta=0$. 
    
Next, since $\widetilde{R}_\alpha s=0$ and $\inn{\widetilde{R}_\alpha}\bm\omega^\beta=-\delta^\beta_\alpha \d s$,  one gets that
$$
\iota_{\widetilde{R}_\alpha}{\rm pr}^*_M\eta^\beta=\delta_\alpha^\beta,\qquad \iota_{\widetilde{R}_\alpha}{\rm pr}^*_M\d\bm \eta=0.
$$
Then,
\[
\inn{[\widetilde{R}_\alpha,\parder{}{s}]}\bm\omega=\Lie_{\widetilde{R}_\alpha}\inn{\parder{}{s}}\bm\omega-\inn{\parder{}{s}}\Lie_{\widetilde{R}_\alpha}\bm\omega=\Lie_{\widetilde{R}_\alpha}\pr^*_M\bm\eta=0\,.
\]
Therefore, $\widetilde{R}_1,\ldots,\widetilde{R}_k$ project onto the family of vector fields $R_1:=\pr_{M*}\widetilde{R}_1,\ldots, R_k:=\pr_{M*}\widetilde{R}_k$ on $M$ satisfying
\[
\pr^*_M(\inn{R_\alpha}\eta^\beta)=\pr^*_M(\inn{\pr_{M*}\widetilde{R}_\alpha}\eta^\beta)=\inn{\widetilde{R}_\alpha}\pr^*_M\eta^\beta=\delta^\beta_\alpha\quad\Longrightarrow\quad\inn{R_\alpha}\eta^\beta=\delta^\beta_\alpha,\qquad \alpha,\beta=1,\ldots,k\,,
\]
and
\[
\inn{R_\alpha}\d\bm\eta=0\,,\qquad \alpha=1,\ldots,k\,.
\]
Hence, $\pr_{M*}\widetilde{R}_1=R_1,\ldots,\pr_{M*}\widetilde{R}_1=R_k$ span a distribution of rank $k$ given by $\ker\d\bm\eta$. Moreover, $\ker\bm\eta$ has to have corank $k$, as otherwise there would be a non-zero vector $v_x\in \T_xM$ such that $v_x\in \ker\bm\eta_x\cap\ker\d\bm\eta_x$. However, this leads to a contradiction. Therefore, $(M,\bm\eta)$ is a $k$-contact manifold. 
\end{proof}

From now on, an exact $k$-symplectic manifold $(\R^\times\times M,s\pr^*_M\bm\eta)$ that comes from the extension of a $k$-contact manifold $(M,\bm\eta)$ is called {\it a $k$-symplectic fibred manifold associated with $\pr_M\colon\R^\times\times M\rightarrow M$}, or simply a {\it $k$-symplectic fibred manifold}.

Every $k$-contact Lie group action $\Phi:G\times M\rightarrow M$ leaving invariant the $k$-contact form $\bm \eta$ admits a $k$-contact momentum map ${\bf J}^\Phi_{\bm\eta}\colon M\rightarrow (\mathfrak{g}^{*})^k$. Since $\mathcal{L}_{R_\beta} {\bf J}^\Phi_{\bm\eta}=0$ for $\beta=1,\ldots,k$, it follows that the Lie group action $\Phi$ can be lifted to 
the {\it extended Lie group action} $\widetilde{\Phi}:(g;s,x)\in G\times \R^\times\times M\mapsto (s,\Phi_g(x))\in\R^\times \times M$ admitting an {\it extended momentum map} $\mathbf{J}^{\widetilde{\Phi}}_{\bm\eta}:(s,x)\in \R^\times\times M \mapsto s\,\mathbf{J}^\Phi_{\bm\eta}(x)\in (\mathfrak{g}^*)^k$ relative to the $k$-symplectic fibred manifold $(\R^\times \times M,s\pr_M^*\bm\eta)$. Moreover,  $\mathbf{J}^{\widetilde{\Phi}}_{\bm\eta}\colon\R^\times \times M\rightarrow (\mathfrak{g}^*)^k$ is an exact $k$-symplectic momentum map associated with an exact $k$-symplectic Lie group action $\widetilde{\Phi}:G\times \R^\times\times M\rightarrow \R^\times \times M$.

It is worth noting that there is another extension of $k$-contact manifold to $k$-symplectic manifolds through $\R^{\times k}$ \cite{LRS_24}, similarly as for $k$-cosymplectic manifolds \cite{LRVZ_23, GM_23}. However, for the $k$-contact MMW reduction purposes, we focus on the extension by $\R^\times$.

\begin{example}
    The assumption of the existence of vector fields $\widetilde{R}_1,\ldots,\widetilde{R}_k$ in Theorem \ref{Th::symplectisation} is necessary as illustrated in this example. Let us consider a manifold $(\R^\times \times \R^4,\bm \omega)$ with
    \[
    \bm\omega=\d(s\pr^*_M\bm\eta)=\omega^1\otimes e_1+\omega^2\otimes e_2=\d(s \pr^*_M\eta^1)\otimes e_1+\d(s\pr^*_M\eta^2)\otimes e_2,
    \]
    where $\{s;x_1,x_2,x_3,x_4\}\in\R^\times\times \R^4$ are local linear coordinates and 
    \[
    \pr^*_M\bm\eta=\pr^*_M\eta^1\otimes e_1+\pr^*_M\eta^2\otimes e_2=(\d x_1+x_3\d x_4)\otimes e_1+(\d x_2+x_2\d x_1)\otimes e_2.
    \]
    Then, $\ker\omega^1=\langle \parder{}{x_2}\rangle$ and $\ker\omega^2=\langle\parder{}{x_3},\parder{}{x_4} \rangle$, leading to $\ker\bm\omega=0$. Therefore, $(\R^\times \times \R^4,\bm \omega)$ is a two-symplectic manifold. However, $\ker\pr^*_M\d\bm\eta=0$ is not a rank two distribution on $\R^\times\times \R^4$ and $(
    \R^4,\bm\eta)$ fails to be a two-contact manifold.
    \demo
\end{example}

\begin{lemma}
\label{Lem::MomRelation}
Let $\mathbf{J}^\Phi_{\bm\eta}\colon M\rightarrow (\mathfrak{g}^*)^k$ be a $k$-contact momentum map. Then, $\mu^\alpha\in \mathfrak{g}^*$ is a weak regular value of $\mathbf{J}^\Phi_{\bm\eta\,\alpha}$ for $\alpha=1,\ldots,k$ if and only if $\mu^\alpha\in\mathfrak{g}^*$ is a weak regular value of $\mathbf{J}^{\widetilde{\Phi}}_{\bm\eta\,\alpha}$. Moreover, if $\bm\mu\in(\mathfrak{g}^*)^k$ is a weak regular $k$-value of $\mathbf{J}^{\Phi}_{\bm\eta}$, then, for a $k$-symplectic momentum map $\mathbf{J}^{\widetilde{\Phi}}_{\bm\eta}\colon\R^\times\times M\ni (s,x)\mapsto s\mathbf{J}^\Phi_{\bm\eta}(x)\in (\mathfrak{g}^*)^k$ associated with $\widetilde{\Phi}:G\times \R^\times \times M\rightarrow \R^\times\times M$, one has
\[
\T_{(s,x)}\Big(\mathbf{J}^{\widetilde{\Phi}-1}_{\bm\eta}(\R^{\times k}\bm\mu)\Big) = \left\langle\parder{}{s}\right\rangle\oplus \T_x(\mathbf{J}^{\Phi-1}_{\bm\eta}(\R^{\times k}\bm\mu))\,,\qquad \forall (s,x)\in \mathbf{J}^{\widetilde{\Phi}-1}_{\bm\eta}(\R^{\times k}\bm\mu)\,.
\]
\end{lemma}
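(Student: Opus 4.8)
The plan is to exploit the explicit product-like structure of the level sets of the extended momentum map $\mathbf{J}^{\widetilde{\Phi}}_{\bm\eta}(s,x)=s\,\mathbf{J}^\Phi_{\bm\eta}(x)$ together with the elementary formula for its differential. Writing a tangent vector at $(s,x)$ as $a\,\frac{\partial}{\partial s}+v$ with $a\in\mathbb{R}$ and $v\in\T_xM$, differentiation of $\widetilde{J}_\alpha:=\mathbf{J}^{\widetilde{\Phi}}_{\bm\eta\,\alpha}$ gives
\[
\T_{(s,x)}\widetilde{J}_\alpha\Big(a\frac{\partial}{\partial s}+v\Big)=a\,\mathbf{J}^\Phi_{\bm\eta\,\alpha}(x)+s\,\T_x\mathbf{J}^\Phi_{\bm\eta\,\alpha}(v),\qquad \alpha=1,\ldots,k.
\]
This single identity is the backbone of both assertions, so I would establish it first, noting that the scaling vector field $\frac{\partial}{\partial s}$ is sent to $\mathbf{J}^\Phi_{\bm\eta\,\alpha}(x)$.

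For the equivalence of weak regular values I would split into the cases $\mu^\alpha=0$ and $\mu^\alpha\neq0$. When $\mu^\alpha=0$ the condition $s\,\mathbf{J}^\Phi_{\bm\eta\,\alpha}(x)=0$ reduces, since $s\neq0$, to $\mathbf{J}^\Phi_{\bm\eta\,\alpha}(x)=0$, so that $\widetilde{J}_\alpha^{-1}(0)=\mathbb{R}^\times\times\mathbf{J}^{\Phi-1}_{\bm\eta\,\alpha}(0)$ is a genuine product; the displayed differential then shows $\ker\T\widetilde{J}_\alpha=\langle\frac{\partial}{\partial s}\rangle\oplus\ker\T\mathbf{J}^\Phi_{\bm\eta\,\alpha}$, and the submanifold and tangent-space conditions transfer in both directions simultaneously. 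When $\mu^\alpha\neq0$ I would use the dilation $\rho_c\colon(s,x)\mapsto(cs,x)$, a diffeomorphism of $\mathbb{R}^\times\times M$ satisfying $\widetilde{J}_\alpha\circ\rho_c=c\,\widetilde{J}_\alpha$, so that the level sets $\widetilde{J}_\alpha^{-1}(c\mu^\alpha)$ are mutually diffeomorphic; moreover on $\widetilde{J}_\alpha^{-1}(\mu^\alpha)$ the evaluation of the differential on $\frac{\partial}{\partial s}$ equals $\mathbf{J}^\Phi_{\bm\eta\,\alpha}(x)=\mu^\alpha/s\neq0$, so $\frac{\partial}{\partial s}$ is transverse to the fibre. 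From this I would read off that the submanifold and kernel conditions for $\widetilde{J}_\alpha$ are controlled exactly by those of $\mathbf{J}^\Phi_{\bm\eta\,\alpha}$ along the ray $\mathbb{R}^\times\mu^\alpha$, invoking the $\Ad^{*k}$-equivariance of $\mathbf{J}^\Phi_{\bm\eta}$ (Proposition \ref{Prop::AdEqkCon}) to relate the fibre geometry at different points of that ray.

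For the tangent-space decomposition I would first prove the global identity
\[
\mathbf{J}^{\widetilde{\Phi}-1}_{\bm\eta}(\mathbb{R}^{\times k}\bm\mu)=\mathbb{R}^\times\times\mathbf{J}^{\Phi-1}_{\bm\eta}(\mathbb{R}^{\times k}\bm\mu).
\]
Indeed, $s\,\mathbf{J}^\Phi_{\bm\eta}(x)\in\mathbb{R}^{\times k}\bm\mu$ with $s\neq0$ holds precisely when each component $\mathbf{J}^\Phi_{\bm\eta\,\alpha}(x)$ is a nonzero multiple of $\mu^\alpha$, i.e.\ when $\mathbf{J}^\Phi_{\bm\eta}(x)\in\mathbb{R}^{\times k}\bm\mu$, and this is independent of the value of $s$. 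Since $\bm\mu$ is a weak regular $k$-value, $\mathbf{J}^{\Phi-1}_{\bm\eta}(\mathbb{R}^{\times k}\bm\mu)$ is a submanifold of $M$ (as recorded in Section \ref{Sec::RedHomSym}), so the right-hand side is a product of manifolds whose tangent space at $(s,x)$ splits as $\T_s\mathbb{R}^\times\oplus\T_x\mathbf{J}^{\Phi-1}_{\bm\eta}(\mathbb{R}^{\times k}\bm\mu)=\langle\frac{\partial}{\partial s}\rangle\oplus\T_x\mathbf{J}^{\Phi-1}_{\bm\eta}(\mathbb{R}^{\times k}\bm\mu)$, which is exactly the claimed formula.

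The two product computations (the case $\mu^\alpha=0$ and the decomposition over $\mathbb{R}^{\times k}\bm\mu$) are routine. The genuine obstacle is the $\mu^\alpha\neq0$ direction of the weak-regular-value equivalence: there the extra $\frac{\partial}{\partial s}$ direction must be shown transverse to the fibre, and one must argue that the submanifold structure of $\widetilde{J}_\alpha^{-1}(\mu^\alpha)$ is governed by the behaviour of $\mathbf{J}^\Phi_{\bm\eta\,\alpha}$ on the whole dilation ray $\mathbb{R}^\times\mu^\alpha$ rather than at the isolated point $\mu^\alpha$. The dilations $\rho_c$ and the equivariance of $\mathbf{J}^\Phi_{\bm\eta}$ are the tools I would use to reduce this to the regularity behaviour already under control.
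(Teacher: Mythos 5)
Your proposal follows the same route as the paper's (very terse) proof: everything rests on the product identity $\mathbf{J}^{\widetilde{\Phi}-1}_{\bm\eta}(\R^{\times k}\bm\mu)=\R^\times\times\mathbf{J}^{\Phi-1}_{\bm\eta}(\R^{\times k}\bm\mu)$, which you prove correctly (membership of $s\,\mathbf{J}^\Phi_{\bm\eta}(x)$ in $\R^{\times k}\bm\mu$ is independent of $s\neq 0$), and the tangent-space decomposition is then immediate from the product structure. Your differential formula $\T_{(s,x)}\mathbf{J}^{\widetilde{\Phi}}_{\bm\eta\,\alpha}\big(a\,\tparder{}{s}+v\big)=a\,\mathbf{J}^\Phi_{\bm\eta\,\alpha}(x)+s\,\T_x\mathbf{J}^\Phi_{\bm\eta\,\alpha}(v)$ and the case split $\mu^\alpha=0$ versus $\mu^\alpha\neq0$ supply details that the paper omits entirely.

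One tool you invoke in the $\mu^\alpha\neq0$ case does not do what you want: the $\Ad^{*k}$-equivariance of $\mathbf{J}^\Phi_{\bm\eta}$ relates level sets over points of the same \emph{coadjoint orbit}, but the dilation ray $\R^\times\mu^\alpha$ is not a coadjoint orbit in general, so equivariance says nothing about how $\mathbf{J}^{\Phi-1}_{\bm\eta\,\alpha}(c\mu^\alpha)$ compares with $\mathbf{J}^{\Phi-1}_{\bm\eta\,\alpha}(\mu^\alpha)$ for $c\neq1$. The observation that actually matters — and which you do make — is that for $\mu^\alpha\neq0$ the fibre $\mathbf{J}^{\widetilde{\Phi}-1}_{\bm\eta\,\alpha}(\mu^\alpha)$ is the graph of $s=1/\lambda$ over $\mathbf{J}^{\Phi-1}_{\bm\eta\,\alpha}(\R^\times\mu^\alpha)$, so the weak regularity of $\mu^\alpha$ for the extended map is governed by the whole ray preimage; what closes the argument is not equivariance but the paper's standing convention (stated right after the definition of a weak regular $k$-value) that $\mathbf{J}^{\Phi-1}_{\bm\eta\,\alpha}(\R^\times\mu^\alpha)$ and $\mathbf{J}^{\Phi-1}_{\bm\eta}(\R^{\times k}\bm\mu)$ are submanifolds under those hypotheses. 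With that substitution your argument is complete and is, if anything, more careful than the one-line justification the paper gives.
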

\begin{proof}
The proof of the lemma follows immediately from the construction of $\mathbf{J}^\Phi_{\bm\eta}$ and $\mathbf{J}^{\widetilde{\Phi}}_{\bm\eta}$ and the fact that ${\bf J}^{\widetilde{\Phi}-1}_{\bm \eta}(\mathbb{R}^{\times k}{\bf \bm\mu})=\mathbb{R}^\times\times {\bf J}^{\Phi-1}_{\bm \eta}(\mathbb{R}^{\times k} {\bm \mu})$. However, there is a slight abuse of notation, as $\mathbf{J}^{\Phi-1}_{\bm\eta}(\R^{\times k}\bm\mu)$ is denoted as a submanifold of $P$ and $M$ at the same time.
\end{proof}

Theorem \ref{Th::OnehomoksymRed} gives sufficient conditions for the existence of an exact $k$-symplectic form on the quotient manifold $\mathbf{J}^{\widetilde{\Phi}-1}_{\bm\eta}(\R^{\times k}\bm\mu)/K_{[\bm\mu]}$

The following lemma translates conditions \eqref{Eq::1homoEQ1} and \eqref{Eq::1homoEQ2} in Theorem \ref{Th::OnehomoksymRed} into the $k$-contact setting on $M$. This allows for establishing sufficient conditions for the $k$-contact MMW reduction on $M$.

\begin{lemma}\label{Lem::Transcrip}
Let $(M,\bm{\eta})$ be a $k$-contact manifold and let $(\R^\times\times M,s\pr_M^*\bm\eta)$ be its associated $k$-symplectic fibred manifold with the canonical projection $\pr_M\colon\R^\times\times M\rightarrow M$. Then,
\begin{equation}
\label{Eq::contactRedEq1} \T_x(\mathbf{J}_{\bm\eta\,\alpha}^{\Phi-1}(\R^\times \mu^\alpha)) = \ker\eta_x^\alpha\cap\ker\d\eta_x^\alpha + \T_x(\mathbf{J}_{\bm\eta}^{\Phi-1}(\R^{\times k}\bm\mu)) + \T_x(K_{[\mu^\alpha]} x)
\end{equation}
and
\begin{equation}
\label{Eq::contactRedEq2}
   \T_x(K_{[\bm{\mu}]} x) = \bigcap^k_{\alpha=1}\left( (\ker\eta_x^\alpha\cap\ker\d\eta_x^\alpha) + \T_x (K_{[\mu^\alpha]}x)\right)\cap  \T_x(\mathbf{J}_{\bm\eta}^{\Phi-1}(\R^{\times k}\bm\mu))
\end{equation}
hold for every $x\in {\bf J}^{\Phi-1}(\R^{\times k}  \bm\mu)$ and $\alpha=1,\ldots,k$, if and only, if
\begin{equation}\label{Eq::symplecticReduction1eq2}
        \T_p(\mathbf{J}^{\widetilde{\Phi}-1}_{\bm\eta\,\alpha}(\R^\times \mu^\alpha)) = \T_p(\mathbf{J}_{\bm\eta}^{\widetilde{\Phi}-1}(\R^{\times k} \bm\mu))+\ker\omega^\alpha_p + \T_p\left(K_{[\mu^\alpha]} p\right)
    \end{equation}
and \begin{equation}\label{Eq::symplecticReduction2eq2}
        \T_p(K_{[\bm\mu]} p)=\bigcap^k_{\alpha=1}\left(\ker\omega^\alpha_p+\T_p(K_{[\mu^\alpha]}p)\right)\cap \T_p(\mathbf{J}_{\bm\eta}^{\widetilde{\Phi}-1}(\R^{\times k} \bm\mu))\,,
    \end{equation}
    hold for every $p=(s,x)\in {\bf J}^{\widetilde{\Phi}-1}_{\bm\eta}(\R^{\times k}{\bm \mu})$ and   $\alpha=1,\ldots,k$.
\end{lemma}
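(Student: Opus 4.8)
The plan is to translate each object appearing in the $k$-symplectic conditions \eqref{Eq::symplecticReduction1eq2}--\eqref{Eq::symplecticReduction2eq2} on the $k$-symplectic fibred manifold $\R^\times\times M$ into its $k$-contact counterpart on $M$, and then to read off the two equivalences term by term by means of the product decomposition $\T_{(s,x)}(\R^\times\times M)=\langle\parder{}{s}\rangle\oplus\T_x M$. Since every translation below is an \emph{equality} of subspaces, both implications of the ``if and only if'' will follow simultaneously.

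First I would compute $\ker\omega^\alpha_p$ at a point $p=(s,x)$. Writing $\bm\omega=\d s\wedge\pr^*_M\bm\eta+s\,\pr^*_M\d\bm\eta$ as in the proof of Theorem \ref{Th::symplectisation} and contracting a vector $v=a\parder{}{s}+u$, with $u\in\T_x M$, against $\omega^\alpha$, the vanishing of the $\d s$-component forces $\eta^\alpha_x(u)=0$, while evaluating the remaining horizontal one-form $a\,\eta^\alpha_x+s\,\iota_u\d\eta^\alpha_x$ on the Reeb vector field $R_\alpha$ (using $\iota_{R_\alpha}\eta^\alpha=1$ and $\iota_{R_\alpha}\d\bm\eta=0$) forces $a=0$ and hence $\iota_u\d\eta^\alpha_x=0$. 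This yields the key identity
\[
\ker\omega^\alpha_p=\ker\eta^\alpha_x\cap\ker\d\eta^\alpha_x\subseteq\T_x M,
\]
which is precisely the term occurring in \eqref{Eq::contactRedEq1}--\eqref{Eq::contactRedEq2}.

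Next I would collect the remaining translations. Lemma \ref{Lem::MomRelation} already provides $\T_p(\mathbf{J}^{\widetilde{\Phi}-1}_{\bm\eta}(\R^{\times k}\bm\mu))=\langle\parder{}{s}\rangle\oplus\T_x(\mathbf{J}^{\Phi-1}_{\bm\eta}(\R^{\times k}\bm\mu))$; the same argument, based on the identity $\mathbf{J}^{\widetilde{\Phi}-1}_{\bm\eta\,\alpha}(\R^\times\mu^\alpha)=\R^\times\times\mathbf{J}^{\Phi-1}_{\bm\eta\,\alpha}(\R^\times\mu^\alpha)$ (valid because $s\in\R^\times$), gives the componentwise statement $\T_p(\mathbf{J}^{\widetilde{\Phi}-1}_{\bm\eta\,\alpha}(\R^\times\mu^\alpha))=\langle\parder{}{s}\rangle\oplus\T_x(\mathbf{J}^{\Phi-1}_{\bm\eta\,\alpha}(\R^\times\mu^\alpha))$. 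Since the extended action $\widetilde{\Phi}(g;s,x)=(s,\Phi_g(x))$ leaves $s$ fixed, its fundamental vector fields are the $s$-invariant lifts of those of $\Phi$, so $\T_p(K_{[\mu^\alpha]}p)=\T_x(K_{[\mu^\alpha]}x)$ and $\T_p(K_{[\bm\mu]}p)=\T_x(K_{[\bm\mu]}x)$, both contained in $\T_x M$.

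Finally I would substitute these identities into \eqref{Eq::symplecticReduction1eq2} and \eqref{Eq::symplecticReduction2eq2} and strip off the $\langle\parder{}{s}\rangle$ summand. In \eqref{Eq::symplecticReduction1eq2} both sides decompose as $\langle\parder{}{s}\rangle$ plus a subspace of $\T_x M$; because $\parder{}{s}\notin\T_x M$, equality of the full spaces is equivalent to equality of their intersections with $\T_x M$, which is exactly \eqref{Eq::contactRedEq1}. In \eqref{Eq::symplecticReduction2eq2} each inner sum $\ker\omega^\alpha_p+\T_p(K_{[\mu^\alpha]}p)=(\ker\eta^\alpha_x\cap\ker\d\eta^\alpha_x)+\T_x(K_{[\mu^\alpha]}x)$ already lies in $\T_x M$, so the whole first factor is a subspace of $\T_x M$; intersecting such a subspace with $\langle\parder{}{s}\rangle\oplus\T_x(\mathbf{J}^{\Phi-1}_{\bm\eta}(\R^{\times k}\bm\mu))$ coincides with intersecting it with $\T_x(\mathbf{J}^{\Phi-1}_{\bm\eta}(\R^{\times k}\bm\mu))$, which yields exactly \eqref{Eq::contactRedEq2}. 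The only genuinely computational point is the identification of $\ker\omega^\alpha_p$, where the Reeb-field hypotheses of Theorem \ref{Th::symplectisation} are indispensable; everything else is careful bookkeeping with the product splitting, and the main thing to watch is that the $\parder{}{s}$ direction belongs to the momentum-map preimages but to none of the $\ker\omega^\alpha$ or orbit terms.
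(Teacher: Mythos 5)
Your proposal is correct and follows essentially the same route as the paper's proof: both reduce everything to the product splitting $\T_{(s,x)}(\R^\times\times M)\simeq\T_s\R^\times\oplus\T_xM$, identify $\ker\omega^\alpha_{(s,x)}$ with $\{0\}\oplus(\ker\eta^\alpha_x\cap\ker\d\eta^\alpha_x)$, use that the extended action fixes $s$ and that the momentum preimages pick up a $\langle\partial_s\rangle$ factor, and then match the two sides summand by summand. The only cosmetic difference is that you derive the kernel identity by direct contraction against $\omega^\alpha$ and the Reeb field, whereas the paper obtains it as $\ker\d s\cap\ker\pr^*_M\eta^\alpha\cap\ker\pr^*_M\d\eta^\alpha$ and distributes the intersection over the splitting.
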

\begin{proof}
    The canonical projection $\pr_M\colon\R^\times\times M\rightarrow M$ and the following natural isomorphisms $\T_{(s,x)}(\R^\times \times M)\simeq \T_x\R^\times \oplus \T_xM$, for every $(s,x)\in \R^\times\times M$, give that
    \begin{align*}
    (\ker\pr^*_M\eta^\alpha)_{(s,x)} &= \T_s\R^\times\oplus \ker\eta^\alpha_x\,,\\
    (\ker\pr^*_M\d\eta^\alpha)_{(s,x)} &= \T_s\R^\times\oplus \ker\d\eta^\alpha_x\,,\\
    (\ker\d s)_{(s,x)} &=\{0\}\oplus \T_xM\,,
    \end{align*}
    for every $(s,x)\in\R^\times\times M$ and $\alpha=1,\ldots,k$. Then,
    \begin{align*}
        (\ker\omega^\alpha)_{(s,x)} &= (\ker\d s\cap\ker\pr^*_M\eta^\alpha\cap\ker\pr^*_M\d\eta^\alpha)_{(s,x)}\\
        &= \left(\{0\}\oplus\T_xM\right)\cap\left(\T_s\R^\times\oplus \ker\eta^\alpha_x\right)\cap\left(\T_s\R^\times \oplus \ker\d\eta^\alpha_x\right)\\
        &= \{0\}\oplus\left(\ker\eta^\alpha_x\cap\ker\d\eta^\alpha_x\right),
    \end{align*}
    for every $(s,x)\in \R^\times \times M$ and $\alpha=1,\ldots,k$. Moreover, the definition of the extended momentum map and the extended Lie group action imply
    \begin{equation}
    \begin{gathered}
        \T_{(s,x)}\left(K_{[\bm\mu]}(s,x)\right)=\{0\}\oplus \T_{x}\left(K_{[\bm\mu]}x\right),\qquad \T_{(s,x)}(\mathbf{J}_{\bm\eta}^{\widetilde{\Phi}-1}(\R^{\times k}\bm\mu))=\T_s\R^\times\oplus \T_x(\mathbf{J}^{\Phi-1}_{\bm\eta}(\R^{\times k} \bm\mu)),\\
        \T_{(s,x)}\left(K_{[\mu^\alpha]}(s,x)\right)=\{0\}\oplus \T_{x}\left(K_{[\mu^\alpha]}x\right),\qquad \T_{(s,x)}(\mathbf{J}_{\bm\eta\,\alpha}^{\widetilde{\Phi}-1}(\R^\times\bm\mu))=\T_s\R^\times\oplus \T_x(\mathbf{J}^{\Phi-1}_{\bm\eta\,\alpha}(\R^\times \bm\mu))\,,
        \end{gathered}
    \end{equation}
    for $\alpha=1,\ldots,k$ and any $(s,x)\in \mathbf{J}^{\widetilde{\Phi}-1}_{\bm\eta}(\R^{\times k}\bm\mu)$. First, suppose that conditions \eqref{Eq::contactRedEq1} and \eqref{Eq::contactRedEq2} are satisfied. Then, condition \eqref{Eq::contactRedEq1} yields that
    \begin{align*}
        \T_p(\mathbf{J}^{\widetilde{\Phi}-1}_{\bm\eta\,\alpha}(\R^\times\mu^\alpha))&=\T_s\R^\times\oplus \T_x(\mathbf{J}^{\Phi-1}_{\bm\eta\,\alpha}(\R^\times\mu^\alpha)) \\
        &=\T_s\R^\times\oplus\left(\T_x(\mathbf{J}^{\Phi-1}_{\bm\eta}(\R^{\times k}\bm\mu))+\left(\ker\eta_x^\alpha\cap\ker\d\eta^\alpha_x\right) + \T_x\left(K_{[\mu^\alpha]}x\right)\right) \\   
        &=\T_s\R^\times\oplus\T_x(\mathbf{J}^{\Phi-1}_{\bm\eta}(\R^{\times k}\bm\mu))+\{0\}\oplus\left(\ker\eta_x^\alpha\cap\ker\d\eta^\alpha_x\right)+\{0\}\oplus \T_x\left(K_{[\mu^\alpha]}x\right)\\
        &=\T_p(\mathbf{J}_{\bm\eta}^{\widetilde{\Phi}-1}(\R^{\times k} \bm\mu))+\ker\omega^\alpha_p + \T_p\left(K_{[\mu^\alpha]} p\right)\,,
    \end{align*}
    and condition \eqref{Eq::contactRedEq2}, gives
    \begin{align*}
    \T_p\left(K_{[\bm\mu]}p\right)&=\{0\}\oplus \T_x\left(K_{[\bm\mu]}x\right)\\
    &=\{0\}\oplus \bigcap^k_{\alpha=1}\left( \ker\eta_x^\alpha\cap\ker\d\eta_x^\alpha + \T_x(K_{[\mu^\alpha]}x)\right)\cap \T_x\mathbf{J}^{\Phi-1}_{\bm\eta}(\R^{\times k}\bm\mu)\\
    &=\bigcap^k_{\alpha=1}\left(\{0\}\oplus (\ker\eta_x^\alpha\cap\ker\d\eta_x^\alpha)+\{0\}\oplus\T_x(K_{[\mu^\alpha]}x)\right)\cap\left(\T_x\R^\times\oplus \T_x(\mathbf{J}^{\Phi-1}_{\bm\eta}(\R^{\times k}\bm\mu))\right)\\
    &=\bigcap^k_{\alpha=1}\left(\ker\omega^\alpha_p+\T_p\left(K_{[\mu^\alpha]}p\right)\right)\cap\T_p(\mathbf{J}^{\widetilde{\Phi}-1}_{\bm\eta}(\R^{\times k}\bm\mu))\,,
\end{align*}
for every $p=(s,x)\in\R^\times\times M$, every $\bm\mu\in(\mathfrak{g}^*)^k$, and $\alpha=1,\ldots,k$. Hence, \eqref{Eq::symplecticReduction1eq2} and \eqref{Eq::symplecticReduction2eq2} are satisfied.
    
    Conversely, assume that \eqref{Eq::symplecticReduction1eq2} and \eqref{Eq::symplecticReduction2eq2} hold. Then, condition \eqref{Eq::symplecticReduction1eq2} can be rewritten as follows
    \begin{align*}
         \T_s\R^\times&\oplus\T_x(\mathbf{J}^{\Phi-1}_{\bm\eta\,\alpha}(\R^\times\mu^\alpha))=\T_p(\mathbf{J}^{\widetilde{\Phi}-1}_{\bm\eta\,\alpha}(\R^\times\mu^\alpha))\\
        &=\T_p(\mathbf{J}_{\bm\eta}^{\widetilde{\Phi}-1}(\R^{\times k} \bm\mu))+\ker\omega^\alpha_p + \T_p\left(K_{[\mu^\alpha]} p\right)\\
        &=\T_s\R^\times\oplus\T_x(\mathbf{J}^{\Phi-1}_{\bm\eta}(\R^{\times k}\bm\mu))+\{0\}\oplus\left(\ker\eta_x^\alpha\cap\ker\d\eta^\alpha_x\right)\!+\!\{0\}\oplus \T_x\left(K_{[\mu^\alpha]}x\right)\\
        &=\T_s\R^\times\oplus\left(\T_x(\mathbf{J}^{\Phi-1}_{\bm\eta}(\R^{\times k}\bm\mu))+\left(\ker\eta_x^\alpha\cap\ker\d\eta^\alpha_x\right) + \T_x\left(K_{[\mu^\alpha]}x\right)\right)\,,
    \end{align*}
and \eqref{Eq::symplecticReduction2eq2} amounts to
\begin{align*}
    \{0\}\oplus& \T_x\left(K_{[\bm\mu]}x\right)=\T_p\left(K_{[\bm\mu]}p\right)\\
    &=\bigcap^k_{\alpha=1}\left(\ker\omega^\alpha_p+\T_p\left(K_{[\mu^\alpha]}p\right)\right)\cap\T_p(\mathbf{J}^{\widetilde{\Phi}-1}_{\bm\eta}(\R^{\times k}\bm\mu))\\
    &=\bigcap^k_{\alpha=1}\left(\{0\}\oplus (\ker\eta_x^\alpha\cap\ker\d\eta_x^\alpha)+\{0\}\oplus\T_x(K_{[\mu^\alpha]}x)\right)\cap\left(\T_x\R^\times\oplus \T_x(\mathbf{J}^{\Phi-1}_{\bm\eta}(\R^{\times k}\bm\mu))\right)\\
    &=\{0\}\oplus \bigcap^k_{\alpha=1}\left( \ker\eta_x^\alpha\cap\ker\d\eta_x^\alpha + \T_x(K_{[\mu^\alpha]}x)\right)\cap \T_x\mathbf{J}^{\Phi-1}_{\bm\eta}(\R^{\times k}\bm\mu)\,,
\end{align*}
for every $p=(s,x)\in\R^\times\times M$, every $\bm\mu\in(\mathfrak{g}^*)^k$, and $\alpha=1,\ldots,k$. Therefore, conditions \eqref{Eq::symplecticReduction1eq2} and \eqref{Eq::symplecticReduction2eq2} are equivalent with the conditions \eqref{Eq::contactRedEq1} and \eqref{Eq::contactRedEq2}, respectively. 
\end{proof}

\begin{theorem}
\label{Th::kconRed}
Let $(M,\bm\eta,\mathbf{J}^\Phi_{\bm\eta})$ be a $k$-contact Hamiltonian system and let $\Phi$ be a $k$-contact Lie group action. Assume that $\bm\mu\in (\mathfrak{g}^*)^k$ is a weak regular $k$-value of $\mathbf{J}^\Phi_{\bm\eta}$ and $\mathbf{J}^{\Phi-1}_{\bm\eta}(\R^{\times k}\bm\mu)$ is quotientable by $K_{[\bm\mu]}$. Moreover, let the following conditions hold
    \begin{equation}
    \label{Eq::ThEq1}
\T_x(\mathbf{J}_{\bm\eta\,\alpha}^{\Phi-1}(\R^\times \mu^\alpha)) = \ker\eta_x^\alpha\cap\ker\d\eta_x^\alpha + \T_x(\mathbf{J}_{\bm\eta}^{\Phi-1}(\R^{\times k}\bm\mu)) + \T_x(K_{[\mu^\alpha]} x)
\end{equation}
and
\begin{equation}
\label{Eq::ThEq2}
   \T_x(K_{[\bm{\mu}]} x) = \bigcap^k_{\alpha=1}\left( \ker\eta_x^\alpha\cap\ker\d\eta_x^\alpha + \T_x (K_{[\mu^\alpha]}x)\right)\cap  \T_x(\mathbf{J}_{\bm\eta}^{\Phi-1}(\R^{\times k}\bm\mu))
\end{equation}
for every $x\in {\bf J}_{\bm\eta}^{\Phi-1}(\R^{\times k}  \bm\mu)$ and $\alpha=1,\ldots,k$. Then, $(M_{[\bm\mu]}=\mathbf{J}^{\Phi-1}_{\bm\eta}(\R^{\times k}\bm\mu)/K_{[\bm\mu]},\bm\eta_{[\bm\mu]})$ is a $k$-contact manifold, while $\bm\eta_{[\bm\mu]}$ is uniquely defined by
\[
\pi_{[\bm\mu]}^*\bm\eta_{[\bm\mu]}=i_{[\bm\mu]}^*\bm\eta\,,
\]
where $i_{[\bm\mu]}\colon\mathbf{J}^{\Phi-1}_{\bm\eta}(\R^{\times k} \bm\mu)\hookrightarrow M$ and $\pi_{
[\bm\mu]}\colon\mathbf{J}^{\Phi-1}_{\bm\eta}(\R^{\times k}\bm\mu)\rightarrow \mathbf{J}^{\Phi-1}_{\bm\eta}(\R^{\times k} \bm\mu)/K_{[\bm\mu]}$ are the natural immersion and the canonical projection, respectively. 
\end{theorem}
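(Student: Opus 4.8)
The plan is to reduce on the symplectic side and then transport the result back to the $k$-contact setting. Concretely, I would symplectize $(M,\bm\eta)$ via Theorem \ref{Th::symplectisation}, apply the exact $k$-symplectic reduction of Theorem \ref{Th::OnehomoksymRed} to the extended data, and finally de-symplectize the quotient by invoking the converse implication of Theorem \ref{Th::symplectisation}.

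First I would pass to the associated $k$-symplectic fibred manifold $(\R^\times\times M,\bm\omega=\d(s\pr^*_M\bm\eta))$ and lift $\Phi$ to the extended action $\widetilde\Phi(g;s,x)=(s,\Phi_g(x))$. As recorded after Theorem \ref{Th::symplectisation}, this is an exact $k$-symplectic Lie group action with exact $k$-symplectic momentum map $\mathbf{J}^{\widetilde\Phi}_{\bm\eta}(s,x)=s\,\mathbf{J}^\Phi_{\bm\eta}(x)$. By Lemma \ref{Lem::MomRelation}, $\bm\mu$ is then a weak regular $k$-value of $\mathbf{J}^{\widetilde\Phi}_{\bm\eta}$ and $\mathbf{J}^{\widetilde\Phi-1}_{\bm\eta}(\R^{\times k}\bm\mu)=\R^\times\times\mathbf{J}^{\Phi-1}_{\bm\eta}(\R^{\times k}\bm\mu)$. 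Because $K_{[\bm\mu]}$ acts trivially on the $\R^\times$ factor, the hypothesis that $\mathbf{J}^{\Phi-1}_{\bm\eta}(\R^{\times k}\bm\mu)$ is quotientable by $K_{[\bm\mu]}$ lifts at once to quotientability of $\mathbf{J}^{\widetilde\Phi-1}_{\bm\eta}(\R^{\times k}\bm\mu)$, and Lemma \ref{Lem::Transcrip} shows that the $k$-contact conditions \eqref{Eq::ThEq1}--\eqref{Eq::ThEq2} are equivalent to the $k$-symplectic conditions \eqref{Eq::1homoEQ1}--\eqref{Eq::1homoEQ2} of Theorem \ref{Th::OnehomoksymRed}.

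I would then apply Theorem \ref{Th::OnehomoksymRed}: the reduced space $\mathbf{J}^{\widetilde\Phi-1}_{\bm\eta}(\R^{\times k}\bm\mu)/K_{[\bm\mu]}$ carries an exact $k$-symplectic form $\bm\Theta_{[\bm\mu]}$ with $\jmath^*_{[\bm\mu]}(s\pr^*_M\bm\eta)$ as pullback. Using the trivial $\R^\times$-action, this quotient is identified with $\R^\times\times M_{[\bm\mu]}$, where $M_{[\bm\mu]}=\mathbf{J}^{\Phi-1}_{\bm\eta}(\R^{\times k}\bm\mu)/K_{[\bm\mu]}$, and $s$ descends to the $\R^\times$-coordinate. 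Since $\Phi^*_g\bm\eta=\bm\eta$ and, for $\xi\in\mathfrak{k}_{[\bm\mu]}=\ker\bm\mu\cap\mathfrak{g}_{[\bm\mu]}$, the form $\langle\mathbf{J}^\Phi_{\bm\eta},\xi\rangle$ vanishes on $\mathbf{J}^{\Phi-1}_{\bm\eta}(\R^{\times k}\bm\mu)$, so that $\iota_{\xi_M}i^*_{[\bm\mu]}\bm\eta=0$, the form $i^*_{[\bm\mu]}\bm\eta$ is $K_{[\bm\mu]}$-basic; hence there is a unique $\bm\eta_{[\bm\mu]}\in\Omega^1(M_{[\bm\mu]},\R^k)$ with $\pi^*_{[\bm\mu]}\bm\eta_{[\bm\mu]}=i^*_{[\bm\mu]}\bm\eta$. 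A short pullback computation along the surjective submersion $\R^\times\times\mathbf{J}^{\Phi-1}_{\bm\eta}(\R^{\times k}\bm\mu)\to\R^\times\times M_{[\bm\mu]}$ then identifies $\bm\Theta_{[\bm\mu]}=s\pr^*\bm\eta_{[\bm\mu]}$, so that $\R^\times\times M_{[\bm\mu]}$ is precisely the $k$-symplectic fibred manifold of $(M_{[\bm\mu]},\bm\eta_{[\bm\mu]})$.

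The main obstacle is the final de-symplectization, which requires exhibiting vector fields on $\R^\times\times M_{[\bm\mu]}$ that play the role of the lifted Reeb fields in Theorem \ref{Th::symplectisation}. I would obtain them by descending the lifted Reeb fields $\widetilde R_1,\ldots,\widetilde R_k$, for which two facts must be checked. First, each $\widetilde R_\alpha$ is tangent to $\mathbf{J}^{\widetilde\Phi-1}_{\bm\eta}(\R^{\times k}\bm\mu)$: this follows from $\Lie_{R_\beta}\mathbf{J}^\Phi_{\bm\eta}=0$, which makes $R_\alpha$ preserve the level set, together with $\widetilde R_\alpha s=0$. Second, each $\widetilde R_\alpha$ is $K_{[\bm\mu]}$-invariant: since $\Phi^*_g\bm\eta=\bm\eta$ forces $\Phi_{g*}R_\alpha$ to satisfy the defining relations of a Reeb field again, the uniqueness in Theorem \ref{thm:k-contact-Reeb} gives $\Phi_{g*}R_\alpha=R_\alpha$. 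The descended fields $\widetilde R^{[\bm\mu]}_\alpha$ then satisfy $\iota_{\widetilde R^{[\bm\mu]}_\alpha}\omega^\beta_{[\bm\mu]}=-\delta^\beta_\alpha\,\d s$ and $\widetilde R^{[\bm\mu]}_\alpha s=0$, because these relations pull back along $\pi_{[\bm\mu]}$ to the corresponding relations for $\widetilde R_\alpha$ and the pullback is injective on forms. With these vector fields in hand, the converse direction of Theorem \ref{Th::symplectisation} yields that $(M_{[\bm\mu]},\bm\eta_{[\bm\mu]})$ is a $k$-contact manifold with $\pi^*_{[\bm\mu]}\bm\eta_{[\bm\mu]}=i^*_{[\bm\mu]}\bm\eta$, completing the proof.
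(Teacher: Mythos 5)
Your proposal is correct and follows the same overall architecture as the paper's proof: symplectize via Theorem \ref{Th::symplectisation}, lift the action and momentum map, transfer the hypotheses with Lemmas \ref{Lem::MomRelation} and \ref{Lem::Transcrip}, reduce with Theorem \ref{Th::OnehomoksymRed}, identify the quotient with $\R^\times\times M_{[\bm\mu]}$, and descend the one-form. The only genuine divergence is in the concluding step. The paper, after obtaining $\bm\eta_{[\bm\mu]}$, verifies the three axioms of Definition \ref{dfn:k-contact-manifold} directly on $M_{[\bm\mu]}$: it projects the Reeb vector fields, checks that they satisfy the Reeb relations downstairs, and then runs a basis decomposition of $\T_x(\mathbf{J}^{\Phi-1}_{\bm\eta}(\R^{\times k}\bm\mu))$ into $Y$'s, fundamental vector fields of $\mathfrak{k}_{[\bm\mu]}$, and Reeb fields, invoking conditions \eqref{Eq::ThEq1}--\eqref{Eq::ThEq2} a second time to identify $\ker i^*_{[\bm\mu]}\bm\eta_x\cap\ker\d i^*_{[\bm\mu]}\bm\eta_x$ with $\T_x(K_{[\bm\mu]}x)$. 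You instead stay on the symplectic side: you descend the lifted Reeb fields $\widetilde R_\alpha$ to $\R^\times\times M_{[\bm\mu]}$, check that the defining relations $\iota_{\widetilde R^{[\bm\mu]}_\alpha}\omega^\beta_{[\bm\mu]}=-\delta^\beta_\alpha\,\d s$ and $\widetilde R^{[\bm\mu]}_\alpha s=0$ survive the quotient (by injectivity of pullback along the surjective submersion), and then invoke the converse implication of Theorem \ref{Th::symplectisation}. This is a legitimate and arguably more economical route: the hypotheses \eqref{Eq::ThEq1}--\eqref{Eq::ThEq2} are consumed exactly once (to secure nondegeneracy of $\bm\omega_{[\bm\mu]}$ via Theorem \ref{Th::OnehomoksymRed}), and the $k$-contact axioms for $(M_{[\bm\mu]},\bm\eta_{[\bm\mu]})$ come for free from the equivalence already proved, rather than being re-derived by hand. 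The price is that your argument leans fully on the precise statement of Theorem \ref{Th::symplectisation} (including the identification $\bm\Theta_{[\bm\mu]}=s\,\pr^*\bm\eta_{[\bm\mu]}$, which you correctly reduce to a pullback computation), whereas the paper's direct verification is self-contained on $M_{[\bm\mu]}$ and makes explicit which fundamental vector fields are killed by the projection.
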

\begin{proof}
    Theorem \ref{Th::symplectisation} guarantees that $(\R^\times\times M,s\pr^*_M\bm\eta)$ is a $k$-symplectic fibred manifold associated with $(M,\bm\eta)$. Consider the extended momentum map $\mathbf{J}^{\widetilde{\Phi}}_{\bm\eta}\colon \R^\times\times M\rightarrow (\mathfrak{g}^*)^k$ associated with the extended Lie group action $\widetilde{\Phi}\colon G\times \R^\times\times M\rightarrow \R^\times\times M$ as defined before. Then, Lemma \ref{Lem::MomRelation} implies that $\bm\mu\in(\mathfrak{g}^*)^k$ is a weak regular $k$-value of $\mathbf{J}^{\widetilde{\Phi}}_{\bm\eta}$ while the conditions \eqref{Eq::ThEq1} and \eqref{Eq::ThEq2} imply that 
    \[    
    \T_{(s,x)}(\mathbf{J}^{\widetilde{\Phi}-1}_{\bm\eta\,\alpha}(\R^\times \mu^\alpha)) = \T_{(s,x)}(\mathbf{J}_{\bm\eta}^{\widetilde{\Phi}-1}(\R^{\times k} \bm\mu))+\ker\omega^\alpha_{(s,x)} + \T_{(s,x)}\left(K_{[\mu^\alpha]} {(s,x)}\right)
    \]
and 
    \[
        \T_{(s,x)}(K_{[\bm\mu]} (s,x))=\bigcap^k_{\alpha=1}\left(\ker\omega^\alpha_{(s,x)}+\T_{(s,x)}(K_{[\mu^\alpha]}(s,x))\right)\cap \T_{(s,x)}(\mathbf{J}_{\bm\eta}^{\widetilde{\Phi}-1}(\R^{\times k} \bm\mu))\,,
    \]
    for every $(s,x)\in {\bf J}^{\widetilde{\Phi}-1}_{\bm\eta}(\R^{\times k}{\bm \mu})$. Hence, Theorem \ref{Th::OnehomoksymRed} gives that $(\mathbf{J}^{\widetilde{\Phi}-1}_{\bm\eta}(\R^{\times k}\bm\mu)/K_{[\bm\mu]},\bm\omega_{[\bm\mu]})$ is an exact $k$-symplectic manifold, with
    \[
    \widetilde{i}_{[\bm\mu]}^*\bm\omega=\widetilde{\pi}^*_{[\bm\mu]}\bm\omega_{[\bm\mu]}\,,
    \]
    where $\widetilde{i}_{[\bm\mu]}\!\colon\!\mathbf{J}^{\widetilde{\Phi}-1}_{\bm\eta}(\R^{\times k}\bm\mu)\!\hookrightarrow \!P$ is the natural immersion and $\widetilde{\pi}_{[\bm\mu]}\!\colon\!\mathbf{J}^{\widetilde{\Phi}-1}_{\bm\eta}(\R^{\times k}\bm\mu)\!\rightarrow\! \mathbf{J}^{\widetilde{\Phi}-1}_{\bm\eta}(\R^{\times k}\bm\mu)/K_{[\bm\mu]}$ is the canonical projection. Additionally, from the definition of $\widetilde{\Phi}\colon G\times P\rightarrow P$ and Lemma \ref{Lem::MomRelation}, it follows that $\mathbf{J}^{\widetilde{\Phi}-1}_{\bm\eta}(\R^{\times k}\bm\mu)/K_{[\bm\mu]}=\R^\times \times \mathbf{J}^{\Phi-1}_{\bm\eta}(\R^{\times k} \bm\mu)/K_{[\bm\mu]}=:\R^\times \times M_{[\bm\mu]}$. Note that $\bm\vartheta\in\Omega^1(\R^\times\times M_{[\bm\mu]},\R^k)$ defined as $\bm\vartheta=\inn{\parder{}{s}}\bm\omega_{[\bm\mu]}$ is projectable with respect to the natural projection $\pr_{M_{[\bm\mu]}}\colon \R^\times\times M_{[\bm\mu]}\rightarrow M_{[\bm\mu]}$. Therefore, $\bm\vartheta=\pr^*_{M_{[\bm\mu]}}\bm\eta_{[\bm\mu]}$ for some $\bm\eta_{[\bm\mu]}\in \Omega^1(M_{[\bm\mu]},\R^k)$ and $\widetilde{i}_{[\bm\mu]}^*\pr^*_M\bm\eta=\widetilde{\pi}_{[\bm\mu]}^*\pr^*_{M_{[\bm\mu]}}\bm\eta_{[\bm\mu]}$. It is worth noting that the following diagram commutes
    \begin{center}
    \begin{tikzcd}
        (\R^\times\times M,\bm\omega)\arrow[rrr,"\pr_M"]& & & (M,\bm\eta)\\
         & & & \\
         ( \mathbf{J}^{\widetilde{\Phi}-1}_{\bm\eta}(\R^{\times k}\bm\mu),\tilde{i}_{[\bm\mu]}^*\bm\omega)\arrow[rrr,"\pr_{M}\big|_{\mathbf{J}^{\widetilde{\Phi}-1}_{\bm\eta}(\R^{\times k}\bm\mu)}"]\arrow[uu,"\widetilde{i}_{[\bm\mu]}"]\arrow[dd,"\widetilde{\pi}_{[\bm\mu]}"] & & & (\mathbf{J}^{\Phi-1}_{\bm\eta}(\R^{\times k}\bm\mu),i_{[\bm\mu]}^*\bm\eta)\arrow[dd,"\pi_{[\bm\mu]}"]\arrow[uu,"i_{[\bm\mu]}"]\\
         & & & \\
         (\R^\times\times M_{[\bm\mu]},\bm\omega_{[\bm\mu]})\arrow[rrr,"\pr_{M_{[\bm\mu]}}"] & & & (M_{[\bm\mu]},\bm\eta_{[\bm\mu]}).
    \end{tikzcd}
    \captionof{figure}{Note that the geometric structures on the left are covers of the structures of the right in the sense that are of the form $\Omega=\d (s\pr^*\eta)$.}
    \end{center}

    Thus,
    \[
    \tilde{i}^*_{[\bm\mu]}\pr^*_M\bm\eta=\pr_M^*i_{[\bm\mu]}^*\bm\eta\qquad {\rm and}\qquad \tilde{\pi}_{[\bm\mu]}^*\pr^*_{M_{[\bm\mu]}}\bm\eta_{[\bm\mu]}=\pr_M^*\pi^*_{[\bm\mu]}\bm\eta_{[\bm\mu]}\,,
    \]
    and it yields that $i_{[\bm\mu]}^*\bm\eta=\pi^*_{[\bm\mu]}\bm\eta_{[\bm\mu]}$.

    Recall that for $(M_{[\bm\mu]},\bm\eta_{[\bm\mu]})$ to be a $k$-contact manifold, it is required that $\ker\bm\eta_{[\bm\mu]}\cap\ker\d\bm\eta_{[\bm\mu]}=0$. Additionally, $\ker\bm\eta_{[\bm\mu]}$ and $\ker\d\bm\eta_{[\bm\mu]}$ must be a corank $k$ and rank $k$ distributions, respectively. By Theorem \ref{thm:k-contact-Reeb}, there exists a unique family of vector fields $R_1,\ldots,R_k\in\mathfrak{X}(M)$ such that $\inn{R_\alpha}{\eta^\beta}=\delta^\beta_\alpha$ and $\inn{R_\alpha}\d\bm\eta=0$, for $\alpha,\beta=1,\ldots,k$. Moreover,
    \[
    \inn{R_\alpha}\d \left\langle \mathbf{J}^{\Phi}_{\bm\eta},\xi\right\rangle=\inn{R_\alpha}\d\inn{\xi_M}\bm\eta=\inn{\xi_M}\inn{R_\alpha}\d \bm\eta=0\,,\qquad \forall \xi\in\mathfrak{g}\,,\qquad \alpha=1,\ldots,k\,,
    \]
    yields that $R_1,\ldots,R_k$ are tangent to $\mathbf{J}^{\Phi-1}_{\bm\eta}(\R^{\times k} \bm\mu)$. Since $\Phi\colon G\times M\rightarrow M$ is a $k$-contact Lie group action, one has
    \[
    \inn{[\xi_M,R_\alpha]}\bm\eta=0\,,\qquad \text{and}\qquad \inn{[\xi_M,R_\alpha]}\d\bm\eta=0\,,\qquad \forall \xi\in\mathfrak{g}\,,\qquad \alpha=1,\ldots,k\,.
    \]
    Therefore, $[R_\alpha,\xi_M]=0$ for any $\xi\in \mathfrak{g}$ and $\alpha=1,\ldots,k$. Thus, $R_1,\ldots,R_k$ project via $\pi_{
    [\bm\mu]}\colon\mathbf{J}^{\Phi-1}_{\bm\eta}(\R^{\times k} \bm\mu)\rightarrow M_{[\bm\mu]}$ onto $R_{[\bm\mu]\, 1},\ldots,R_{[\bm\mu]\,k}\in \mathfrak{X}(M_{[\bm\mu]})$. Additionally,
    \[
    \pi_{[\bm\mu]}^*(\inn{R_{[\bm\mu]\,\alpha}}\eta^\beta_{[\bm\mu]})=\inn{R_\alpha}i_{[\bm\mu]}^* \eta^\beta=i^*_{[\bm\mu]}(\inn{R_\alpha}\eta^\beta)=\delta^\beta_\alpha\,,\qquad \alpha,\beta=1,\ldots,k\,,
    \]
    and
    \[
    \pi^*_{[\bm\mu]}(\inn{R_{[\bm\mu]\,\alpha}}\d\bm\eta_{[\bm\mu]}) = \inn{R_\alpha}i_{[\bm\mu]}^*\d\bm\eta=i_{[\bm\mu]}^*(\inn{R_\alpha}\d\bm\eta) = 0\,,\qquad \alpha=1,\ldots,k\,,
    \]
    where we denoted by $R_{\alpha}$ both the vector field $R_{\alpha}$ on $M$ itself and its restriction to $\mathbf{J}^{\Phi-1}_{\bm\eta}(\R^{\times k}\boldsymbol{\mu})$. Hence, $R_{[\bm\mu]\, 1},\ldots, R_{[\bm\mu]\,k}\in \mathfrak{X}(M_{[\bm\mu]})$ are Reeb vector fields related to $(M_{[\bm\mu]},\bm\eta_{[\bm\mu]})$, namely they give rise to a basis of the distribution given by $\ker\d\bm\eta_{[\bm\mu]}$.

    Let $\ell:=\dim \mathbf{J}^{\Phi-1}_{\bm\eta}(\R^{\times k} \bm\mu)$, and let $\left\langle X_1,\ldots, X_\ell\right\rangle=\T_x\mathbf{J}^{\Phi-1}_{\bm\eta}(\R^{\times k} \bm\mu)$ for any $x\in \mathbf{J}^{\Phi-1}_{\bm\eta}(\R^{\times k} \bm\mu)$. Since $\left\langle R_1,\ldots, R_k\right\rangle\subset \T_x\mathbf{J}^{\Phi-1}_{\bm\eta}(\R^{\times k}\bm\mu)$, within $\left\langle X_1,\ldots, X_\ell\right\rangle$, one can always choose a family of vector fields $\left\langle Y_1,\ldots, Y_{\ell-k}\right\rangle\subset \ker\bm\eta_x$ such that $\left\langle Y_1,\ldots, Y_{\ell-k}\right\rangle\oplus\left\langle R_1,\ldots, R_k\right\rangle=\T_x\mathbf{J}^{\Phi-1}_{\bm\eta}(\R^{\times k}\bm\mu)$. Taking into account, that the orbit $K_{[\bm\mu]}x\subset\mathbf{J}^{\Phi-1}_{\bm\eta}(\R^{\times k}\bm\mu)$, it follows that within $\left\langle Y_1,\ldots,Y_{\ell-k}\right\rangle$ there are vector fields $\xi^j_M(x)$, where $\xi^j\in \mathfrak{k}_{[\bm\mu]}$ and $j=1,\ldots,\dim K_{[\bm\mu]}$. Therefore, 
    \[
    \T_x(\mathbf{J}^{\Phi-1}_{\bm\eta}(\R^{\times k} \bm\mu))=\left\langle Y_1,\ldots, Y_{\ell-\dim K_{[\bm\mu]}-k}\right\rangle\oplus \left\langle\xi^1_{M}(x),\ldots, \xi^{\dim K_{[\bm\mu]}}_M(x)\right\rangle \oplus \left\langle  R_1,\ldots, R_k\right\rangle,
    \]
    for any $x\in \mathbf{J}^{\Phi-1}_{\bm\eta}(\R^{\times k}\bm\mu)$. Moreover, $\left\langle Y_1,\ldots,Y_{\ell-\dim K_{[\bm\mu]}-k}\right\rangle$ is a family of vector fields that project onto $M_{[\bm\mu]}$ and take values in $\ker\bm\eta_{[\bm\mu]}$. Since the Reeb vector fields $R_1,\ldots,R_k$ project onto $R_{[\bm\mu]1},\ldots, R_{[\bm\mu]k}$, the vector fields $\xi^1_{M}(x),\ldots, \xi^{\dim K_{[\bm\mu]}}_M(x)$ project to zero, and $\ker i^*_{[\bm\mu]}\bm\eta_x\cap\ker\d i^*_{[\bm\mu]}\bm\eta_x=\T_x(K_{[\bm\mu]}x)$ by \eqref{Eq::ThEq1} and \eqref{Eq::ThEq1}, it follows that the pair $(M_{[\bm\mu]},\bm\eta_{[\bm\mu]})$ is indeed a $k$-contact manifold.
\end{proof}

\begin{example}{(Product of contact manifolds)}
\label{Ex::kproduct}
The following example illustrates how the $k$-contact MMW reduction theorem works. Remarkably, many practical examples have a related $k$-contact manifold similar to the one given next.

Let $M=M_1\times\dotsb\times M_k$ for some co-orientable contact manifolds $(M_\alpha,\eta^\alpha)$ with $\alpha=1,\ldots,k$. If $\pr_\alpha: M\rightarrow M_\alpha$ is the canonical projection onto the $\alpha$-th component $M_\alpha$ in $M$, then $(M,\bm\eta=\sum_{\alpha=1}^k\pr_\alpha^*\eta^\alpha\otimes e_\alpha)$ is a $k$-contact manifold since $\rk(\ker\d\bm\eta)=k$, ${\rm corank}(\ker\bm\eta)=k$, and $\ker\bm\eta\cap\ker\d\bm\eta=0$.

To simplify the notation, we write $\pr_\alpha^*\eta^\alpha$ as $\eta^\alpha$. Additionally, assume that a Lie group action $\Phi^\alpha: G_\alpha\times  M_\alpha\rightarrow M_\alpha$ admits a contact momentum map $\mathbf{J}^{\Phi^\alpha}_{\eta^\alpha}: M_\alpha\rightarrow\mathfrak{g}^*_\alpha$ and each $\Phi^\alpha$ acts in a quotientable manner on $\mathbf{J}^{\Phi^\alpha-1}_{\eta^\alpha}(\R^\times\mu^\alpha)$ for each $\alpha=1,\ldots,k$.

Define the Lie group action $G=G_1\times \cdots \times G_k$ on $M$ in the following way
\begin{equation}\label{eq:GrAck}
    \Phi:G\times M\ni (g_1,\ldots,g_k,x_1,\ldots,x_k)\longmapsto (\Phi^1_{g_1}(x_1),\ldots,\Phi^k_{g_k}(x_k))\in M\,.
\end{equation}
Then, $\mathfrak{g}=\mathfrak{g}_1\times\dotsb\times\mathfrak{g}_k$ is the Lie algebra of $G$ and the associated $k$-contact momentum map reads
\[
\mathbf{J}^\Phi_{\bm\eta}:M\ni(x_1,\ldots,x_k)\longmapsto \sum^k_{\alpha=1}
(0,\ldots, {\bf J}^\alpha,\ldots,0)\otimes {e}_\alpha\in \mathfrak{g}^{*k}\,,
\]
where $\mathbf{J}^\alpha(x_1,\ldots,x_k)=\mathbf{J}^{\Phi^\alpha}_{\eta^\alpha}(x_\alpha)$ for $\alpha=1,\ldots,k$ and $\mathfrak{g}^*=\mathfrak{g}_1^*\times\dotsb\times\mathfrak{g}_k^*$ is the dual space to $\mathfrak{g}$.
Suppose, that $\mu^\alpha\in\mathfrak{g}^*_\alpha$ is a weak regular value of $\mathbf{J}^{\Phi^\alpha}_{\eta^\alpha}\colon M_\alpha\rightarrow\mathfrak{g}^*_\alpha$ for each $\alpha=1,\ldots,k$. Hence, $\bm{\mu}= \sum^k_{\alpha=1}(0,\ldots,\mu^\alpha,\ldots,0) \otimes e_\alpha\in(\mathfrak{g}^*)^k$ is a weak regular $k$-value of $\mathbf{J}^\Phi_{\bm\eta}$. Then, $\Phi$ acts in a quotientable manner on the level sets of ${\bf J}^\Phi_{\bm\eta}$. Therefore, $\mathbf{J}^{\Phi-1}_{\bm\eta}(\R^{\times k}\bm\mu)$ is a submanifold of $M$, where $\R^{\times k}\bm\mu=(\R^\times\mu^1,0,\ldots,0)\otimes e_1+\dotsb+(0,\ldots,0,\R^\times\mu^k)\otimes e_k\subset (\mathfrak{g}^*)^k$. Theorem \ref{Prop::Kgroup} ensures that there exists a unique and simply connected Lie group $K_{[\bm\mu]}\subset G$, whose Lie algebra is  $\mathfrak{k}_{[\bm\mu]}=\ker\bm\mu\cap\mathfrak{g}_{[\bm\mu]}$, where $\mathfrak{k}_{[\bm\mu]}=\mathfrak{k}_{[\mu^1]}\cap\cdots\cap\mathfrak{k}_{[\mu^k]}$, $\ker\bm\mu=\ker\mu^1\cap\cdots\cap\ker\mu^k$, and $\mathfrak{g}_{[\bm\mu]}=\mathfrak{g}_{[\mu^1]}\cap\cdots\cap\mathfrak{g}_{[\mu^k]}$. Therefore, for $x=(x_1,\ldots,x_k)\in\mathbf{J}^{\Phi-1}_{\bm\eta}(\R^{\times k}\bm\mu)$, it follows that
\begin{align}
    \T_x (\mathbf{J}^{\alpha-1}(\R^\times\mu^\alpha)) &= \T_{x_1}M_1\oplus\dotsb\oplus \T_{x_\alpha} (\mathbf{J}^{\Phi^\alpha-1}_{\eta^\alpha}(\R^\times\mu^\alpha))\oplus\dotsb\oplus \T_{x_k}M_k\,,\\
    \T_x(\mathbf{J}^{\Phi-1}_{\bm\eta}(\R^{\times k}\bm\mu))&= \T_{x_1}(\mathbf{J}^{\Phi^1-1}_{\eta^1}(\R^\times\mu^1))\oplus\dotsb\oplus  \T_{x_k}(\mathbf{J}^{\Phi^k-1}_{\eta^k}(\R^\times\mu^k))\,,\\
    \ker \eta^\alpha_x\cap\ker\d\eta^\alpha_x &= \T_{x_1}M_1\oplus\dotsb\oplus \T_{x_{\alpha-1}}M_{\alpha-1}\oplus \{0\}\oplus \T_{x_{\alpha+1}}M_{\alpha+1}\oplus\dotsb\oplus \T_{x_k}M_k\,,\\
    \T_x\left(K_{[\mu^\alpha]}x\right) &= \T_{x_1}\left(G_1 x_1\right)\oplus\dotsb\oplus \T_{x_\alpha}\left(K_{\alpha[\mu^\alpha]}x_\alpha\right)\oplus\dotsb\oplus \T_{x_k}\left(G_{k}x_k\right)\,,\\
    \T_x\left(K_{[\bm\mu]}x\right) &= \T_{x_1}\left(K_{1[\mu^1]}x_1\right)\oplus\dotsb\oplus \T_{x_k}\left(K_{k[\mu^k]}x_k\right)\,.
\end{align}
Then, immediately follows that
\[
\T_x(\mathbf{J}^{\Phi^\alpha}_{\eta^\alpha}(\R^\times\mu^\alpha))= \T_x\left( \mathbf{J}^{\Phi-1}_{\bm\eta}({\R^{\times k}\bm \mu})\right)+\ker\eta^\alpha_x\cap\ker\d\eta^\alpha_x+ \T_x\left(K_{[\mu^\alpha]}x\right)\,,\qquad \alpha=1,\ldots,k\,,
\]
and
\[
\T_x\left(K_{[\bm\mu]}x\right)=\bigcap^k_{\beta=1}\left(\ker\eta^\beta_x\cap\ker\d\eta^\beta_x+\T_x\left(K_{[\mu^\beta]}x\right)\right)\cap  \T_x\left(\mathbf{J}^{\Phi-1}_{\bm\eta}(\R^{\times k}\bm\mu)\right)\,,
\]
for every weakly regular $\bm\mu\in (\mathfrak{g}^*)^k$ and $x\in {\bf J}^{\Phi-1}_{\bm\eta}(\R^{\times k}\bm\mu)$. Recall that, by Theorem \ref{Th::kconRed}, these equations guarantee that the reduced space $\mathbf{J}^{\Phi-1}_{\bm\eta}(\R^{\times k}\bm\mu)/K_{[\bm\mu]}$ is a $k$-contact manifold, while
\[
\mathbf{J}^{\Phi-1}_{\bm\eta}(\R^{\times k}\bm\mu)/K_{[\bm\mu]}\simeq \mathbf{J}^{\Phi^1-1}_{\eta^1}(\R^\times\mu^1)/K_{1[\mu^1]}\times\dotsb\times \mathbf{J}^{\Phi^k-1}_{\eta^k}(\R^\times\mu^k)/K_{k[\mu^k]}\,.
\]
\demo
\end{example}

Based on Theorem \ref{Th::kconRed}, let us present the reduction of dynamics given by $k$-contact Hamiltonian $k$-vector fields.

\begin{theorem}
\label{Th::kContRedDyn}
    Let assumptions of the Theorem \ref{Th::kconRed} hold. Let $(M,\bm\eta,\mathbf{J}^\Phi_{\bm\eta},h)$ be a $G$-invariant $k$-contact Hamiltonian system. Assume that $\Phi_{g*}\bfX^h=\bfX^h$ for every $g\in K_{[\bm\mu]}$, and $\bfX^h$ is tangent to $\mathbf{J}^{\Phi-1}_{\bm\eta}(\R^{\times k}\bm \mu)$. Then, the flow $\mathcal{F}^\alpha_t$ of $X^h_\alpha$ leave $\mathbf{J}^{\Phi-1}_{\bm\eta}(\R^{\times k} \bm\mu)$ invariant and induces a unique flow $\mathcal{K}^\alpha_t$ on $\mathbf{J}^{\Phi-1}_{\bm\eta}(\R^{\times k} \bm\mu)/K_{[\bm\mu]}$ satisfying
    \[
    \pi_{[\bm\mu]}\circ \mathcal{F}^\alpha_t=\mathcal{K}^\alpha_t\circ \pi_{[\bm\mu]}\,,
    \]
    for $\alpha=1,\ldots,k$.
\end{theorem}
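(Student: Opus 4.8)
The plan is to treat this as a standard descent-of-flow argument, relying on two elementary facts: a vector field tangent to a submanifold has a flow preserving that submanifold, and a vector field invariant under a diffeomorphism has a flow commuting with it. Write $N := \mathbf{J}^{\Phi-1}_{\bm\eta}(\R^{\times k}\bm\mu)$ for brevity; by the hypotheses inherited from Theorem \ref{Th::kconRed}, this is a submanifold of $M$ on which $K_{[\bm\mu]}$ acts in a quotientable manner, so that $\pi_{[\bm\mu]}\colon N\to N/K_{[\bm\mu]}$ is a surjective submersion.

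First I would establish that each flow $\mathcal{F}^\alpha_t$ leaves $N$ invariant. Since $\bfX^h=(X^h_1,\ldots,X^h_k)$ is assumed tangent to $N$, the restriction $X^h_\alpha|_N$ is a genuine vector field on $N$; by uniqueness of integral curves, the integral curves of $X^h_\alpha$ issued from points of $N$ remain in $N$, whence $\mathcal{F}^\alpha_t(N)\subseteq N$ for every admissible $t$ and every $\alpha=1,\ldots,k$. Thus $\mathcal{F}^\alpha_t$ restricts to a (local) flow on $N$.

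Next I would exploit the invariance hypothesis $\Phi_{g*}\bfX^h=\bfX^h$ for $g\in K_{[\bm\mu]}$, that is $\Phi_{g*}X^h_\alpha=X^h_\alpha$ for each $\alpha$. A vector field invariant under a diffeomorphism $\Phi_g$ has a flow commuting with it, giving $\mathcal{F}^\alpha_t\circ\Phi_g=\Phi_g\circ\mathcal{F}^\alpha_t$ for all $g\in K_{[\bm\mu]}$. Here $K_{[\bm\mu]}$ indeed maps $N$ onto itself: for $g\in K_{[\bm\mu]}=\bigcap_\alpha K_{[\mu^\alpha]}$ one has $\Ad^*_{g^{-1}}\mu^\alpha\wedge\mu^\alpha=0$, so $\Ad^{*k}_{g^{-1}}$ preserves $\R^{\times k}\bm\mu$, and the $\Ad^{*k}$-equivariance of $\mathbf{J}^\Phi_{\bm\eta}$ (Proposition \ref{Prop::AdEqkCon}) yields $\Phi_g(N)=N$. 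Consequently $\mathcal{F}^\alpha_t$ carries $K_{[\bm\mu]}$-orbits to $K_{[\bm\mu]}$-orbits.

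Finally I would define the induced flow by $\mathcal{K}^\alpha_t([x]):=\pi_{[\bm\mu]}(\mathcal{F}^\alpha_t(x))$. If $[x]=[y]$, say $y=\Phi_g(x)$, then $\mathcal{F}^\alpha_t(y)=\Phi_g(\mathcal{F}^\alpha_t(x))$ by the commutation relation, so $\pi_{[\bm\mu]}(\mathcal{F}^\alpha_t(y))=\pi_{[\bm\mu]}(\mathcal{F}^\alpha_t(x))$ and $\mathcal{K}^\alpha_t$ is well defined; by construction $\pi_{[\bm\mu]}\circ\mathcal{F}^\alpha_t=\mathcal{K}^\alpha_t\circ\pi_{[\bm\mu]}$. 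Smoothness of $\mathcal{K}^\alpha_t$ follows from the universal property of the surjective submersion $\pi_{[\bm\mu]}$, applied to the smooth map $\pi_{[\bm\mu]}\circ\mathcal{F}^\alpha_t$, which is constant along the fibres of $\pi_{[\bm\mu]}$; the group and identity laws $\mathcal{K}^\alpha_{t+s}=\mathcal{K}^\alpha_t\circ\mathcal{K}^\alpha_s$ and $\mathcal{K}^\alpha_0=\id$ descend directly from those of $\mathcal{F}^\alpha_t$, and uniqueness is immediate from the surjectivity of $\pi_{[\bm\mu]}$. The only genuinely delicate point is technical rather than conceptual: since all constructions here are local and the flows only locally defined, one must keep the time intervals on which $\mathcal{F}^\alpha_t$, its restriction to $N$, and the descended $\mathcal{K}^\alpha_t$ are defined mutually consistent, and work on a neighbourhood where $\pi_{[\bm\mu]}$ is a bona fide submersion; under the standing local assumptions of the paper this raises no substantive obstacle.
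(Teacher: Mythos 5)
Your argument is correct and proves exactly what the statement literally asserts, but it takes a more elementary route than the paper and, as a consequence, establishes strictly less. You treat the theorem as a pure descent-of-flow statement: tangency of $\bfX^h$ to $N:=\mathbf{J}^{\Phi-1}_{\bm\eta}(\R^{\times k}\bm\mu)$ gives invariance of $N$ under $\mathcal{F}^\alpha_t$, the $K_{[\bm\mu]}$-invariance of $X^h_\alpha$ gives $\mathcal{F}^\alpha_t\circ\Phi_g=\Phi_g\circ\mathcal{F}^\alpha_t$, and the quotient flow is then defined, smooth and unique by the universal property of the surjective submersion $\pi_{[\bm\mu]}$. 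All of that is sound, including your check via $\Ad^{*k}$-equivariance that $K_{[\bm\mu]}$ preserves $N$. The paper's proof, however, spends most of its effort on a point you never touch: it projects $\bfX^h$ to a $k$-vector field $\mathbf{Y}$ on $M_{[\bm\mu]}$ and then verifies, by pulling back the $k$-contact Hamilton--De Donder--Weyl equations \eqref{eq:k-contact-HDW-alternative} through $\pi_{[\bm\mu]}$ and $i_{[\bm\mu]}$ and using the projected Reeb vector fields, that $\mathbf{Y}=\bfX^{h_{[\bm\mu]}}$ for the reduced Hamiltonian $h_{[\bm\mu]}$ determined by $\pi_{[\bm\mu]}^*h_{[\bm\mu]}=i_{[\bm\mu]}^*h$. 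That identification is what makes this a reduction of \emph{dynamics} rather than merely of a flow, and it is the only place where the $G$-invariance of $h$ and the reduced $k$-contact structure from Theorem \ref{Th::kconRed} are actually used; your proof uses neither. So: your approach buys brevity and generality (it would work for any $K_{[\bm\mu]}$-invariant tangent $k$-vector field, Hamiltonian or not), while the paper's buys the substantive conclusion that the induced flow on $M_{[\bm\mu]}$ is again that of a $k$-contact Hamiltonian system. If you intend your proof to stand in for the paper's, you should append the computation showing $\pi_{[\bm\mu]*}\bfX^h=\bfX^{h_{[\bm\mu]}}$.
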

\begin{proof}
    Since $\bfX^h$ is tangent to $\mathbf{J}^{\Phi-1}_{\bm\eta}(\R^{\times k}\bm\mu)$ it follows that each integral curve $\mathcal{F}_t^\alpha$ of $X^h_\alpha$ is contained within $\mathbf{J}^{\Phi-1}_{\bm\eta}(\R^{\times k}\bm\mu)$ for all $t\in\R$ and $\alpha=1,\ldots,k$. The assumption that $\Phi_{g*}\bfX^h=\bfX^h$, for every $g\in K_{[\bm\mu]}$, implies that $\bfX^h=(X^h_1,\ldots,X^h_k)$ projects onto a $k$-vector field $\mathbf{Y}=(Y_1,\ldots,Y_k)$ on $\mathbf{J}^{\Phi-1}_{\bm\eta}(\R^{\times k}\bm\mu)/K_{[\bm\mu]}$, namely $\pi_{[\bm\mu]*}X^h_\alpha=Y_\alpha$ for $\alpha=1,\ldots,k$. Since $h\in \Cinfty(M)$ is $G$-invariant, it gives rise to a function $h_{[\bm\mu]}\in\Cinfty(\mathbf{J}^{\Phi-1}_{\bm\eta}(\R^{\times k}\bm\mu)/K_{[\bm\mu]})$ such that $\pi_{[\bm\mu]}^*h_{[\bm\mu]}=i_{[\bm\mu]}^*h$. By Theorem \ref{Th::kconRed}, it follows that $(M_{[\bm\mu]},\bm\eta_{[\bm\mu]})$ is a $k$-contact manifold, while $\pi_{[\bm\mu]}^*\bm\eta_{[\bm\mu]}=i_{[\bm\mu]}^*\bm\eta$. Recall that Reeb vector fields, $R_1,\ldots,R_k$, are tangent to $\mathbf{J}^{\Phi-1}_{\bm\eta}(\R^{\times k}\bm\mu)$ and they project onto Reeb vector fields, $R_{[\bm\mu]1},\ldots, R_{[\bm\mu]k}$, on $M_{[\bm\mu]}$. Then, for $\alpha=1,\ldots,k$, one has
    \begin{align*}
        \pi_{[\bm\mu]}^*\d h_{[\bm\mu]}&=\d i_{[\bm\mu]}^*h=i_{[\bm\mu]}^*\left(\iota_{\bfX^h}\d\bm\eta+\sum^k_{\alpha=1}(R_\alpha h)\eta^\alpha\right)=\iota_{\bfX^h}i_{[\bm\mu]}^*\d\bm\eta+\sum^k_{\alpha=1}\left(R_\alpha \left(i_{[\bm\mu]}^*h\right)\right)i_{[\bm\mu]}^*\eta^\alpha\\&=\iota_{\bfX^h}\pi_{[\bm\mu]}^*\d\bm\eta_{[\bm\mu]}+\sum^k_{\alpha=1}\left(R_\alpha\left( \pi_{[\bm\mu]}^*h_{[\bm\mu]}\right)\right)\pi_{[\bm\mu]}^*\eta_{[\bm\mu]}^\alpha\\&=\pi^*_{[\bm\mu]}\left(\iota_{\pi_{[\bm\mu]*}\bfX^h}\d\bm\eta_{[\bm\mu]}\right)+\pi_{[\bm\mu]}^*\left(\sum^k_{\alpha=1}\left(\left(\pi_{[\bm\mu]*}R_\alpha\right) h_{[\bm\mu]}\right)\eta_{[\bm\mu]}^\alpha\right)\\&=\pi^*_{[\bm\mu]}\left(\iota_{\mathbf{Y}}\d\bm\eta_{[\bm\mu]}\right)+\pi^*_{[\bm\mu]}\left(\sum^k_{\alpha=1}\left(R_{[\bm\mu]\alpha}h_{[\bm\mu]}\right)\eta_{[\bm\mu]}^\alpha\right)=\pi^*_{[\bm\mu]}\left(\iota_{\mathbf{Y}}\d\bm\eta_{[\bm\mu]}+\sum^k_{\alpha=1}(R_{[\bm\mu]\alpha}h_{[\bm\mu]})\eta_{[\bm\mu]}^\alpha\right),
    \end{align*}
    and
    \[
        -\pi^*_{[\bm\mu]}h_{[\bm\mu]}=-i_{[\bm\mu]}^*h=i^*_{[\bm\mu]}\left(\iota_{\bfX^h}\bm\eta\right)=\iota_{\bfX^h}i_{[\bm\mu]}^*\bm\eta=\iota_{\bfX^h}\pi^*_{[\bm\mu]}\bm\eta_{[\bm\mu]}=\pi^*_{[\bm\mu]}\left(\iota_{\pi_{[\bm\mu]*}\bfX^h}\bm\eta_{[\bm\mu]}\right)=\pi^*_{[\bm\mu]}\left(\iota_{\mathbf{Y}}\bm\eta_{[\bm\mu]}\right).
    \]
    Therefore, $\mathbf{Y}$ is a $k$-contact Hamiltonian $k$-vector field with respect to $h_{[\bm\mu]}\in\Cinfty(M_{[\bm\mu]})$, namely $\mathbf{Y}=\bfX^{h_{[\bm\mu]}}$ and the statement follows.
\end{proof}

\begin{example}[Coupled strings with damping]
Consider the manifold $M = \oplus^2\cT\R^2\times\R^2$ with coordinates $(q^1, q^2, p_1^t, p_2^t, p_1^x, p_2^x, s^t, s^x)$. Then, $(M,\bm\eta)$ is a two-contact manifold where the two-contact form is given by
\[
\bm{\eta}=\eta^t\otimes e_1+\eta^x\otimes e_2=(\d s^t-p_1^t\d q^1-p^t_2\d q^2)\otimes e_1+(\d s^x-p^x_1\d q^1-p^x_2\d q^2)\otimes e_2\,.
\]
The Reeb vector fields associated with $\eta^t$ and $\eta^x$ are $R_t = \tparder{}{s^t}$ and $R_x = \tparder{}{s^x}$, respectively. Let us define the Lie group action
\[
\Phi\colon\R^2\times M\ni (\lambda_1,\lambda_2;q^1, q^2, p_1^t, p_2^t, p_1^x, p_2^x, s^t, s^x)\longmapsto (q^1+\lambda_2, q^2+\lambda_2, p_1^t, p_2^t, p_1^x, p_2^x, s^t, s^x+\lambda_1)\in M\,.
\]
Note that $\Phi\colon\R^2\times M\rightarrow M$ is a two-contact, free, and proper Lie group action. The fundamental vector fields associated with $\Phi$ read
\[
\xi^1_M=\frac{\partial}{\partial s^x}\,,\qquad \xi^2_M=\frac{\partial}{\partial q^1}+\frac{\partial}{\partial q^2}\,.
\]
Therefore, the two-contact momentum map $\mathbf{J}^\Phi_{\bm\eta}\colon\oplus^2\cT\R^2\times\R^2\rightarrow (\R^{2*})^2$ is given by
\[
\mathbf{J}^\Phi_{\bm\eta}\colon M\ni y\longmapsto \boldsymbol{\mu}=\mu^1\otimes e_1+\mu^2\otimes e_2=(0,-p^t_1-p^t_2)\otimes e_1+(1,-p^x_1-p_2^x)\otimes e_2\in (\R^2)^{*2}\,.
\]
Recall that
\[
\T_y\mathbf{J}^{\Phi-1}_{\bm\eta}(\R^{\times 2}\boldsymbol{\mu})=\{ v_y\in \T_y M\,\mid\, \T_y\mathbf{J}^\Phi_{\bm\eta\,\alpha}(v_y)=\lambda_\alpha \mu^\alpha,\quad \lambda_\alpha \in \R^\times,\quad \alpha=1,\ldots,k\}\,,
\]
for any $y\in\mathbf{J}^{\Phi-1}_{\bm\eta}(\R^{\times 2}\bm\mu)$. For simplicity, let us fix $\boldsymbol{\mu}=(0,0)\otimes e_1+(1,0)\otimes e_2\in(\mathfrak{g}^*)^2$. It is worth noting that $\bm \mu$ is a weakly regular $2$-value of $\mathbf{J}^\Phi_{\bm \eta}$ that is not regular $2$-value.  Then, $\mathbf{J}^{\Phi-1}_{\bm\eta}(\mathbb{R}^{\times 2}\bm\mu)$ is a submanifold of $M$, and it follows that
\[
\mathbf{J}^{\Phi-1}_{\bm\eta}(\R^{\times 2}\boldsymbol{\mu})=\{y\in M\,\mid\,p_1^t=-p^t_2\,,\quad p_1^x=-p_2^x\}
\]
and
\[
\T_y\mathbf{J}^{\Phi-1}_{\bm\eta}(\R^{\times 2}\boldsymbol{\mu})=\left\langle \parder{}{s^x}, \parder{}{s^t}, \parder{}{q^1}, \parder{}{q^2}, \parder{}{p_1^x} - \parder{}{p_2^x}, \parder{}{p_1^t} - \parder{}{p_2^t}\right\rangle\,.
\]
Note that $\ker\boldsymbol{\mu}=\langle\xi^2 \rangle$ and $\mathfrak{g}_{[\boldsymbol{\mu}]}=\langle \xi^1,\xi^2\rangle$. Thus, $\mathfrak{k}_{[\boldsymbol{\mu}]}=\ker\boldsymbol{\mu}\cap \mathfrak{g}_{[\boldsymbol{\mu}]}=\langle \xi^2 \rangle$ and 
\[
\T_y(K_{[\boldsymbol{\mu}]}y)=\left\langle \frac{\partial}{\partial q^1}+\frac{\partial}{\partial q^2}\right\rangle.
\]
Moreover, one has $\mathfrak{k}_{[\mu^1]}=\ker \mu^1\cap \mathfrak{g}_{[\mu^1]}=\langle \xi^1,\xi^2\rangle$,  $\mathfrak{k}_{[\mu^2]}=\ker \mu^2\cap \mathfrak{g}_{[\mu^2]}=\langle \xi^2\rangle$, and
\[
\begin{gathered}
\T_y\mathbf{J}^{\Phi-1}_{\bm\eta\,1}(\R^\times\mu^1)=\left\langle \frac{\partial}{\partial s^x},\frac{\partial}{\partial s^t}, \frac{\partial}{\partial q^1}, \frac{\partial}{\partial q^2},\frac{\partial}{\partial p^x_1}, \frac{\partial}{\partial p^x_2},\frac{\partial}{\partial p^t_1}-\frac{\partial}{\partial p^t_2} \right\rangle\,, \\
\T_y\mathbf{J}^{\Phi-1}_{\bm\eta\,2}(\R^\times\mu^2)=\left\langle \frac{\partial}{\partial s^x},\frac{\partial}{\partial s^t}, \frac{\partial}{\partial q^1}, \frac{\partial}{\partial q^2},\frac{\partial}{\partial p^x_1}-\frac{\partial}{\partial p^x_2},\frac{\partial}{\partial p^t_1},\frac{\partial}{\partial p^t_2}\right\rangle\,,\\
\T_y \left(K_{[\mu^1]}y\right)=\left\langle \frac{\partial}{\partial s^x}, \frac{\partial}{\partial q^1}+\frac{\partial}{\partial q^2}\right\rangle\,,\qquad 
\T_y \left(K_{[\mu^2]}y\right)=\left\langle \frac{\partial}{\partial q^1}+\frac{\partial}{\partial q^2}\right\rangle\,,\\
\ker\eta^t_y\cap\ker\d\eta^t_y=\left\langle \frac{\partial}{\partial s^x},\frac{\partial }{\partial p^x_1},\frac{\partial }{\partial p^x_2}\right\rangle\,,\qquad
\ker\eta^x_y\cap\ker\d\eta^x_y=\left\langle \frac{\partial}{\partial s^t},\frac{\partial }{\partial p^t_1},\frac{\partial }{\partial p^t_2}\right\rangle\,.
\end{gathered}
\]
Then, condition \eqref{Eq::ThEq1} holds since, for any $y\in\mathbf{J}^{\Phi-1}_{\bm\eta}(\R^{\times 2}\bm\mu)$, one has
\begin{multline*}
    \ker\eta^t_y\cap\ker\d\eta^t_y+\T_y\mathbf{J}^{\Phi-1}_{\bm\eta}(\R^{\times 2}\bm\mu)+\T_y\left(K_{[\mu^1]}y\right)\\=\left\langle \frac{\partial}{\partial s^x},\frac{\partial }{\partial p^x_1},\frac{\partial }{\partial p^x_2}\right\rangle+\left\langle \parder{}{s^x}, \parder{}{s^t}, \parder{}{q^1}, \parder{}{q^2}, \parder{}{p_1^x} - \parder{}{p_2^x}, \parder{}{p_1^t} - \parder{}{p_2^t}\right\rangle+\left\langle \frac{\partial}{\partial s^x}, \frac{\partial}{\partial q^1}+\frac{\partial}{\partial q^2}\right\rangle\\=\left\langle \parder{}{s^x},\parder{}{s^t},\parder{}{q^1},\parder{}{q^2},\parder{}{p^x_1},\parder{}{p^x_2},\parder{}{p^t_1}-\parder{}{p^t_2}\right\rangle=\T_y\mathbf{J}^{\Phi-1}_{\bm\eta\,1}(\R^\times\mu^1),
\end{multline*}
and
\begin{multline*}
    \ker\eta^x_y\cap\ker\d\eta^x_y+\T_y\mathbf{J}^{\Phi-1}_{\bm\eta}(\R^{\times 2}\bm\mu)+\T_y\left(K_{[\mu^2]}y\right)\\=\left\langle \frac{\partial}{\partial s^t},\frac{\partial }{\partial p^t_1},\frac{\partial }{\partial p^t_2}\right\rangle+\left\langle \parder{}{s^x}, \parder{}{s^t}, \parder{}{q^1}, \parder{}{q^2}, \parder{}{p_1^x} - \parder{}{p_2^x}, \parder{}{p_1^t} - \parder{}{p_2^t}\right\rangle+\left\langle  \frac{\partial}{\partial q^1}+\frac{\partial}{\partial q^2}\right\rangle\\=\left\langle \parder{}{s^x},\parder{}{s^t},\parder{}{q^1},\parder{}{q^2},\parder{}{p^x_1}-\parder{}{p^x_2},\parder{}{p^t_1},\parder{}{p^t_2}\right\rangle=\T_y\mathbf{J}^{\Phi-1}_{\bm\eta\,2}(\R^{\times }\mu^2).
\end{multline*}
Similarly, condition \eqref{Eq::ThEq2} holds since
\begin{multline*}
    \left(\ker\eta^t_y\cap\ker\d\eta^t_y+\T_y\left(K_{[\mu^1]}y\right)\right)\cap\left(\ker\eta^x_y\cap\ker\d\eta^x_y+\T_y\left(K_{[\mu^2]}y\right)\right)\cap \T_y\mathbf{J}^{\Phi-1}_{\bm\eta}(\R^{\times 2}\bm\mu)=\\=\left\langle\parder{}{s^x},\parder{}{p^x_1},\parder{}{p^x_2},\parder{}{q^1}+\parder{}{q^2}\right\rangle \cap \left\langle \parder{}{s^t},\parder{}{p^t_1},\parder{}{p^t_2},\parder{}{q^1}+\parder{}{q^2}\right\rangle\\ \cap \left\langle \parder{}{s^x}, \parder{}{s^t}, \parder{}{q^1}, \parder{}{q^2}, \parder{}{p_1^x} - \parder{}{p_2^x}, \parder{}{p_1^t} - \parder{}{p_2^t}\right\rangle=\T_y\left(K_{[\bm\mu]}y\right),
\end{multline*}
for any $y\in\mathbf{J}^{\Phi-1}_{\bm\eta}(\R^{\times2}\bm\mu)$. Therefore, Theorem \ref{Th::kconRed} yields that $(M_{[\bm\mu]}=\mathbf{J}^{\Phi-1}_{\bm\eta}(\R^{\times2}\bm\mu)/K_{[\bm\mu]},\bm\eta_{[\bm\mu]})$ is a two-contact manifold with 
\[
\bm\eta_{[\bm\mu]}=\eta^t_{[\bm\mu]}\otimes e_1+\eta^x_{[\bm\mu]}\otimes e_2=\left(\d s^t -\frac{1}{2}p^t\d q\right)\otimes e_1+\left(\d s^x-\frac{1}{2}p^x\d q\right)\otimes e_2\,,
\]
where $(q:=q^1-q^2, p^t:=p^t_1-p^t_2, p^x:=p^x_1-p^x_2,s^t,s^x)$ are local coordinates on $M_{[\bm\mu]}\simeq \R^5$.

Consider a system of coupled damped strings with a Hamiltonian function  $h\colon M\to\R$ of the form
\[
h(q^1, q^2, p_1^t, p_2^t, p_1^x, p_2^x, s^t, s^x) = \frac{1}{2}\left( (p_1^t)^2 + (p_2^t)^2 - (p_1^x)^2 - (p_2^x)^2 \right) + C(q^1-q^2) + \gamma s^t\,,
\]
where $C(q^1-q^2)$ is a coupling function between the two strings. Recall that the dynamics on $(M,\bm\eta)$ is given by the two-contact Hamiltonian two-vector field $\bfX^h=(X^h_t,X^h_s)\in\mathfrak{X}^2(M)$ with local expression 
\begin{align}
    X_t^h &= p^t_1\parder{}{q^1} + p_2^t\parder{}{q^2} + \left( -\parder{C}{q} - \gamma p_1^t - G_{x1}^x \right)\parder{}{p_1^t} + \left( \parder{C}{q} - \gamma p_2^t - G_{x2}^x \right)\parder{}{p_2^t} + G_{t1}^x\parder{}{p_1^x}\\ &\quad + G_{t2}^x\parder{}{p_2^x} + \left( \frac{1}{2}( (p_1^t)^2 + (p_2^t)^2 - (p_1^x)^2 - (p_2^x)^2) - C(q) - \gamma s^t - g_x^x \right)\parder{}{s^t} + g_t^x\parder{}{s^x}\,,\\
    X_x^h &= -p_1^x\parder{}{q^1}  -p_2^x\parder{}{q^2} + G_{x1}^t\parder{}{p_1^t} + G_{x2}^t\parder{}{p_2^t} + G_{x1}^x\parder{}{p_1^x} + G_{x2}^x\parder{}{p_2^x} + g_x^t\parder{}{s^t} + g_x^x\parder{}{s^x}\,,
\end{align}
where $G_{x1}^x,G_{x2}^x,G_{x1}^t,G_{x2}^t,G_{t1}^x,G_{t2}^x,g_x^x,g_t^x,g_x^t$ are arbitrary functions on $M$. 

To reduce the dynamics given by $\bfX^h$ onto $M_{[\bm\mu]}=\mathbf{J}^{\Phi-1}_{\bm\eta}(\R^{\times 2}\bm\mu)/K_{[\bm\mu]}$, we must ensure, according to Theorem \ref{Th::kContRedDyn}, that $\bfX^h$ is tangent to $\mathbf{J}^{\Phi-1}_{\bm\eta}(\R^{\times2}\bm\mu)$ and $\Phi_{g\ast}\bfX^h=\bfX^h$ for every $g\in \mathbb{R}^2$. Therefore, assume that all these arbitrary functions are $\mathbb{R}^2$-invariant and they satisfy the following relations
\[
    G^x_{x1}=-G^x_{x2},\quad G^x_{t1}=-G^x_{t2},\quad G^t_{x1}=-G^t_{x2}.
\]
Then,
\begin{align}
    X_t^h &= p_1^t\left( \parder{}{q^1} - \parder{}{q^2} \right) - \left( \parder{C}{q} + \gamma p_1^t + G_{x1}^x \right) \left( \parder{}{p_1^t} - \parder{}{p_2^t} \right) + G_{t1}^x\left(\parder{}{p_1^x} - \parder{}{p_2^x}\right) \\
    &\quad + \left( (p_1^t)^2 - (p_1^x)^2 - C(q) - \gamma s^t - g_x^x \right)\parder{}{s^t} + g_t^x\parder{}{s^x}\,,\\
    X_x^h &= -p_1^x\left(\parder{}{q^1} - \parder{}{q^2}\right) + G_{x1}^t\left( \parder{}{p_1^t} - \parder{}{p_2^t} \right) + G_{x1}^x\left( \parder{}{p_1^x} - \parder{}{p_2^x} \right) + g_x^t\parder{}{s^t} + g_x^x\parder{}{s^x}\,,
\end{align}
for any point $x\in\mathbf{J}^{\Phi-1}_{\bm\eta}(\R^{\times 2}\bm\mu)$.
Applying Theorem \ref{Th::kContRedDyn}, one gets that the reduced two-contact Hamiltonian two-vector field $\bfX^{h_[\bm\mu]}=(X_t^{h_{[\bm\mu]}},X_x^{h_{[\bm\mu]}})$ reads
\begin{align}
    X_t^{h_{[\bm\mu]}} &= p^t\parder{}{q} \!-\! \left( 2\parder{C}{q} \!+\! \gamma p^t\!+\! 2\widetilde{G_{x1}^x} \right)\!\parder{}{p^t} \!+\! \left( \frac{1}{4}((p^t)^2 \!-\! (p^x)^2)\! -\! C(q)\! -\! \gamma s^t \!-\! \widetilde{g_x^x} \right)\parder{}{s^t} \!+\! 2\widetilde{G_{t1}^x}\parder{}{p^x} \!+\! \widetilde{g_t^x}\parder{}{s^x}\,,\\
    X_x^{h_{[\bm\mu]}} &= -p^x\parder{}{q} + 2\widetilde{G_{x1}^t}\parder{}{p^t} + 2\widetilde{G_{x1}^x}\parder{}{p^x} + \widetilde{g_x^t}\parder{}{s^t} + \widetilde{g_x^x}\parder{}{s^x}\,,\
\end{align}
where $\widetilde{G_{x1}^x},\widetilde{G_{x1}^t},\widetilde{G_{t1}^x},\widetilde{g_x^x},\widetilde{g_t^x},\widetilde{g_x^t}$ are functions on $M_{[\bm\mu]}$ coming from the $\R^2$-invariant functions without tildes on $M$ and $h_{[\bm\mu]}$ is the reduced Hamiltonian function on $M_{[\bm\mu]}$ given by
\[
h_{[\bm\mu]}=\frac{1}{4}\left((p^t)^2+(p^x)^2\right)+C(q)+\gamma s^t\,.
\]

The two-vector field $\bfX^h$ is integrable when $[X_t^h, X_x^h] = 0$. To ensure the integrability, let us restrict to the submanifold $N:=\{y\in M\,\mid\, p^x_1=0=p^x_2\}\subset \mathbf{J}^{\Phi-1}_{\bm\eta}(\R^{\times 2}\bm\mu)$. Additionally, assume that the functions $G_{x1}^x,G_{x2}^x,g_x^x$ are constant, while $G_{x1}^t,G_{x2}^t,G_{t1}^x,G_{t2}^x,g_t^x,g_x^t$ vanish. Under these conditions, the two-contact Hamiltonian two-vector field $\bfX^h$ on $N$ gives rise to the following Hamilton--De Donder--Weyl equations on $N$ (note that these are not exactly the Hamilton--De Donder--Weyl equations since $N$ is not a $k$-contact manifold.)
\begin{gather}
    \parder{q^1}{t} = p_1^t\,,\qquad \parder{q^2}{t} = p_2^t\,,\qquad \parder{q^1}{x} = -p_1^x=0\,,\qquad \parder{q^2}{x} = -p_2^x = 0\,, \\
    \parder{p^t_1}{t} = -\parder{C}{q} - \gamma p_1^t - G_{x1}^x = -\parder{C}{q} - \gamma p_1^t - \parder{p_1^x}{x}=-\parder{C}{q} - \gamma p_1^t\,,\\
    \parder{p^t_2}{t} = \parder{C}{q} - \gamma p_2^t - G_{x2}^x = \parder{C}{q} - \gamma p_2^t - \parder{p_2^x}{x}=\parder{C}{q} - \gamma p_2^t\,.
\end{gather}
By combining these equations, one obtains the following system of PDEs
\begin{equation}
    \parder{^2q^1}{t^2}  = -\gamma\parder{q^1}{t} -\parder{C}{q}\,,\qquad
    \parder{^2q^2}{t^2}  = -\gamma\parder{q^2}{t} + \parder{C}{q}\,.
\end{equation}
This system describes two coupled damped vibrating strings, constrained to the submanifold $N$. 

Furthermore, the integral sections of the reduced two-contact Hamiltonian two-vector field $\bfX^{h_{[\bm\mu]}}$, restricted to $\pi_{[\bm\mu]}(N)=\{\pi_{[\bm\mu]}(y)\in M_{[\bm\mu]}\,\mid\, p^x=0\}$, lead to the following system of PDEs
\begin{gather*}
\parder{q}{t} = p^t\,,\qquad \parder{q}{x} = -
p^x=0\,,\\ \parder{p^t}{t} = -2\parder{C}{q} - \gamma p^t - 2\widetilde{G_{x1}^x} = -2\parder{C}{q} - \gamma p^t - \parder{p^x}{x}=-2\parder{C}{q} - \gamma p^t\,.
\end{gather*}
This system boils down to
\[
\parder{p^t}{t} = -\gamma p^t - 2\parder{C}{q} \quad\Rightarrow\quad \parder{^2q}{t^2} = -\gamma\parder{q}{t} - 2\parder{C}{q}\,,
\]
which corresponds to the equation of a single damped string, with an external force acting on it, restricted to $\pi_{[\bm\mu]}(N)$.
\demo
\end{example}

\begin{example}
    Consider  $M=\R^5\times\R^5$ with
    \[
    \bm\eta=\eta^1\otimes e_1+\eta^2\otimes e_2=\left(\d s_1-x_2\d x_1 -x_4\d x_3\right)\otimes e_1 + \left(\d s_2-y_2\d y_1 -y_4\d y_3\right)\otimes e_2\,,
    \]
    where $(x_1,\ldots, x_4,s_1,y_1,\ldots, y_4,s_2)$ are linear coordinates on $\R^{10}$. Since, each pair $(\R^5,\eta^\alpha)$ is a contact manifold with local coordinates $(x_1,\ldots, x_4,s_1)$ and $(y_1,\ldots, y_4,s_2)$, it follows from Example \ref{Ex::kproduct} that  $(M,\bm\eta)$ is a two-contact manifold. Consider the vector fields on $M$ of the form
    \[
    \xi^1_M=\parder{}{s_2}\,,\qquad
    \xi^2_M=\parder{}{x_3}\,,\qquad \xi^3_M=\parder{}{y_1}+\parder{}{y_3}\,.
    \]
    These vector fields arise from an abelian three-dimensional Lie group acting via translations and leave $\bm\eta$ invariant. This Lie group action acts in a quotientable manner on $M$. The corresponding two-contact momentum map $\mathbf{J}^\Phi_{\bm\eta}\colon M\rightarrow (\R^{3*})^{2}$ reads
    \begin{multline*}
    \mathbf{J}^\Phi_{\bm\eta}\colon M\ni(x_1,\ldots, x_4,s_1,y_1,\ldots, y_4,s_2)\\\longmapsto \mu^1\otimes e_1+\mu^2\otimes e_2= (0,-x_4,0)\otimes e_1+(1,0,-y_2-y_4)\otimes e_2\in (\R^{3*})^2\,.
    \end{multline*}
    Let $\bm\mu=(1,0,-1)\otimes e_2$. Note that $\bm\mu\in (\R^{3*})^2$ is a weak regular two-value of $\mathbf{J}^\Phi_{\bm\eta}$. Then, 
    \[
    \mathbf{J}^{\Phi-1}_{\bm\eta}(\R^{\times 2}\bm\mu)=\{x\in M\,\mid\,x_4=0\,,\quad y_2+y_4=1\}
    \]
    and
    \[
    \T_x\mathbf{J}^{\Phi-1}_{\bm\eta}(\R^{\times 2}\bm\mu)=\left\langle\parder{}{s_1},\parder{}{x_1},\parder{}{x_2},\parder{}{x_3},\parder{}{s_2},\parder{}{y_1},\parder{}{y_2}-\parder{}{y_4},\parder{}{y_3}\right\rangle.
    \]
    Moreover, one has that $\mathfrak{k}_{[\bm\mu]}=\langle \xi^2,\xi^1+\xi^3\rangle$. By Example \ref{Ex::kproduct}, it follows that conditions \eqref{Eq::contactRedEq1} and \eqref{Eq::contactRedEq2} are satisfied. Introducing the following change of coordinates
    \begin{gather*}
        \alpha=\frac{1}{3}\left(y_1+y_3+s_2\right)\,,\qquad \beta=\frac{1}{3}\left(y_1+y_3-2s_2\right)\,,\\
        z_2=y_2+y_4\,,\qquad z_3=y_1-y_3\,,\qquad z_4=y_2-y_4\,,
    \end{gather*}
    while $(s_1,x_2,x_2,x_3,x_4)$ remain unchanged, it follows that 
    \[
    \mathbf{J}^{\Phi-1}_{\bm\eta}(\R^{\times 2}\bm\mu)=\{x\in M\,\mid\,x_4=0,\quad z_2=1\}
    \]
    and
    \[
    \T_x\left(K_{[\bm\mu]}x\right)=\left\langle\parder{}{x_3},\parder{}{\alpha}\right\rangle
    \]
    for any $x\in\mathbf{J}^{\Phi-1}_{\bm\eta}(\R^{\times 2}\bm\mu)$. Then, by Theorem \ref{Th::kconRed}, one gets that $(\mathbf{J}^{\Phi-1}_{\bm\eta}(\R^{\times 2}\bm\mu)/K_{[\bm\mu]}\simeq \R^6,\bm\eta_{[\bm\mu]})$ is a two-contact manifold with
    \[
    \bm\eta_{[\bm\mu]}=\eta^1_{[\bm\mu]}\otimes e_1+\eta^2_{[\bm\mu]}\otimes e_2=\left(\d s_1-x_2\d x_1\right)\otimes e_1+\left(-\frac{3}{2}\d\beta-\frac{1}{2}z_4\d z_3\right)\otimes e_2\,.
    \]
    The reduced Reeb vector fields read
    \[
    R_{[\bm\mu]1}=\parder{}{s_1}\,,\qquad R_{[\bm\mu]2}=-\frac{2}{3}\parder{}{\beta}\,.
    \]
    \end{example}

    \begin{example}
       The following example presents the one-contact reduction for the spherical cotangent bundle of a Riemannian manifold. Let $(Q,g)$ be an $n$-dimensional Riemannian manifold and let $0_Q$ be the zero section of the cotangent bundle $\pi_{Q}\colon \cT Q\rightarrow Q$. Consider the action of $\R^+=\{x\in \mathbb{R}\mid x>0\}$ on $\T^*Q-0_Q$ given by
       \[
       \phi\colon(s,\alpha)\in \R^+\times (\T^*Q-0_Q)\longmapsto \phi_s(\alpha):=s\alpha \in (\T^*Q-0_Q)\,.
       \]
       This action defines an $\R^+$-symplectic principal bundle  $\tau:(\T^*Q-0_Q)\rightarrow(\T^*Q-0_Q)/\R^+$. Note that the canonical symplectic form on $(\T^*Q-0_Q)$ is one-homogeneous with respect to $\phi$. Then, the quotient manifold $(\T^*Q-0_Q)/\R^+$ is diffeomorphic to the spherical cotangent bundle defined as
       \[
       \mathbb{S}(\cT Q)=\big\{\alpha\in \cT Q\mid \sqrt{g(\alpha,\alpha)}=1\big\}\,,
       \]
       where $g$ also denotes the corresponding metric on $\cT Q$. Furthermore, $(\mathbb{S}(\cT Q),\eta=i^*\theta_{\cT Q})$ is a co-orientable contact manifold, where $i\colon\mathbb{S}(\cT Q)\hookrightarrow \cT Q$ is the inclusion and $\theta_{\cT Q}$ is the Liouville form on $\cT Q$, for more details see \cite{Bla_10, Gei_08}.

       In particular, consider $Q=G$, where $G$ is a finite-dimensional Lie group. A Riemannian metric on $\cT G$ can be defined using the Killing form $\kappa$ on $\mathfrak{g}$, which can be extended to Riemannian metric on $G$ via left multiplication in $G$. Once having the Riemannian metric on $G$, one can further induce it on $\cT G$ by the isomorphism between $\T G$ and $\cT G$. Recall that the left multiplication $L\colon(g,h)\in G\times G\mapsto L_g(h)=gh\in G$ induces the trivialisation of $\T^*G$ in the following manner \(\lambda\colon\alpha_g\in \T^*G\mapsto (g,\cT_eL_g(\alpha_g))\in G\times\mathfrak{g}^*\). Therefore, the lift of the Lie group action $L$ to $G\times \mathfrak{g}^*$ is given by 
       \[
       \Psi\colon (h;g,\vartheta)\in G\times (G\times\mathfrak{g}^*)\mapsto (hg,\vartheta)\in G\times \mathfrak{g}^*\,.
       \]

       Then, $\phi\colon(s,g,\vartheta)\in\R^+\times G\times(\mathfrak{g}^*-0_{\mathfrak{g}^*})\mapsto (g,s\vartheta)\in G\times(\mathfrak{g}^*-0_{\mathfrak{g}^*})$ and $(\mathbb{S}(\T^*G)\simeq G\times \mathbb{S}\mathfrak{g}^*,\eta=i^*\theta_{\cT G})$ is a co-orientable contact manifold. Note that $\Psi_g$ is a fibrewise linear and hence $\phi_s\circ\Psi_g=\Psi_g\circ\phi_s$ for any $s\in\R^+$ and $g\in G$. Therefore, $\Psi$ induces the Lie group action $\Phi\colon G\times (G\times\mathbb{S}\mathfrak{g}^*)\rightarrow G\times \mathbb{S}\mathfrak{g}^*$. Additionally, $\Phi_g^*\eta=\eta$ since $\Psi_g^*\theta_{\cT G}=\theta_{\cT G}$ for every $g\in G$. 
       
       Then, the contact momentum map $J^\Phi_\eta\colon(g,[\vartheta])\in G\times\mathbb{S}\mathfrak{g}^*\mapsto  \Ad^*_{g^{-1}}[\vartheta]\in\mathfrak{g}^*$ induces the map $\widetilde{J}^\Phi_\eta\colon(g,[\vartheta])\in G\times \mathbb{S}\mathfrak{g}^*\mapsto [\Ad^*_{g^{-1}}\vartheta]\in \mathbb{S}\mathfrak{g}^*$. Consequently, for some $\mu\in(\mathfrak{g}^*-0_{\mathfrak{g}^*})$, one has \(\widetilde{J}^{\Phi-1}_\eta([\mu])=J^{\Phi-1}_\eta(\R^+\mu)\)\footnote{Note that within the work, it is considered \(\R^\times\) instead of \(\R^+\). However, in the co-orientable case, this distinction is irrelevant, for details see \cite{GG_23}}, where
       \[
       \widetilde{J}^{\Phi-1}_\eta([\mu])=\{(g,[\vartheta])\in G\times \mathbb{S}\mathfrak{g}^*\mid [\Ad_{g^{-1}}^*\vartheta]=[\mu]\}\,.
       \]
       By Example \ref{Ex::kproduct}, conditions \eqref{Eq::ThEq1} and \eqref{Eq::ThEq2} are satisfied automatically when $k=1$. Consequently, applying Theorem \ref{Th::kconRed}, one gets that $(\widetilde{J}^{\Phi-1}_\eta([\mu])/K_{[\mu]},\eta_{[\mu]})$ is a contact manifold, where $\eta_{[\mu]}$ satisfies
        \[
        i_{[\mu]}^*\eta=\pi_{[\mu]}^*\eta_{[\mu]}\,,
        \]
        with $i_{[\mu]}\colon\widetilde{J}^{\Phi-1}_{\eta}([\mu])\hookrightarrow G\times \mathbb{S}\mathfrak{g}^*$ being the natural immersion and $\pi_{
        [\mu]}\colon\widetilde{J}^{\Phi-1}_{\eta}([\mu])\!\rightarrow\! \widetilde{J}^{\Phi-1}_{\eta}([\mu])/K_{[\mu]}$ being the canonical projection.

        It is worth noting that this example serves as a particular case of the contact MMW reduction for a spherical cotangent bundle. This topic was studied in the literature \cite{DOR_03, DRM_07}. However, in \cite{DOR_03, DRM_07}, the authors applied Willett's reduction. That approach has limitations due to the technical assumption $\ker\mu+\mathfrak{g}_\mu=\mathfrak{g}$ \cite{Wil_02}, the next section compares other reductions with ours and discusses their limitations. Thus, Theorem \ref{Th::kconRed}, for $k=1$, provides a more general approach to the MMW reduction of co-orientable contact manifolds, in particular spherical cotangent bundles.
     \end{example}

\section{Comparison with previous contact reductions}\label{Sec::CompPrevious}

\begin{figure}[htp!]
    \centering
    \includegraphics[width=1\linewidth]{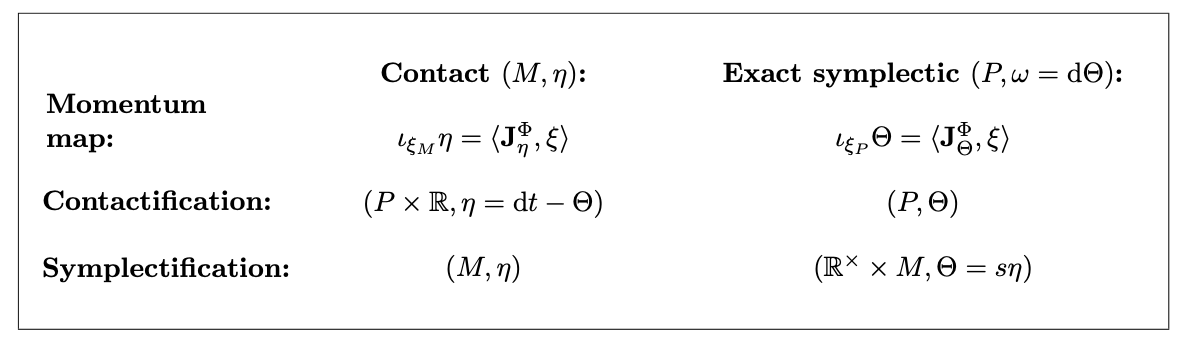}
    \caption{The table above summarises the notation used in the subsequent section.
}
    \label{fig:enter-label}
\end{figure}

In this section, we examine the relations between several previous contact reduction theories and our one-contact reduction \cite{Alb_89, GG_23, Wil_02}. The contact MMW reduction has been of significant interest for many years \cite{Alb_89, GG_23, Wil_02}. First, we recall the correspondence between line symplectic principal bundles and contact manifolds \cite{GG_23}. Then, after commenting on Albert's contact reduction \cite{Alb_89}, we analyse Willett's one (see \cite{Wil_02} for a discussion of Willett's and Albert's reductions). We here specifically focus on the contact reduction approaches developed by Willett in \cite{Wil_02} and K. Grabowska and J. Grabowski in \cite{GG_23}. In particular, we correct the reduction subgroup in the main contact reduction theorem in \cite{GG_23} for a certain subclass of reductions.

Let us recall some fundamental notions from contact geometry that are appropriate to understand the relation between contact and symplectic manifolds, as well as the reduction itself. A {\it contact manifold} is a pair $(M,\mathcal{C})$, where $M$ is a $(2n+1)$-dimensional manifold, $n\in\mathbb{N}$, and $\mathcal{C}$ is the so-called {\it contact distribution}, i.e. a {\it maximally non-integrable distribution} with corank one on $M$. In other words, a contact distribution is a distribution $\mathcal{C}$ on $M$ given around any point $x\in M$ by $\mathcal{C}|_U=\ker\eta$ for some $\eta\in \Omega^1(U)$ and an open neighbourhood $U\ni x$ so that $\eta\wedge (\d\eta)^n$ is a volume form on $U$. Then, $\eta$ is called a {\it (local) contact form}\footnote{The case $n=0$ is not considered to avoid calling maximally non-integrable a distribution that is integrable}. A contact manifold $(M,\mathcal{C})$ is {\it co-oriented} if it admits an associated contact form $\eta\in\Omega^1(M)$ defined on the whole $M$. A co-oriented contact manifold is denoted as $(M,\eta)$. A diffeomorphism on $M$ that preserves $\mathcal{C}$ is called a {\it contactomorphism}. 

Let $\phi\colon\R^\times\times P\rightarrow P$ be a principal Lie group action on $P$ of the multiplicative group $\mathbb{R}^\times$. A symplectic manifold $(P,\omega)$ is a {\it one-homogeneous symplectic manifold} if \(\phi^*_s\omega=s\omega\) for every $s\in\R^\times$.
Equivalently, $\Lie_{\nabla}\omega=\omega$, where $\nabla(p)=\frac{\d}{\d s}|_{s=0}\phi_s(p)$ for any $p\in P$. Note that one-homogeneous symplectic manifold is an exact symplectic manifold $\omega=\Lie_{\nabla}\omega=\d\iota_{\nabla}\omega=\d \Theta$.
A {\it  symplectic $\mathbb{R}^\times$-principal bundle} is a triple $(P,\phi,\omega)$, where $P$ is an $\mathbb{R}^\times$-principle bundle $\tau:P\rightarrow M$ relative to the Lie group action $\phi\colon\mathbb{R}^\times\times P\rightarrow P$ and where $\omega\in\Omega^2(P)$ is a one-homogeneous symplectic form relative to $\phi$. Within this section, pairs $(P,\omega)$ and $(M,\eta)$ denote symplectic and contact manifolds, respectively. The following theorem (see \cite[Theorem 3.8]{GG_22}) shows the relation between contact distributions $\mathcal{C}$ and symplectic $\mathbb{R}^\times$-principal bundles. To prevent any confusion between our conventions and those used in \cite{GG_23}, we have specifically excluded the scenario where $\dim M=1$ from Theorem 3.8 as mentioned in \cite{GG_23}.

\begin{theorem} 
\label{Th::ConSymRelation}
    Let $M$ be a manifold of dimension larger than one. There is a one-to-one correspondence between contact distributions $\mathcal{C}$ on $M$ and symplectic $\mathbb{R}^\times$-principal bundles over $M$. In this correspondence, the symplectic $\mathbb{R}^\times$-principal bundle associated with $\mathcal{C}$ is $(\mathcal{C}^\circ)^\times\subset \cT M$, where $\mathcal{C}^\circ$ denotes the annihilator of $\mathcal{C}$.
\end{theorem}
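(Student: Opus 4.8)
The plan is to exhibit two mutually inverse constructions between contact distributions on $M$ and symplectic $\R^\times$-principal bundles over $M$, the unifying observation being that under the natural identification the symplectic form always arises by pulling back the canonical symplectic form $\omega_M=\d\theta_M$ of $\cT M$, where $\theta_M$ is the tautological one-form. Throughout I write $\dim M=2n+1$ with $n\geq 1$, which is exactly the range $\dim M>1$ in the statement.

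For the forward direction, given a contact distribution $\C$ on $M$ its annihilator $\C^\circ\subset\cT M$ is a rank-one subbundle on which $\R^\times$ acts fibrewise by scalar multiplication, so that $\tau\colon(\C^\circ)^\times\to M$, the restriction of the cotangent projection with the zero section removed, is an $\R^\times$-principal bundle (a local nonvanishing section trivialises it). I would equip it with $\omega:=\jmath^*\omega_M$, where $\jmath\colon(\C^\circ)^\times\hookrightarrow\cT M$ is the inclusion; closedness is immediate and the homogeneity $\phi_s^*\omega=s\omega$ follows from the defining property $\phi_s^*\theta_M=s\theta_M$ of the tautological form. The one computation that matters is nondegeneracy. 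Choosing a local nonvanishing section $\eta$ of $\C^\circ$, i.e. a local contact form with $\C=\ker\eta$, trivialises $(\C^\circ)^\times\cong U\times\R^\times$ via $(x,s)\mapsto s\eta_x$, and the tautological property gives $\jmath^*\theta_M=s\,\eta$, hence $\omega=\d s\wedge\eta+s\,\d\eta$. Since $(\d s\wedge\eta)^2=0$ and $(\d\eta)^{n+1}=0$ for dimensional reasons on the $(2n+1)$-dimensional base, only a single term survives in the binomial expansion, yielding
\[
\omega^{\,n+1}=(n+1)\,s^n\,\d s\wedge\eta\wedge(\d\eta)^n,
\]
which is a volume form precisely because $\eta\wedge(\d\eta)^n\neq0$ by the contact condition. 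This simultaneously establishes that $\omega$ is symplectic and records that symplecticity is \emph{equivalent} to maximal non-integrability.

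For the reverse direction, given a symplectic $\R^\times$-principal bundle $(P,\phi,\omega)$ with Euler vector field $\nabla$, one-homogeneity gives $\omega=\Lie_\nabla\omega=\d\inn{\nabla}\omega$, so $\Theta:=\inn{\nabla}\omega$ is a primitive. Because $\Theta(\nabla)=\omega(\nabla,\nabla)=0$ and $\nabla$ spans the one-dimensional vertical bundle $\ker\T\tau$, each covector $\Theta_p$ descends through $\T_p\tau$ to a covector $\jmath(p)\in\cT_{\tau(p)}M$, defining a map $\jmath\colon P\to\cT M$ covering $\tau$. I would then verify: (i) $\jmath$ is $\R^\times$-equivariant, since $\nabla$ is $\phi$-invariant and $\phi_s^*\omega=s\omega$ give $\phi_s^*\Theta=s\Theta$; and (ii) $\jmath(p)\neq0$ for every $p$, for otherwise $\Theta_p=0$, i.e. $\nabla(p)\in\ker\omega_p=0$, contradicting that $\nabla$ is nowhere zero for a free principal action. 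Hence $\jmath$ is an equivariant embedding onto a rank-one subbundle with zero section deleted; letting $L\subset\cT M$ be the line subbundle spanned fibrewise by the $\R^\times$-orbit $\jmath(P)$ and setting $\C:=L^\circ$ gives a corank-one distribution with $L=\C^\circ$ and $\jmath(P)=(\C^\circ)^\times$. The tautological property yields $\jmath^*\theta_M=\Theta$, so $\jmath^*\omega_M=\omega$; feeding this into the nondegeneracy computation of the forward direction in reverse forces $\eta\wedge(\d\eta)^n\neq0$ for any local spanning section $\eta$ of $L$, whence $\C$ is a contact distribution.

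It remains to confirm that the two assignments are mutually inverse and to identify the genuine obstacle. On $(\C^\circ)^\times$ the Euler field $\nabla$ is tangent and satisfies $\inn{\nabla}\omega_M=\theta_M$, so $\inn{\nabla}\jmath^*\omega_M=\jmath^*\theta_M$ reconstructs exactly the line subbundle $\C^\circ$ and hence $\C$; conversely $\jmath^*\omega_M=\omega$ identifies $(P,\phi,\omega)$ with the bundle built from $\C$, so the two constructions invert each other and the correspondence is bijective. The main obstacle I anticipate is not any single estimate but the bookkeeping in the reverse passage: checking that $\Theta$ descends fibrewise to a \emph{nowhere-vanishing}, smooth section of $\cT M$ and that its span is a smooth subbundle of constant rank one, and keeping careful track of the fact that the unique surviving term of $\omega^{\,n+1}$ is literally the contact volume form, which is the mechanism tying nondegeneracy of $\omega$ to maximal non-integrability of $\C$.
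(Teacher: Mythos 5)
You have supplied a proof for a statement that the paper itself does not prove: Theorem \ref{Th::ConSymRelation} is imported verbatim from the literature (it is \cite[Theorem 3.8]{GG_22}, as the paper indicates just above the statement), so there is no in-paper argument to compare against. That said, your argument is correct and is essentially the standard symplectisation/contactification construction underlying the cited result: the forward direction via $\omega=\jmath^*\d\theta_M$ and the computation $\omega^{n+1}=(n+1)s^n\,\d s\wedge\eta\wedge(\d\eta)^n$ (the term $(\d\eta)^{n+1}$ dying because it is pulled back from a $(2n+1)$-dimensional base) correctly ties nondegeneracy of $\omega$ to the contact condition; the reverse direction via $\Theta=\iota_\nabla\omega$, the descent of $\Theta_p$ through $\T_p\tau$ (legitimate because $\ker\T_p\tau=\langle\nabla(p)\rangle$ and $\Theta_p(\nabla(p))=0$), and the nonvanishing of $\jmath(p)$ from $\ker\omega_p=0$ and freeness of the action, are all sound. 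Two small points of hygiene: in the local trivialisation one should write $\jmath^*\theta_M=s\,\tau^*\eta$ (the contact form pulled back to the total space) rather than $s\,\eta$; and in the reverse direction the smoothness and constant rank of the line subbundle $L=\langle\jmath(P)\rangle$ should be recorded explicitly by pushing a local section of $P\to M$ through $\jmath$ — you flag this as bookkeeping, and it is, but it is the step that makes $\C=L^\circ$ a genuine (regular, smooth) corank-one distribution. Your restriction to $\dim M=2n+1$ with $n\geq1$ correctly accounts for the paper's exclusion of one-dimensional contact manifolds.
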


For simplicity, a symplectic $\R^\times$-principal bundle over $M$ from Theorem \ref{Th::ConSymRelation}, is called a {\it symplectic cover} of $(M,\mathcal{C})$ and is denoted by $(P,\Theta)$, where $\Theta\in \Omega^1(P)$. Recall that, according to our convention, a symplectic $\mathbb{R}^\times$-principal bundle $P$ over a one-dimensional $M$ does not give rise to a contact distribution on $M$.

\subsection{Previous contact reductions}

The first contact reduction was accomplished, only for co-orientable contact manifolds, by C. Albert in \cite{Alb_89}. His reduced manifold depends on the choice of a contact form \cite[p. 4259]{Wil_02}, but recall that the contact distribution $\mathcal{C}=\ker\eta$ remains unchanged when $\eta$ is multiplied by a non-vanishing function. The problem of the dependence on $\eta$ was solved by C. Willett in \cite{Wil_02}. But Willett's reduction depends on assuming that $\ker\mu+\mathfrak{g}_{\mu}=\mathfrak{g}$, where $\mathfrak{g}_\mu$ is the Lie algebra of $G_\mu=\{g\in G\,\mid\,\Ad^*_{g^{-1}}\mu=\mu\}$. However, there are many cases when, in attempting to accomplish Willett's contact reduction, this condition is not fulfilled (see \cite{Wil_02} and our example in forthcoming Section  \ref{Sec:Correction}). Recently, a MMW reduction for general contact manifolds was devised in \cite{GG_23}. The new MMW reduction is based on the one-to-one correspondence between contact manifolds and one-homogeneous symplectic line bundles, while the contact quotient remains essentially as in \cite{Wil_02}. Indeed, contact reductions in \cite{GG_23, Wil_02} rely on the Lie subgroup $K_\mu\subset G$ with a Lie algebra $\mathfrak{k}_{\mu}:=\ker\mu\cap\mathfrak{g}_{\mu}$
and the reduced contact manifold is defined on $ J^{\Phi-1}_\eta(\mathbb{R}^\times \mu)/K_\mu$. Notwithstanding, the approach in \cite{GG_23} is claimed to work when $\ker\mu+\mathfrak{g}_\mu\neq \mathfrak{g}$ and differs from the approach in \cite{ZMZC_06}. Nevertheless, we explain in Section \ref{Sec:Correction} that the reduction theorem in \cite{GG_23} should have the reduction group changed, being this fact clear when  $\ker\mu+\mathfrak{g}_\mu\neq \mathfrak{g}$, and due to a problem in the determination of orthogonal relative to differential two-forms as it also happens in other works in the literature (see \cite{CLRZ_23, LRVZ_23, MRSV_15} for some reviews on these problems).

Willett's results originally concern reduced {\it contact orbifolds}, where an {\it orbifold} is a generalisation of manifolds obtained by dividing by discrete groups \cite{ALR_07, Thu_97}. However, we will restrict ourselves to manifolds by assuming that $J^{\Phi-1}_\eta(\R^\times\mu)$ is a submanifold and the Lie group action $\Phi\colon K_\mu \times M \rightarrow M$ acts in a quotientable manner on ${J}^{
\Phi-1}_{\eta}(\mathbb{R}^\times \mu)$, which ensures that ${J}^{\Phi-1}_{\eta}(\R^\times\mu)/K_{\mu}$ is a manifold. In particular, Willett gets that $ {J}^{\Phi-1}_\eta(\R^\times\mu)$ is a submanifold of $M$ by assuming that ${J}^\Phi_\eta$ is transverse to $\R^\times\mu$. However, it suffices for our purposes to assume that $\mu\in\mathfrak{g}^*$ is a weak regular value of $J^\Phi_\eta$. The proof of the following theorem can be found in \cite[Theorem 1]{Wil_02}.

\begin{theorem}
    Let $(M,\eta,J^\Phi_\eta)$ be a cooriented contact Hamiltonian system,  let $\mu\in\mathfrak{g}^*$ be a weak regular value of $J^\Phi_\eta$, and assume that $\Phi$ acts in a quotientable manner on $J^{\Phi-1}_\eta(\R^\times\mu)$ with $\ker\mu+\mathfrak{g}_\mu=\mathfrak{g}$. Then,  $(J^{\Phi-1}_\eta(\R^\times\mu)/K_\mu,\eta_\mu)$ such that $\pi^*_\mu \eta_\mu=i_\mu^* \eta$, is a co-orientable contact manifold, where $i_\mu\colon J^{\Phi-1}_\eta(\R^\times\mu)\hookrightarrow M$ is the natural immersion and $\pi_\mu\colon J^{\Phi-1}_\eta(\R^\times\mu)\rightarrow J^{\Phi-1}_\eta(\R^\times\mu)/K_{\mu}$ is the canonical projection by the Lie group $K_\mu$ with Lie algebra $\mathfrak{k}_\mu=\ker\mu\cap\mathfrak{g}_\mu$.
\end{theorem}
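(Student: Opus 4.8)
The plan is to deduce the statement from our own $k$-contact reduction theorem (Theorem~\ref{Th::kconRed}) specialised to $k=1$, after identifying Willett's reduction group $K_\mu$ with the group $K_{[\mu]}$ that governs Theorem~\ref{Th::kconRed}. Recall that $K_{[\mu]}$ has Lie algebra $\mathfrak{k}_{[\mu]}=\ker\mu\cap\mathfrak{g}_{[\mu]}$ with $\mathfrak{g}_{[\mu]}=\{\xi\in\mathfrak{g}\mid\ad^*_\xi\mu\wedge\mu=0\}$ (Proposition~\ref{Prop::Kgroup}), whereas Willett's $K_\mu$ has Lie algebra $\mathfrak{k}_\mu=\ker\mu\cap\mathfrak{g}_\mu$ with $\mathfrak{g}_\mu=\{\xi\in\mathfrak{g}\mid\ad^*_\xi\mu=0\}$. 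Since $\mathfrak{g}_\mu\subseteq\mathfrak{g}_{[\mu]}$, the inclusion $\mathfrak{k}_\mu\subseteq\mathfrak{k}_{[\mu]}$ is immediate, so the whole comparison rests on the reverse inclusion, which is precisely where Willett's hypothesis $\ker\mu+\mathfrak{g}_\mu=\mathfrak{g}$ must intervene.

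First I would prove $\mathfrak{k}_{[\mu]}\subseteq\mathfrak{k}_\mu$ (the interesting case being $\mu\neq0$, as $\mu=0$ makes both algebras equal to $\mathfrak{g}$). Let $\xi\in\mathfrak{k}_{[\mu]}$, so $\xi\in\ker\mu$ and $\ad^*_\xi\mu=\lambda\mu$ for some $\lambda\in\R$; the goal is $\lambda=0$. Since $\mu\neq0$, pick $\zeta\in\mathfrak{g}$ with $\langle\mu,\zeta\rangle\neq0$ and decompose $\zeta=\zeta_0+\zeta_1$ with $\zeta_0\in\ker\mu$, $\zeta_1\in\mathfrak{g}_\mu$ using $\ker\mu+\mathfrak{g}_\mu=\mathfrak{g}$; then $\langle\mu,\zeta_1\rangle=\langle\mu,\zeta\rangle\neq0$. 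Using the convention $\langle\ad^*_\zeta\mu,\chi\rangle=\langle\mu,[\zeta,\chi]\rangle$ together with $\ad^*_{\zeta_1}\mu=0$,
\[
0=\langle\ad^*_{\zeta_1}\mu,\xi\rangle=\langle\mu,[\zeta_1,\xi]\rangle=-\langle\ad^*_\xi\mu,\zeta_1\rangle=-\lambda\langle\mu,\zeta_1\rangle,
\]
so $\lambda=0$, i.e. $\xi\in\mathfrak{g}_\mu$ and hence $\xi\in\mathfrak{k}_\mu$. Therefore $\mathfrak{k}_\mu=\mathfrak{k}_{[\mu]}$, and the connected integral subgroups coincide, $K_\mu=K_{[\mu]}$.

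Next I would check that hypotheses \eqref{Eq::ThEq1} and \eqref{Eq::ThEq2} of Theorem~\ref{Th::kconRed} are automatically satisfied when $k=1$. The co-orientable contact nondegeneracy (Definition~\ref{dfn:k-contact-manifold}(3)) gives $\ker\eta_x\cap\ker\d\eta_x=0$ at every $x$, so this summand disappears from both equations. Moreover $\mathfrak{k}_{[\mu]}\subseteq\mathfrak{g}_{[\mu]}$ together with Lemma~\ref{Lemm::contactLemma}(1) gives $\T_x(K_{[\mu]}x)\subseteq\T_x(G_{[\mu]}x)\subseteq\T_x\mathbf{J}^{\Phi-1}_\eta(\R^\times\mu)$; consequently \eqref{Eq::ThEq1} collapses to the trivial identity $\T_x\mathbf{J}^{\Phi-1}_\eta(\R^\times\mu)=\T_x\mathbf{J}^{\Phi-1}_\eta(\R^\times\mu)$ and \eqref{Eq::ThEq2} to $\T_x(K_{[\mu]}x)=\T_x(K_{[\mu]}x)$, both trivially true. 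The weak-regularity of $\mu$ and the quotientability carried over from the hypotheses then supply the remaining assumptions of Theorem~\ref{Th::kconRed}.

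Finally, applying Theorem~\ref{Th::kconRed} at $k=1$ yields a contact manifold $(\mathbf{J}^{\Phi-1}_\eta(\R^\times\mu)/K_{[\mu]},\eta_{[\mu]})$ with $\pi^*_{[\mu]}\eta_{[\mu]}=i^*_{[\mu]}\eta$; substituting $K_\mu=K_{[\mu]}$ and writing $\eta_\mu:=\eta_{[\mu]}$ gives exactly the claimed statement, co-orientability being automatic since $\eta_\mu$ is a globally defined $\R$-valued one-form. The main obstacle is the Lie-algebraic step $\mathfrak{k}_{[\mu]}\subseteq\mathfrak{k}_\mu$: this is the single place where Willett's condition $\ker\mu+\mathfrak{g}_\mu=\mathfrak{g}$ is genuinely used, and it is precisely this condition whose failure (as in the $\T^*\SL_2\times\R$ example) makes $K_\mu$ and $K_{[\mu]}$ differ, so that Willett's scheme breaks down while the reduction by $K_{[\mu]}$ persists.
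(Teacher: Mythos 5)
Your proposal is correct, but it takes a genuinely different route from the paper: the paper does not prove this statement at all, it simply cites Willett's original argument (``The proof of the following theorem can be found in [Wil\_02, Theorem 1]''), whereas you derive it as a corollary of Theorem~\ref{Th::kconRed} at $k=1$. The two key steps both check out. The Lie-algebraic step is sound: with the convention $\langle\ad^*_\xi\mu,\nu\rangle=\langle\mu,[\xi,\nu]\rangle$ used in the proof of Proposition~\ref{Prop::Kgroup}, decomposing $\zeta=\zeta_0+\zeta_1$ via $\ker\mu+\mathfrak{g}_\mu=\mathfrak{g}$ and pairing $\ad^*_{\zeta_1}\mu=0$ against $\xi$ does force $\lambda=0$, so $\mathfrak{k}_\mu=\mathfrak{k}_{[\mu]}$ and the connected integral subgroups agree. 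The verification that \eqref{Eq::ThEq1} and \eqref{Eq::ThEq2} are vacuous for $k=1$ is also right, and is in fact consistent with the paper's own remark in the spherical-cotangent-bundle example that these conditions ``are satisfied automatically when $k=1$.'' What your approach buys is worth stating explicitly: it shows that Willett's theorem is subsumed by the paper's $K_{[\bm\mu]}$-reduction, and it isolates the hypothesis $\ker\mu+\mathfrak{g}_\mu=\mathfrak{g}$ as doing exactly one job --- collapsing $K_{[\mu]}$ down to $K_\mu$ --- which dovetails with the paper's later diagnosis in Section~\ref{Sec:Correction} that when this condition fails the two groups differ and reduction by $K_\mu$ breaks while reduction by $K_{[\mu]}$ survives. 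The only caveat, which the paper itself raises about Willett's statement and which is therefore not a defect of your argument specifically, is that Theorem~\ref{Th::kconRed} yields a contact manifold in the sense of Definition~\ref{dfn:k-contact-manifold} (so $\ker\eta_\mu\neq 0$), and the degenerate case of a one-dimensional reduced space must be excluded for the conclusion to hold as stated.
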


Let us explain in Theorem \ref{Th::GrabConRed} the MMW contact reduction in \cite[Theorem 1.1]{GG_23}. Note that the definition of transversality of a submanifold $N$ to a contact distribution $\mathcal{C}$ used in \cite{GG_23} amounts to $\T_xN\not\subset \mathcal{C}_x$ for every $x\in N$, but other definitions of transversality in that work, e.g. concerning transversality to the fibres of a fibration, may be non-standard.

\begin{theorem}
\label{Th::GrabConRed}
    Let $(M,\mathcal{C})$ be a contact manifold with a symplectic cover $(P,\Theta)$ and $\tau\colon P\rightarrow M$, let $\Phi\colon G\times M\rightarrow M$ be a contact Lie group, and let $J^{\widetilde{\Phi}}_\Theta\colon P\rightarrow \mathfrak{g}^*$ be an exact symplectic momentum map associated with the lifted Lie group action $\widetilde{\Phi}\colon G\times P\rightarrow P$. Let $\mu\in\mathfrak{g}^*$ be a weakly regular value of $J^{\widetilde{\Phi}}_\Theta$ so that the simply connected Lie subgroup $K_{\mu}$ of $G$, corresponding to the Lie subalgebra
    \[
    \mathfrak{k}_{\mu}=\{\xi\in\ker\mu\mid\ad^*_\xi\mu=0\}
    \]
    of $\mathfrak{g}$, acts in a quotientable manner on the submanifold $\tau(J^{\widetilde{\Phi}-1}_\Theta(\R^\times\mu))$ of $M$. Additionally, suppose that $\T \left(\tau(J^{\widetilde{\Phi}-1}_\Theta(\R^\times\mu))\right)$ is transversal to $\mathcal{C}$. Then, we have a canonical submersion 
    \[
    \pi\colon\tau(J^{\widetilde{\Phi}-1}_\Theta(\R^\times\mu))\longrightarrow \tau(J_\Theta^{\widetilde{\Phi}-1}(\R^\times\mu))/K_{\mu}\,,
    \]
    where $\left(\tau(J^{\widetilde{\Phi}-1}_\Theta(\R^\times\mu))/K_{\mu},\mathcal{C}_\mu\right)$ is canonically a contact manifold equipped with the contact distribution $\mathcal{C}_\mu:=\T \pi\left(\mathcal{C}\cap \T\left(\tau(J_\Theta^{\widetilde{\Phi}-1}(\R^\times\mu))/K_\mu\right)\right)$.
\end{theorem}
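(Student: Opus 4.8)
The plan is to lift the whole problem to the symplectic cover $(P,\Theta)$ provided by Theorem~\ref{Th::ConSymRelation}, carry out an exact symplectic reduction there, and then read the contact--symplectic correspondence backwards to descend the reduced structure to $M$. Throughout, write $S:=J^{\widetilde{\Phi}-1}_\Theta(\R^\times\mu)\subset P$ and $N:=\tau(S)\subset M$. The first task is to record the homogeneity that forces the orbit $\R^\times\mu$ to appear in place of a single value $\mu$: since $\Theta$ is one-homogeneous and $\widetilde{\Phi}$ is the lift of a contact action, the principal $\R^\times$-action $\phi$ commutes with $\widetilde{\Phi}$ and the exact momentum map is one-homogeneous, $\phi_s^*J^{\widetilde{\Phi}}_\Theta=s\,J^{\widetilde{\Phi}}_\Theta$ for all $s\in\R^\times$. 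Hence $J^{\widetilde{\Phi}-1}_\Theta(\mu)$ is not $\phi$-invariant, whereas $S$ is, so $\tau|_S\colon S\to N$ is itself an $\R^\times$-principal bundle and $S$ is the natural invariant submanifold to reduce.

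Next I would apply the exact symplectic reduction of Theorem~\ref{Th::OnehomoksymRed} in the case $k=1$ --- whose geometric engine is Lemma~\ref{Lemm::SymplecticLemma} --- to the data $(P,\Theta)$ and $J^{\widetilde{\Phi}}_\Theta$, producing a reduced exact symplectic manifold $(S/K_\mu,\omega_S)$ with $\bar\pi^*\omega_S=\jmath^*\d\Theta$, where $\jmath\colon S\hookrightarrow P$ is the inclusion and $\bar\pi\colon S\to S/K_\mu$ the quotient projection. Because $\phi$ commutes with $\widetilde{\Phi}$ and with the reducing group, the dilation action descends to $S/K_\mu$; moreover $(S/K_\mu)/\R^\times$ is canonically $N/K_\mu$ and the reduced form $\omega_S$ stays one-homogeneous, being the descent of $\d\Theta$. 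Thus $S/K_\mu$ is once more a symplectic $\R^\times$-principal bundle, now over $N/K_\mu=\tau(J^{\widetilde{\Phi}-1}_\Theta(\R^\times\mu))/K_\mu$, and a second application of Theorem~\ref{Th::ConSymRelation} equips this base with a canonical contact distribution. Unwinding the annihilator description $(\mathcal{C}^\circ)^\times$ of the cover identifies this distribution with $\mathcal{C}_\mu=\T\pi(\mathcal{C}\cap\T N)$, the transversality hypothesis ensuring that $\mathcal{C}\cap\T N$ has the right corank so that $\mathcal{C}_\mu$ is again maximally non-integrable.

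The crux, and the step I expect to be the genuine obstacle, is the identity $\T_p(K_\mu p)=\ker(\jmath^*\d\Theta)_p$ for every $p\in S$, which is exactly what makes $S/K_\mu$ symplectic. By Lemma~\ref{Lemm::SymplecticLemma}, one has $\T_p S=(\T_p(Gp)\cap\ker\Theta_p)^{\perp_\omega}$ and the orbit directions tangent to $S$ are cut out by $\mathfrak{g}_{[\mu]}=\{\xi\mid\ad^*_\xi\mu\wedge\mu=0\}$ rather than by $\mathfrak{g}_\mu=\{\xi\mid\ad^*_\xi\mu=0\}$. Consequently the reducing algebra $\mathfrak{k}_\mu=\ker\mu\cap\mathfrak{g}_\mu$ declared in the statement produces, in general, orbits strictly smaller than $\ker(\jmath^*\d\Theta)_p$, so the quotient need not be symplectic and the argument stalls. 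The two algebras agree --- and the proof closes as above --- exactly when $\R^\times\mu$ meets the coadjoint orbit of $\mu$ only at $\mu$; when the coadjoint orbit $\mathcal{O}_\mu$ is itself an $\R^\times$-principal bundle one must replace $K_\mu$ by the group $K_{[\mu]}$ with algebra $\mathfrak{k}_{[\mu]}=\ker\mu\cap\mathfrak{g}_{[\mu]}$, which is the correction developed in Section~\ref{Sec:Correction}.
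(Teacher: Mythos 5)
Your proposal correctly diagnoses the situation: this theorem is quoted verbatim from the literature, and the paper reproduces it in Section \ref{Sec:Correction} precisely in order to refute it rather than to prove it, so no proof of the statement as written exists in the paper. The obstacle you isolate is exactly the one the paper identifies: by Lemma \ref{Lemm::SymplecticLemma}, the kernel of the pulled-back symplectic form on $J^{\widetilde{\Phi}-1}_\Theta(\R^\times\mu)$ is $\T_p(K_{[\mu]}p)$, generated by $\mathfrak{k}_{[\mu]}=\ker\mu\cap\mathfrak{g}_{[\mu]}$ with $\mathfrak{g}_{[\mu]}=\{\xi\mid \ad^*_\xi\mu\wedge\mu=0\}$, and this may strictly contain the orbit directions of $K_\mu$; quotienting by the smaller group therefore fails to kill the whole characteristic distribution, the quotient need not be symplectic, and the base need not be contact. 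The paper makes this concrete with the example $M=\cT\SL_2\times\R$ and $\mu=\mu^3$, where $\mathfrak{k}_{\mu^3}=\langle\xi_2\rangle$ while the kernel is $\langle\xi_{1P}(p),\xi_{2P}(p)\rangle=\T_p(K_{[\mu^3]}p)$, so the quotient by $K_{\mu^3}$ is four-dimensional and cannot be contact; your more abstract route through Lemma \ref{Lemm::SymplecticLemma} reaches the same verdict, and your remark that the two algebras coincide unless the coadjoint orbit carries an $\R^\times$-action matches the paper's own framing of when the original proof breaks down. Your positive strategy --- reduce the symplectic cover by the corrected group $K_{[\mu]}$ and descend through the correspondence of Theorem \ref{Th::ConSymRelation} --- is exactly the content of the corrected statement, Theorem \ref{Th::GrabConRedv2}. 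The only point you omit is a second, minor defect the paper also records: even with the corrected group one must additionally require the reduced base to have dimension larger than one, since a one-dimensional quotient carries a nowhere-vanishing one-form but not a contact structure; this is why Theorem \ref{Th::GrabConRedv2} adds that dimensional hypothesis.
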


\subsection{On the previous contact literature}\label{Sec:Correction}

Theorem 1.1 in \cite{GG_23} does not involve the condition $\ker\mu+\mathfrak{g}_\mu=\mathfrak{g}$. But as shown in \cite[Example 3.7]{Wil_02}, if  $\ker{\mu} + \mathfrak{g}_{\mu}\neq \mathfrak{g}$, the reduced manifold $J^{{\Phi}-1}_{\eta}(\mathbb{R}^\times\mu)/K_{\mu}$ may not be a contact manifold. As shown next, the problem in \cite[Theorem 1.1]{GG_23} is that the Lie group performing the reduction is not properly calculated and it should match expression (2) in our Lemma \ref{Lemm::SymplecticLemma}.  More in detail, the expression in \cite{GG_23} for the kernel of the reduction of a one-homogeneous symplectic form $\omega$ to a manifold $J^{\Phi-1}_\theta(\mathbb{R}^\times \mu)$ in the proof of \cite[Theorem 1.1]{GG_23} is such that the calculus of a symplectic orthogonal is not correct in all cases.
Let us analyse this in detail by means of a relevant example based on a Willett's one given in \cite{Wil_02}.

Let us analyse the contact manifold $(M:=\cT \SL_2\times \R,\eta=\d t-\theta)$, where $t$ is the canonical variable in $\mathbb{R}$ and $\theta$ is the pull-back of the Liouville form on $\cT\SL_2$ to $M$. 

Consider the canonical identification $\vartheta_g\in \cT \SL_2\mapsto (g,L^*_g\vartheta_g)\in \SL_2\times \mathfrak{sl}_2^*$, where $L_g\colon h\in \SL_2\mapsto gh\in \SL_2$. The Lie group action $R\colon (g,h)\in \SL_2\times \SL_2\mapsto hg^{-1}\in  \SL_2$ can be lifted to a Lie group action of $\SL_2$ on $\cT \SL_2\times\mathbb{R}\simeq \SL_2\times \mathfrak{sl}^*_2\times\mathbb{R}$ that amounts to 
    \[
    \Phi\colon\SL_2\times (\SL_2\times \mathfrak{sl}^*_2\times\mathbb{R})\ni (g;h,\vartheta,t)\longmapsto (hg^{-1},\Ad^*_{g^{-1}}\vartheta,t)\in \SL_2\times \mathfrak{sl}^*_2\times \mathbb{R}\,.
    \]

    Since $R$ is free, the fundamental vector fields associated with $\Phi$ span a distribution of rank three on  $\SL_2\times \mathfrak{sl}_2^*\times \mathbb{R}$. As $\Phi$ is a lift of $R$,  it leaves invariant both natural lifts to $M$ of the tautological and canonical symplectic forms on $\cT \SL_2$. Therefore, $\Phi$ is a contact Lie group action of $(M,\eta)$. The canonical isomorphisms $\mathfrak{sl}_2^{**}\simeq \mathfrak{sl}_2$,  $\mathfrak{sl}_2^*\simeq \T_\sigma \mathfrak{sl}_2^*$ for every $\sigma\in \mathfrak{sl}_2^*$, and the trivialisations  $\cT\SL_2\simeq \SL_2\times \mathfrak{sl}_2^*$ and $\T\SL_2\simeq \SL_2\times \mathfrak{sl}_2$ via left group multiplications, give
    \[
    \T_{(g,\sigma)}\cT \SL_2\simeq \T_g\SL_2\oplus\T_{\sigma}\mathfrak{sl}_2^*\simeq \mathfrak{sl}_2\oplus \mathfrak{sl}_2^*,\qquad     \cT_{(g,\sigma)}\cT \SL_2\simeq \cT_g\SL_2\oplus \cT_{\sigma}\mathfrak{sl}_2^*\simeq \mathfrak{sl}^*_2\oplus \mathfrak{sl}_2\,.
    \]
Then, $\theta_{(g,\vartheta)}(v,\sigma)=\langle \vartheta,v\rangle$, for $(g,\vartheta)\in \SL_2\times \mathfrak{sl}^*_2\simeq \cT \SL_2$ and every $(v,\sigma)\in \mathfrak{sl}_2\times \mathfrak{sl}_2^*\simeq \T_{(g,\sigma)}\cT\SL_2$. Alternatively,  $\theta=\sum_{i=1}^3\lambda_i\tilde\eta^i_L$, where $\lambda_1,\lambda_2,\lambda_3$ are the coordinates of $\mathfrak{sl}_2^*$ lifted to $\cT \SL_2$ according to the diffeomorphism $\cT\SL_2\simeq \SL_2\times\mathfrak{sl}_2^*$, while $\eta^i_L$ are the pull-back to $\cT \SL_2$ of the left-invariant one-forms $\eta^i_L$ on $\SL_2$ whose values at ${\rm Id}\in \SL_2$ match $\lambda_1,\lambda_2,\lambda_3$, respectively. This gives rise to an ${\rm Ad}^*$-equivariant contact momentum map for $\Phi$ of the form
    \[
   {J}^\Phi_\eta\colon M\simeq\SL_2\times\mathfrak{sl}_2^*\times \R\ni(g,\vartheta,t)\longmapsto -\vartheta\in \mathfrak{sl}^*_2\,.
    \]
Note that
    \begin{equation}\label{Eq:BAsissl2}\mathfrak{sl}_2 = \left\langle \xi_1=\left(\begin{array}{cc}
        1 & 0 \\
        0 & -1 \end{array}\right),\quad\xi_2=\left(\begin{array}{cc}
        0 & 1 \\
        0 & 0  \end{array}\right),\quad\xi_3=\left(\begin{array}{cc}
        0 & 0 \\
        1 & 0   \end{array}\right)\right\rangle
    \end{equation}
    and 
    \begin{equation}\label{Eq:CommRelsl2}
    [\xi_1,\xi_2]=2\xi_2\,,\qquad [\xi_1,\xi_3]=-2\xi_3\,,\qquad [\xi_2,\xi_3]=\xi_1\,.
    \end{equation}
    Let $\mathfrak{sl}_2^* = \langle\mu^1,\mu^2,\mu^3\rangle$, where $\{\mu^1,\mu^2,\mu^3\}$ is the dual basis to $\{\xi_1,\xi_2,\xi_3\}$. Then,
    \[
   {J}^{\Phi-1}_\eta(\R^\times\mu^3)=\{(g,\vartheta,t)\in M\,\mid\,{J}^{\Phi}_\eta(g,\vartheta,t)=\lambda\, \mu^3,\lambda\in\R^\times\}\,\simeq\SL_2\times \R^\times \mu^3\times \mathbb{R}\]
is a five-dimensional manifold.  Since $\partial_t$ is tangent to $J^{
\Phi-1}_\eta(\mathbb{R}^\times \mu^3)$ but it does not takes values in $\mathcal{C}$ at any point, hence ${J}^{\Phi-1}_\eta(\R^\times\mu^3)$ is transverse to $\mathcal{C}_{(g,\vartheta,t)}$ for any $(g,\vartheta,t)\in J^{\Phi-1}_{\eta}(
    \R^\times\mu^3)$.   Furthermore,  
    \begin{align}
    {\rm ad}^*_{\xi_1}\mu^1&=0\,, &\qquad     {\rm ad}^*_{\xi_1}\mu^2&=2\mu^2\,, &\qquad {\rm ad}^*_{\xi_1}\mu^3&=-2\mu^3\,,\\
    {\rm ad}^*_{\xi_2}\mu^1&=\mu^3\,, &\qquad    
    {\rm ad}^*_{\xi_2}\mu^2&=-2\mu^1\,, &\qquad {\rm ad}^*_{\xi_2}\mu^3&=0\,,\\
    {\rm ad}^*_{\xi_3}\mu^1&=-\mu^2\,,&\qquad {\rm ad}^*_{\xi_3}\mu^2&=0\,,&\qquad {\rm ad}^*_{\xi_3}\mu^3&=2\mu^1\,.
    \end{align}
    Then, $\ker\mu^3=\langle \xi_1,\xi_2\rangle$ and $\mathfrak{g}_{\mu^3}=\langle \xi_2\rangle$ yield that $\mathfrak{k}_{\mu^3}=\mathfrak{g}_{\mu^3}\cap \ker \mu^3=\langle\xi_2 \rangle$. Recall that $K_{\mu^3}$ is a Lie group with Lie algebra isomorphic to $\mathfrak{k}_{\mu^3}$. 
    
    The restriction of $\Phi$ to the action of $K_{\mu^3}$ on $J^{{\Phi}-1}_\eta(\mathbb{R}^\times \mu^3)$ is free. To verify that $\Phi$ is proper, let us use the Bourbaki definition of properness via the {\it shear mapping} \cite[p. 319]{Lee_00}. Consider a compact subset $A\subset J^{{\Phi}-1}_\eta(\mathbb{R}^\times \mu^3)\times J^{{\Phi}-1}_\eta(\mathbb{R}^\times \mu^3)$, which amounts to being sequentially compact because it is metrizable (every smooth manifold is). Let us prove that $\widehat{\Phi}^{-1}(A)$  is sequentially compact relative to the induced shear map $\widehat{\Phi}\colon K_{\mu^3}\times J^{{\Phi}-1}_\eta(\mathbb{R}^\times \mu^3)\rightarrow J^{{\Phi}-1}_\eta(\mathbb{R}^\times \mu^3)\times J^{{\Phi}-1}_\eta(\mathbb{R}^\times \mu^3)$. Consider a sequence $(k_i,x_i)$ in $\widehat{\Phi}^{-1}(A)$.
 Then, the sequence $(x_i, \Phi(k_i,x_i))$, which lies in $A$, admits a subsequence of points  $(x_\alpha,\Phi(k_\alpha,x_\alpha))$ that is convergent in $A$. Therefore, $(x_\alpha)$ and $(\Phi(k_\alpha,x_\alpha))$  converge to some $x,y\in J^{{\Phi}-1}_\eta(\mathbb{R}^\times \mu^3)$, respectively. Moreover, $(x,y)\in A\subset J^{{\Phi}-1}_\eta(\mathbb{R}^\times \mu^3)\times J^{{\Phi}-1}_\eta(\mathbb{R}^\times \mu^3)$. Using again the diffeomorphism $M=\cT \SL_2\times \mathbb{R}\simeq \SL_2\times \mathfrak{sl}_2^*\times \mathbb{R}$, we can write $(x_\alpha)=(g_\alpha,\vartheta_\alpha, t_\alpha)$, $y=(g_y,\vartheta_y,t_y)$ and $x=(g_x,\vartheta_x,t_x)$ in a unique manner. Then, $(g_\alpha k_\alpha^{-1})$ and $(g_x k_\alpha^{-1})$ tend to $g_y$. Since $K_{\mu^3}$ is of the form
    \[
        K_{\mu^3}=\left\{k_\alpha=\begin{pmatrix}
            1 &\lambda_\alpha \\
            0 & 1
        \end{pmatrix}:\lambda_\alpha\in \mathbb{R}\right\},
    \]
and acts freely, it follows that $(k_\alpha)$ can only tend to an element $\delta=g^{-1}_xg_y$ in $K_{\mu^3}$. Thus, $(x,\Phi(\delta,x))\in A$ and  $(k_\alpha,x_\alpha)$ converges to a point in $\widehat{\Phi}^{-1}(A)$, which makes the restriction of $\Phi$ to $K_{\mu^3}\times J^{\Phi-1}_\eta(\mathbb{R}^\times \mu^3)$ to be proper.

To be in a situation described by Theorem \ref{Th::GrabConRed}, consider the trivial symplectic $\R^\times$-principal bundle $\tau\colon P\rightarrow M$ with $P=\mathbb{R}^\times\times M$. Then, $\omega\in\Omega^2(P)$ is given by $\omega=\d\Theta=\d(s\tau^*\eta)$, with $s\in \mathbb{R}^\times$. Hence, $\omega$ is an exact one-homogeneous symplectic form relative to
    \[
    \phi\colon\R^\times\times P\ni(\lambda;s,x)\longmapsto (\lambda s,x)\in P\,.
    \]
    Recall that since the Lie group action leaves the contact form invariant, 
    the lifted Lie group action $\widetilde{\Phi}\colon (g;s,x)\in \SL_2\times P\mapsto (s,\Phi(g,x))\in P$,  leaves the $s$ coordinate in $P$ invariant, is free,  proper, and exact symplectic relative to $\Theta=s\tau^*\eta$. Moreover, $\widetilde{\Phi}$ gives rise to an exact symplectic momentum map $J^{\widetilde{\Phi}}_\Theta\colon P\rightarrow\mathfrak{g}^*$, defined as in Definition \ref{Def::MomentumMap1homo}, for $k=1$, and given by
    \[
        {J}^{\widetilde{\Phi}}_\Theta\colon P\ni (s,g,\vartheta,t)\longmapsto -s\vartheta\in\mathfrak{sl}_2^*\,.
    \]
    Then,
    \[{J}^{\widetilde{\Phi}-1}_\Theta(\R^\times\mu^3)=\{(s,g,\vartheta,t)\in P\,\mid\,{J}^{\widetilde{\Phi}}_\Theta(s,g,\vartheta,t)=\kappa \mu^3,\kappa\in\R^\times\}\simeq\mathbb{R}^\times\times \SL_2\times\R^\times\mu^3\times\R\,,
    \]
    is a six-dimensional submanifold of $P$. Therefore, all the assumptions of Theorem \ref{Th::GrabConRed} are satisfied. However, $\tau(J^{\widetilde{\Phi}-1}_\Theta(\R^\times\mu^3))/K_{\mu^3}= {J}^{\Phi-1}_\eta(\R^\times\mu^3)/K_{\mu^3}$ can not be a contact manifold since it is four-dimensional. Respectively, $ {J}^{\widetilde{\Phi}-1}_\Theta(\R^\times\mu^3)/K_{\mu^3}$ can not be a symplectic manifold since it is a five-dimensional manifold. Thus, Theorem \ref{Th::GrabConRed} fails.
    
    One of the problems in the contact reduction in \cite{GG_23} is the expression $\chi(\omega_{[\mu]})=\widehat{\mathfrak{g}}_\mu^0 +\chi(\omega)$ \cite[p. 2831]{GG_23}. In our notation and since $\omega$ is a symplectic form on $P=\mathbb{R}^\times\times M$ in our work, the expression amounts to 
\begin{equation}\label{Eq:FalseClaim}
    \ker \jmath_{[\mu]}^*\omega=\T_p(K_{\mu} p)\,,\qquad \forall p\in  {J}^{\widetilde{\Phi}-1}_\Theta(\R^\times\mu)\,,
    \end{equation}
    where $\jmath_{[\mu]}: {J}^{\widetilde{\Phi}-1}_\Theta(\R^\times\mu)\hookrightarrow P$ is the natural embedding, $J^{\widetilde{\Phi}}_\Theta$ is the exact symplectic momentum map associated with the Lie group action $\widetilde{\Phi}\colon G\times P\rightarrow P$ induced by the initial contact Lie group action $\Phi:G\times M\rightarrow M$ on the contact manifold $(M,\mathcal{C})$, and $K_\mu$ is the Lie subgroup of $G$ with Lie algebra $\mathfrak{g}^0_\mu=\ker \mu\cap \mathfrak{g}_\mu$, where $\mathfrak{g}_\mu$ is the Lie algebra of the isotropy subgroup of the coadjoint action of $G$ on $\mathfrak{g}^*$ at $\mu\in \mathfrak{g}^*$. Note also that $\widehat{\mathfrak{g}}_\mu^0$ represents the fundamental vector fields on $P$ related to the Lie algebra $\mathfrak{g}_\mu^0$, which is denoted by $\mathfrak{k}_\mu$ in our work. It is worth now recalling that
    since the $\mathbb{R}^\times$-bundle action commutes with $J_\Theta^{\widetilde{\Phi}}$, one has that $K_{\mu'}$ is the same for every $\mu'\in \mathbb{R}^\times \mu$. 
    
    Let us explain why \eqref{Eq:FalseClaim} is not correct in all cases. First,
    \[
    \ker \jmath_{[\mu]}^*\omega=\T  {J}^{\widetilde{\Phi}-1}_\Theta(\R^\times\mu)\cap\left(\T  {J}^{\widetilde{\Phi}-1}_\Theta(\R^\times\mu)\right)^{\perp_\omega}.
    \]
If we assume for simplicity that $\mu'=J^{\widetilde{\Phi}}_{\Theta}(p)\neq 0$, one has  $\T_p {J}^{\widetilde{\Phi}-1}_\Theta(\R^\times\mu)=\T_p {J}^{\widetilde{\Phi}-1}_\Theta(\mu')\oplus\langle\nabla_p\rangle$, where $\nabla$ is the Euler vector field of the  $\mathbb{R}^\times$-principal bundle $\tau\colon\mathbb{R}^\times\times M\rightarrow M$. 
Since $\iota_{\nabla}\omega=\Theta$, it follows that
\[
\T_p {J}^{\widetilde{\Phi}-1}_\Theta(\R^\times\mu)\cap\left(\T_p {J}^{\widetilde{\Phi}-1}_\Theta(\R^\times\mu)\right)^{\perp_\omega}=(\T_p {J}^{\widetilde{\Phi}-1}_\Theta(\mu')\oplus\langle \nabla_p\rangle)\cap\left(\T_p {J}_\Theta^{\widetilde{\Phi}-1}(\mu')\right)^{\perp_\omega}\cap\ker \Theta_p\,.
\]
From standard MMW symplectic reduction theory, it follows that 
\begin{equation}\label{eq:ExpSymOrt}
\T_p {J}^{\widetilde{\Phi}-1}_\Theta(\R^\times\mu)\cap\left(\T_p {J}^{\widetilde{\Phi}-1}_\Theta(\R^\times\mu)\right)^{\perp_\omega}=\left(\T_p {J}^{\widetilde{\Phi}-1}_\Theta(\mu')\oplus\langle\nabla_p\rangle\right)\cap\T_p (Gp)\cap\ker \Theta_p\,.
\end{equation}
Assuming  that $\T(\tau(J^{\widetilde{\Phi}-1}_\Theta(\mu')))$ is transversal to the contact distribution $\mathcal{C}$, as done in \cite{GG_23},  essentially ensures that $\nabla_p\notin\ker \jmath^*_{[\mu]}\omega_p$. In fact, if $\nabla_p\in\ker \jmath^*_{[\mu]}\omega_p$, one has that  $\iota_\nabla\omega=\Theta$ vanishes on the tangent bundle to $J_\Theta^{\widetilde{\Phi}-1}(\mathbb{R}^\times \mu)$, which implies in particular that $\T\tau(J_\Theta^{\widetilde{\Phi}-1}(\mu'))\subset \mathcal{C}$. But, here it comes one of the inaccuracies in \cite{GG_23}:  one cannot ensure that $\chi(\omega_\mu)=\widehat{\mathfrak{g}}^0_\mu $, where $\chi(\omega_\mu)$ represents the kernel of the restriction of $\omega$ to $J_\Theta^{\widetilde{\Phi}-1}(\mathbb{R}^\times\mu)$.  In other words, the results in \cite{GG_23} imply that
\[
\T_p {J}^{\widetilde{\Phi}-1}_\Theta(\R^\times\mu)\cap\left(\T_p {J}^{\widetilde{\Phi}-1}_\Theta(\R^\times\mu)\right)^{\perp_\omega}=\T_p {J}^{\widetilde{\Phi}-1}_\Theta(\mu')\cap\T_p (Gp)\cap\ker \Theta_p\,=T_p(K_{\mu'}p).
\]
But the above formula does not always hold. The intersection with a direct sum is not the direct sum of the intersections in general and the transversality condition on $\tau(J^{\widetilde{\Phi}-1}_\Theta(\mu'))$  assumed in \cite{GG_23}  does not change that fact. 
Moreover, the lift of our Lemma \ref{Lemm::contactLemma} to the one-homogeneous symplectic manifold shows that there may be fundamental vector fields of $\widetilde{\Phi}$ tangent to $\T_pJ_\Theta^{\widetilde{\Phi}-1}(\mathbb{R}^\times \mu)$ that are symplectically orthogonal to it, but are not tangent to any $\T_pJ_\Theta^{\widetilde{\Phi}-1}(\mu')$, e.g. the fundamental vector fields related to $\ker \mu\cap \mathfrak{g}_{[\mu]}$ not belonging to $\ker \mu\cap \mathfrak{g}_{\mu}$. 

To clarify what was said, let us illustrate the problem by one counterexample. Note that $\partial_s$ and $\partial_t$ are tangent to $J_\Theta^{\widetilde{\Phi}-1}(\mathbb{R}^\times \mu^3)$. Taking into account the form of $\omega$, it turns out that 
    \[
    \T_p {J}^{\widetilde{\Phi}-1}_\Theta(\R^\times\mu^3)\cap\left(\T_p {J}^{\widetilde{\Phi}-1}_\Theta(\R^\times\mu^3)\right)^{\perp_\omega}\subset \ker \tau^*\eta\cap \ker \d s\,.
    \]
 Note that $\xi_{1P}$ and $\xi_{2P}$ are tangent to $J_\Theta^{\widetilde{\Phi}-1}(\mathbb{R}^\times \mu^3)$,  $\xi_{1P}\wedge\xi_{2P}\neq 0$,  and $\omega(\xi_{1P},\xi_{2P})=0$. Hence, in view of \ref{eq:ExpSymOrt}, it follows that    \[    \T_p{J}^{\widetilde{\Phi}-1}_\Theta(\R^\times\mu^3)\cap\left(\T_p{J}^{\widetilde{\Phi}-1}_\Theta(\R^\times\mu^3)\right)^{\perp_\omega}\supset \left\langle \xi_{1P}(p),\xi_{2P}(p)\right\rangle\neq \T_p(K_{\mu^3} p)=\langle \xi_{2P}(p)\rangle\,,
    \]
for any $p\in{J}_\Theta^{\widetilde{\Phi}-1}(\R^\times\mu^3)$. Therefore, the claim \eqref{Eq:FalseClaim} is incorrect and Theorem \ref{Th::GrabConRed} from \cite{GG_23} fails in this case. In fact, Lemma \ref{Lemm::contactLemma} implies that    
\[
    \T_p{J}^{\widetilde{\Phi}-1}_\Theta(\R^\times\mu^3)\cap\left(\T_p{J}^{\widetilde{\Phi}-1}_\Theta(\R^\times\mu^3)\right)^{\perp_\omega}=\left\langle \xi_{1P}(p),\xi_{2P}(p)\right\rangle=\T_p(K_{[\mu^3]} p)\,,
    \]
for any $p\in {J}_\Theta^{\widetilde{\Phi}-1}(\R^\times\mu^3)$, where $K_{[\mu^3]}$ as defined in Proposition \ref{Prop::Kgroup}. Since \cite{GG_23} assumes that the symplectic orthogonal to $\T J^{\widetilde{\Phi}-1}_\Theta(\mathbb{R}^\times \mu^3)$ must be included in the isotropy group $G_{\mu^3}$, they did not notice that $\xi_{1P}$ is missing in \eqref{Eq:FalseClaim}. 

A corrected version of Theorem \ref{Th::GrabConRed} could be stated as follows and the proof is analogous to the original provided in \cite{GG_23} once the required symplectic orthogonal is corrected.

\begin{theorem}
\label{Th::GrabConRedv2}
    Let $(M,\mathcal{C})$ be a contact manifold with a symplectic cover $(P,\Theta)$ and $\tau\colon P\rightarrow M$, let $\Phi\colon G\times M\rightarrow M$ be a contact Lie group action, and let $J^{\widetilde{\Phi}}_\Theta\colon P\rightarrow \mathfrak{g}^*$ be an exact symplectic momentum map associated with the lifted Lie group action $\widetilde{\Phi}\colon G\times P\rightarrow P$. Let $\mu\in\mathfrak{g}^*$ be a weak regular value of $J^{\widetilde{\Phi}}_\Theta$ so that the connected Lie subgroup $K_{[\mu]}$ of $G$, corresponding to the Lie subalgebra of $\mathfrak{g}$ given by
    \[
    \mathfrak{k}_{[\mu]}=\{\xi\in\ker\mu \mid \ad^*_\xi\mu\wedge\mu=0\},
    \]
acts in a quotientable manner on the submanifold $\tau(J^{\widetilde{\Phi}-1}_\Theta(\R^\times\mu))$ of $M$. Additionally, suppose that $\T [\tau(J^{\widetilde{\Phi}-1}_\Theta(\R^\times\mu))]$ is transversal to $\mathcal{C}$ and $\tau(J^{\widetilde{\Phi}-1}_\Theta(\R^\times\mu))/K_{[\mu]}$ has dimension larger than one. Then, we have a canonical submersion $\pi\colon\tau(J^{\widetilde{\Phi}-1}_\Theta(\R^\times\mu))\rightarrow \tau(J^{\widetilde{\Phi}-1}_\Theta(\R^\times\mu))/K_{[\mu]}$, where $\left(\tau(J^{\widetilde{\Phi}-1}_\Theta(\R^\times\mu))/K_{[\mu]},\mathcal{C}_{[\mu]}\right)$ is a contact manifold with $\mathcal{C}_{[\mu]}:=\T \pi\left(\mathcal{C}\cap \T\left(\tau(J^{\widetilde{\Phi}-1}_\Theta(\R^\times\mu))/K_{[\mu]}\right)\right)$.
\end{theorem}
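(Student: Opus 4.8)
The plan is to transport the reduction to the symplectic cover $(P,\omega=\d\Theta)$ of $(M,\mathcal{C})$, perform an exact symplectic Marsden--Meyer--Weinstein reduction there using the corrected reduction group $K_{[\mu]}$, and then descend through the contact--symplectic correspondence of Theorem \ref{Th::ConSymRelation}. Since $(P,\Theta)$ is one-homogeneous and $J^{\widetilde{\Phi}}_\Theta$ is exact, I would work with the submanifold $N:=J^{\widetilde{\Phi}-1}_\Theta(\R^\times\mu)$, establish that the quotient $N/K_{[\mu]}$ inherits a one-homogeneous symplectic form, recognise it as a symplectic $\R^\times$-principal bundle over $\tau(N)/K_{[\mu]}$, and let Theorem \ref{Th::ConSymRelation} produce the contact distribution $\mathcal{C}_{[\mu]}$ on the base. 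Note that $\tau(N)/K_{[\mu]}=(N/K_{[\mu]})/\R^\times$, since the $\R^\times$-principal action preserves $N$ (it scales $\mu$ within $\R^\times\mu$) and commutes with $K_{[\mu]}$.

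The heart of the matter, and the entire content of the correction to \cite{GG_23}, is to compute $\ker\jmath^*_{[\mu]}\omega$ correctly, where $\jmath_{[\mu]}\colon N\hookrightarrow P$ is the inclusion. By definition, $\ker\jmath^*_{[\mu]}\omega_p=\T_pN\cap(\T_pN)^{\perp_\omega}$. Lemma \ref{Lemm::SymplecticLemma}(2) identifies $\T_pN=(\T_p(Gp)\cap\ker\Theta_p)^{\perp_\omega}$, so that in the symplectic setting $(\T_pN)^{\perp_\omega}=\T_p(Gp)\cap\ker\Theta_p$; combining this with Lemma \ref{Lemm::SymplecticLemma}(1), which gives $\T_p(G_{[\mu]}p)=\T_p(Gp)\cap\T_pN$, one obtains $\ker\jmath^*_{[\mu]}\omega_p=\T_p(G_{[\mu]}p)\cap\ker\Theta_p$. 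Finally, since $\iota_{\xi_P}\Theta=\langle J^{\widetilde{\Phi}}_\Theta,\xi\rangle$ vanishes at $p\in N$ exactly when $\xi\in\ker\mu$, intersecting with $\ker\Theta_p$ selects $\xi\in\mathfrak{g}_{[\mu]}\cap\ker\mu=\mathfrak{k}_{[\mu]}$, yielding $\ker\jmath^*_{[\mu]}\omega_p=\T_p(K_{[\mu]}p)$. This is precisely the identity that must replace the erroneous \eqref{Eq:FalseClaim}.

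Granting this identity, the remainder proceeds as in \cite{GG_23}. Because $K_{[\mu]}$ acts quotientably and $\T_p(K_{[\mu]}p)=\ker\jmath^*_{[\mu]}\omega_p$, the closed form $\jmath^*_{[\mu]}\omega$ descends to a nondegenerate closed (indeed exact) two-form $\omega_{[\mu]}$ on $N/K_{[\mu]}$. The $\R^\times$-principal action descends to $N/K_{[\mu]}$, and one-homogeneity $\phi_s^*\omega_{[\mu]}=s\,\omega_{[\mu]}$ is inherited since the action commutes with both $\jmath_{[\mu]}$ and the projection $\pi_{[\mu]}$. The transversality hypothesis enters here: if the Euler vector field $\nabla$ satisfied $\nabla_p\in\ker\jmath^*_{[\mu]}\omega_p$, then $\Theta=\iota_\nabla\omega$ would vanish on $\T_pN$, forcing $\T\tau(N)\subset\mathcal{C}$ and contradicting transversality; hence $\nabla$ descends to a nowhere-vanishing generator, making $N/K_{[\mu]}\to\tau(N)/K_{[\mu]}$ a genuine symplectic $\R^\times$-principal bundle. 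Theorem \ref{Th::ConSymRelation}, together with the hypothesis $\dim\big(\tau(N)/K_{[\mu]}\big)>1$, then endows $\tau(N)/K_{[\mu]}$ with the contact distribution $\mathcal{C}_{[\mu]}=\T\pi\big(\mathcal{C}\cap\T(\tau(N)/K_{[\mu]})\big)$.

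The main obstacle, and the very reason the theorem must be restated, is the orthogonal computation of the second paragraph: the naive identification of $\ker\jmath^*_{[\mu]}\omega$ with $\T(K_\mu p)$ fails because the symplectic orthogonal of an internal direct sum is not the direct sum of the orthogonals, so fundamental vector fields generated by $\ker\mu\cap\mathfrak{g}_{[\mu]}$ but not by $\ker\mu\cap\mathfrak{g}_\mu$ remain symplectically orthogonal to $\T_pN$ while failing to be tangent to any fixed level set $J^{\widetilde{\Phi}-1}_\Theta(\mu')$. Obtaining this intersection correctly --- equivalently, invoking the lift of Lemma \ref{Lemm::contactLemma} to the one-homogeneous symplectic cover rather than arguing level set by level set --- is the crux; once it is settled, the descent of the form, the survival of $\nabla$ under transversality, and the appeal to Theorem \ref{Th::ConSymRelation} follow \cite{GG_23} essentially verbatim.
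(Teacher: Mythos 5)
Your proposal is correct and follows essentially the same route as the paper: the paper states that the proof is analogous to \cite{GG_23} once the symplectic orthogonal is corrected, and your computation $\ker\jmath^*_{[\mu]}\omega_p=\T_pN\cap\T_p(Gp)\cap\ker\Theta_p=\T_p(K_{[\mu]}p)$ via Lemma \ref{Lemm::SymplecticLemma} is precisely the correction the paper develops in Section \ref{Sec:Correction} (there phrased as the lift of Lemma \ref{Lemm::contactLemma} to the one-homogeneous symplectic cover). The descent of the one-homogeneous form, the role of transversality in keeping the Euler vector field out of the kernel, and the final appeal to Theorem \ref{Th::ConSymRelation} together with the dimension hypothesis all match the paper's intended argument.
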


Previous works distinguish the specific case when $\mu=0$, like  \cite{GG_23}. Remarkably, our theory includes this possibility as a particular case. When $\mu=0$ the reduced manifold takes the form $\tau(J_\Theta^{\widetilde{\Phi}-1}(0))/G$, since $\ker\mu=\mathfrak{g}$ and $K_{[\mu]}=G$. In the co-orientable setting, this was already known to be a reduced co-orientable contact manifold as shown in \cite{LW_01, Lo_01}.

\subsection{Comparing to previous contact reductions}

This section demonstrates through simple examples that the Lie group used in the contact reduction in \cite{GG_23} must be changed, and an additional condition is assumed in the contact reduction theorems in \cite{GG_23, Wil_02}. Moreover, we also show that
our one-contact reduction method is applicable in situations where previous contact reduction approaches from \cite{Alb_89, Wil_02} are not. It also covers a few cases not included in \cite{GG_23} since some technical conditions assumed in that work are not needed in our theory, as illustrated in the following examples. 

    Consider the contact manifold $(\R^7,\eta)$ with
\[
    \eta=\d t-x_2\d x_1+x_1\d x_2 -x_4\d x_3+x_6\d x_5\,.
    \]
    Define a contact Lie group action of $G={\rm GL}_2\simeq \SL_2\times \mathbb{R}$ on $
    \mathbb{R}^7$ of the form
    \[
    \Phi\colon G\times \R^7\ni((g,\lambda);t,x_1,x_2,x_3,x_4,x_5,x_6)\mapsto (t,(x_1,x_2)g^T,x_3,x_4,x_5+\lambda,x_6)\in \R^7,
    \]
    where $g^T$ is transpose of the matrix $g\in \SL_2$. Indeed, a short calculation shows that $\Phi_{(g,\lambda)}^*\eta=\eta$ for every $(g,\lambda)\in \SL_2\times \mathbb{R}$.  A basis of  fundamental vector fields for $\Phi$ read
    \[
    \vartheta_{1M}=x_2\frac{\partial}{\partial x_1}\,,\qquad \vartheta_{2M}=x_1\frac{\partial}{\partial x_2}\,,\qquad \vartheta_{3M}=x_1\frac{\partial}{\partial x_1}-x_2\frac{\partial}{\partial x_2}\,,\qquad \vartheta_{4M}=\frac{\partial}{\partial x_5}.
    \]
   
    Let $\mathfrak{gl}_2^*=\langle\widetilde{\mu}^1,\widetilde{\mu}^2,\widetilde{\mu}^3,\widetilde{\mu}^4\rangle$, where $\{\widetilde{\mu}^1,\widetilde{\mu}^2,\widetilde{\mu}^3,\widetilde{\mu}^4\}$ is the dual basis to the basis $\{\vartheta_1,
\vartheta_2,\vartheta_3,\vartheta_4\}$ of $\mathfrak{gl}_2\simeq \mathfrak{sl}_2\oplus \mathbb{R}$ with non-vanishing commutation relations
$$
[\vartheta_1,\vartheta_2]=\vartheta_3,\qquad [\vartheta_1,\vartheta_3]=-2\vartheta_1,\qquad [\vartheta_2,\vartheta_3]=2\vartheta_2.
$$
    Then, a contact momentum map $J^\Phi_\eta\colon \mathbb{R}^7\rightarrow \mathfrak{gl}^*_2$ in the basis $\{\widetilde{\mu}^1,\widetilde{\mu}^2,\widetilde{\mu}^3,\widetilde{\mu}^4\} $ reads
    \[
    J^\Phi_\eta(x)=(\iota_{\vartheta_{1M}}\eta(x),\iota_{\vartheta_{2M}}\eta(x),\iota_{\vartheta_{3M}}\eta(x))=(-x_2^2,x_1^2,-2x_1x_2,x_6)\in \mathfrak{gl}_2^*\,.
    \]
    Let us fix $\mu=\widetilde{\mu}^2\in \mathfrak{gl}_2^*$. Thus,
    \[
    J^{\Phi-1}_\eta(\R^\times\widetilde{\mu}^2) = \{ x=(t,x_1,x_2,x_3,x_4,x_5,x_6)\in \R^7\,\mid\, x_2=x_6=0,\,x_1\neq 0\}
    \]
    and 
    \[
    \T_xJ^{\Phi-1}_\eta(\R^\times\widetilde{\mu}^2)=\left\langle \frac{\partial}{\partial t},\frac{\partial}{\partial x_1},\frac{\partial}{\partial x_3},\frac{\partial}{\partial x_4},\frac{\partial}{\partial x_5}  \right\rangle\,.
    \]
   Moreover, $\ker\widetilde{\mu}^2=\left\langle \vartheta_1,\vartheta_3,\vartheta_4\right\rangle$, $\mathfrak{g}_{\widetilde{\mu}^2}=\left\langle \vartheta_1,\vartheta_4\right\rangle$, and $\mathfrak{g}_{[\widetilde{\mu}^2]}=\langle \vartheta_1,\vartheta_3,\vartheta_4\rangle $ since $\ad^*_{\vartheta^4}\vartheta=0$ for every 
$\vartheta\in \mathfrak{gl}_2^*$ and 
    \begin{align*}
  \ad^*_{\vartheta_1}\widetilde{\mu}^1&=-2\widetilde{\mu}_3\,,&\qquad \ad^*_{\vartheta_2}\widetilde{\mu}^1&=0\,,&\qquad \ad^*_{\vartheta_3}\widetilde{\mu}^1&=2\widetilde{\mu}^1\,,\\  \ad^*_{\vartheta_1}\widetilde{\mu}^2&=0\,,&\qquad \ad^*_{\vartheta_2}\widetilde{\mu}^2&=2\widetilde{\mu}^3\,,&\qquad \ad^*_{\vartheta_3}\widetilde{\mu}^2&=-2\widetilde{\mu}^2\,,\\
\ad^*_{\vartheta_1}\widetilde{\mu}^3&=\widetilde{\mu}_2\,,&\qquad \ad^*_{\vartheta_2}\widetilde{\mu}^3&=-{\widetilde\mu}^1\,,&\qquad \ad^*_{\vartheta_3}\widetilde{\mu}^3&=0\,.
    \end{align*}
    Thus, $\mathfrak{k}_{\widetilde{\mu}^2}=\ker\widetilde{\mu}^2\cap\mathfrak{g}_{\widetilde{\mu}^2}=\left\langle \vartheta_1,\vartheta_4\right\rangle$ and $\mathfrak{k}_{[\widetilde{\mu}^2]}=\langle \vartheta_1,\vartheta_3,\vartheta_4\rangle$ yield
    \[
    \T_x(K_{\widetilde{\mu}^2} x)=\left\langle \vartheta_{1M}(x),\vartheta_{4M}(x)\right\rangle\quad \mathrm{and}\quad \T_x(K_{[\widetilde{\mu}^2]} x)=\left\langle \vartheta_{1M}(x),\vartheta_{3M}(x),\vartheta_{4M}(x)\right\rangle\,.
    \]
    Note that the condition required by Willett’s contact reduction \cite{Wil_02}, namely $\ker\widetilde{\mu}^2+\mathfrak{g}_{\widetilde{\mu}^2}=\mathfrak{gl}_2$ does not hold, making his results inapplicable. Then, let us try to perform the contact reduction introduced in \cite{GG_23}. When reducing to $J^{\Phi-1}_{\eta}(\R^\times\widetilde{\mu}^2)/K_{\widetilde{\mu}^2}$, one notices that the assumptions of Theorem \ref{Th::GrabConRed} are not satisfied, since $\vartheta_{1M}$ restricted to $J^{\Phi-1}_\eta(\R^\times\widetilde{\mu}^2)$ vanishes and the associated action of $K_{\widetilde{\mu}^2}$ is not free on it. Nevertheless,  the remaining assumptions are indeed satisfied. In particular, $J^{\Phi-1}_\eta(\R^\times\widetilde{\mu}^2)$ is transversal to the contact distribution and $\widetilde{\mu}^2\in\mathfrak{gl}^*_2$ is a weak regular value of the symplectic momentum map obtained by lifting $J^\Phi_\eta$ to $\mathbb{R}^\times\times \mathbb{R}^7$. 

    Applying Theorem \ref{Th::GrabConRedv2}, one gets that the quotient with respect to $K_{[\widetilde{\mu}^2]}$ yields  
    \[
 \T_m[ J^{\Phi-1}_\eta(\R^\times\widetilde{\mu}^2)/(K_{[\widetilde{\mu}^2]}x) ]\simeq  \T_xJ^{\Phi-1}_\eta(\R^\times\widetilde{\mu}^2)/\T_x(K_{[\widetilde{\mu}^2]}x)=\left\langle \parder{}{t},\parder{}{x_3},\parder{}{x_4}\right\rangle,
    \]
    and $(J^{\Phi-1}_\eta(\R^\times\widetilde{\mu}^2)/K_{[\widetilde{\mu}^2]},\eta_{[\widetilde{\mu}^2]})$ is a three-dimensional contact manifold with
    \[
    \eta_{[\widetilde{\mu}^2]}=\d t-x_4\d x_3\,.
    \]
The above example can be slightly modified to show that the reduction group in \cite{GG_23} is incorrect.  Consider the restriction, $\Phi'$, of \(\Phi\) to the action of the Lie subgroup 
\[
H_2\times \mathbb{R}=\left\{\left(\left(\begin{array}{cc}
\lambda_1&0\\\lambda_2&1/\lambda_1\end{array}\right),\lambda\right)
\,\mid\,\lambda_1\in \mathbb{R}_+\,,\ \lambda_2\in \mathbb{R}\,,\ \lambda\in \mathbb{R}\right\}
\]
on $\mathbb{R}^7$. Then, $\Phi'$ is Hamiltonian relative to the contact form given above and the new contact momentum map $J^{\Phi'}_\eta\colon M\rightarrow (\mathfrak{h}_2\oplus \mathbb{R})^*$ reads
    \[
    J^{\Phi'}_\eta(x)=(\iota_{\vartheta_{2M}}\eta(x),\iota_{\vartheta_{3M}}\eta(x),\iota_{\vartheta_{4M}}\eta(x))=(x_1^2,-2x_1x_2,x_6)\in (\mathfrak{h}_2\oplus \mathbb{R})^*\,.
    \]

       Consider $(\mathfrak{h}_2\oplus \mathbb{R})^*=\langle\widehat{\mu}^2,\widehat{\mu}^3,\widehat{\mu}^4\rangle$, where $\{\widehat{\mu}^2,\widehat{\mu}^3,\widehat{\mu}^4\}$ is the dual basis to the basis $\{
\vartheta_2,\vartheta_3,\vartheta_4\}$ of $\mathfrak{h}_2\oplus\mathbb{R}$ with non-vanishing commutation relations
$$ [\vartheta_2,\vartheta_3]=2\vartheta_2\,.
$$
    Let us fix $\mu=\widehat{\mu}^2\in (\mathfrak{h}_2\oplus\mathbb{R})^*$. Thus,
    \[
    J^{\Phi'-1}_\eta(\R^\times\widehat{\mu}^2) = \{ x=(t,x_1,x_2,x_3,x_4,x_5,x_6)\in \R^7\,\mid\, x_2=x_6=0,x_1\neq 0\}
    \]
    and 
    \[
    \T_xJ^{\Phi'-1}_\eta(\R^\times\widehat{\mu}^2)=\left\langle \frac{\partial}{\partial t},\frac{\partial}{\partial x_1},\frac{\partial}{\partial x_3},\frac{\partial}{\partial x_4},\frac{\partial}{\partial x_5} \right\rangle\,.
    \]
   Moreover, $\ker\widehat{\mu}^2=\left\langle \vartheta_3,\vartheta_4\right\rangle$, $\mathfrak{g}_{\widehat{\mu}^2}=\langle \vartheta_4\rangle$, and $\mathfrak{g}_{[\widehat{\mu}^2]}=\langle \vartheta_3,\vartheta_4\rangle $ since
    \[
    \ad^*_{\vartheta_2}\widehat{\mu}^2=2\widehat{\mu}^3\,,\qquad \ad^*_{\vartheta_3}\widehat{\mu}^2=-2\widehat{\mu}^2\,.
    \]
    Thus, $\mathfrak{k}_{\widehat{\mu}^2}=\ker\widehat{\mu}^2\cap\mathfrak{g}_{\widehat{\mu}^2}=\langle \vartheta_4\rangle$, $\mathfrak{k}_{[\widehat{\mu}^2]}=\langle \vartheta_3,\vartheta_4\rangle$ yields
    \[
\T_x(K_{[\widehat{\mu}^2]} x)=\left\langle \vartheta_{3M}(x),\vartheta_{4M}(x)\right\rangle\,.
\]
Condition \cite{Wil_02}, namely $\ker\widehat{\mu}^2+\mathfrak{g}_{\widehat{\mu}^2}=\mathfrak{h}_2\oplus\mathbb{R}$, does not hold, making Willett's results inapplicable. Then, the reduction introduced  in \cite{GG_23} gives $J^{\Phi'-1}_{\eta}(\R^\times\widehat{\mu}^2)$ which is even dimensional. Conditions in Theorem \ref{Th::GrabConRed},  are satisfied since the action on  $J^{\Phi'-1}_\eta(\R^\times\widehat{\mu}^2)$ is the one by the neutral element of the Lie group. The remaining assumptions are indeed satisfied. In particular, $J^{\Phi'-1}_\eta(\R^\times\widehat{\mu}^2)$ is transversal to the contact distribution and $\widehat{\mu}^2\in(\mathfrak{h}_2\oplus\mathbb{R})^*$ is a weak regular value of the symplectic momentum map. 
    

Let us provide the final example, which revisits the previous one but rewrites it using the ideas and notation introduced in \cite{GG_23}. In order to deal with non-co-orientable contact manifolds, the authors in \cite{GG_23} focus on one-homogeneous symplectic $\R^\times$-principal bundles over a contact manifold $M$. Hence, the following example is the symplectic extension of a particular case of the above one to illustrate that the theory in \cite{GG_23} requires modifications to ensure it holds in a general framework. 

Consider an exact symplectic manifold \((P=\R^\times \times\R^7, s\,\tau^*\eta)\), where $\tau: P\rightarrow \mathbb{R}^7$ is the canonical projection, $s$ is the fibre variable on $\mathbb{R}^\times$, and define the contact form 
\[
    \eta=\d t-x_2\d x_1+x_1\d x_2 -x_4\d x_3+x_6\d x_5\,
    \]
    for linear coordinates $t,x_1,\ldots,x_6$ on $\mathbb{R}^7$. Let $\Theta=s\tau^*\eta$ and let $H_2$ denote the Lie group consisting of $2 \times 2$ lower triangular unimodular matrices with real entries and a positive diagonal. The positive diagonal ensures that $H_2$ is connected, thus preventing subsequent technical complications. The Lie group action 
$
\Phi':(H_2\times \mathbb{R})\times \mathbb{R}^7\rightarrow \mathbb{R}^7
$
of the form
$$
\Phi'\left(\left(h=\left(\begin{array}{cc}
\lambda_3&0\\\lambda_2&1/\lambda_3\end{array}\right),\lambda_4\right),(t,x_1,\ldots,x_6)\right)=(t,\lambda_3x_1,\lambda_2x_1+x_2/\lambda_3,x_3,x_4,x_5+\lambda_4,x_6),
$$
where $\lambda_3>0$ and $\lambda_2,\lambda_4\in \mathbb{R}$, leaves $\eta$ invariant and it 
can be lifted to a Hamiltonian Lie group action $\widetilde{\Phi}':(H_2\times\R)\times P\rightarrow P$ given by
\[
\widetilde{\Phi}':(H_2\times \mathbb{R})\times P\rightarrow P,\qquad \widetilde{\Phi}'((h,\lambda_4),(s,t,x_1,\ldots,x_6))=(s,\Phi'((h,\lambda_4),(t,x_1,\ldots,x_6))),
\]
where $s,t,x_1,\ldots,x_6$ form a coordinate system on $P$ defined in the standard manner. 
Indeed, $\widetilde{\Phi}'$ is Hamiltonian relative to the symplectic form $\d \Theta$ on $P$ and $\tau\circ\widetilde{\Phi}'_{(h,\lambda_4)}=\Phi'_{(h,\lambda_4)}\circ\tau$ for every $(h,\lambda_4)\in H_2\times \mathbb{R}$ (see \cite[Corollary 2.20]{GG_23} for further details).  Then, a basis of fundamental vector fields of $\widetilde{\Phi}'$ reads
\[
\nu_{2P}=x_1\parder{}{x_2} ,\qquad \nu_{3P}=x_2\parder{}{x_2}-x_1\parder{}{x_1} ,\qquad\nu_{4P}=\parder{}{x_5}.
\]
Note that each $\nu_{iP}$ is the unique Hamiltonian vector field on $P$ projecting onto a contact Hamiltonian vector field $\vartheta_{iM}=\tau_*\nu_{iP}$ on $\mathbb{R}^7$ for $i=2,3,4$. Moreover, each $\nu_{iP}$ has an $\mathbb{R}^\times$-homogeneous Hamiltonian function $-\iota_{\nu_{iP}}\Theta$ with $i=2,3,4$. Then, a momentum map $J_\Theta^{{\widetilde{\Phi}'}}:P\rightarrow (\mathfrak{h}_2\oplus\mathbb{R})^*$ associated with ${\widetilde{\Phi}}'$, where $\mathfrak{h}_2$ is the Lie algebra of $H_2$,  reads
\[
J_\Theta^{\widetilde{\Phi}'}(p)=(\iota_{\nu_{2P}}\Theta(p),\iota_{\nu_{3P}}\Theta(p) ,\iota_{\nu_{4P}}\Theta(p))=(sx_1^2,2sx_1x_2,sx_6)\in (\mathfrak{h}_2\oplus\R)^*,\qquad \forall p\in P,
\]
in a basis $\{e^1,e^2,e^3\}$ of $\mathfrak{h}^*_2\oplus \mathbb{R}^*$ dual to a basis $\{e_1,e_2,e_3\}$ adapted to the decomposition $\mathfrak{h}_2\oplus \mathbb{R}$ and closing opposite commutation relations to $\nu_{2P},\nu_{3P},\nu_{4P}$, respectively\footnote{Recall that our definition of fundamental vector fields induces a Lie algebra anti-homomorphism $\xi\in \mathfrak{h}_2
\oplus\mathbb{R}\simeq \mathfrak{g}\mapsto\xi_P\in\mathfrak{X}(P)$.}. It is worth recalling that $J_\Theta^{\widetilde{\Phi}'}$ is ${\rm Ad}^*$-equivariant. 
If we fix $\mu=e^1\in \mathfrak{h}^*_2\oplus\R^*$, then
\[
J_\Theta^{{\widetilde{\Phi}'}-1}(\R^\times e^1)=\{p=(s,t,x_1,x_2,x_3,x_4,x_5,x_6)\in \R^\times\times\R^7\,\mid\,  x_2=x_6=0, s\neq0,x_1\neq0\}
\]
and
\[
\T_pJ_\Theta^{\widetilde{\Phi}'-1}(\R^\times e^1)=\left\langle\nabla=s\frac{\partial}{\partial s},\parder{}{t},\parder{}{x_1},\parder{}{x_3},\parder{}{x_4},\parder{}{x_5}\right\rangle_p,
\]
for any $p\in J_\Theta^{\widetilde{\Phi}'-1}(\R^\times e^1)$. Additionally, one has that
\[
J_\Theta^{\widetilde{\Phi}'-1}(\lambda e^1)=\{p\in P\,\mid\, s x_1^2=\lambda,\, x_2=0,\, x_6=0\},\qquad \lambda \in \mathbb{R}^\times
\]
and
\[
\T_pJ_\Theta^{\widetilde{\Phi}'-1}(\lambda e^1)=\left\langle \parder{}{t},\parder{}{x_3},\parder{}{x_4},\parder{}{x_5},2\nabla-x_1\parder{}{x_1}\right\rangle_p,\qquad \lambda \in \mathbb{R}^\times,
\]
for any $p\in J_\Theta^{\widetilde{\Phi}'-1}(\lambda e^1)$. Then, $\nu_{4P}(p)\in \T_pJ_\Theta^{\widetilde{\Phi}'-1}(\lambda e^1)$ for every $p\in J_\Theta^{\widetilde{\Phi}'-1}(\lambda e^1)$ and  $$\T_p J_\Theta^{\widetilde{\Phi}'-1}(\R^\times e^1)=\langle \nabla_p\rangle\oplus \T_pJ_\Theta^{\widetilde{\Phi}'-1}(\lambda e^1)$$ for any $p\in J_\Theta^{\widetilde{\Phi}'-1}(\R^\times e^1)$ and  $J_\Theta^{\widetilde{\Phi}'}(p)=\lambda e^1$ for some $\lambda\in \mathbb{R}^\times$. One has 
at points of $J_\Theta^{\widetilde{\Phi}'-1}(\mathbb{R}^\times e^1)$ that  
\[
\omega=\d s\wedge (\d t+x_1\d x_2-x_4\d x_3)+s(2\d x_1\wedge \d x_2+\d x_3\wedge \d x_4-\d x_5\wedge \d x_6)
\]
and
\[
\left(\T_pJ_\Theta^{\widetilde{\Phi}'-1}(\R^\times e^1)\right)^{\perp_\omega}=\left\langle \nu_{3P}(p),\nu_{4P}(p)\right\rangle,\qquad \forall p\in J_\Theta^{\widetilde{\Phi}'-1}(\R^\times e^1).
\]
Since $[e_1,e_2]=-2e_1$ and $[e_3,e_i]=0$, it follows that ${\rm ad}_{e_3}^*e^1=0$ and  ${\rm ad}_{e_2}^*e^1=2e^1$. Hence,  the connected Lie subgroups of 
 $H_2\times \mathbb{R}$ with Lie algebras $\ker e^1\cap \mathfrak{g}_{[e^1]}$ and $\ker e^1\cap \mathfrak{g}_{e^1}$ are
$$
K_{[e^1]}=\left\{\left(\left(\begin{array}{cc}\lambda_3&0\\0&1/\lambda_3\end{array}\right),\lambda_4\right)\mid \lambda_3> 0,\lambda_4\in \mathbb{R}\right\},\quad K_{e^1}=\left\{\left(\left(\begin{array}{cc}1&0\\0&1\end{array}\right),\lambda_4\right)\mid \lambda_4\in \mathbb{R}\right\},
$$ and it follows that 
\begin{equation}
\label{Eq::ExampleEq}
\left(\T_pJ_\Theta^{\widetilde{\Phi}'-1}(\R^\times  e^1\right)^{\perp_\omega}\cap\T_pJ_\Theta^{\widetilde{\Phi}'-1}(\R^\times e^1)=\left\langle \nu_{3P}(p),\nu_{4P}(p)\right\rangle=\T_p(K_{[e^1]}p)\neq \T_p(K_{e^1}p),
\end{equation}
for any $p\in J_\Theta^{\widetilde{\Phi}'-1}(\R^\times e^1)$. Then,  $(J_\Theta^{\widetilde{\Phi}'-1}(\R^\times e^1)/K_{[e^1]},\Theta_{[e^1]})$ is a symplectic cover of the co-orientable contact manifold $(\tau(J_\Theta^{\widetilde{\Phi}'-1}(\R^\times e^1)/K_{[e^1]},\eta_{[e^1]})$, where
\[
\Theta{[e^1]}=s (\d t -x_4\d x_3)\,,\qquad \eta_{[e^1]}=\d t -x_4\d x_3\,.
\]
One can verify that the assumptions of Theorem \ref{Th::GrabConRed} hold in the previous example, namely the tangent space to $\tau(J_\Theta^{\widetilde{\Phi}'-1}(\R^\times e^1))$ is transversal to the contact distribution, $e^1\in \mathfrak{h}^*_2\oplus\mathbb{R}^*$ is a weak regular value of $J_\Theta^{\widetilde{\Phi}'}\colon P\rightarrow \mathfrak{h}^*_2\oplus\mathbb{R}^*$ and the restriction of the Lie group action of $K_{e^1}$ on $J_\Theta^{{\widetilde{\Phi}'}-1}(
\mathbb{R}^\times e^1)$ is free and proper \footnote{It follows from taking a sequence in $\mathbb{R}\times J_\Theta^{\widetilde{\Phi}'-1}(\mathbb{R}^\times  e^1)$  whose image under the $\widetilde{\Phi}'\colon\mathbb{R}\times J^{\widetilde{\Phi}'}_\Theta(\mathbb{R}^\times e^1)\rightarrow  [J^{\widetilde{\Phi}'}_\Theta(\mathbb{R}^\times e^1)]^2$ belongs to a compact subset, using that the projection onto its components must be a compact and using that compact amounts in manifolds to sequentially compactness under standard assumptions.} . Due to \eqref{Eq::ExampleEq}, one sees that the reduced manifold obtained in \cite{GG_23} is not a symplectic manifold and there is a mistake in the formula in \cite{GG_23} for the symplectic orthogonal to $
\T J_\Theta^{{\widetilde{\Phi}'}-1}(\mathbb{R}^\times \mu)$. Note that $\tau(J_\Theta{\widetilde{\Phi}'-1}(\R^\times e^1))/K_{e^1}$ is even dimensional and it is not a contact manifold either. Instead of taking the quotient by the Lie subgroup $K_\mu$, the correct approach requires using $K_{[\mu]}$ under previously considered regularity conditions on the Lie group actions and the momentum maps. 

There are more technical issues with the reductions in \cite{GG_23,Wil_02}. When the reduced manifold obtained by their methods is one-dimensional, the final one-form is not contact, while previous works claim that it is. To see this, it is enough to restrict the above example to $\mathbb{R}^3$ deleting components for $x_3,\ldots,x_6$. In this manner, one obtains a reduction procedure that satisfies all conditions in \cite{GG_23,Wil_02} but gives rise to a reduced form $\d t$ on $\mathbb{R}$, which is not a contact form. Despite this, it represents only a minor technical error in their findings. The main ideas and results regarding Marsden--Meyer--Weinstein contact reductions from \cite{GG_23,Wil_02} remain valid under previous modifications and are expressed in Theorem  \ref{Th::GrabConRedv2}.
\section{Conclusions and outlook}\label{Sec::ConclusionsOutlook}
This paper provides several significant insights into the theory and applications of co-orientable $k$-contact geometry. We successfully extended the Marsden--Meyer--Weinstein reduction method to co-orientable $k$-contact manifolds, demonstrating the viability of this reduction in complex, multidimensional settings. Furthermore, we proved the most general co-orientable $k$-contact reduction theorem via submanifolds and established analogous conditions to those in \cite{Bla_07} to guarantee that the reduced co-orientable $k$-contact manifold is of the type $\mathbf{J}^{\Phi-1}_{\bm\eta}(\R^{\times k}\bm\mu)/K_{[\bm\mu]}$. We showed that $k$-contact MMW reduction simplifies the analysis of non-conservative Hamiltonian systems given by a $k$-contact Hamiltonian $k$-vector field, preserving the essential dynamical features while reducing the complexity of the original system. 

As a byproduct, our theory improves the previously known MMW contact reductions ($k=1$) and solves mistakes found in the literature. Moreover, it sheds new light on understanding the characteristic distributions of contact manifolds.

The work provides detailed and illustrative examples and theoretical results that demonstrate the reduction process, showing how it reduces the $k$-contact form and the dynamics of the Hamiltonian system. We also discussed the potential applications of their findings in various fields, including classical field theory, where systems often exhibit dissipative behaviours that can be modelled using $k$-contact geometry.

Given the recent rapid development of this field and the growing interest among researchers, we propose potential possibilities for future work. The generalisation of our approach to non-co-orientable $k$-contact manifolds. The main challenge lies in the fact that the $k$-contact manifold can not be extended to any $k$-symplectic or symplectic principal bundle, as is possible in the contact case. This leads to many technical problems and difficulties in proving such a generalisation. The second interesting topic concerns the contact MMW contact reduction, i.e. the case when $k=1$. It is possible to develop analogous orbit reductions as in the symplectic setting \cite{OR_04}. It appears that in our framework, it naturally follows.

Our findings underscore the importance of $k$-contact geometry in advancing the study of non-conservative systems, offering a new framework for understanding and simplifying their dynamics. Additionally, they highlight the potential for future developments in the field.  

\addcontentsline{toc}{section}{Acknowledgements}

\section*{Acknowledgements}
X. Rivas and B.M. Zawora thank IDUB program Mikrogrants from the Faculty of Physics at the University of Warsaw for their partial financial support during their research stays at the University of Warsaw and Rovira i Virgili University, where this work was initiated and completed. X. Rivas also
acknowledges partial financial support from the Spanish Ministry of Science and Innovation,
grants PID2021-125515NB-C21, and RED2022-134301-T of AEI, and Ministry of Research and
Universities of the Catalan Government, project 2021 SGR 00603. B.M. Zawora acknowledges a Ph.D. stipend from the School of Natural and Exact Sciences of the University of Warsaw. S. Vilariño acknowledges partial financial support from the Spanish Ministry of Science and Innovation, grant PID2021-125515NB-C22 and Aragon Government, project 2023 E48-23R. An acknowledgement is given to J. Grabowski and K. Grabowska for the productive discussions regarding their work and the confirmation of a problem with the reduction group in his contact reduction theory.

\bibliographystyle{abbrv}

\bibliography{references.bib}

\end{document}